\newtheorem{Theorem}{Theorem}
\newtheorem{Proposition}[Theorem]{Proposition}
\newtheorem{Corollary}[Theorem]{Corollary}
\newtheorem{Lemma}[Theorem]{Lemma}
\newtheorem{Conjecture}[Theorem]{Conjecture}
\newcommand{\ZZ}{{\mathbb Z}}
\newcommand{\CC}{{\mathbb C}}
\newcommand{\RR}{{\mathbb R}}
\newcommand{\kk}{{\mathbb K}}
\newcommand{\pg}{{\cal P}(\Gamma)}
\begin{document}

\title{Theory of homotopes with applications to mutually unbiased bases, harmonic analysis on graphs and perverse sheaves}
\author{Alexey Bondal, Ilya Zhdanovskiy\\
}
\date{}

\maketitle

{\bf Abstract.}
    The paper is a survey of modern results and applications of the theory of homotopes. The notion of a well-tempered element in an associative algebra is introduced and it is proven that the category of representations of the homotope constructed by a well-tempered element is the heart of a suitably glued t-structure. Hochschild and global dimensions of the homotopes are calculated in the case of well-tempered elements. The homotopes constructed by generalized Lapalce operators in Poincare groupoids of graphs are studied. It is shown that they are quotients of Temperley-Lieb algebras of general graphs. The perverse sheaves on a punctured disc and on a 2 dimensional sphere with a double point are identified with representations of suitable homotopes. Relations of the theory to orthogonal decompositions of sl(n, C) into the sum of Cartan subalgebras, to classifications of configurations of lines, to mutually unbiased bases, to quantum protocols, to generalized Hadamard matrices are discussed.

{\bf Keywords.} Homotope, well-tempered element, orthogonal decomposition of a Lie algebra, mutually unbiased bases, quantum protocol, Temperley-Lie algebra, Poincare groupoid, generalised Hadamard matrix, Laplace operator on a graph, discrete harmonic analysis, perverse sheaves, gluing of t-structures.


\tableofcontents

\section{Introduction}
\footnote{The reported study was funded by RFBR, project number 19-11-50213. The study has been funded within the framework of the HSE University Basic Research Program. The authors were partially supported by the RFBR grant project 18-01-00908. The first author is partially supported by Laboratory of Mirror Symmetry NRU HSE, RF Government grant, ag. № 14.641.31.0001. The work was partially supported by JSPS KAKENHI № JP20H01794. This work was supported by World Premier International Research Center Initiative (WPI Initiative), MEXT, Japan.}
The central object of study for this paper is the so-called homotope. Recall what is this. Assume we are given a unital algebra $A$ with a fixed element $\Delta$. The homotope is a simple algebraic construction that allows us to produce a new algebra from $A$ and $\Delta$. The new algebra $B^+$ coincides with $A$ as a vector space, while the multiplication is modified via $\Delta$:
$$
a\cdot b=a\Delta b
$$
Since the new algebra might not have a unit, we adjoin the unit to it. The resulted algebra we denote by $B^+$, this is the (unital) homotope. Alegbra $B^+$ is the augmentation ideal in $B$.

Despite of the simple definition, the theory of representations of homotopes has deep connactions with with advanced modern constructions in homological algebra. Moreover,  homotopes give a unifying approach to various problems in mathematics and physics. Among those are the classification of configuration of lines with given angles in Hilbert spaces, the description of orthogonal decompositions of simple Lie algebras into the sum of Cartan subalgebras, the discrete harmonic analysis on graphs, the classification of mutually unbiased bases, which have wide application in Quantum Mechanics and Quantum Information Theory, description of perverse sheaves on stratified topological spaces, the representation theory of quasi-heredetary algebras, etc. 

The goal of this paper is to explain how homotopes naturally show up in some of the above-mentioned areas of research, to develop the categorical homological approach to the representation theory of homotopes and to demonstrate applications of homotopes in new situations, such as the algebraic description of perverse sheaves on certain stratified topological spaces.

Original algebra $A$ and its homotope $B$ are related by a pair of homomorphisms $B\to A$. These homomorphisms are essential for understanding the representation theory of homotopes. 

In the abstract framework, we formalize conditions on the element $\Delta\in A$ which imply good categorical description of representation theory of the homotope $B$ in terms of representations of $A$. These are two conditions: one of them requires the two-sided ideal generated by $\Delta$ is the whole algebra $A$, and the other one is that the augmentation ideal $B^+$ is projective as a left and right $B$-module. We call  the elements satisfying these conditions {\em well-tempered}. 
We show that well-temperedness is the property of the double cosets with respect to the action of the group of invertible elements in $A$. We the homological properties of the homotope and  obtain an upper bound for Hochschild and global dimension for the algebra with modified multiplication in terms of the same dimensions for $A$. The condition of the augmented algebra to be projective can be analysed in terms of the Noncommutative Differential Geometry. 

The homotope occurs to be Morita equivalent under a suitably defined homotopy equivalence of elements $\Delta$. This motivates us to keep the historical name 'homotope', which originally appeared in Algebra research about 60 years ago. 

It is interesting that when $A$ is a commutative algebra the homotope construction has the geometric meaning of a divisorial contraction. Though  the element $\Delta$ in this situation is well-tempered only if it is invertible. 

We introduce a class of algebras $B(\Gamma )$ related to a simply laced graphs $\Gamma$ and study
their representation theory. In order to describe the algebra, let us number the edges of the graph by pairs of its vertices. 

Algebra $B(\Gamma )$ depends on a system of invertible parameters $s_{ij}$ which are assigned to the edges $(ij)$ of the graph $\Gamma$. Algebra $B(\Gamma )$ is generated by idempotents $x_i$, numbered by the vertices of the graph, which subject to relations:
\begin{itemize}
\item{$x^2_i = x_i$, for every vertex $i$ of $\Gamma$,}

\item{$x_ix_jx_i = s_{ij}^2x_i$, $x_jx_ix_j = s_{ij}^2x_j$, if $(ij)$ is an edge 
in $\Gamma$,}

\item{$x_ix_j = x_jx_i = 0$, if $(ij)$ is not an edge in $\Gamma$.}

\end{itemize}

Our original motivation to consider algebras $B(\Gamma )$ was the problem of classifying systems of $m$ Cartan subalgebras in Lie algebra $sl(n, \CC )$, pairwise orthogonal with respect to Killing form. This can easily be interpreted in terms of the representation theory of algebras $B(\Gamma_m(n))$, where $\Gamma_m(n)$ is a graph with vertices arranged in $m$ rows, by $n$ vertices in each row, such that vertices in different rows are connected by edges and those in the same row are not. The problem on orthogonal Cartan subalgebras can be reformulated as the problem on finding systems of projectors satisfying the conditions of unbiasedness (see \ref{aup}). These conditions define a representation of $B(\Gamma_m(n))$ for parameters $s_{ij}=\frac 1n$.

 Many other classical and modern
problems in Algebra and Algebraic Geometry, such as Poncelet porism \cite{Sch} and its generalizations, admit interpretations in terms of the representation theory of algebras $B(\Gamma )$ for other graphs.

A unitary version for representation theory for $B(\Gamma )$ is also of big interest. In this case, we consider representations in a Hermitian (resp. Euclidean) space and require that the generating idempotents in $B(\Gamma )$ are represented by Hermitian (resp. orthogonal) projectors. Classification of such representations includes as a partial case the problem of classifying systems of lines in a Hermitian vector space with prescribed angles between them. The above mentioned algebraic version of the problem is interpreted as the  complexification of the unitary problem.

The Hermitian representations for $B(\Gamma_m(n))$ define so-called {\em mutually unbiased bases}. The problem of classifying these bases has recently attracted a lot of attention in Quantum Information Theory due to its relevance to quantum encoding, decoding and quantum tomography.  In \ref{protocols}, we shortly described an instance of how mutually unbiased bases appear in this context on the example of protocols of quantum channels.

Algebra $B(\Gamma )$ is a quotient of Temperley-Lieb algebra and of Hecke algebra of the graph $\Gamma$. 
Thus, the study of its representation theory can be viewed as the first step in studying representation theory of Hecke algebras of complicated graphs. While the rich representation theory of Hecke algebras of Dynkin and extended Dynkin graphs is a part of common knowledge, very little is known about representations of Hecke algebras for graphs with noncommutative fundamental group.

The representation theory for $B(\Gamma )$ is closely related to the harmonic analysis of local systems on the graph.
Here, the crucial fact is that the algebra $B(\Gamma )$ can be obtained via the homotope consruction applied to Poincare groupoid of the graph 
and to $\Delta$ a generalized Laplacian of the graph (see \ref{changemult}, \ref{sectionlaplace}). Moreover, generalized Laplacians in $B(\Gamma )$ are examples of well-tempered elements in Poincare groupoid.
This construction implies a pair of homomorphism of $B(\Gamma )$ into the algebra of Poincare groupoid of $\Gamma$. The representations of $B(\Gamma )$ can be described in terms of representations of Poincare groupoid and functors that are naturally assigned to these homomorphisms.
The representations of Poincare groupoid are identified with local systems on the graph, whence the relevance of the harmonic analysis of the local systems. Also let us note that the Hermitian version of the problem is intimately related to the positivity of Laplace operator.


Poincare groupoid is isomorphic to the matrix algebra over the group algebras of the fundamental group of the graph. In particular, for simply connected graph (in other words, for a tree) it is the matrix algebra over the base field. All nonzero elements in this algebra are well-tempered. It would be interesting to classify well-tempered elements in matrix algebras over group algebras of free groups, such as the fundamental groups of graphs.

If algebra $A$ is coherent over a Noetherian commutative ring $\kk$, and element $\Delta$ is well-tempered, then the homotope will be coherent too. Therefore, the category of finitely presented modules over the homotope is abelian.

We describe the representation theory of homotopes. Firstly, we concentrate our attention on  properties of functors and natural transformations between the categories of representations of algebra $A$ and the algebra $B$ constructed from $A$ via a well-tempered element in $A$.
Then we consider the derived categories and show that, in the case of a well-tempered element, the derived category of representations for $B$, $D(B-{\rm mod})$, satisfies the recollement data with the subcategory $D(\kk -{\rm mod})$ and the quotient category $D(A-{\rm mod})$. We  determine the gluing functor $D(A-{\rm mod})\to D(\kk -{\rm mod})$. The abelian category $B-{\rm mod}$ is the heart of the t-structure which is obtained by the gluing of the standard t-structures on the subcategory and the quotient category.

For a well-tempered $\Delta$, the category $D(B-{\rm mod})$ has a semiorthogonal decomposition into categories $D(\kk -{\rm mod})$ and $D(A-{\rm mod})$. Hence, the homotope construction, when viewed from the point of view of derived categories, looks more like a blow-up of a point on a surface, than the contraction of a divisor, as it actually is for a commutative $A$.


We study the representations of $B(\Gamma )$ for some graphs. The problem of describing the representations of minimal possible non-zero dimension is particularly interesting. In particular, if $B(\Gamma )=B(\Gamma_m(n))$, then the minimal dimension is $n$, and this is exactly the case that corresponds to orthogonal Cartan subalgebras in $sl(n)$. Another good example is when $\Gamma$ is a cyclic graph with $n$ vertices. Then the minimal possible dimension of representations for $B(\Gamma )$ is $n-2$ for suitable choice of parameters $s_{ij}$. 


The last theme that we touch in this survey is the relation of homotopes to the theory of perverse sheaves on stratified topological spaces. Since the category of representations of a homotope and the category of perverse sheaves both are the hearts of glued t-structures, it is reasonable to expect that in some situations the homotopes give the description of perverse sheaves. This is a hot topic, because explicit descriptions of perverse sheaves on stratified topological spaces are currently used for their categorifications called schobers \cite{KS}.

We make the first steps in this direction and show how to describe by means of homotopes the well-known category of perverse sheaves on the disc stratified by its central point and the complement to it. Our description is (at least superficially) different from the description by means of nearby and vanishing cycles, which might be of independent interest. 

Moreover, the category of perverse sheaves on a complex rational curve with a double point is equivalent to the category of finite dimensional representations $B(\Gamma )-\mod$, where $\Gamma$ is the cyclic graph with the generalised Laplace operator carefully chosen. This example can be regarded as a little bridge between Discrete Harmonic Analysis and the topology of stratified topological spaces. It would be interesting to compare this result with the description of microlocal sheaves on Riemann surfaces with ordinary double points proposed in \cite{BezKap}.

\section{Orthogonal Cartan subalgebras and algebraically unbiased projectors}
\label{sectprojectors}

\subsection{Orthogonal Cartan subalgebras}

Consider  a simple Lie algebra $L$ over an algebraically closed field of characteristic zero.
Let $K$ be the Killing form on $L$.
In 1960, J.G.Thompson, in course of constructing integer quadratic lattices with interesting properties, introduced the following definitions.

{\bf Definition.} Two Cartan subalgebras $H_1$ and $H_2$
in $L$ are said to be {\it orthogonal} if $K(h_1,h_2) = 0$ for all $h_1 \in H_1,
h_2 \in H_2$.

{\bf Definition.} Decomposition of $L$ into the direct sum
of Cartan subalgebras $L = \oplus^{h+1}_{i=1}
H_i$ is said to be {\it orthogonal} if
$H_i$ is orthogonal to $H_j$, for all $i \ne j$.

An intensive study of orthogonal decompositions has been undertaken since then (see the book \cite{KT} and references therein).
For Lie algebra $sl(n)$, A.I. Kostrikin, I.A, Kostrikin and V.A. Ufnarovsky have arrived to the following conjecture \cite{KKU}, called {\it Winnie-the-Pooh Conjecture} (cf. \cite{KT}, where, in particular, the name of the conjecture is explained by a wordplay in the
Milne's book in Russian translation).

\begin{Conjecture}
Lie algebra $sl(n)$ has an orthogonal decomposition if and only if $n = p^m$, for a prime number $p$.
\end{Conjecture}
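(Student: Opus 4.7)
The plan is to address the two implications separately. For the ``if'' direction, suppose $n=p^m$. An orthogonal decomposition can be produced from the finite Heisenberg group $H_n$ associated to the symplectic $\mathbb{F}_{p^m}$-vector space $V=\mathbb{F}_{p^m}\oplus\mathbb{F}_{p^m}$. By Stone--von Neumann, $H_n$ has a unique irreducible representation on $\CC^n$ with nontrivial central character; it sends each of the $n+1$ Lagrangian lines $\ell\subset V$, parametrised by $\PP^1(\mathbb{F}_{p^m})$, to a maximal torus of $\mathrm{PGL}_n(\CC)$ whose Lie algebra is a Cartan subalgebra $H_\ell\subset sl(n)$. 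Orthogonality with respect to the Killing form reduces, via a short character computation, to the transversality of any two distinct Lagrangians in $V$, and a dimension count gives $sl(n)=\bigoplus_\ell H_\ell$.

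The ``only if'' direction is the genuine content of the conjecture. My approach would be to translate it into the homotope language of the paper. By the correspondence sketched in Section~\ref{sectprojectors}, an orthogonal decomposition $sl(n)=\bigoplus_{i=1}^{n+1}H_i$ amounts to a collection of $n+1$ orthonormal families of projectors satisfying the algebraic unbiasedness conditions, and hence to a representation of $B(\Gamma_{n+1}(n))$ of dimension $n$ with parameters $s_{ij}=\tfrac{1}{n}$. Since $n$ is the minimal possible nonzero dimension for such a representation, the conjecture becomes the statement that the representation variety of $B(\Gamma_{n+1}(n))$ at these parameters contains a point of dimension $n$ exactly when $n$ is a prime power. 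Presenting $B(\Gamma_{n+1}(n))$ as a homotope of the Poincar\'e groupoid of $\Gamma_{n+1}(n)$ via a generalised Laplacian recasts this as a question about local systems on $\Gamma_{n+1}(n)$ compatible with the spectral data of the Laplacian, so that the tools of the paper (the pair of homomorphisms, the recollement, the Hermitian refinement) become available.

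The hard part, and in fact the entire substance of the conjecture, is to rule out the existence of such a representation when $n$ is not a prime power; this is open in general, with the case $n=6$ notoriously unresolved. Within the framework of the paper, the candidate strategies I would pursue are: first, extracting from the well-temperedness of the Laplacian numerical identities that the traces of the projectors must satisfy and hunting for number-theoretic obstructions, in the spirit of the non-existence results for generalised Hadamard matrices; second, exploiting the Hermitian refinement together with positivity of the Laplace operator to bound the dimension of the representation variety; and third, using the semiorthogonal decomposition of $D(B(\Gamma_{n+1}(n))\text{-}\mathrm{mod})$ to translate the existence of the desired representation into a vanishing condition for an $\mathrm{Ext}$ group attached to a local system on $\Gamma_{n+1}(n)$. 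None of these is expected to settle the conjecture outright; a realistic outcome is a uniform derivation of the known partial results rather than a full proof.
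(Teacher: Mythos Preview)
The statement you are addressing is labelled a \emph{Conjecture} in the paper, and the paper does not prove it. Immediately after stating it, the authors write that ``the conjecture remains widely open'' and that even the case $n=6$ is unresolved. So there is no proof in the paper to compare your proposal against.

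Your proposal is honest about this: you sketch the known ``if'' direction via the Heisenberg group over $\mathbb{F}_{p^m}$ (which is indeed the classical construction, cf.\ the references \cite{KKU}, \cite{KT} in the paper), and you explicitly say that the ``only if'' direction is open and that your suggested strategies are not expected to settle it. That is an accurate assessment, not a proof. The translation you outline --- from orthogonal decompositions to $n$-dimensional representations of $B(\Gamma_{n+1}(n))$ at $s_{ij}^2=\tfrac{1}{n}$, and thence to homotopes of the Poincar\'e groupoid --- is precisely the reformulation the paper develops, but the paper does not claim, and does not show, that this reformulation yields the ``only if'' direction.

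In short: there is no gap in your reasoning beyond the one you already flag, namely that the hard implication is genuinely open. Just be aware that what you have written is a research programme, not a proof, and should not be presented as one.
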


The conjecture remains widely open. Even the non-existence of an orthogonal decomposition for $sl(6)$, for $n=6$ the first positive integer which is not a prime power, is not done yet. In this respect and in view of applications to Quantum Information Theory, which will be discussed below, it would be also interesting to find the maximal number of pairwise orthogonal Cartan subalgebras in $sl(n)$, for any given $n$, as well as to classify the maximal collections of such Cartan subalgebras modulo obvious symmetries.

We recall an interpretation and a generalization of the problem in terms of systems of minimal projectors and its relation to representation theory of Temperley-Lieb algebras and Hecke algebras of some graphs. This was discovered by the first author about 35 years ago (cf. \cite{KT}).

\subsection{Algebraically unbiased projectors}
\label{aup}

Let $V$ be a $n$-dimensional space over a field of characteristic zero.


Two minimal (i.e. rank 1) projectors $p$ and $q$ in $V$ are said to be {\it algebraically unbiased} if
\begin{equation}
\label{unb} {\rm tr}(pq) = \frac{1}{n}
\end{equation}
Equivalently, this reads as one of the two (equivalent) algebraic relations:
\begin{equation}\label{aunbias1}
pqp = \frac{1}{n}p,
\end{equation}
\begin{equation}\label{aunbias2}
qpq = \frac{1}{n}q.
\end{equation}

We will also consider {\it
orthogonal} projectors. Orthogonality of $p$ and $q$ is algebraically expressed as
\begin{equation}\label{orthog}
pq = qp = 0
\end{equation}

Two maximal (i.e. of cardinality $n$) sets of minimal orthogonal projectors $(p_1,...,p_n)$ and $(q_1,...,q_n)$  are said to be {\em algebraically unbiased} if
$p_i$ and $q_j$ are algebraically unbiased for all pairs $(i,j)$.


Let $sl(V)$ be the Lie algebra of traceless operators in $V$. The Killing form is given by the trace of the product of operators. A Cartan subalgebra $H$ can be extended to an abelian  subalgebra $H'$ in the matrix algebra by adding the identity operator $E$. The projectors of rank 1 in $H'$  is the maximal set of orthogonal projectors in $V$. Clearly, this set is unique. 
We say that these projectors are {\it associated} to $H$.

If $p$ is a minimal projector in $H'$, then ${\rm tr} (p)=1$, hence, $p-\frac 1nE \in H$. If projectors $p$ and $q$ are associated to orthogonal Cartan subalgebras, then
$$
{\rm tr}(p-\frac 1nE)(q-\frac 1nE)=0,
$$
which is equivalent to $p$ and $q$ to be algebraically unbiased.

Therefore, pairs of orthogonal Cartan subalgebras (or, in other words, orthogonal pairs) are in one-to-one correspondence with pairs of algebraically unbiased maximal sets of minimal orthogonal projectors. Similarly, orthogonal decompositions of $sl(n)$ correspond to $n+1$ of pairwise algebraically unbiased sets of minimal orthogonal projectors. In the analysis of the conjecture 1, it is worthwhile to consider unbiasedness not only for maximal sets of orthogonal projectors, but also for non complete orthogonal sets. Thus, we come to the problem of studying the sets of projectors where every pair satisfies either conditions (\ref{aunbias1}-\ref{aunbias2}) or (\ref{orthog}). Note here that this will lead us to the representation theory of reduced Temperley-Lieb algebras which we study in chapter 4.

Let us express the algebraic unbiasedness in terms of vectors and covectors. 
Let projectors $p$ and $q$ be given as:
$$
p=e\otimes x,\ \ q=f\otimes y ,
$$
where $e$ and $f$ are in $V$ and $x$ and $y$ are in $V^*$. The equations $p^2=p$ and $q^2=q$ imply:
\begin{equation}\label{projector}
(e, x)=1, \ \ (f, y)=1,
\end{equation}
where $(-,-)$ stands for the pairing between vectors and covectors.
Then the algebraic unbiasedness of $p$ and $q$ reads:
\begin{equation}\label{aunbias}
(x, f)(y, e)=\frac 1n .
\end{equation}
Orthogonality conditions (\ref{orthog}) reads:
\begin{equation}\label{ortho}
(x,f)=0, \ \ (y,e)=0.
\end{equation}

\subsection{Graph formulation of the problem}
\label{problem formulation}
The above discussion suggests to consider the following general problem on systems of projectors.

Assume we are given a simply laced graph $\Gamma$ with a finite number of vertices. Consider a finite dimensional vector space $V$. Assign a rank 1 projector in $V$ to every vertex of the graph. If two vertices are related by
an edge, then we require the corresponding two projectors to be algebraically unbiased. If there is no edge between the vertices, then we require the projectors to be orthogonal. The problem is to classify, for a given graph, all possible systems of projectors satisfying the required conditions modulo automorphisms of $V$.

The problem enjoys a {\em duality} which exchanges the roles of the space $V$ and its dual $V^*$. Indeed,
equations (\ref{aunbias1}), (\ref{aunbias2}), (\ref{orthog}) are invariant under the operation
of assigning to the vertices the adjoint projectors in the dual space $V^*$:
\begin{equation}
p\mapsto p^*.
\end{equation}
Thus, having a configuration in the space $V$, we obtain a dual configuration in $V^*$ and {\em vice versa}.

Rank 1 projectors are parameterized by the variety ${\mathbb P}(V)\times {\mathbb P}(V^*)\setminus D$, where $D$ is the incidence divisor. Indeed, every projector $p$ has the form $p=e\otimes x$, for $e$ a vector and $x$ a covector satisfying
$(e, x)=1$. There is the action of the torus $k^*$ on $V\times V^*$ by formula $e\mapsto \lambda e$, $x\mapsto \lambda ^{-1}x$, for $\lambda \in k^*$. Clearly, if $p=e\otimes x$ then $e$ and $x$  are determined uniquely up to the action of $k^*$.

Further, $V\times V^*$ is endowed with the standard symplectic form, which is invariant under the torus action. The locus of the solutions of equation
$(e, x)=1$ is the preimage of the unit under the moment map. Therefore, the quotient of the variety given by the equation modulo the $k^*$ action is an instance of the symplectic reduction. Obviously, the induced symplectic structure on ${\mathbb P}(V)\times {\mathbb P}(V^*)\setminus D$ coincides with Kostant-Kirillov-Lie bracket on the (co)adjoint orbit of minimal projectors of Lie algebra $gl(V)$.

We consider
the moduli space of solutions to the problem of classification of systems of projectors. In fact, the fine moduli is clearly a stack, because every configuration of projectors has the  obvious automorphism group $k^*$. Nevertheless, suitable conditions on configurations imply no extra automorphism.

\begin{Proposition}\label{minimal-conf}
Let $\Gamma$ be a connected graph. Assume that either the configuration of projectors is such that
the images of the projectors span the space $V$ or the intersection of kernels of projectors is trivial. Then the only automorphisms of the configuration are the scalars (i.e. the group of automorphisms is $k^*$).
\end{Proposition}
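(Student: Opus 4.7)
The plan is to show that any $g \in GL(V)$ satisfying $g p_i g^{-1} = p_i$ for every vertex $i$ must be a scalar. Write $p_i = e_i \otimes x_i$ with $(x_i, e_i) = 1$. Since $p_i$ has rank one, the commutation condition is equivalent to $g$ preserving both the image line $\langle e_i \rangle$ and the kernel hyperplane $\ker x_i$, giving $g e_i = \lambda_i e_i$ and $x_i \circ g = \mu_i x_i$ for some scalars $\lambda_i, \mu_i \in k^*$. The direct computation $g p_i g^{-1} = (g e_i) \otimes (x_i \circ g^{-1}) = \lambda_i \mu_i^{-1}\, p_i$ then forces $\mu_i = \lambda_i$.

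Next, I would propagate a common eigenvalue along edges of $\Gamma$. For an edge $(ij)$ the unbiasedness relation $p_i p_j p_i = s_{ij}^2 p_i$ translates, via the tensor expression $p_i p_j = (x_i, e_j)\, e_i \otimes x_j$, into $(x_i, e_j)(x_j, e_i) = s_{ij}^2 \neq 0$; in particular $(x_i, e_j) \neq 0$. Computing the scalar $x_i(g e_j)$ in two ways yields $\lambda_i (x_i, e_j) = (x_i \circ g)(e_j) = x_i(g e_j) = \lambda_j (x_i, e_j)$, so $\lambda_i = \lambda_j$. Since $\Gamma$ is connected, this propagates to give a single scalar $\lambda$ with $g e_i = \lambda e_i$ and $x_i \circ g = \lambda x_i$ for every $i$.

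Either hypothesis then suffices to conclude $g = \lambda \cdot \mathrm{id}$. If the images $\langle e_i \rangle$ span $V$, then $g$ agrees with $\lambda \cdot \mathrm{id}$ on a spanning set. If $\bigcap_i \ker p_i = 0$, then $\bigcap_i \ker x_i = 0$, so the covectors $x_i$ span $V^*$; the relations $x_i \circ g = \lambda x_i$ then force the transpose of $g$ to equal $\lambda \cdot \mathrm{id}$ on $V^*$, whence $g = \lambda \cdot \mathrm{id}$. The only subtle point is the matching $\mu_i = \lambda_i$ forced by the normalization $(x_i, e_i) = 1$; the remainder reduces to elementary linear algebra plus the graph-theoretic connectedness of $\Gamma$.
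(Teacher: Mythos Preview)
Your proof is correct and follows essentially the same route as the paper: both arguments show that an automorphism acts by a scalar $\lambda_i$ on each image line, then propagate $\lambda_i = \lambda_j$ along edges using the nonvanishing of the ``cross term'' coming from unbiasedness, and conclude via connectedness and the spanning hypothesis. The paper phrases the edge step intrinsically (for $v$ in the image of $p$, the vector $qv$ is a nonzero element of the image of $q$, and $g(qv)=q(gv)=\lambda_p\,qv$ forces $\lambda_q=\lambda_p$), while you unpack the same computation in the $e_i\otimes x_i$ coordinates via the pairing $(x_i,e_j)\neq 0$; for the kernel case the paper invokes the duality $p\mapsto p^*$ rather than arguing directly with covectors, but this is the same content. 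One small wording issue: the equality $\mu_i=\lambda_i$ is forced by $gp_ig^{-1}=p_i$ itself, not by the normalization $(x_i,e_i)=1$.
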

\begin{proof} Assume that the images of the projectors span the space $V$.
An automorphism of the configuration must preserve the image of every projector. If projectors $p$ and $q$ are algebraically unbiased, then for a vector $v$ in the image of $p$ we have:
$$
pqv=\frac 1n v,
$$
which shows, in particular, that $qv$ is a nonzero vector in the image of $q$. By applying to this equality any operator that commutes with $p$ and $q$, we see that this operator acts on $v$ and $qv$ via multiplication by the same scalar. Hence, it acts on the images of $p$ and $q$ by multiplication with the same scalar. Since the graph is connected, the scalar is the same for the images of all projectors. As the images of the projectors span $V$, it follows that the operator acts by a scalar. Using the duality, gives the fact in the case when the intersection of the kernels of the projectors is trivial, which proves the proposition.
\end{proof}


We say that a configuration is {\em minimal} if the images of the projectors span vector space $V$ and the intersection of the kernels of the projectors is zero.
Let ${\tilde {\cal M}}^n (\Gamma )$ be the moduli stack of minimal configurations in $k^n$. It follows from proposition \ref{minimal-conf} that it is a $k^*$-gerbe. We denote by ${\cal M}^n(\Gamma )$ the coarse moduli space of this gerbe. Clearly, it is the quotient of a subvariety in the cartesian product of copies of ${\mathbb P(V)}\times {\mathbb P(V^*)}\setminus D$, one copy for every vertex of the graph, modulo the action of $GL(V)$. Note that this presentation is not particularly convenient for calculation.

To every configuration, we can assign in a canonical way a minimal configuration at the price of reducing the dimension of the vector space. To this end, take the subspace generated by the images of all projectors. There is the induced configuration subordinated to the same graph in this subspace. If the intersection of the kernels of all projectors in the new configuration is nontrivial, then mode out this intersection. The resulting configuration is minimal.

We say that two configurations are $S$-{\em equivalent} if the corresponding minimal configurations are isomorphic. Consider the set ${\cal M}(\Gamma )$ of isomorphism classes of all minimal configurations for all possible dimensions $n$. We will endow it with the structure of an affine variety. It has a stratification with strata ${\cal M}^n (\Gamma )$ which parameterize the minimal configurations of given dimension $n$:
\begin{equation}\label{xdecom}
{\cal M}(\Gamma )=\bigcup_n {\cal M}^n(\Gamma )
\end{equation}

Equivalently, this variety parameterizes the $S$-equivalence classes of configurations in the vector space of dimension equal to the number of vertices in the graph.
Note that all this has a natural interpretation in the context of representation theory and the gluing of t-structures, which we will discuss below.

The symplectic geometry of the theory of unbiased projectors is discussed in \cite{BZh}.

\subsection{The moduli space of configurations as a torus}\label{calmod}

We will show that ${\cal M}(\Gamma )$ is in fact a torus $k^{*N}$. Let us describe a generating set of functions on it. Consider a cyclic path $\gamma$  of length $s$ in the graph $\Gamma$. Choose any orientation of the path and take the product $p_{\gamma}=p_1\dots p_s$ of projectors corresponding to the vertices of the path taken in any full order compatible with the cyclic order defined by the orientation (some vertices might recur). The trace $T_{\gamma}$ of the resulting operator does not depend obviously on the choice of the full order. We get a set of regular functions on the moduli space of configurations. The set is numbered by the homotopy classes of cyclic paths in $\Gamma$.

Since the operator $p_{\gamma}p_1$, if nonzero, has the same kernel and image as $p_1$ has, it is proportional to $p_1$. By checking the trace, we get:
$$
p_{\gamma}p_1=T_{\gamma}p_1.
$$
Let ${\hat {\gamma}}$ be the cyclic path $\gamma$ with inverse orientation and $p_{\hat {\gamma}}=p_s\dots p_1$, the operator  which corresponds to the full order inverse to the one chosen for $\gamma$. Then we have as above:
$$
p_1p_{\hat {\gamma}}=T_{{\hat \gamma}}p_1.
$$
Using this equalities, we get:

\begin{equation}\label{invloop}
T_{\gamma}T_{{\hat \gamma}}p_1=p_{\gamma}p_1p_{{\hat \gamma}}= \frac 1 {n^s} p_1.
\end{equation}
The latter equality is obtained by iteratively applying (\ref{aunbias1}) and (\ref{aunbias2}). Therefore:
\begin{equation}\label{invtrace}
T_{\gamma}T_{\hat \gamma}= \frac 1 {n^s}
\end{equation}
It is convenient to introduce normalized functions $S_{\gamma}$:
\begin{equation}\label{sgamma}
S_{\gamma}=n^{\frac12 |\gamma |}T_{\gamma},
\end{equation}
where $|\gamma |$ is the length of the cyclic path $\gamma$, i.e. the number of edges in $\gamma$.
Then equation (\ref{invtrace}) reads as a convenient cancellation law:
\begin{equation}\label{cancel}
S_{\gamma}S_{\hat \gamma}=1.
\end{equation}
Now we regard $\Gamma $ as a topological space, i.e. as a 1-dimensional CW-complex.
\begin{Theorem}\label{embedding}
\begin{itemize}
\item[(i)]
$S_{\gamma}$ depends on the homotopy class of the free loop $\gamma$ only;
\item[(ii)]
The assignment $\gamma \mapsto S_{\gamma}$ extends to a homomorphism
${\rm H}_1(\Gamma , {\mathbb Z})\to k^*$;
\item[(iii)] $S_{\gamma}$ does not depend on the choice of representation in the $S$-equivalence class.
\item[(iv)]
The map ${\cal M}(\Gamma )\to {\rm H}^1(\Gamma ,k^*)$ defined by $S_{\gamma}$'s is
bijective.
\end{itemize}
\end{Theorem}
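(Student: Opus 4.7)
The plan is to verify (i)--(iii) by direct manipulations with the projector relations (\ref{aunbias1})--(\ref{orthog}) and to deduce (iv) by constructing an explicit inverse to the map $S$, realised via local systems on $\Gamma$.

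For (i), the trace $T_\gamma = \operatorname{tr}(p_1 \cdots p_s)$ is cyclically invariant, so it depends only on the cyclic sequence of vertices of $\gamma$. Two cyclic paths in the $1$-dimensional CW-complex $\Gamma$ represent the same free homotopy class iff they differ by a finite sequence of cyclic rotations and insertions or deletions of backtracks $\ldots v_i v_j v_i \ldots$ (where $(ij)$ is necessarily an edge). Deleting such a backtrack substitutes $p_i p_j p_i$ by $\tfrac{1}{n} p_i$ via (\ref{aunbias1}), which divides $T_\gamma$ by $n$ while shortening $|\gamma|$ by $2$; the factor $n^{|\gamma|/2}$ in (\ref{sgamma}) compensates exactly, so $S_\gamma$ is invariant. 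For (ii), fix a common basepoint $v_1$ for two based loops $\gamma_1, \gamma_2$. Directly from the definition, $p_{\gamma_1 \gamma_2} = p_{\gamma_1} p_{\gamma_2}$ as operators, and since $p_{\gamma_2}$ begins with $p_{v_1}$, the identity $p_{\gamma_1} p_{v_1} = T_{\gamma_1} p_{v_1}$ from (\ref{invloop}) gives $p_{\gamma_1} p_{\gamma_2} = T_{\gamma_1} p_{\gamma_2}$, whence $T_{\gamma_1 \gamma_2} = T_{\gamma_1} T_{\gamma_2}$. Lengths add, so $S$ is multiplicative on $\pi_1(\Gamma, v_1)$, and since $k^*$ is abelian it factors through $H_1(\Gamma, \mathbb{Z})$; basepoint independence follows from (i).

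For (iii), the minimal configuration associated to a given one is extracted in two stages: first restrict to $W := \sum_i p_i(V)$, then quotient by $K := \bigcap_i \ker(p_i|_W)$. Each $p_i$ preserves $W$ and annihilates $K$, so all projectors descend. The operator $p_1 \cdots p_s$ maps $V$ into $W$ because $p_1(V) \subseteq W$; in a basis adapted to a splitting $V = W \oplus W'$ the diagonal entries outside $W$ vanish, hence $\operatorname{tr}_V(p_1 \cdots p_s) = \operatorname{tr}_W(p_1 \cdots p_s)$. A dual argument handles the quotient by $K$, so $T_\gamma$ and thus $S_\gamma$ are $S$-equivalence invariants.

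The substance of the theorem is (iv), which I would split into injectivity and surjectivity. For injectivity, iterated application of (\ref{aunbias1}), (\ref{aunbias2}) and (\ref{orthog}) reduces every monomial $p_{i_1}\cdots p_{i_s}$ to a scalar multiple of a single projector, where the scalar is determined by the $S$-invariants of the freely reduced cyclic subwords; equal $S_\gamma$ then forces equal traces on every word in the $p_i$, and in a minimal configuration this recovers the configuration up to isomorphism. For surjectivity, given $\phi \in \operatorname{Hom}(H_1(\Gamma, \mathbb{Z}), k^*)$, I would fix a spanning tree of $\Gamma$, build a standard rigid configuration along it (for a tree the moduli is a point), and then twist the covector assigned to each non-tree edge by the value of $\phi$ on the corresponding fundamental cycle; a direct computation of $S_\gamma$ along each fundamental cycle shows the resulting configuration realises $\phi$. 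The main obstacle is this reconstruction step in injectivity, for which the cleanest framework is the identification of minimal configurations with irreducible representations of the homotope of the Poincare groupoid of $\Gamma$ developed in \ref{changemult}, \ref{sectionlaplace}; this also reconciles the varying dimension $n$ across the stratification (\ref{xdecom}) with the single target group $H^1(\Gamma, k^*)$.
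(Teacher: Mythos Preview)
Your treatment of (i)--(iii) is correct and in fact more direct than the paper's: the paper only argues (i) on the spot and defers (ii)--(iv) wholesale to the Poincar\'e groupoid/homotope machinery developed in Sections~\ref{Poincare}--\ref{repbg}, whereas you extract (ii) and (iii) by elementary trace manipulations.  Your argument for (ii) via $p_{\gamma_1}p_{v_1}=T_{\gamma_1}p_{v_1}$ is clean, and the two-step trace computation in (iii) is exactly right.

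The issues are in (iv).  First, the sentence ``reduces every monomial $p_{i_1}\cdots p_{i_s}$ to a scalar multiple of a single projector'' is false as written: $p_ip_jp_k$ for three distinct vertices on a triangle is a rank-one operator but not a scalar times any $p_\ell$.  What is true (and what you actually need) is that the \emph{trace} of any monomial is determined by the $S_\gamma$, via cyclic invariance plus backtrack reduction; you should say that instead.  Second, the genuinely hard step---that equal trace data on all words forces isomorphism of minimal configurations---is asserted but not argued; this is a Procesi/GIT-type reconstruction statement and does not follow from anything you have written.  Third, the surjectivity construction is not well-defined: covectors are attached to vertices, not edges, so ``twist the covector assigned to each non-tree edge'' has no literal meaning, and any attempt to twist a vertex covector will disturb several edges at once.

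You correctly diagnose at the end that the clean route is through the identification of minimal configurations with rank-one representations of the homotope of the Poincar\'e groupoid.  That is precisely how the paper proceeds: the equivalence $k[\pi_1(\Gamma)]\text{-mod}\simeq k\Gamma\text{-mod}$ matches characters of $\pi_1(\Gamma)$ with $k\Gamma$-modules in which each $e_i$ has rank one, and the functor $W\mapsto W_{\min}$ carries these to minimal $B(\Gamma)$-modules, i.e.\ to minimal configurations.  Both directions of (iv) then fall out of this correspondence, bypassing the trace-reconstruction and spanning-tree issues entirely.  So your instinct about where the real content lies is right; the direct arguments you sketch for (iv) do not close the gap on their own.
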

\begin{proof} Given a cyclic path $\gamma$, there is a minimal free loop $\gamma_0$ in the graph which represents the homotopy class of $\gamma$. The cyclic path $\gamma$ can be contracted to $\gamma_0$ by elementary contraction, i.e. contractions where only paths along an edge in one direction and then immediately back, is contracted. Equations
(\ref{aunbias1}) and (\ref{aunbias2}) imply that $S_{\gamma}$ does not change under such contractions. This proves(i).

It is more convenient for us to postpone the proof for the other statements until we develop a more conceptional approach via Poincare groupoid (see section 7.3).


\end{proof}

Let $\Gamma_m(n)$ be the graph with $m$ rows and $n$ vertices in
each row, such that any two vertices are connected by an edge if and only if they are in different rows.

In view of what was explained in section \ref{aup}, a configuration of projectors subordinated to the graph $\Gamma_m(n)$ in the vector space $V$ of dimension $n$ gives a configuration of $m$ pairwise orthogonal Cartan subalgebras in $sl(n)$. The restriction on the dimension is crucial here. In the above picture, this corresponds to distinguishing the stratum on the torus which parameterizes representations of the minimal possible dimension. The moduli of configurations of orthogonal Cartan subalgebras is the quotient of ${\cal M}^n(\Gamma_m(n) )$ by the action of the product of $m$ symmetric groups $S_n$ that permute the vertices in the rows of the graph.


\subsection{Generalizations of the problem}

It is natural to put the problem on configurations of projectors into a broader context.
We can substitute the constant ${\frac 1n}$ at the right hand side of (\ref{aunbias1}) and (\ref{aunbias2}) by arbitrary non-zero constant $r\in k^*$. We say that two rank 1 projectors are $r$-{\em unbiased} if:

\begin{equation}\label{raunbias1}
pqp = rp,
\end{equation}
or, equivalently,
\begin{equation}\label{raunbias2}
qpq = rq.
\end{equation}

Further, when considering the problem on system of projectors subordinated to a graph, we can make $r$ to be  dependent on the edge. Then the initial data is a graph $\Gamma$ with labels $r_{ij}\in k^*$ assigned to edges $(ij)$ in the graph. The relations are:

\begin{equation}\label{unbgen1}
p_ip_jp_i = r_{ij}p_i,
\end{equation}
\begin{equation}\label{unbgen2}
p_jp_ip_j = r_{ij}p_j,
\end{equation}
if there is an edge $(ij)$ in the graph. We keep also the condition that $p_i$ and $p_j$ are orthogonal projectors
if there is no edge $(ij)$.

All what was said in section 2.4 holds true (with little correction) for this generalized version. In particular, formula (\ref{sgamma}) for $S_{\gamma}$ then reads:
\begin{equation}\label{sgamma1}
S_{\gamma}={\frac 1 {\sqrt{\prod r_{ii+1}}}}T_{\gamma},
\end{equation}
where the product under the square root is taken over all edges in $\gamma$ and $T_{\gamma }$ is the trace of the product of projectors along cyclic path $\gamma$.
The rest goes {\em mutatis  mutandis}.

The generalized version of Theorem (\ref{embedding}) holds true with the change of the wording that we leave to the reader.

Further, we can also generalize the problem by considering projectors of higher rank. Equations (\ref{unbgen1}) and (\ref{unbgen2}) are not equivalent conditions in this case. Hence, we should pose them both. It follows immediately from these equations that $p_i$ and $p_j$ have the same rank. Thus all projectors from the system are of the same rank if the graph is connected. We will reformulate this problem in terms of the representation theory of algebras $B(\Gamma )$, which we consider in section 4. Similar to the homological interpretation for the case of rank 1 projectors, the higher rank case is related to local systems of higher rank on the graph $\Gamma $ regarded as a topological space.

\subsection{Generalized Hadamard matrices.}

In this subsection, we will show how generalized Hadamard  matrices are related to orthogonal pairs of Cartan subalgebras in Lie algebra $sl(n)$.

Denote by $\cal M$ the set of $n\times n$ matrices  with non-zero entries over the field $k$.
A matrix $A = \{a_{ij}\}$ from $\cal M$ is said to be a {\it
generalized Hadamard matrix} if
\begin{equation}\label{hadamard}
\sum^{n}_{j=1}\frac{a_{ij}}{a_{sj}} = 0.
\end{equation}
for all $i \ne s$.

This condition can be recast by means of {\it Hadamard involution}
$h: {\cal M} \to {\cal M}$ defined by
\begin{equation}
h: a_{ij} \mapsto \frac{1}{na_{ji}}.
\end{equation}
\begin{Proposition} A is a generalized Hadamard matrix if and only if $A$ is invertible and $h(A) = A^{-1}$.
\end{Proposition}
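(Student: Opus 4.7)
The plan is to prove the statement by direct matrix multiplication, showing that the Hadamard condition is equivalent to $A\cdot h(A) = I_n$.

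First I would compute the matrix product $A\cdot h(A)$ entry-by-entry. By the definition of $h$, one has
\[
(A\cdot h(A))_{is} = \sum_{j=1}^n a_{ij}\,h(A)_{js} = \sum_{j=1}^n a_{ij}\cdot\frac{1}{n\,a_{sj}} = \frac{1}{n}\sum_{j=1}^n \frac{a_{ij}}{a_{sj}}.
\]
For the diagonal entries, taking $i=s$ yields $\frac{1}{n}\sum_{j=1}^n 1 = 1$ automatically, with no condition on $A$ needed (we only use that the entries are nonzero, so the ratios make sense). For the off-diagonal entries, taking $i\neq s$, the expression is precisely $\frac{1}{n}$ times the left-hand side of the defining relation \eqref{hadamard}. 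Hence the off-diagonal entries vanish if and only if $A$ satisfies the generalized Hadamard condition.

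Combining these two observations, $A\cdot h(A) = I_n$ if and only if $A$ is a generalized Hadamard matrix. Since $A$ and $h(A)$ are finite square matrices over the field $k$, the equality $A\cdot h(A) = I_n$ is equivalent to $A$ being invertible with $h(A) = A^{-1}$. This gives both implications of the proposition.

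There is no real obstacle: the argument is a short computation together with the standard fact that for finite square matrices a one-sided inverse is a two-sided inverse. The only point worth noting is that the condition ``$A\in\mathcal{M}$'' (all entries nonzero) is used tacitly to ensure that $h(A)$ is defined in the first place; the proposition as stated presupposes this since $h$ is only defined on $\mathcal{M}$.
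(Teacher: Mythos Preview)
Your proof is correct and follows essentially the same approach as the paper: the paper's proof is the single line ``(\ref{hadamard}) is equivalent to $A\cdot h(A)=1$,'' and you have simply written out that computation explicitly, together with the observation that a one-sided inverse of a square matrix over a field is a two-sided inverse.
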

\begin{proof} Indeed, (\ref{hadamard}) is equivalent to $A\cdot h(A)=1$.
\end{proof}

For any two Cartan subalgebras in a simple Lie algebra, one is always a conjugate of the other by an automorphism of the Lie algebra.
For the case of $sl(n)$, Cartan subalgebras are conjugated by an element of $GL_n$.
Then we can assume that the diagonal Cartan subalgebras is one of them, while the other one is conjugate to it by means of a nondegenerate matrix $A$.   
The normalizer consists of monomial matrices. Therefore, matrix $A$ is defined up to transformations
\begin{equation}\label{hadtrans}
A' = M_1 A M_2,
\end{equation}
where $M_1$ and $M_2$ are invertible monomial matrices.


\begin{Proposition}\cite{KT} Two Cartan subalgebras $H$ and $AHA^{-1}$ form an orthogonal pair in $sl(n)$ if and only if $A$ is a generalized Hadamard matrix.
\end{Proposition}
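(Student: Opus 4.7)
The plan is to reduce the claim to a direct trace computation using the dictionary established in Section~\ref{aup}: an orthogonal pair of Cartan subalgebras in $sl(n)$ is the same as a pair of algebraically unbiased maximal sets of minimal orthogonal projectors (namely the rank-one projectors associated to each Cartan subalgebra). After conjugating, we may take $H$ to be the diagonal Cartan. Its associated rank-one projectors are the matrix units $p_i = E_{ii}$. The Cartan subalgebra $AHA^{-1}$ then has associated rank-one projectors $q_j = A E_{jj} A^{-1}$.

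First, I would compute the pairing of these projectors explicitly. Since $(A E_{jj} A^{-1})_{kl} = A_{kj} (A^{-1})_{jl}$, taking the trace against $E_{ii}$ gives
$$
\mathrm{tr}(p_i q_j) \;=\; \mathrm{tr}\bigl(E_{ii}\, A E_{jj} A^{-1}\bigr) \;=\; A_{ij}\,(A^{-1})_{ji}.
$$
By the correspondence recalled above, $H$ and $AHA^{-1}$ form an orthogonal pair if and only if $\mathrm{tr}(p_i q_j) = \tfrac{1}{n}$ for every pair $(i,j)$, i.e.\ if and only if
$$
A_{ij}\,(A^{-1})_{ji} \;=\; \frac{1}{n} \qquad \text{for all } i,j.
$$

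Second, I would observe that this system of equations forces all entries $A_{ij}$ to be non-zero (so $A \in \M$) and can be rewritten as $(A^{-1})_{ji} = \tfrac{1}{nA_{ij}}$, which is precisely the identity $A^{-1} = h(A)$ for the Hadamard involution $h$. By the preceding proposition, this is equivalent to $A$ being a generalized Hadamard matrix. Conversely, if $A$ is generalized Hadamard, reading the same chain of equivalences backwards yields $\mathrm{tr}(p_i q_j) = 1/n$ for all $i,j$, hence orthogonality of the two Cartan subalgebras.

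There is no real obstacle: the argument is essentially a two-line matrix computation combined with the proposition characterizing generalized Hadamard matrices via $h(A) = A^{-1}$ and the unbiasedness/orthogonality dictionary of Section~\ref{aup}. The only minor point to verify is the invariance of the resulting notion under the monomial transformations (\ref{hadtrans}), which simply reflects the freedom in choosing the normalizer representative and in ordering the associated projectors $p_i$ and $q_j$; these transformations preserve the generalized Hadamard condition, so the statement is well-posed for the conjugation class of $A$ rather than $A$ itself.
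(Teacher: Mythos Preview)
Your argument is correct. The paper does not give its own proof of this proposition (it is quoted from \cite{KT}), but your computation is exactly the natural one given the setup of Section~\ref{aup} and the preceding proposition characterizing generalized Hadamard matrices via $h(A)=A^{-1}$; there is nothing to add.
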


Recall that orthogonal pairs of Cartan subalgebras are associated to configurations of projectors governed by the graph $\Gamma_2(n)$. 
According to Theorem \ref{embedding}, the moduli of configurations in all dimensions is a $k^*$-torus of dimension $(n-1)^2$ (the rank of 1-st homology for graph $\Gamma_2(n)$).
This torus is identified with the quotient of $\cal M$
by the left and right multiplication of diagonal matrices. Equations (\ref{hadamard}) are easily seen to be invariant under this action.
They define the stratum of this torus which parameterizes representations of the minimal possible dimension, which is $n$ in this case.




\section{The Hermitian case}

\subsection{Mutually unbiased bases and configurations of lines in a Hermitian space}
The terminology of unbiased bases firstly appeared in physics \cite{Schw}, \cite{Ivo}. It is a unitary version of the algebraic unbiasedness introduced in section 2.
Here we will define them and later explain on an example how unbiased bases show up in Quantum Information Theory.


Let $V$ be an $n$ dimensional complex space with a fixed
Hermitian metric $\langle \ ,\ \rangle $. Two orthonormal Hermitian
bases $\{e_i\}$ and $\{f_j\}$ in $V$ are {\em mutually
unbiased} if, for all pairs $(ij)$, holds:
\begin{equation}\label{ef}
|\langle e_i, f_j \rangle|^2 =\frac 1n
\end{equation}

Consider the orthogonal projectors $p_i$ and $q_j$ corresponding to these bases, defined by:
$$
p_i(-)=e_i\otimes \langle - , e_i \rangle ,\ \ q_j(-)=f_j\otimes \langle - , f_j \rangle .
$$
Then condition (\ref{aunbias}) is satisfied for them, hence
the pairs of projectors $(p_i,q_j)$, for any $i,j$, are algebraically unbiased. Note that these operators are rank
1 Hermitian projectors, and, being such, are
defined by non-zero vectors in their images. We say that a pair of 
rank 1 projectors is {\em unbiased} if it is an algebraically unbiased pair of Hermitian projectors.

We can formulate a general problem about unbiased Hermitian projectors for graphs similar to that for algebraically unbiased projectors. Since a rank 1 Hermitian projector is determined  by its image, the one dimensional subspace (a line) in $V$, the configurations could be understood in terms of collections of points in ${\mathbb P}V$, but we will formulate it in terms of lines.

We fix again a simply laced graph $\Gamma $ with a finite number of vertices and a finite dimensional vector space $V$ of dimension $n$, now endowed with a Hermitian form. Assign a length 1 vector in $V$ to every vertex in $\Gamma$. If two vertices are connected by an edge, then we put condition
\begin{equation}\label{ef1}
|\langle e, f \rangle|^2 =\frac 1n
\end{equation}
on vectors $e$ and $f$ corresponding to the vertices. If the vertices are disconnected, then we require the vectors to be perpendicular. The problem is to classify all systems of vectors modulo linear automorphisms of $V$ and the products of $U(1)$'s corresponding to change of phases of all vectors assigned to vertices.

There is a Hermitian version of the generalised algebraic problem. 
To this end, we consider a {\em full} graph $\Gamma$ together with labels $r_{ij}$, $0\le r_{ij}\le 1$, on all edges $(ij)$. The problem is to find all, up to linear automorphisms of $V$, configurations of lines assigned to vertices of the graph and satisfying
\begin{equation}
|\langle e_i, e_j \rangle|^2 =r_{ij},
\end{equation}
for any choice of length 1 vectors $e_i$ and $e_j$ in the lines corresponding to vertices $i$ and $j$ of the graph.
Vanishing $r_{ij}=0$ corresponds to the absence of edge between $i$ and $j$ in the graph in the algebraic version of the problem (in which case the condition was given by two equations (\ref{ortho}), instead of one in the Hermitian case).

Note that all our conditions are just to fix particular angles between lines in the configuration.
More generally, one can consider subspaces of fixed dimension in the Hermitian space $V$ and fix angles between them.
This corresponds to higher rank representations of algebra $B_r(\Gamma )$ (see below).

\subsection{Moduli of configurations of lines and complex Hadamard matrices}
When the base field is the complex numbers we can regard the algebraic unbiasedness as a complexification of the unbiasedness. To be more precise, consider the space ${\cal M}(\Gamma )$ of all configurations of algebraically unbiased projectors for graph $\Gamma$. 
It is an algebraic subvariety in the cartesian product of several copies of ${\mathbb P}(V)\times {\mathbb P}(V^*)\setminus D$, one copy for every vertex of $\Gamma$.

Now fix a Hermitian form on $V$. The Hermitian involution gives a new duality on the set of algebraic configurations:
\begin{equation}
p_i\mapsto p_i^{\dag }.
\end{equation}
The duality induces an anti-holomorphic involution on ${\cal M}(\Gamma )$. It is easy to see that the involution takes $S_{\gamma }$ to ${\bar S_{{\hat \gamma}}}$. 

Clearly, the involution preserves all strata ${\cal M}^n({\Gamma })$, in particular, it takes any minimal configuration to a minimal one. 



Since mutually unbiased bases are algebraically unbiased, they give examples of orthogonal Cartan subalgebras in $sl(n)$. Given $m$ pairwise mutually unbiased bases
${\cal B}_1,{\cal B}_2, . . . ,{\cal B}_{m}$ in a Hermitian space $ V$, we obtain $m$ pairwise orthogonal Cartan
subalgebras $H_1,H_2, . . . ,H_{m}$ in $sl(n)$. 
In particular, a collection of $n+1$ mutually unbiased bases in a Hermitian vector space of dimension $n$ gives rise to an orthogonal
decomposition of $sl(n)$. This fact was noticed 
in \cite{BTSW}.

Let $\cal B$ be an orthonormal basis in $\CC^n$. Matrix $A = (a_{ij})$ is said to be {\it complex Hadamard} if bases $\cal B$ and $A({\cal B})$ are mutually unbiased.
Let $A$ and $C$ be a complex Hadamard matrices. We will say that $A$ is equivalent to $C$ if $A = M_1 C M_2$ for some unitary monomial matrices $M_1, M_2$.

The relation between complex Hadamard matrices and generalized Hadamard ones is as follows:
$A$ is a complex Hadamard $(n\times n)$ matrix if and only if $A$ is a generalized Hadamard matrix and $|a_{ij}| = {\frac 1n}$.

The problem of classification of generalised (respectively complex) Hadamard matrices is in fact the problem of classification of orthogonal pairs of Cartan subalgebras in $sl(n, \CC)$ (respectively of mutually unbiased bases in $\CC^n$). This problem is solved still only for $n \le 5$ (see  \cite{KKU2}, \cite{Haa}): it turns out that when $n = 2,3,5$ there is only finite number of Hadamard matrices, while for $n = 4$ the matrices are parameterized by a curve. In the case $n = 6$, there was a long standing conjecture on the existence of a 4-dimensional family of the matrices (see \cite{MS}, \cite{Szo}, \cite{TZ}). The authors of the present paper has proven this conjecture in \cite{BZh2}.
Nevertheless this does not provide a complete classification of generalised (or complex) Hadamard matrices in dimension 6, because beyond the 4-dimensional family there is at least isolated (not in any family) generalized Hadamard matrices. Also there was a conjecture formulated by S. Popa (see \cite{Po}) 
on the finiteness of the number of complex (generalised) Hadamard matrices when the dimension of the vector space is a prime number. This conjecture has been falsified by his student M. Petrescu \cite{Pet}, who has 
constructed via computer simulations 1-dimensional families in dimension 7, 13, 19, 31, 79. In the case of dimension 7, R. Nicoara has noticed in \cite{Nic} that projectors corresponding to mutually unbiased bases satisfy a commutator relation. This observation has got a further development in \cite{KZh1}, \cite{KZh2}.  

\subsection{Quantum protocols}
\label{protocols}

Consider a finite dimensional quantum mechanical system for which
 the space of states is an $n$ dimensional complex vector space $V$. Recall that {\em states} of the system are vectors
in $V$ considered up to multiplication by non-zero constants.
{\em Observables} of the system are Hermitian operators in $V$. For the sake of simplicity, choose an
observable which has
pairwise distinct real eigenvalues $\lambda_i, i=1,\dots, n,$ in the standard orthonormal basis $\{e_i\}_{i\in [1,n]}$. Due to the basic
quantum mechanical principles, the {\em measurement} of this
observable when the system is in a normalised state $f$ (i.e. $|f|=1$) returns us one of the eigenvalues $\lambda_i$ of the observable with the probability equal to 
$|\langle
e_i, f \rangle|^2$. 

Another important result of the measurement is that after the measurement the state of the system transfers into the eigenvector $e_i$. This leads to many problems in Quantum Theory, because the state of the system is changed after the measurement, and to repeat the same measurement with the system is, in general, no longer possible.

The condition of mutual unbiasedness (\ref{ef}) frequently appears in problems of Quantum
Information Theory. Here is one of the typical examples.

Let Alice transmits quantum information to Bob by using a quantum
channel, which is a chain of quantum particles whose states are described by vectors in a vector  space $V$. A protocol for transmitting quantum
information is a set of pairwise mutually unbiased bases in $V$.
Alice and Bob has arranged the sequence of pairwise mutually
unbiased bases which they will use in their communication, say
$\{e_i\}$, $\{f_j\}$, $\{g_k\}$, etc. Alice sends states of the
systems: firstly, one of the vector of the basis $\{e_i\}$, then a
vector of the basis $\{f_j\}$, etc. Possible transmitted string
might look as $(e_2, f_5, g_3, \dots )$. The actual information is
contained in the indices $2, 5, 3, \dots $. 

Now assume also the presence
of an eavesdropper, Eve. Since the protocol is an open information,
Eve knows the set of bases but she does not
know the sequence of them that Alice and Bob arranged among themselves. Eve
will get no information even probabilistically if she uses a wrong
basis from the protocol when trying to determine a transmitted state. Indeed, if
Alice sends one of the $e_i$'s and Eve measures an observable which,
as a Hermitian operator, is diagonalisable in the basis $\{f_j\}$, then
she will get one of the eigenvalues  of this observable with equal
probability $\frac 1n$. 

Clearly, the bigger is the number of mutually
unbiased bases in the protocol, the more secure is the protocol against the attempts of interception. Also protocols of quantum channels are widely used for the protection against obstacles or 'noises' in the channel. Famous protocol BB84 was the first protocol which in essence utilises unbiased bases in the simplest situation of a 2-dimensional vector space of states of a quantum qubit \cite{BB84}. There is one up to conjugation maximal set of three bases. Under the identification with Cartan subalgebras in $sl(2, {\mathbb C})$, this set corresponds to Cartan subalgebras spanned by Pauli matrices.

When the space of quantum states of transmitted particles has higher dimension, the problem of classification of mutually unbiased bases arises. It follows from the correspondence of mutually unbiased bases and orthogonal Cartan subalgebras that the number of such bases is not greater than $n+1$, where $n$ is the dimension of the state space. The demand of describing   maximal sets of mutually unbiased bases explains the
relevance of the Winnie-the-Pooh problem to Quantum Information
Theory.

There are many other instances when mutually unbiased bases are used in physics, see 
\cite{FM}, \cite{Rus}, \cite{Woo}, \cite{Eng}, \cite{Hua}, \cite{Vai}, \cite{Got}, \cite{Cal}, etc.

%

\section{Algebra $B(\Gamma)$, Hecke algebra and Poincare groupoid of graph}

\subsection{Algebra $B(\Gamma)$.}

The above discussion of the problem on configurations of projectors motivates the study of representation theory for algebra $B({\Gamma})$, which we introduce here. Under some specialization of parameters, this
algebra becomes a quotient of more familiar Temperley-Lieb algebra of the 
graph $\Gamma$. The latter is, in its turn, a quotient of Hecke algebra of
the graph.

Let $\Gamma$ be a simply laced graph.  Denote by $V(\Gamma)$ and $E(\Gamma)$ the sets
of vertices and edges of the graph.
Let $\kk$ be a commutative ring, Choose a set of invertible elements $\{s_{ij}\}$ in $\kk$, where $(ij)$ runs over the set of all edges of the graph (i.e. $s_{ij}=s_{ji}$). For example, one can take the universal ring $\kk =k[\{  s_{ij}\},
\{ s_{ij}^{-1}\} ]$, where $k$ is a field of
characteristic zero.
We define
algebra $B(\Gamma )$ as a unital algebra over $\kk$. Generators $x_i$ of $B(\Gamma )$ are numbered by the
vertices $i$ of $\Gamma$. They subject relations:
\begin{itemize}
\item{$x^2_i = x_i$, for every $i$ in $V(\Gamma )$,}

\item{$x_ix_jx_i = s_{ij}^2x_i$, $x_jx_ix_j = s_{ij}^2x_j$, if $i$ and $j$ are adjacent
in $\Gamma$,}

\item{$x_ix_j = x_jx_i = 0$, if there is no edge $(ij)$ in $\Gamma$.}

\end{itemize}
We define $B^+(\Gamma )$ as the augmentation ideal in $B(\Gamma )$ generated by all $x_i$'s.
Note that automorphisms of the graph induce automorphisms of $B(\Gamma)$.

A configuration of projectors $p_i$'s considered in section \ref{sectprojectors} can be understood as a representation of $B(\Gamma )$:
$$
x_i\mapsto p_i,
$$
if we pose
$$
r_{ij}=s_{ij}^2.
$$
It will be convenient for us to keep the square roots of $r_{ij}$ as basic parameters.

A path in the graph is a sequence of vertices
$$
\gamma = (i_0, \dots , i_t),
$$
where $i_l=i_{l+1}$ or $(i_li_{l+1})\in E(\Gamma )$, for $0\le l \le t-1$. To such a path, we assign an element $x_{\gamma} \in B(\Gamma )$:
$$
x_{\gamma}= x_{i_0}\dots x_{i_t}.
$$
We assign 1 to the empty path.

Path $\gamma$ is said to be {\em contracted} if 
$$
i_l\ne i_{l+1},\ {\rm for}\ 0\le l \le t-1,\ {\rm and}\ i_l\ne i_{l+2},\ {\rm for}\ 0\le l \le t-2,
$$
or if it is an empty path.
If $\gamma$ is not contracted, i.e. $i_l=i_{l+1}$ or $i_l=i_{l+2}$ for some $l$, then we define its {\em elementary contraction} as a new path with vertex $i_{l+1}$
removed from $\gamma$. We can obtain other contractions of the path by iterating minimal contractions. Note that all contractions of $\gamma$ preserve the homotopy class
of $\gamma$ among the paths with fixed starting and ending points.
Every homotopy class of paths with fixed ends contains a unique contracted path, which we call the {\em minimal contraction}. The minimal contraction can be achieved by a sequence of elementary contractions.

\begin{Proposition}\label{basisbg}
The set of elements $x_{\gamma}$ where $\gamma$ runs over the set of all contracted paths in $\Gamma$, is a $\kk$-basis for algebra $B({\Gamma })$.
\end{Proposition}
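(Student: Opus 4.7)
The plan is to split the proposition into spanning and linear independence. Spanning is routine: read the three families of relations as rewriting rules $x_i x_i \to x_i$, $x_i x_j x_i \to s_{ij}^2 x_i$ for an edge $(ij)$, and $x_i x_j \to 0$ for $i \ne j$ with $(ij) \notin E(\Gamma)$, each of which either annihilates a word in the $x_i$ or strictly shortens it while preserving proportionality. Iterating these rules rewrites any monomial in the generators as a scalar multiple of $x_\gamma$ for a contracted path $\gamma$ (its minimal contraction), or as zero; so since $B(\Gamma)$ is spanned over $\kk$ by the unit $x_\emptyset = 1$ together with monomials in the $x_i$, the set $\{x_\gamma : \gamma \text{ contracted}\}$ spans.

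For linear independence I would build an explicit left $B(\Gamma)$-module $M$ that separates contracted paths. Let $M$ be the free $\kk$-module with basis $\{e_\gamma\}$ indexed by contracted paths, empty path included. Define the left action of a generator $x_i$ on $e_\gamma$ with $\gamma = (i_0, \dots, i_t)$ case by case: (a) $x_i \cdot e_\emptyset = e_{(i)}$; (b) if $i_0 = i$, set $x_i \cdot e_\gamma = e_\gamma$; (c) if $i_0 \ne i$ and $(i, i_0) \notin E(\Gamma)$, set $x_i \cdot e_\gamma = 0$; (d) if $(i, i_0) \in E(\Gamma)$ with $t \ge 1$ and $i_1 = i$, set $x_i \cdot e_\gamma = s_{i i_0}^2\, e_{(i_1, \dots, i_t)}$; (e) otherwise set $x_i \cdot e_\gamma = e_{(i, i_0, \dots, i_t)}$, which remains contracted because we are not in case (b) or (d). Two of the defining relations come essentially for free under this prescription: the image of each $x_j$ lies in the span of those $e_\gamma$ starting at $j$, on which $x_j$ acts as the identity by rule (b), so $x_j^2 = x_j$; and $x_i x_j = 0$ for a non-edge $(ij)$ with $i \ne j$ since rule (c) shows $x_i$ annihilates precisely that subspace.

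The main obstacle is the braid-type relation $x_i x_j x_i = s_{ij}^2 x_i$ on an edge $(ij)$: verifying it requires a finite case analysis on the starting vertex of $\gamma$ and on whether the second vertex equals $i$ or $j$. The essential point is that rule (d) fires exactly once on each side to produce the factor $s_{ij}^2$, while contractedness $i_l \ne i_{l+2}$ prevents spurious extra firings midway through the computation. Once the action is verified, a short induction on $|\gamma|$ via rule (e) gives $x_\gamma \cdot e_\emptyset = e_\gamma$ for every contracted $\gamma$, since at the prepending step contractedness guarantees rule (e) rather than (d) applies. Any relation $\sum_\gamma c_\gamma x_\gamma = 0$ in $B(\Gamma)$ then evaluates at $e_\emptyset$ to $\sum_\gamma c_\gamma e_\gamma = 0$ in $M$, forcing all $c_\gamma = 0$ by independence of the $e_\gamma$.
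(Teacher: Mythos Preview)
Your proposal is correct and takes a genuinely different route from the paper.

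The paper argues directly about the ideal of relations in the free algebra on the $x_i$: every relation either kills a non-path monomial or relates two path monomials lying in the same homotopy class of paths with fixed endpoints, and within each such class every monomial is a fixed invertible scalar times the unique contracted representative. Linear independence then follows because distinct contracted paths lie in distinct homotopy classes.

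You instead build the left regular representation ``by hand'': you write down a candidate $B(\Gamma)$-module $M$ on the free $\kk$-module with basis the contracted paths, verify the defining relations by a short case analysis, and then observe that $x_\gamma \cdot e_\emptyset = e_\gamma$ separates the $x_\gamma$. This is more self-contained; in particular it avoids having to argue carefully that the two-sided ideal generated by the relations respects the decomposition by homotopy classes, a step the paper treats somewhat informally. The paper's argument, by contrast, makes visible the topological reason the basis exists---one element per homotopy class of paths---which foreshadows the later identification of $B(\Gamma)$ with a homotope of the Poincar\'e groupoid. Your sketch of the verification of $x_i x_j x_i = s_{ij}^2 x_i$ is accurate: after reducing to $e_{\gamma'}$ with $\gamma'$ starting at $i$, exactly one application of rule (d) occurs in the computation of $x_i x_j e_{\gamma'}$, and the contractedness hypothesis $i_0 \ne i_2$ is precisely what prevents a second firing.
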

\begin{proof}
The defining relations for $B({\Gamma})$ imply that $x_{\gamma}$ remains the same up to invertible multiplier after an elementary contraction and that the $\kk$-linear span of $x_{\gamma}$'s is $B({\Gamma})$. 

It remains to show that these elements are linearly independent. To this end, note that any element in the ideal of relations for $B(\Gamma )$ among $x_i$'s is a $\kk$-linear combination of two classes of relations:
\begin{itemize}
\item $x_{i_1}\dots x_{i_t}=0$ when, for some $l$, the pair $(i_l, i_{l+1})$ is not an edge of the graph.
\item $x_{\gamma}=\lambda_{\gamma \gamma '} x_{\gamma '}$ where $\gamma'$ is an elementary contraction of $\gamma$ and $\lambda_{\gamma \gamma '} \in \kk^*$;
\end{itemize}
The first class shows that it is enough to consider only monomials corresponding to paths in the graph.
The second class of relations is divided into groups, where each group consists of relations among monomials $x_{\gamma}$
where $\gamma$ is in a homotopy class of paths with fixed starting and ending points. Hence, it is enough to check that monomials from every homotopy class of paths span a 1-dimensional space in $B({\Gamma })$.
Denote by $q_{ij}$ the difference between multiplicities of edge $(ij)$ in $\gamma$ and its contraction $\gamma '$. Then one can easily check by induction on the number of elementary contractions required to pass from $\gamma$ to $\gamma '$ the formula:
$$
x_{\gamma}=\prod s_{ij}^{q_{ij}}x_{\gamma '}.
$$
By taking $\gamma '$ to be the minimal contraction in the homotopy class of $\gamma$, we get that any monomial has an expression as the monomial of the contracted path with a unique invertible multiplier in $\kk$. Thus, the space spanned by the monomials in one homotopy class is indeed 1-dimensional. This finishes the proof of the theorem.
\end{proof}

We will write $B_s(\Gamma )$ when we need to specify the values of $s=\{s_{ij}\}$.
We shall consider the representation theory of these algebras in section 7.

\subsection{Temperley-Lieb algebras and Hecke algebras of graphs}
\label{tlhecke}

Consider the special case when $s_{ij}=s$ for all edges $(ij)$ of the graph. To describe the relation to Hecke algebra, note that the standard Temperley-Lieb algebra $TL(\Gamma)$ is
defined similarly to $B(\Gamma )$, with the last relation replaced by $x_ix_j =
x_jx_i$ when there is no edge connecting $i$ and $j$. Hence, algebra
$B(\Gamma)$ for the case $s_{ij}=s$ is a quotient of $TL(\Gamma)$.


Recall that Hecke algebra $H(\Gamma)$ of graph $\Gamma$ is the unital algebra over $k[q, q^{-1}]$ generated by
elements $T_i, i \in V(\Gamma)$ subject the following relations:
\begin{itemize}
\item{$T_i T_j T_i = T_j T_i T_j$, if $(i,j) \in E(\Gamma)$,}
\item{$T_i T_j = T_j T_i$, otherwise,}
\item{$(T_i + 1)(T_i - q) = 0$ for any $i$.}
\end{itemize}
We obtain $H_q(\Gamma)$, an algebra over $k$, by specializing
$q$ to a non-zero value. For the case $\Gamma$ is  Dynkin graph of
type $A_n$, algebra $H_q(\Gamma)$ is known to
be a $q$-deformation of the group algebra for the permutation group $S_{n+1}$.
Its representation theory is useful in constructing polynomial
invariants of knots (cf. \cite{Lick}, \cite{Jon}). If $q$ is not a root of unity,
$H_q(\Gamma )$ is isomorphic to the group algebra of for $S_{n+1}$.

There is a homomorphism $H(\Gamma ) \to B(\Gamma )[q]$, where $B(\Gamma )$
is extended by the central element $q$, a root of the equation:
$$
q+2+q^{-1}=s^{-2}.
$$
The homomorphism is defined by mapping:
$$
T_i \mapsto (q+1)x_i - 1.
$$

One can easily see that via this homomorphism $B(\Gamma)$ becomes isomorphic to the quotient of $H(\Gamma)$ by relations:
\begin{itemize}
\item{$-1+T_i+T_j-T_i T_j - T_j T_i + T_iT_jT_i = 0$, if $(ij)$ is an edge;}
\item{$-1+T_i + T_j - T_i T_j = 0$, otherwise.}
\end{itemize}


Let us note that
if $\Gamma$ is a graph of type $A_n$ and $q$ is not a root of unity,
algebra $B(\Gamma)$, being a quotient of $H_q(\Gamma)$, is the direct sum of
the matrix algebra of the operators in the standard $n$-dimensional
representation of the permutation group $S_{n+1}$ and the field $k$. Note that $H_q(\Gamma )$ contains the whole representation theory of the symmetric group, while Temperley-Lieb algebra does the representations
corresponding Young diagrams with up to two rows.

Thus the representation theory of $B(\Gamma )$ is very simple for graph $A_n$.
The reason is that the homology of the graph is trivial. The measure of complexity of the representation theory for $B(\Gamma )$ is the first homology of the graph, as we will see later.

\subsection{Poincare groupoids of graphs}\label{Poincare}
It turns out that the representation theory for $B(\Gamma )$ is
closely related to that of Poincare groupoid of the graph.

Let again $\Gamma$ be a simply-laced graph. Consider it as a topological space. Let $\pg$ be the Poincare
groupoid of graph $\Gamma$, i.e. a category with objects vertices of the graph
and morphisms homotopic classes of paths. Composition of morphisms is given by concatenation of paths.

Let $\kk $ be a commutative ring. Denote by $\kk\Gamma $ the
algebra over $\kk $
with a free $\kk$-basis numbered by morphisms in
$\pg$ and multiplication induced by concatenation of paths (when it makes sense, and zero when it does not). Let $e_i$ be the element of $\kk\Gamma$ which is the constant path in vertex $i$. Any oriented edge $(ij)$ can be interpreted as
a morphism in $\pg$, hence it gives an element $l_{ij}$ in $\kk\Gamma$. These are the generators.
The defining relations are:
\begin{itemize}
\item{$e_i e_j = \delta_{ij} e_i,\ \ e_i l_{jk} = \delta_{ij} l_{ik},\ \ l_{jk} e_i = \delta_{ki} l_{jk}$;}
\item{$l_{ij} l_{ji} = e_i,\ \ l_{ji}l_{ij} = e_j,\ \ l_{ij}l_{km} = 0$, if $j \ne k$.}
\end{itemize}

We consider $\kk\Gamma$ as an algebra with unit:
$$
1=\sum_{i\in V(\Gamma )}e_i.
$$



Let $\gamma\in \pg$ be a path in $\Gamma$. Denote by $l_{\gamma}$ the element in $\kk\Gamma$ corresponding to this path. There is an involutive anti-isomorphism $\sigma :\kk\Gamma\to \kk\Gamma ^{opp}$ defined by
\begin{equation}\label{dualkg}
\sigma(l_{\gamma})=l_{{\hat \gamma}},
\end{equation}
where ${\hat \gamma}$ is the path inverse to $\gamma$. It implies a duality, i.e. an involutive anti-equivalence, $D: \kk \Gamma -{\rm mod}_{fd}\simeq \kk \Gamma -{\rm mod}^{opp}_{fd}$, on the category $\kk \Gamma -{\rm mod}_{fd}$ of finite dimensional $\kk \Gamma$-representations, if $\kk=k$ is a field. If $\rho :k\Gamma \to {\rm End}(V)$ is a representation, then the dual representation $D(\rho ):k\Gamma \to {\rm End}(V^* )$ is defined, for $l\in k\Gamma$, by:
\begin{equation}\label{dualforkg}
D(\rho ) (l)=\rho (\sigma (l))^*.
\end{equation}

Let $\Gamma$ be in addition a connected graph. Then the category of representations for Poincare groupoid and that
for the fundamental group of the graph are equivalent.
To see this, fix $t \in V(\Gamma)$. Denote by $\kk[\pi(\Gamma,t)]$
the group algebra of the fundamental group $\pi(\Gamma,t)$. Consider
projective $\kk\Gamma$ - module $P_t = \kk\Gamma e_t$. Clearly, $P_t$ is a $
\kk\Gamma$ - $\kk[\pi(\Gamma,t)]$ - bimodule. Note that $P_t$ are isomorphic as left $\kk \Gamma$-modules for all choices of the vertex $t$. Indeed, the right multiplication by an element corresponding to a path that connects $t_1$ with $t_2$ would give an isomorphism of $P_{t_1}$ with $P_{t_2}$.

\begin{Proposition}\label{moritakg} Bimodule $P_t$ induces a Morita equivalence between $\kk\Gamma$ and $\kk[\pi(\Gamma,t)]$.
Thus, the categories $\kk\Gamma - {\rm mod}$ and $\kk[\pi(\Gamma,t)] - {\rm mod}$ are
equivalent. Moreover, algebra $\kk\Gamma$ is isomorphic to the matrix algebra over $\kk[\pi(\Gamma,t)]$, with the size of (square) matrices equal to $|V(\Gamma )|$.
\end{Proposition}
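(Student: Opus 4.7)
The plan is to prove $P_t$ is a progenerator over $\kk\Gamma$ with endomorphism ring (up to opposite) equal to $\kk[\pi(\Gamma,t)]$, and then invoke standard Morita theory.

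First I would identify the ring $e_t\kk\Gamma e_t$, which acts on $P_t=\kk\Gamma e_t$ on the right by multiplication inside $\kk\Gamma$. Using the defining relations $e_ie_j=\delta_{ij}e_i$, $e_il_{jk}=\delta_{ij}l_{ik}$ and $l_{jk}e_i=\delta_{ki}l_{jk}$, together with the groupoid relation $l_{ij}l_{ji}=e_i$, one sees that $e_t\kk\Gamma e_t$ has a $\kk$-basis indexed by homotopy classes of closed paths at $t$, i.e.\ by elements of $\pi(\Gamma,t)$, with multiplication given by concatenation. Thus $e_t\kk\Gamma e_t\cong \kk[\pi(\Gamma,t)]$ as $\kk$-algebras, and $P_t$ becomes a $\kk\Gamma$-$\kk[\pi(\Gamma,t)]$-bimodule via right multiplication.

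Next I would show that $P_t$ is a progenerator. It is projective as the direct summand $\kk\Gamma e_t$ of the regular module $\kk\Gamma=\bigoplus_{i\in V(\Gamma)}\kk\Gamma e_i=\bigoplus_i P_i$. Since $\Gamma$ is connected, for each vertex $i$ we can fix a homotopy class of path from $t$ to $i$, giving an element $v_i\in e_i\kk\Gamma e_t$; the groupoid relations provide an inverse $v_i^{-1}\in e_t\kk\Gamma e_i$ with $v_iv_i^{-1}=e_i$, $v_i^{-1}v_i=e_t$. Right multiplication by $v_i$ is then a left-$\kk\Gamma$-module isomorphism $P_t\xrightarrow{\sim} P_i$, so
\[
\kk\Gamma\;\cong\;P_t^{\oplus |V(\Gamma)|}
\]
as left $\kk\Gamma$-modules. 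In particular, $P_t$ generates $\kk\Gamma\text{-mod}$.

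The standard idempotent/Morita theorem now gives that the adjoint pair $P_t\otimes_{\kk[\pi(\Gamma,t)]}-$ and $\mathrm{Hom}_{\kk\Gamma}(P_t,-)$ is an equivalence between $\kk[\pi(\Gamma,t)]\text{-mod}$ and $\kk\Gamma\text{-mod}$. For the matrix description, applying $\mathrm{End}_{\kk\Gamma}$ to both sides of $\kk\Gamma\cong P_t^{\oplus |V(\Gamma)|}$ and using $\kk\Gamma\cong\mathrm{End}_{\kk\Gamma}(\kk\Gamma)^{\mathrm{op}}$ yields
\[
\kk\Gamma\;\cong\;\bigl(M_{|V(\Gamma)|}(\mathrm{End}_{\kk\Gamma}(P_t))\bigr)^{\mathrm{op}}\;\cong\;M_{|V(\Gamma)|}\bigl(\mathrm{End}_{\kk\Gamma}(P_t)^{\mathrm{op}}\bigr)\;\cong\;M_{|V(\Gamma)|}(\kk[\pi(\Gamma,t)]),
\]
via the transpose isomorphism $M_n(R)^{\mathrm{op}}\cong M_n(R^{\mathrm{op}})$ and the identification $\mathrm{End}_{\kk\Gamma}(P_t)\cong (e_t\kk\Gamma e_t)^{\mathrm{op}}$ of the first step.

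The only real bookkeeping obstacle is making the $\mathrm{op}$s land correctly: one must be careful that $\mathrm{End}_{\kk\Gamma}(Re)\cong (eRe)^{\mathrm{op}}$ (composition reverses under evaluation at $e$), so that the natural right action of $e_t\kk\Gamma e_t$ on $P_t$ matches the right action of $\kk[\pi(\Gamma,t)]$ by concatenation of loops at the endpoint $t$ rather than its opposite.
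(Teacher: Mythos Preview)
Your proof is correct and follows essentially the same approach as the paper: both fix a system of paths from $t$ to each vertex to exhibit $P_i\cong P_t$ (hence $\kk\Gamma\cong P_t^{\oplus|V(\Gamma)|}$) and identify the corner ring $e_t\kk\Gamma e_t$ with $\kk[\pi(\Gamma,t)]$. The only cosmetic difference is that the paper writes down the matrix isomorphism explicitly as $\gamma_i^{-1}\pi\gamma_j\mapsto \pi\cdot E_{ij}$ rather than deducing it from an ${\rm End}$ computation, which spares the ${}^{\mathrm{op}}$ bookkeeping you flagged.
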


The functors that induce equivalence between categories $\kk\Gamma - {\rm mod}$ and $\kk[\pi(\Gamma,t)] - {\rm mod}$ are:
\begin{equation}
\label{functors} V \mapsto P_t \otimes_{\kk[\pi(\Gamma,t)]} V,\ \ W \mapsto
{\rm Hom}_{\kk\Gamma}(P_t, W).
\end{equation}



Let us construct an isomorphism $\kk\Gamma\to {\rm Mat}_n(\kk[\pi(\Gamma,t)])$. To this end, fix a system of paths $\{\gamma_i\}$ connecting the vertex $t$ with every vertex $i$ (one path for every vertex). For any element $\pi\in \kk[\pi(\Gamma,t)]$, consider an element  $\gamma^{-1}_i\pi\gamma_j$ in $\kk\Gamma$. The homomorphism is defined by the assignment:
$$
\gamma^{-1}_i\pi\gamma_j\mapsto \pi\cdot E_{ij},
$$
where $E_{ij}$ stands for the elementary matrix with the only nontrivial entry $1$ at $(ij)$-th place. An easy check shows that this is indeed a well-defined ring isomorphism.

Let $\kk =k$ be a field.
Since the fundamental group $\pi(\Gamma,t)$ is free, the equivalence implies that the homological
dimension of category $k\Gamma - {\rm Mod}$ is 0 if the graph is a tree and 1 otherwise.

The equivalence takes the duality functor (\ref{dualforkg}) for $k\Gamma -{\rm mod}$ into the standard duality $W\mapsto W^*$ for representations of the group $\pi(\Gamma,t)$.

\section{Homotopes}

In this section we introduce the central mathematical object of this paper, the homotope. It is a purely formal algebraic construction of a new algebra, given an algebra and an element in it. Despite of simplicity of the construction, the representation theory of homotopes is rich. In particular, homotopes will allow us to describe algebraically the categories of perverse sheaves on some stratified topological spaces.


\subsection{Definition of homotopes}
\label{changemult}
Let $A$ be an algebra over a commutative ring $\kk$ and $\Delta$ an element in $A$. Consider the non-unital algebra, $A_{\Delta}$, with the same space as $A$ but with new multiplication defined by the formula:
\begin{equation}
a \cdot_{\Delta} b = a\Delta b.
\end{equation}
Algebra $A_{\Delta }$ is called {\em homotope of} $A$ (cf. \cite{J}).
By adjoining the unit to algebra $A_{\Delta}$, we get a unital algebra $\widehat{A}_{\Delta}=\kk \cdot 1\oplus A_{\Delta}$, which we will call by {\em unital homotope of} $A$.
Define two homomorphisms:
\begin{equation}
\psi_{1},\psi_{2}:A_{\Delta} \to A,
\end{equation}
by mappings:
\begin{equation}
\label{hom}
\psi_1: a \mapsto a\Delta , \ \ \psi_2:a \mapsto \Delta a.
\end{equation}
Note that
\begin{equation}
{\rm Im}\psi_1 = A\Delta,\ \ {\rm Im}\psi_2 = \Delta A,
\end{equation}
i.e. the images of the non-unital homotope $A_{\Delta}$ are the one-sided ideals in $A$ generated by $\Delta$.
We use the same notation $\psi_1$ and $\psi_2$ for the extensions to homomorphisms of unital algebras:
$$
\psi_{1},\psi_{2}:{\widehat A}_{\Delta} \to A,
$$
Note that $A_{\Delta}$ is the augmentation ideal, hence an $\widehat{A}_{\Delta}$-bimodule.
The left multiplication in $A$ commutes with right multiplication in $A_{\Delta}$ and vice versa.
We use the left and right multiplication in $A$ to endow $A_{\Delta}$ with the structure of $A$-bimodule. Clearly,
as a right or left $A$-module, $A_{\Delta}$ is free of rank 1.
The left $\widehat{A}_{\Delta}$-module structure on $A_{\Delta}$ coincides with the structure obtained by pulling back of the left $A$-module structure along morphism $\psi_1$. Similarly, the right $\widehat{A}_{\Delta}$-module structure is the pull-back of the right $A$-module structure along morphism $\psi_2$.

We use notation $B=\widehat{A}_{\Delta}$ and $B^+=A_{\Delta}$.
Denote by ${}_{\psi_i}A_{\psi_j}$ the $B$-bimodule structure on $A$ with left and right module structures defined by $\psi_i$ and $\psi_j$ respectively. The identification of $B^+$ with $A$ implies an isomorphism of $B$-bimodules :
\begin{equation}
B^+ \cong {}_{\psi_1}A_{\psi_2}
\end{equation}

If $\Delta$ is an invertible element in $A$, then $\psi_1$ and $\psi_2$ are isomorphisms. Thus, $A_{\Delta}$ is a unital algebra with the unit ${\Delta }^{-1}$. In this case,  ${\widehat A_{\Delta}}$ is obtained from a unital algebra, isomorphic to $A$, by adjoining a new unit $1_B$. This implies that algebra ${\widehat A_{\Delta}}$ is a product of algebras $A_{\Delta}\simeq A$ and $\kk$. Algebras $A_{\Delta}$ and $\kk$ are subalgebras in ${\widehat A_{\Delta}}$, where $\kk$ is embedded into ${\widehat A_{\Delta}}$ via $\kk (1_B-{\Delta }^{-1})$ (in order to annihilate $A_{\Delta}$). Thus, the construction of the homotope is of interest only when $\Delta $ is not invertible.

\subsection{An analogy: homotopes as affine contractions}

It is well-known that the contraction of a projective curve in an algebraic surface is possible only under suitable assumptions. For example, a smooth projective curve can be contracted (even in a formal neighborhood) only if its self-intersection number is negative. In the affine geometry, the picture is drastically different: any affine closed subscheme in an affine scheme can be contracted. Moreover, all colimits over finite diagrams in the category of affine schemes exist. Indeed, the category of affine schemes is dual to the category of commutative algebras, where limits exist. Though one should keep in mind that a  colimit of finitely generated commutative algebras may no longer be a finitely generated algebra. For instance, the contraction of an affine line in the affine plan is the spectrum of an infinitely generated algebra.

The homotope construction can be understood as a noncommutative version of the divisorial contraction in the affine algebraic geometry. Indeed, assume we have a commutative algebra $A$ and an element $\Delta$ in it, which is not a zero divisor in $A$. For this case, the maps $\psi_1$ and $\psi_2$ coincide and identify $B^+$ with the ideal in $A$ generated by $\Delta$. Then algebra $B$ is the limit over the diagram:

\begin{equation}\label{contractiondiagram1}
\xymatrix{& & A\ar[d]
\\
 & \kk\ar[r] & A/\Delta A}
\end{equation}

Since the dual of this diagram presents the contraction of ${\rm Spec}A/ \Delta A\subset {\rm Spec}A$ to the point ${\rm Spec}\kk$:
\begin{equation}\label{contractiondiagram2}
\xymatrix{{\rm Spec}A/\Delta A\ar[d]&\ar[r] & {\rm Spec}\kk\ar[d]
\\
 {\rm Spec}A& \ar[r] &{\rm Spec}B }
\end{equation}
we can regard homotope as the contraction of the principal divisor defined by $\Delta$. In section \ref{genrepbg} we dsicuss an example of contraction of the first infinitesimal neighborhood of a point
on the affine line, which results in the cusp singularity.

\subsection{Discrete Laplace operator and algebra $B(\Gamma)$ as a homotope of Poincare groupoid.}\label{sectionlaplace}


Let  $\kk$ be a ring which contains a set of invertible elements $\{s_{ij}\}$. For instance, a possible choice of $\kk$ is the ring of Laurant polinomials: $\kk=k[\{ s_{ij}\} ,\{ s_{ij}^{-1}\} ]$.

Consider the {\em generalized Laplace operator of a graph $\Gamma$}, i.e. an element $\Delta$ in the algebra $\kk\Gamma$ of Poincare groupoid of the graph:
\begin{equation}\label{deltaformula}
\Delta =1+ \sum s_{ij}l_{ij},
\end{equation}
where the sum is taken over all oriented edges.

Consider algebra $\kk \Gamma_{\Delta}$, the unital homotope of $\kk\Gamma$ constructed via the element $\Delta$.
Denote by $x_i$'s the elements in $\kk{\Gamma}_{\Delta}$ that correspond to $e_i$'s in $\kk{\Gamma }$.
The following theorem implies the realisation of $B(\Gamma)$ as the unital homotope of the Poincare groupoid $\kk\Gamma$.
\begin{Theorem}\label{bgammaviagr}
There is a unique isomorphism of non-unital algebras:
\begin{equation}
B^{+}(\Gamma) \cong \kk{\Gamma}_{\Delta},
\end{equation}
and hence, an isomorphism of unital algebras:
\begin{equation}
B(\Gamma) \cong \widehat{\kk{\Gamma}_{\Delta}}
\end{equation}
that takes $x_i$ into $e_i$.
\end{Theorem}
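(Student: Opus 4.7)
The plan is to construct the isomorphism from left to right by sending $x_i \mapsto e_i$ (and $1 \mapsto 1$), verify that this extends to a well-defined unital homomorphism, and then establish bijectivity by comparing bases. Uniqueness is automatic, since $B(\Gamma)$ is generated as a unital algebra by the $x_i$.

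First I would verify that the elements $e_i \in \kk\Gamma_\Delta$ satisfy the defining relations of $B(\Gamma)$. The key computation is
\[
e_i \cdot_\Delta e_j \;=\; e_i \Delta e_j \;=\; e_i e_j \;+\; \sum_{(kl)} s_{kl}\, e_i l_{kl} e_j,
\]
and since $e_i l_{kl} e_j = \delta_{ik}\delta_{lj}\, l_{ij}$, this collapses to $\delta_{ij} e_i$ when $(ij)\notin E(\Gamma)$ and to $\delta_{ij} e_i + s_{ij} l_{ij}$ when $(ij)\in E(\Gamma)$. This immediately gives $e_i \cdot_\Delta e_i = e_i$ and $e_i \cdot_\Delta e_j = 0$ for non-adjacent $i\neq j$. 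A second multiplication, using that $l_{ij} e_i = 0$ for $i\neq j$ and $l_{ij} l_{ji} = e_i$, yields $e_i \cdot_\Delta e_j \cdot_\Delta e_i = s_{ij}^{2}\, e_i$ whenever $(ij)$ is an edge. Hence the assignment $x_i \mapsto e_i$ extends to a unital homomorphism $\phi: B(\Gamma)\to \widehat{\kk\Gamma_\Delta}$ whose restriction to $B^+(\Gamma)$ lands in $\kk\Gamma_\Delta$.

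For bijectivity I would invoke Proposition \ref{basisbg}. By induction on path length one checks that for a contracted path $\gamma = (i_0,\dots,i_t)$ one has
\[
\phi(x_{i_0}\cdots x_{i_t}) \;=\; e_{i_0}\cdot_\Delta\cdots\cdot_\Delta e_{i_t} \;=\; s_{i_0 i_1}\,s_{i_1 i_2}\cdots s_{i_{t-1} i_t}\, l_\gamma,
\]
the inductive step being a repetition of the basic computation: among the summands of $\Delta$ sandwiched between consecutive factors, only $s_{i_l i_{l+1}} l_{i_l i_{l+1}}$ survives, and the non-backtracking condition $i_{l} \neq i_{l+2}$ of a contracted path ensures that no cancellation $l_{ij} l_{ji} = e_i$ occurs, so the growing product remains a genuine basis element. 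Since the $x_\gamma$ over nonempty contracted paths form a $\kk$-basis of $B^+(\Gamma)$, the $l_\gamma$ over non-backtracking paths (i.e.\ representatives of morphisms in the Poincar\'e groupoid) form a $\kk$-basis of $\kk\Gamma$, and each $s_{ij}$ is invertible, the map $\phi$ carries one basis to nonzero scalar multiples of the other. Thus $\phi|_{B^+(\Gamma)}$ is a $\kk$-linear bijection onto $\kk\Gamma_\Delta$ and an isomorphism of non-unital algebras; adjoining units on both sides yields the unital statement.

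The main obstacle is purely bookkeeping: $\Delta$ is a sum over all oriented edges and one must control which summand survives at each multiplication. Once the collapse $e_i \cdot_\Delta e_j = \delta_{ij} e_i + s_{ij} l_{ij}$ (on edges) is firmly in hand, both the relation check and the inductive identification of $\phi(x_\gamma)$ with a nonzero scalar multiple of $l_\gamma$ become routine, and the theorem reduces to a matching of bases.
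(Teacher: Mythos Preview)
Your proof is correct and follows essentially the same route as the paper: verify the defining relations of $B(\Gamma)$ for the elements $e_i$ under the product $\cdot_\Delta$, obtain a homomorphism, then prove bijectivity by matching the basis of contracted-path monomials $x_\gamma$ from Proposition~\ref{basisbg} against the groupoid basis $l_\gamma$ via the formula $x_{i_0}\cdots x_{i_t}\mapsto s_{i_0i_1}\cdots s_{i_{t-1}i_t}\,l_\gamma$. The only cosmetic difference is that the paper isolates the identity $l_{ij}\cdot_\Delta l_{jk}=l_{ij}l_{jk}$ as a lemma and uses it (together with commutation of left $\cdot_\Delta$-multiplication with right $\kk\Gamma$-multiplication) to get the formula for arbitrary paths at once, whereas you argue by induction restricted to contracted paths; both arguments are equivalent.
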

\begin{proof}
Firstly,
$$
x_i^2=e_i \cdot_{\Delta} e_i=e_i \Delta e_i=e_i=x_i.
$$
Further, we have:
$$
x_ix_j=e_i \cdot_{\Delta} e_j = e_i \Delta e_j = s_{ij} l_{ij},
$$
for $(ij)\in E(\Gamma )$, and
\begin{equation}\label{xixj0}
x_ix_j=0,
\end{equation}
when $(ij)$ is not an edge of $\Gamma$.

We also have:
\begin{equation}\label{llmult}
l_{ij}\cdot_{\Delta} l_{jk}= l_{ij}l_{jk}
\end{equation}
for edges $(ij)$ and $(jk)$.
Hence,
\begin{equation}\label{xl}
x_ix_jx_i=s_{ij}^2l_{ij}l_{ji}=s_{ij}^2x_i.
\end{equation}
Thus, we have checked the defining relations for $B(\Gamma )$. In other words, we have constructed a homomorphism
$$
B(\Gamma )\to \widehat{\kk{\Gamma}_{\Delta}},
$$
which we need to show to be an isomorphism.
The above equations imply that for every path $\gamma =(i_1\dots i_l)$, $l\ge 2$, in the graph, we have
\begin{equation}\label{xxll}
x_{i_1}\dots x_{i_l}=x_{i_1}x_{i_2}^2\dots x_{i_{l-1}}^2x_{i_l}\\
=s_{i_1i_2}\dots s_{i_{l-1}i_l}l_{i_1i_2}{\cdot}_{\Delta}\dots \cdot_{\Delta}l_{i_{l-1}i_l}\\
=s_{i_1i_2}\dots s_{i_{l-1}i_l}l_{i_1i_2}\dots l_{i_{l-1}i_l}.
\end{equation}
where we consider multiplication in $\kk \Gamma_{\Delta}$ at the left hand side and that in $\kk \Gamma $ at the right hand side. To get the last equality we use iteratively (\ref{llmult}) and the fact that the left multiplication in $\kk \Gamma_{\Delta}$ commutes with the right multiplication in $\kk \Gamma $. 

Since the set of elements $l_{i_1i_2}\dots l_{i_{l-1}i_l}$ where $(i_1, \dots , i_l)$ run over the set of contracted paths in $\Gamma$ is a $\kk$-basis in $\kk\Gamma$,
the bijectivity of the homomorphism follows from theorem \ref{basisbg}, which proves the theorem.
\end{proof}


By the difinition, a {\em tail} is a vertex in the graph with valency 1.

\begin{Proposition}
\label{divisor of zero}
Let $\Gamma$ be a non-empty connected graph with no tail
and $\kk$ a commutative ring with no zero divisor. Then $\Delta$ is neither left nor right zero divisor in $\kk\Gamma$.
\end{Proposition}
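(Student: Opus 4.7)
The plan is to run a leading-length argument on the $\kk$-basis of $\kk\Gamma$ consisting of the elements $l_\gamma$ for reduced (i.e.\ contracted) paths $\gamma$. Write a nonzero $y \in \kk\Gamma$ as $y = \sum_\gamma c_\gamma l_\gamma$ and let $N$ be the maximum length of a reduced $\gamma$ with $c_\gamma \neq 0$. I aim to exhibit a basis vector whose coefficient in $\Delta y$ is forced to be nonzero, splitting on $N = 0$ versus $N \geq 1$.

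When $N = 0$, $y$ is a nonzero combination of the vertex idempotents $e_v$, and each $l_{ij} e_v$ is either zero or the length-one element $l_{ij}$. Hence $\sum s_{ij} l_{ij} y$ is supported on length-one basis elements, so the coefficient of $e_v$ in $\Delta y$ equals $c_v$, and some $c_v$ is nonzero.

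When $N \geq 1$, fix $\gamma = (v_0, v_1, \ldots, v_N)$ with $c_\gamma \neq 0$. The no-tail hypothesis supplies a neighbor $u$ of $v_0$ distinct from $v_1$, so $\tilde\gamma = (u, v_0, v_1, \ldots, v_N)$ is a reduced path of length $N+1$. I claim the coefficient of $l_{\tilde\gamma}$ in $\Delta y$ is $s_{uv_0} c_\gamma$. The summand $y$ itself contributes $c_{\tilde\gamma} = 0$ by maximality. For the summand $\sum s_{ij} l_{ij} y$, a product $l_{ij} l_{\gamma'}$ can equal $l_{\tilde\gamma}$ in only two ways: either the concatenation is already reduced, which forces $(i,j) = (u,v_0)$ and $\gamma' = \gamma$ (contributing $s_{uv_0} c_\gamma$); or the leading factor $l_{ij} l_{ji}$ collapses, which forces $\gamma'$ to be a reduced path of length $N+2$, impossible by maximality of $N$. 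Since $\kk$ has no zero divisors and $s_{uv_0}$ is a unit, the coefficient $s_{uv_0} c_\gamma$ is nonzero, so $\Delta y \neq 0$.

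The right zero-divisor statement follows symmetrically, by extending a maximal-length $\gamma$ in the support of $y$ at its terminal vertex $v_N$ using a neighbor $u' \neq v_{N-1}$ (again guaranteed by no tails), and tracking the coefficient of $l_{(v_0,\ldots,v_N,u')}$ in $y \Delta$. The main obstacle is precisely the contraction case in the leading-length analysis; this is where the no-tail hypothesis enters, since it supplies an edge at $v_0$ that does not immediately cancel against the first step of $\gamma$ when one prepends $l_{uv_0}$. Without it, the only available edge at a tail would backtrack along $\gamma$ and produce a length-$(N-1)$ term that could be cancelled by other elements of $y$, destroying the leading-term control.
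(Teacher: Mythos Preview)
Your proof is correct and follows essentially the same leading-length argument as the paper: pick a maximal-length contracted path in the support of $y$, extend it by one edge using the no-tail hypothesis, and observe that the coefficient of the resulting basis element in $\Delta y$ is $s_{uv_0}c_\gamma\neq 0$. The paper first restricts to a single summand $P_t$ and then lifts to the universal covering tree before running this argument, whereas you work directly in $\kk\Gamma$; your version is more direct, but the underlying idea is identical.
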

\begin{proof}
We will argue by contradiction: 
assume that $z\in \kk\Gamma$ is such that $z\Delta =0$. Recall that $\kk\Gamma$ is a direct sum of right modules $P_t$, where $t$ runs over all vertices of the graph. Then we have a decomposition $z=\sum z_i$. Every component $z_t$ satisfies  $z_t\Delta =0$. Thus, we reduced the problem to the case when $z\in P_t$ for some $t\in V(\Gamma )$. Consider the universal covering graph ${\tilde \Gamma}$ with a lift ${\tilde t}\in V({\tilde \Gamma})$ of the vertex $t$. Then we have a unique lift ${\tilde z}\in \kk {\tilde \Gamma}$ of $z$ in the groupoid algebra of the universal cover. Clearly, ${\tilde z}{\tilde \Delta}=0$, where ${\tilde \Delta}$ is the Laplace operator for the universal cover. Note that ${\rm H}^1( \Gamma , \ZZ )\ne 0$, because the graph with zero homology is a tree and tree always has a tail. This implies that ${\tilde \Gamma}$ is infinite. Thus the sum in (\ref{deltaformula}) for ${\tilde \Delta}$ is infinite, but ${\tilde z}{\tilde \Delta}$ is a finite element, because ${\tilde z}$ has an expression as a finite sum
\begin{equation}\label{tildez}
{\tilde z}=\sum \lambda_{\gamma}l_{\gamma},
\end{equation}
where $\gamma$ runs over some finite set of contracted paths in ${\tilde \Gamma}$ (with start at ${\tilde t}$),
and $\lambda_{\gamma}\ne 0$.
Let $\gamma_o$ is a contracted path in this sum with maximal length, and $i$ is the ending vertex of this path. Because the vertex is not a tail, there is an edge $l_{ij}$ incident to this vertex, which is not contained in $\gamma_o$. Decomposition of ${\tilde z}{\tilde \Delta}$ into linear combination of contracted paths will contain a non-zero summand $\lambda_{\gamma_o}s_{ij}l_{\gamma_o}l_{ij}$ (no other terms than $\lambda_{\gamma_o}l_{\gamma_o}$ from (\ref{tildez}) can contribute to it). Since ${\widetilde {z\Delta}}={\tilde z}{\tilde \Delta}$, we got a contradiction. The absence of right zero divisors for $\Delta $ is proven similarly.




\end{proof}

It is not hard find graphs with tails for which $\Delta$ is a zero divisor.

Consider the homomorphisms: $\psi_{i}: B(\Gamma) \to \kk \Gamma, i = 1,2$ as in (\ref{hom}).
They map the generators of $B(\Gamma)$ as follows:
\begin{equation}\label{formpsi1}
\psi_1: x_i \mapsto e_i+\sum_j s_{ij}l_{ij},
\end{equation}
\begin{equation}\label{formpsi2}
\psi_2: x_i \mapsto e_i+\sum_j s_{ji}l_{ji}.
\end{equation}

\begin{Corollary}
Let $\Gamma$ be a non-empty connected graph with no tail
and $\kk$ a commutative ring with no zero divisor.
Then homomorphisms $\psi_i: B(\Gamma) \to \kk \Gamma,$ for $i = 1,2$, are injective.
\end{Corollary}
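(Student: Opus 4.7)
The plan is to reduce the injectivity of $\psi_1$ (and symmetrically $\psi_2$) to two separate ingredients: (i) injectivity on the augmentation ideal $B^+(\Gamma)$, and (ii) $\kk\cdot 1_{\kk\Gamma} \cap \kk\Gamma\cdot\Delta = 0$ inside $\kk\Gamma$. Under the identification $B^+(\Gamma) = \kk\Gamma_\Delta$ furnished by Theorem \ref{bgammaviagr}, the restriction $\psi_1|_{B^+}$ is precisely right multiplication by $\Delta$ on $\kk\Gamma$, so (i) is immediate from Proposition \ref{divisor of zero}. Since $\psi_1(1_B) = 1_{\kk\Gamma}$ and $B(\Gamma) = \kk\cdot 1_B \oplus B^+(\Gamma)$, injectivity on $B(\Gamma)$ then follows from (i) combined with (ii), and the bulk of the argument is devoted to (ii).

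For (ii), I would argue by contradiction: assume $\mu \cdot 1_{\kk\Gamma} = a\Delta$ for some $\mu \in \kk\setminus\{0\}$ and $a \in \kk\Gamma$. Decomposing $a = \sum_{v\in V(\Gamma)} a_v$ by target vertex (so $a_v \in \kk\Gamma\cdot e_v$) and matching components of $a\Delta = \mu\cdot 1_{\kk\Gamma}$ in each $\kk\Gamma\cdot e_w$ yields
\[
a_w + \sum_{v\sim w} s_{vw}\, a_v\, l_{vw} = \mu\, e_w, \qquad (\star_w)
\]
for every $w \in V(\Gamma)$. Let $D$ be the maximum length of a contracted path with nonzero coefficient in some $a_v$ (set $D = -\infty$ if $a = 0$). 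The cases $a = 0$ and $D = 0$ are quick: $a = 0$ gives $\mu\cdot 1_{\kk\Gamma} = 0$ and hence $\mu = 0$; for $D = 0$ each $a_v$ has the form $c_v e_v$, and reading off the $e_w$- and $l_{vw}$-coefficients of $(\star_w)$ gives $c_w = \mu$ and $c_v = 0$ for every $v\sim w$, propagating to $\mu = 0$ via connectedness of $\Gamma$ (using that $\Gamma$ has at least one edge, a consequence of non-emptiness and the no-tail hypothesis).

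The interesting case is $D \geq 1$, which I plan to rule out by exhibiting a length-$(D+1)$ basis vector of $\kk\Gamma\cdot e_w$ whose coefficient in $(\star_w)$ is forced to be simultaneously nonzero (by the left-hand side) and zero (by the right-hand side). Concretely: choose a vertex $v$ with a length-$D$ contracted path $\delta$ appearing in $a_v$ with nonzero coefficient $\alpha$; let $u$ be its penultimate vertex. Because $\Gamma$ has no tails, $v$ has valency at least two, so it admits a neighbor $w \neq u$, and the concatenation $\eta := \delta \cdot l_{vw}$ is a contracted path of length $D+1$ ending at $w$.

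The crux is then a cancellation analysis showing that the coefficient of $l_\eta$ in $(\star_w)$ collapses to the single contribution $s_{vw}\alpha$: the term $a_w$ contributes nothing (its paths have length $\leq D < D+1$), $\mu e_w$ contributes nothing (wrong length), and for $v' \neq v$ the term $a_{v'}\, l_{v'w}$ contributes nothing either — the non-cancellation product $l_\sigma\, l_{v'w}$ ends in the edge $(v'\to w)$, which does not match $\eta$'s last edge $(v\to w)$ unless $v' = v$, while a cancellation product would require a path of length $D+2$ in some $a_{v'}$, excluded by maximality of $D$. Comparison with the right-hand side's $0$ then yields $s_{vw}\alpha = 0$, contradicting $\alpha \neq 0$ since $\kk$ is a domain and $s_{vw}$ is a unit. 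The symmetric argument gives injectivity of $\psi_2$. The main obstacle is precisely this cancellation analysis, which depends essentially on the basis of $\kk\Gamma$ consisting of contracted paths and on $D$ bounding the lengths involved; the no-tail hypothesis enters exactly when selecting $w \neq u$ to guarantee that $\eta$ is contracted.
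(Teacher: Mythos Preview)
Your argument is correct and is, in fact, more detailed than the paper's: the paper states this result immediately after Proposition~\ref{divisor of zero} with no proof at all, treating it as an evident consequence. You have correctly isolated the two ingredients. Step (i), injectivity on $B^+(\Gamma)$, is indeed immediate from Proposition~\ref{divisor of zero}, since $\psi_1|_{B^+}$ is right multiplication by the non-zero-divisor $\Delta$. Step (ii), showing $\kk\cdot 1_{\kk\Gamma}\cap \kk\Gamma\cdot\Delta=0$, is genuinely needed to pass from $B^+$ to $B=\kk\cdot 1_B\oplus B^+$, and your maximal-length extension argument is exactly the technique of the paper's proof of Proposition~\ref{divisor of zero} (the paper lifts to the universal cover whereas you work directly with the contracted-path basis of $\kk\Gamma$, but the combinatorics are identical). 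So your proof is a faithful elaboration of what the paper leaves implicit.

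One small caveat: your parenthetical that ``$\Gamma$ has at least one edge, a consequence of non-emptiness and the no-tail hypothesis'' is not literally true, since a single isolated vertex has valency $0\neq 1$ and hence is not a tail under the paper's definition. In that degenerate case $\kk\Gamma=\kk$, $\Delta=1$, and $\psi_1(\lambda\cdot 1_B + b)=\lambda+b$ has nontrivial kernel, so the corollary itself fails as literally stated. The paper makes the same tacit assumption (its proof of Proposition~\ref{divisor of zero} asserts that ``tree always has a tail''), so this is a shared minor oversight rather than a defect of your argument; once one assumes $|V(\Gamma)|\geq 2$, both the statement and your proof are correct.
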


The last remark we would like to make is about 
the definition of Laplace operator. Consider a symmetric square matrix $S=\{s_{ij}\}$ with invertible elements at the diagonal. The combinatorial structure of the graph encodes the places of nonzero entries of the matrix $S$. We define Laplace operator by:
$$
\Delta = \sum s_{ii}e_i+\sum s_{ij}l_{ij}.
$$
The corresponding algebra $\kk \Gamma_{\Delta }$ will have generators $x_i$, corresponding to the trivial 'vertex' paths in the groupoid, which are not projectors anymore. But they can be normalized by a nonzero scalar to be projectors. Multiplication of $x_i$ by $\lambda$ corresponds to multiplication of the elements in $i$-th line and $i$-th row of the matrix $S$ by $\lambda$. These multiplications reduce the problem to the case considered above when all diagonal entries of the matrix are ones. 

\subsection{Properties of Poincare groupoid as a $B(\Gamma )$-module}

We shall prove several useful facts about $\kk\Gamma$ as  $B(\Gamma )$-(bi)module.

\begin{Proposition}\label{bxipi} The isomorphism in theorem \ref{bgammaviagr} identifies
$B(\Gamma )x_i$ with $\ _{\psi_{1}}P_i=\ _{\psi_{1}}(\kk \Gamma e_i)$ (where $\ _{\psi_{1}}P_i$ is $P_i$ with left $B(\Gamma )$-module structure induced by $\psi_1$) and $x_iB(\Gamma )$ with $(e_i\kk\Gamma )_{\psi_{2}}$ (i.e. $e_i\kk\Gamma $ with right $B(\Gamma )$-module structure induced by $\psi_2$).
We have an isomorphism of left $B(\Gamma )$-modules:
\begin{equation}\label{bgxileft}
B^+(\Gamma )=\oplus_i\ B(\Gamma )x_i
\end{equation}
and an isomorphism of right $B(\Gamma )$-modules
\begin{equation}\label{bgxiright}
B^+(\Gamma )=\oplus_i\ x_iB(\Gamma )
\end{equation}
\end{Proposition}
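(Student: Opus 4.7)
The plan is to translate the claim through the isomorphism $B^{+}(\Gamma)\cong\kk\Gamma_\Delta$ of theorem \ref{bgammaviagr} (under which $x_i$ corresponds to $e_i$) together with the description from section \ref{changemult} of the left (resp.\ right) $B(\Gamma)$-action on $B^{+}(\Gamma)$ as the pullback of the tautological $\kk\Gamma$-action along $\psi_1$ (resp.\ $\psi_2$). Once this setup is fixed, the claim reduces to a direct calculation of principal ideals in $\kk\Gamma$.

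First I would observe that $B(\Gamma)x_i = B^{+}(\Gamma)x_i$: the unit contributes $1_B\cdot x_i=x_i$, but the relation $x_i = x_i\cdot x_i$ already places $x_i$ inside $B^{+}(\Gamma)x_i$, so nothing is lost by restricting to the augmentation ideal. Under the above identification, $B^{+}(\Gamma)\cdot x_i$ becomes the subset $\kk\Gamma\cdot_{\Delta}e_i = \kk\Gamma\,\Delta\,e_i$ of $\kk\Gamma$. The key step is to verify the equality $\kk\Gamma\,\Delta\,e_i = \kk\Gamma\,e_i = P_i$. The inclusion $\subseteq$ is immediate from $\Delta e_i\in P_i$; for the reverse inclusion, the identity $e_i\Delta e_i = e_i$ gives $e_i\in\kk\Gamma\,\Delta\,e_i$, so $P_i = \kk\Gamma\,e_i\subseteq\kk\Gamma\,\Delta\,e_i$. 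This identity $e_i\Delta e_i=e_i$ is an immediate consequence of the formula (\ref{deltaformula}) for $\Delta$ and the defining relations $e_i l_{jk}=\delta_{ij}l_{ik}$ and $l_{jk}e_i=\delta_{ki}l_{jk}$ of the groupoid algebra.

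Transporting the left $B(\Gamma)$-module structure on $B(\Gamma)x_i$ along this set-theoretic identification with $P_i$ yields exactly the pulled-back action, i.e.\ $\ _{\psi_1}P_i$, establishing the first half of the proposition. The right-module statement $x_iB(\Gamma)\cong (e_i\kk\Gamma)_{\psi_2}$ follows by the mirror argument, with $e_i\Delta$ replacing $\Delta e_i$ and the identity $e_i\Delta e_i=e_i$ used identically.

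For the decompositions (\ref{bgxileft}) and (\ref{bgxiright}), the orthogonal idempotent decomposition $1=\sum_i e_i$ of the unit in $\kk\Gamma$ yields the $\kk$-module splittings $\kk\Gamma=\bigoplus_i\kk\Gamma e_i=\bigoplus_i e_i\kk\Gamma$. Under the isomorphism $B^{+}(\Gamma)\cong\kk\Gamma$ of theorem \ref{bgammaviagr}, and in view of the two identifications just proved, these are precisely the claimed $B(\Gamma)$-module decompositions. There is essentially no substantive obstacle here: all the content is already packaged into theorem \ref{bgammaviagr} and the description of the bimodule structure on $B^{+}$ from section \ref{changemult}, and the only calculation one actually needs is the one-line verification of $e_i\Delta e_i=e_i$.
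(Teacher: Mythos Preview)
Your proof is correct and follows essentially the same architecture as the paper's: identify $B^{+}(\Gamma)$ with $\kk\Gamma_\Delta$, check that $B(\Gamma)x_i$ corresponds to $\kk\Gamma e_i$ with the $\psi_1$-induced structure, and then invoke the idempotent decomposition $\kk\Gamma=\bigoplus_i\kk\Gamma e_i$.

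The one noteworthy difference is in how the equality $B(\Gamma)x_i=\kk\Gamma e_i$ is verified. The paper appeals to the explicit monomial formula (\ref{xxll}), which shows that the basis elements $x_\gamma$ of $B(\Gamma)x_i$ (contracted paths ending at $i$) map, up to invertible scalars, to the basis elements $l_\gamma$ of $\kk\Gamma e_i$. Your argument is more economical: you observe directly that $B^{+}(\Gamma)\cdot_\Delta e_i=\kk\Gamma\,\Delta e_i$ and then use the single identity $e_i\Delta e_i=e_i$ to get both inclusions $\kk\Gamma\,\Delta e_i=\kk\Gamma e_i$. This bypasses the basis computation entirely and isolates exactly what is needed; the paper's route, on the other hand, gives a more explicit picture of how the bases match up. Both are perfectly valid and short.
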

\begin{proof}
Formulas (\ref{xixj0}) and (\ref{xxll}) show that $B(\Gamma )x_i$ coincides with $\kk \Gamma e_i$ and $x_iB(\Gamma )$ does so with $e_i\kk\Gamma $. Since the left $B(\Gamma )$-module structure on $\kk \Gamma$ comes via $\psi_1$ and the right module structure does via $\psi_2$ the identifications follow. The decompositions
$$
\kk\Gamma = \oplus_i \ \kk\Gamma e_i,\ \ \kk\Gamma = \oplus_i \ e_i\kk\Gamma
$$
imply the decompositions for $B^+(\Gamma )$.
\end{proof}

Since $\kk\Gamma$ is isomorphic to $B^+(\Gamma )$, this proposition is applicable to $\kk\Gamma$ when we consider it as a left or right module over $B(\Gamma )$.

\begin{Corollary}\label{bgprojective}
$B^+(\Gamma )$ (hence, $\kk\Gamma$) is projective as left and right $B(\Gamma )$-module.
\end{Corollary}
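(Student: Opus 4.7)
The plan is to read off projectivity directly from the idempotent decomposition furnished by Proposition \ref{bxipi}. First I would observe that each generator $x_i$ is an idempotent in $B(\Gamma)$, since the very first defining relation of $B(\Gamma)$ reads $x_i^2 = x_i$. Consequently $1 = x_i + (1-x_i)$ is an orthogonal decomposition of the unit of $B(\Gamma)$ into idempotents, and we obtain a splitting of left $B(\Gamma)$-modules
\begin{equation*}
B(\Gamma) = B(\Gamma) x_i \oplus B(\Gamma)(1-x_i),
\end{equation*}
which exhibits $B(\Gamma)x_i$ as a direct summand of the free rank-one left $B(\Gamma)$-module. In particular each $B(\Gamma)x_i$ is projective as a left $B(\Gamma)$-module, and by the symmetric argument each $x_i B(\Gamma)$ is projective as a right $B(\Gamma)$-module.

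Next I would invoke Proposition \ref{bxipi}, which gives the finite direct sum decompositions
\begin{equation*}
B^+(\Gamma) = \bigoplus_i B(\Gamma)x_i \quad \text{and} \quad B^+(\Gamma) = \bigoplus_i x_i B(\Gamma)
\end{equation*}
as left and right $B(\Gamma)$-modules respectively. Since each summand has just been shown to be projective, $B^+(\Gamma)$ is projective on either side, being a finite direct sum of projectives.

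Finally, to transfer the conclusion to $\kk\Gamma$, I would appeal to Theorem \ref{bgammaviagr}, which identifies $B^+(\Gamma)$ with $\kk\Gamma_\Delta$, i.e. with the underlying vector space of $\kk\Gamma$ equipped with the $B(\Gamma)$-bimodule structure induced by $\psi_1$ on the left and $\psi_2$ on the right (as explained in Section \ref{changemult}). Thus, viewed as a left or right $B(\Gamma)$-module via the homomorphism $\psi_1$ or $\psi_2$ respectively, $\kk\Gamma$ is isomorphic to $B^+(\Gamma)$, and hence projective. No step here is a genuine obstacle; the only thing one must be careful about is not to confuse the adjoined unit $1$ of $B(\Gamma)$ with the unit $\sum_i e_i$ of $\kk\Gamma$, which is why the idempotents $x_i$ (rather than their sum) are what produce the summand decomposition.
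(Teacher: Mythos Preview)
Your proof is correct and follows essentially the same approach as the paper: use that each $x_i$ is idempotent to split $B(\Gamma)=B(\Gamma)x_i\oplus B(\Gamma)(1-x_i)$, conclude $B(\Gamma)x_i$ is projective, and then invoke the decomposition of Proposition~\ref{bxipi}. Your write-up is more detailed (in particular you spell out the right-module case and the identification with $\kk\Gamma$), but the argument is the same.
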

\begin{proof}
Since, $x_i$ is an idempotent, we have a decomposition:
$$
B(\Gamma )=B(\Gamma )x_i\oplus B(\Gamma )(1-x_i).
$$
Being a direct summand of a free module, module $B(\Gamma )x_i$ is projective. Hence $B^+(\Gamma )$ is so.
\end{proof}

The multiplication in $ A_{\Delta}$ defines an $A$-bimodule  homomorphism $A\otimes_{\widehat A_{\Delta}} A\to A$.
\begin{Proposition}\label{tensormult}
Multiplication map in $B^+(\Gamma )$ defines an isomorphism of $\kk\Gamma$-bimodules:
\begin{equation}\label{tensorkg}
\kk\Gamma\otimes_{B(\Gamma )}\kk\Gamma=\kk\Gamma .
\end{equation}
\end{Proposition}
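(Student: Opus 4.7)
The plan is to write the homotope multiplication map explicitly and then construct an explicit $\kk$-linear inverse. Under the bimodule isomorphism $B^{+}(\Gamma) \cong {}_{\psi_1}\kk\Gamma_{\psi_2}$ from Theorem \ref{bgammaviagr}, the homotope multiplication $a \cdot_{\Delta} b = a\Delta b$ descends to
\[
\mu : \kk\Gamma \otimes_{B(\Gamma)} \kk\Gamma \to \kk\Gamma, \qquad a \otimes b \mapsto a\Delta b.
\]
Well-definedness amounts to checking, for $\beta \in B^{+}$ viewed as an element of $\kk\Gamma$, that $a\psi_{2}(\beta) \otimes b$ and $a \otimes \psi_{1}(\beta)b$ have the same image. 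Since $\psi_{1}(\beta) = \beta\Delta$ and $\psi_{2}(\beta) = \Delta\beta$, both evaluate to $a\Delta\beta\Delta b$; the case $\beta = 1_{B}$ is trivial. The left multiplication in $\kk\Gamma$ commutes with the right $B$-action on the first factor, and similarly on the right, so $\kk\Gamma \otimes_{B}\kk\Gamma$ is naturally a $\kk\Gamma$-bimodule and $\mu$ is $\kk\Gamma$-bilinear by associativity.

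Using the decomposition $1 = \sum_{i \in V(\Gamma)} e_{i}$ of the unit in $\kk\Gamma$, I would define the candidate inverse
\[
\nu : \kk\Gamma \to \kk\Gamma \otimes_{B(\Gamma)} \kk\Gamma, \qquad c \mapsto \sum_{i} e_{i} \otimes e_{i}c.
\]
The pivotal identity is $e_{i}\Delta e_{i} = e_{i}$, which follows at once from the explicit form $\Delta = 1 + \sum s_{kl} l_{kl}$ together with the defining relations $e_{i}e_{j} = \delta_{ij}e_{i}$ and $l_{kl}e_{j} = \delta_{lj}l_{kl}$ (no loop contributes, as $l_{ii}$ does not exist). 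It immediately gives $\mu(\nu(c)) = \sum_{i} e_{i}\Delta e_{i}c = \sum_{i} e_{i}c = c$. For the other composition, the fundamental tensor relation $x\Delta b \otimes y = x \otimes b\Delta y$ for $b \in B^{+}$ (which is precisely $x\psi_{2}(b) \otimes y = x \otimes \psi_{1}(b)y$), applied with $b = e_{i}a$, combined with $e_{i}\Delta e_{i} = e_{i}$, yields
\[
e_{i} \otimes e_{i}a\Delta b = e_{i}\Delta(e_{i}a) \otimes b = (e_{i}\Delta e_{i})a \otimes b = e_{i}a \otimes b.
\]
Summing over $i$ gives $\nu(\mu(a \otimes b)) = \bigl(\sum_{i} e_{i}\bigr) a \otimes b = a \otimes b$, so $\nu$ is a two-sided inverse of $\mu$.

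The main obstacle is purely clerical: one must consistently track the two one-sided $B$-actions on $\kk\Gamma$ induced by $\psi_{1}$ and $\psi_{2}$, and recognize the correct form in which to invoke the tensor-product relation when moving $\Delta$ across the middle $B$. Once the identity $e_{i}\Delta e_{i} = e_{i}$ is spotted, the whole verification collapses to the short computation above. An alternative more module-theoretic route would use the right $B$-module decomposition $B^{+} = \bigoplus_{i} x_{i}B$ from Proposition \ref{bxipi} together with the standard isomorphism $x_{i}B \otimes_{B} M \cong x_{i}M$ for idempotents, but the explicit inverse above is shorter.
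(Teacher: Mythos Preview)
Your proof is correct and is essentially the same argument as the paper's, just repackaged: the paper works in $B^{+}\otimes_{B}B^{+}$, proves surjectivity and injectivity separately, and invokes Proposition~\ref{bxipi} to conclude $x_i c_i=0$ from $\sum_i x_i c_i=0$, whereas you construct the explicit inverse $\nu(c)=\sum_i e_i\otimes e_i c$ directly. Both hinge on the single identity $e_i\Delta e_i=e_i$ (equivalently $x_i^2=x_i$) and the same tensor move $x\Delta\beta\otimes y=x\otimes\beta\Delta y$; your formulation is slightly more economical in that it does not need the direct-sum decomposition of Proposition~\ref{bxipi}.
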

\begin{proof} $\kk\Gamma$ is identified with $B^+(\Gamma )$ as a left and right $B(\Gamma )$-module. Thus we need to show that $B^+(\Gamma)\otimes_{B(\Gamma )}B^+(\Gamma )=B^+(\Gamma )$. Every element in $B^+(\Gamma )$ has the form
$$
b=\sum x_ib_i,
$$
for some $b_i\in B^+(\Gamma )$, in particular, for $b=x_i$, we have $x_i=x_ix_i$. This implies that $b$ is the image of $x_i\otimes b_i$ under multiplication map, i.e. the map is surjective. Element $p\in B^+(\Gamma )\otimes_{B(\Gamma )}B^+(\Gamma )$ can be presented in the form
$$
p=\sum x_i\otimes c_i.
$$
If $p$ is in the kernel of the multiplication morphism, then $\sum x_ic_i=0$.
By proposition \ref{bxipi}, this implies that $x_ic_i=0$ for all $i$. Then, $x_i\otimes c_i=x_i^2\otimes c_i=x_i\otimes x_ic_i=0$, i.e. $p=0$. Hence, multiplication map is also injective.
\end{proof}

\section{Well-tempered elements and homotopes}
Given an algebra $A$ over a commutative ring $\kk$ and a fixed element $\Delta$, we formalize suitable conditions on this element which imply properties of $B(\Gamma )$ described in the previous section. We call elements satisfying these conditions well-tempered. They provide a good framework for studying representation theory of a class of algebras whom $B(\Gamma )$ belongs to.

\subsection{Projective augmentation ideals}
Here we give a criterion for the augmentation ideal to be a projective right or left module in terms of Noncommutative Differential Geometry. In the definition of well-tempered elements given in the next subsection, the projectiveness of the augmentation ideal is the crucial property.
When considering algebras over a commutative ring $\kk$, we assume them to be free as modules over $\kk$. By default, tensor products are assumed to be over $\kk$.

Recall some facts from Noncommutative Differential Geometry.
Let $B$ be a unital algebra over $\kk$. Then $B$-bimodule $\Omega^1_B$ of noncommutative 1-forms is defined as the kernel of the multiplication map: $B\otimes B\to B$. Thus, we have a short exact sequence:
\begin{equation}\label{omegaoneseq}
0\to \Omega^1_B\to B\otimes B\to B\to 0.
\end{equation}
The universal derivation $B\to \Omega^1_B$ is defined by the formula:
$$
{\rm d}b=b\otimes 1-1\otimes b ,
$$
for $b\in B$.

Let $M$ be a left module over algebra $B$. We say that $\nabla :M\to \Omega^1_B\otimes_B M$ is a connection on $M$ if it satisfies the condition:
$$
\nabla (bm)=({\rm d}b)m+b\nabla (m),
$$
for all $b\in B$ and $m\in M$.
\begin{Lemma} \cite{CQ1}
Module $M$ is projective if and only if there exists a connection upon it.
\end{Lemma}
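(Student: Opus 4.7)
The plan is to realise connections on $M$ as left $B$-linear splittings of the multiplication surjection $\mu_M \colon B \otimes M \to M$, after which the lemma reduces to the standard characterisation of projectivity by splittings of surjections.

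First I would tensor the defining sequence \eqref{omegaoneseq} on the right over $B$ with $M$. Since that sequence splits as right $B$-modules via $b \mapsto 1 \otimes b$, exactness survives, producing a short exact sequence of left $B$-modules
\begin{equation*}
0 \to \Omega^1_B \otimes_B M \to B \otimes M \xrightarrow{\mu_M} M \to 0,
\end{equation*}
under the identification $(\sum a_i \otimes b_i) \otimes_B m \leftrightarrow \sum a_i \otimes b_i m$. In particular $(\mathrm{d}b) \otimes_B m$ corresponds to $b \otimes m - 1 \otimes bm$ inside $B \otimes M$.

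Next I would set up the key bijection. Given any $\kk$-linear map $\nabla \colon M \to \Omega^1_B \otimes_B M$, define $s(m) := 1 \otimes m + \nabla(m) \in B \otimes M$. Then $\mu_M \circ s = \mathrm{id}_M$ holds automatically, because $\nabla(m)$ lies in the kernel of $\mu_M$. A direct computation, using the explicit embedding above, gives
\begin{equation*}
s(bm) - b\, s(m) \;=\; \nabla(bm) - b\nabla(m) - (\mathrm{d}b)\, m,
\end{equation*}
so $s$ is left $B$-linear precisely when $\nabla$ satisfies the Leibniz rule. Thus connections on $M$ are in bijection with $B$-linear splittings of $\mu_M$.

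Finally, $M$ is projective as a $B$-module if and only if $\mu_M$ admits a left $B$-linear section: the forward direction applies the universal property of projectivity to the surjection $\mu_M$, and the converse exhibits $M$ as a direct summand of $B \otimes M$, which is $B$-free under the paper's standing freeness assumption over $\kk$. Combined with the bijection of the previous paragraph, this yields the lemma. The only step with genuine content is the Leibniz/linearity translation, which is immediate once the embedding $\Omega^1_B \otimes_B M \hookrightarrow B \otimes M$ is made explicit, so no real obstacle arises.
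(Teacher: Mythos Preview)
Your argument is correct and follows essentially the same route as the paper: both tensor the defining sequence for $\Omega^1_B$ with $M$ to obtain the short exact sequence $0 \to \Omega^1_B \otimes_B M \to B \otimes M \to M \to 0$, and then identify connections with $B$-linear splittings of it. The only cosmetic differences are that the paper justifies exactness via ${\rm Tor}_1^B(B,M)=0$ rather than via the right-module splitting $b \mapsto 1 \otimes b$, and phrases the splitting as a retraction $B \otimes M \to \Omega^1_B \otimes_B M$ (then composed with $m \mapsto 1 \otimes m$) rather than as a section $s(m) = 1 \otimes m + \nabla(m)$; your explicit Leibniz/linearity computation makes transparent what the paper leaves to the reader.
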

\begin{proof}

Taking tensor product over $B$ of sequence (\ref{omegaoneseq}) with $M$ gives a short exact sequence:
$$
0\to \Omega^1_B\otimes_BM\to B\otimes M\to M\to 0,
$$
because ${\rm Tor}_1^B(B, M)=0$. Since $B\otimes M$ is a free $B$-module, projectiveness of $M$ is equivalent to splitting of this sequence, i.e. existence of a $B$-module homomorphism $B\otimes M\to \Omega^1_B\otimes_BM$ which is a retraction on $\Omega^1_B\otimes_BM$. When composed with the embedding $M\to B\otimes M$, where $m\mapsto 1\otimes m$, this retraction is nothing but a connection on $M$.
\end{proof}

Note that $\Omega^1_B$ is projective as both left and right $B$-module. The splitting of the sequence (\ref{omegaoneseq}) as of left (respectively, right) $B$-modules is given by map $B\to B\otimes B$ defined by $b\mapsto b\otimes 1_B$ (respectively, $b\mapsto 1_B\otimes b$).

Let $B$ be any augmented unital algebra with augmentation ideal $B^+$.
We shall use the short exact sequence given by the augmentation:
\begin{equation}\label{aug1}
0\to B^+\to B\to \kk \to 0.
\end{equation}

\begin{Lemma}\label{omegaone} We have an isomorphism of left $B$-modules:
$$
\Omega^1_B=B\otimes B^+,
$$
and an isomorphism of right $B$-modules:
$$
\Omega^1_B=B^+\otimes B,
$$
\end{Lemma}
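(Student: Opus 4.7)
The plan is to exploit the fact that the augmentation $\epsilon \colon B \to \kk$ always splits as a map of $\kk$-modules (with section $1 \mapsto 1_B$), so that we get a canonical $\kk$-module decomposition $B = \kk\cdot 1_B \oplus B^+$. Tensoring on the left with $B$ over $\kk$ yields a decomposition of left $B$-modules
\begin{equation*}
B \otimes B \;=\; (B \otimes 1_B) \,\oplus\, (B \otimes B^+),
\end{equation*}
where $B \otimes 1_B$ is canonically identified with $B$ via $b \otimes 1_B \mapsto b$. This is the starting point.

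Next I would trace the multiplication map $\mu \colon B \otimes B \to B$ through this decomposition. On the first summand $\mu$ restricts to the identity $B \otimes 1_B \to B$, and on the second summand it is just the restricted multiplication $\mu' \colon B \otimes B^+ \to B$, $b \otimes b' \mapsto bb'$. Consequently the kernel of $\mu$ is precisely the graph of $-\mu'$, and this immediately gives a left $B$-module isomorphism
\begin{equation*}
\phi \colon B \otimes B^+ \longrightarrow \Omega^1_B, \qquad \phi(x) = x - \mu(x) \otimes 1_B,
\end{equation*}
whose inverse is the restriction to $\Omega^1_B$ of the projection $\pi \colon B \otimes B \to B \otimes B^+$ defined by $\pi(b \otimes b') = b \otimes (b' - \epsilon(b')\cdot 1_B)$. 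A direct check shows $\pi \circ \phi = \mathrm{id}$ and that $\phi(\pi(\omega)) = \omega$ for $\omega \in \Omega^1_B$, using that the defining condition $\mu(\omega) = 0$ kills the correction terms.

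For the right $B$-module statement I would run the symmetric argument: the same splitting of $\epsilon$ gives $B \otimes B = (1_B \otimes B) \oplus (B^+ \otimes B)$ as right $B$-modules, and the analogous map $\psi(y) = y - 1_B \otimes \mu(y)$ identifies $B^+ \otimes B$ with $\Omega^1_B$ as right $B$-modules. There is no real obstacle in this proof; the only thing to be careful about is distinguishing the left and right module structures on $B \otimes B$ and verifying that the maps $\phi$ and $\psi$ are genuinely $B$-linear on the appropriate side (which is automatic since the corrections $\mu(x) \otimes 1_B$ and $1_B \otimes \mu(y)$ lie in the "trivial" summands). The argument does not require any freeness of $B$ over $\kk$; it uses only that $\epsilon$ splits, which it does canonically.
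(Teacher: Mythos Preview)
Your argument is correct and is essentially the same as the paper's: both use the canonical splitting $B=\kk\cdot 1_B\oplus B^+$ to decompose $B\otimes B$ and identify $\Omega^1_B$ with the summand $B\otimes B^+$ (respectively $B^+\otimes B$) via the projection killing $B\otimes 1_B$ (respectively $1_B\otimes B$). The paper states only the forward map (the projection restricted to $\Omega^1_B$), whereas you additionally write down the explicit inverse $\phi(x)=x-\mu(x)\otimes 1_B$ and verify both composites, which is a welcome elaboration but not a different idea.
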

\begin{proof}
By definition of $\Omega^1_B$, we have an embedding $\Omega^1_B\to B\otimes B$. Composite of this map with the projection $B\otimes B\to B\otimes B^+$ that kills $B\otimes 1_B$ gives the required isomorphism for left $B$-modules. The projection $B\otimes B\to B^+\otimes B$ that kills $1_B\otimes B$ does the case for right $B$-modules.
\end{proof}

Now we consider $M=B^+$ as a left or right $B$-module.
\begin{Proposition} \label{projectiveness} Assume that multiplication map $B^+\otimes B^+\to B^+$ is epimorphic. Then the following are equivalent:
\begin{itemize}
\item[(i)]$B^+$ is projective as a left (respectively, right) $B$-module,
\item[(ii)]there is a left (respectively, right) $B$-module homomorphism which is a section to the map
$B^+\otimes B^+\to B^+$ given by multiplication in $B^+$,
\item[(iii)] left $B$-submodule $\Omega^1_BB^+$ in $\Omega^1_B$ is a direct summand  (respectively, right $B$-submodule $B^+\Omega^1_B$ in $\Omega^1_B$ is a direct summand).
\end{itemize}
\end{Proposition}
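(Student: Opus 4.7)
The plan is to treat (i)$\Leftrightarrow$(ii) directly via the universal property of projectives, and to treat (i)$\Leftrightarrow$(iii) by constructing a short exact sequence of left $B$-modules
\begin{equation*}
0\to \Omega^1_B B^+\to \Omega^1_B\to B^+\to 0
\end{equation*}
and invoking the projectivity of $\Omega^1_B$ already recorded after Lemma \ref{omegaone}.

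For (i)$\Rightarrow$(ii), the map $m$ is a surjective left $B$-module homomorphism onto $B^+$, so if $B^+$ is projective the identity on $B^+$ lifts through $m$ to a $B$-linear section $s'$. For the converse, compose $s'$ with the natural inclusion $B^+\otimes B^+\hookrightarrow B\otimes B^+$ to obtain a $B$-linear section of the surjection $\mu:B\otimes B^+\to B^+$, $b_0\otimes b_1\mapsto b_0b_1$. Since $B^+$ is a $\kk$-module direct summand of the free $\kk$-module $B$, the left $B$-module $B\otimes B^+$ is projective (it is a direct summand of $B\otimes B$, which is free over $B$), and $B^+$ is a retract of this projective, hence itself projective.

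The core of the argument is (i)$\Leftrightarrow$(iii). I would identify $\Omega^1_B\cong B\otimes B^+$ as left $B$-modules via $b_0\,db_1\leftrightarrow b_0\otimes b_1$ using Lemma \ref{omegaone}, and denote by $\tilde\mu:B\otimes B^+\to B^+$ the multiplication $b_0\otimes b_1\mapsto b_0b_1$; this is manifestly left $B$-linear and surjective. From the Leibniz rule one reads off the transported right $B^+$-action
\begin{equation*}
(b_0\otimes b_1)\cdot b = b_0\otimes b_1 b - b_0b_1\otimes b,\qquad b\in B^+,
\end{equation*}
and $\tilde\mu$ visibly kills any such expression, giving $\Omega^1_B B^+\subseteq \ker\tilde\mu$. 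For the reverse inclusion, use the $\kk$-module splitting $B=\kk\cdot 1_B\oplus B^+$ (provided by the freeness hypothesis on $B$) to write any $z\in B\otimes B^+$ uniquely as $z=1\otimes c_0+\sum_i a_i\otimes c_i$ with $a_i,c_i\in B^+$; the condition $z\in\ker\tilde\mu$ forces $c_0=-\sum_i a_ic_i$, whence $z=\sum_i\bigl(a_i\otimes c_i-1\otimes a_ic_i\bigr)=-\sum_i(1\otimes a_i)\cdot c_i\in \Omega^1_B B^+$.

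This yields the displayed short exact sequence. Splitting of the sequence is exactly the assertion that $\Omega^1_B B^+$ is a direct summand of $\Omega^1_B$, which is (iii); and since $\Omega^1_B$ is projective as a left $B$-module, the sequence splits if and only if its right term $B^+$ is projective, which is (i). The right-module case is proved verbatim by the symmetric identification $\Omega^1_B\cong B^+\otimes B$ of Lemma \ref{omegaone}. The principal obstacle is the reverse inclusion in the kernel computation above, where one must massage a generic kernel element into an explicit sum of right $B^+$-actions; the standing assumption that $B$ is free over $\kk$ (hence that the augmentation splits) is what provides the decomposition of $z$ making this possible.
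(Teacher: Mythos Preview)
Your proof is correct and follows essentially the same architecture as the paper's: both hinge on the short exact sequence $0\to \Omega^1_B B^+\to \Omega^1_B\to B^+\to 0$ of left $B$-modules, together with the projectivity of $\Omega^1_B$, to establish (i)$\Leftrightarrow$(iii); and both treat (i)$\Leftrightarrow$(ii) via sections of the multiplication map $B\otimes B^+\to B^+$.

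The one genuine methodological difference is in how that short exact sequence is obtained. The paper proceeds categorically: it tensors $\Omega^1_B$ (on the right, over $B$) with the augmentation sequence $0\to B^+\to B\to \kk\to 0$, using right-flatness of $\Omega^1_B$ to preserve exactness, and then identifies $\Omega^1_B\otimes_B\kk$ with $B^+$ by tensoring the defining sequence for $\Omega^1_B$ with $\kk$. You instead compute $\ker\tilde\mu$ by hand, exploiting the $\kk$-splitting $B=\kk\cdot 1_B\oplus B^+$ to decompose a generic kernel element and rewrite it as a sum of right $B^+$-actions. Your route is more elementary and self-contained; the paper's is shorter and makes the role of the augmentation more transparent. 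Both are valid, and the underlying content is the same. (One tiny inaccuracy: the projectivity of $\Omega^1_B$ is recorded in the paper just \emph{before} Lemma~\ref{omegaone}, not after.)
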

\begin{proof}
Left projectiveness of $B^+$ is equivalent to the existence of a left $B$-module homomorphism $s:B^+\to B\otimes B^+$, a section to the multiplication map $B\otimes B^+\to B^+$.  By assumption, every element $b\in B^+$ has a decomposition $b=\sum b_ic_i$ with $b_i,c_i\in B^+$. Since $s$ is a $B$-module homomorphism, we got that $s(b)=\sum b_is(c_i)\in B^+\otimes B^+$, i.e. the image of $s$ lies in $B^+\otimes B^+$. Thus, we can consider $s$ as a section required in (ii). This proves equivalence of (i) and (ii).


Apply functor $\Omega^1_B\otimes_B (-)$ to the augmentation sequence (\ref{aug1}). Since $\Omega^1_B$ is projective as a right $B$-module, it is also flat. Thus we get a short exact sequence:
$$
0\to \Omega^1_B\otimes_B B^+\to \Omega^1_B\to \Omega^1_B\otimes_B \kk \to 0.
$$
The image of the first homomorphism is $\Omega^1_BB^+$. In view of lemma \ref{omegaone}, we have isomorphisms
of left $B$-modules $\Omega^1_B=B\otimes B^+$.
Applying functor $(-)\otimes_B \kk$ to (\ref{omegaoneseq}), we get that $\Omega^1_B\otimes_B \kk = B^+$
as a left $B$-module.
Easy calculation shows that, under this identifications,
the homomorphism $\Omega^1_B\to \Omega^1_B\otimes_B\kk$ in the above short exact sequence coincides up to sign with the morphism $B\otimes B^+\to B^+$ given by multiplication. Thus the kernel of the multiplication map $B\otimes B^+\to B^+$ is isomorphic to $\Omega^1_BB^+$. This map has a section if and only if $B^+$ is a projective left $B$-module. Therefore $\Omega^1_BB^+$ is a direct summand in $\Omega^1_B$ exactly when $B^+$ is projective.
\end{proof}

\subsection{Well-tempered elements}

Given an element $\Delta$ in an algebra $A$, let $B$ be the homotope $B={\widehat A_{\Delta}}$ and $B^+=A_{\Delta}$ the augmentation ideal in $B$.

Consider  complex $K_{\Delta}$:
\begin{equation}\label{complexk}
\dots \to A^{\otimes n}\to \dots \to A^{\otimes 3}\to A^{\otimes 2}\to A\to 0
\end{equation}
with differential ${\rm d}:A^{\otimes n+1}\to A^{\otimes n}$ defined by:
$$
d(a_0\otimes \dots \otimes a_{n})=\sum (-1)^ia_0\otimes \dots \otimes a_i\Delta a_{i+1}\otimes \dots \otimes a_n.
$$
This is the bar-complex for $B^+$ expressed in terms of $A$. It might not be exact in general, because $B^+$ is not a unital algebra.

\begin{Lemma}\label{doublecoset}
If $\Delta '=c\Delta d$,
where $c$ and $d$ are any invertible elements in $A$, then the homotopes $B$ and $B'$ constructed by means of $\Delta$ and $\Delta '$ are isomorphic.
\end{Lemma}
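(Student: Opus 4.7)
The plan is to exhibit an explicit isomorphism of non-unital algebras $\phi \colon A_{\Delta'} \to A_{\Delta}$ and then observe that any such isomorphism extends uniquely to the unital homotopes obtained by adjoining $\kk$.

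The natural guess is to use the invertible elements $c$ and $d$ to ``absorb'' the difference between $\Delta$ and $\Delta'=c\Delta d$. Concretely, I would define
\[
\phi \colon A_{\Delta'} \to A_{\Delta}, \qquad \phi(a) = d\, a\, c,
\]
using the $\kk$-module identification of both homotopes with $A$. This is a $\kk$-linear bijection, with inverse $a \mapsto d^{-1} a c^{-1}$, since $c$ and $d$ are invertible in $A$.

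The key step is the compatibility check with the modified multiplications. For $a,b \in A_{\Delta'}$ we have
\[
\phi(a \cdot_{\Delta'} b) = \phi(a\Delta' b) = \phi(a c \Delta d b) = d(a c \Delta d b)c = (dac)\Delta(dbc) = \phi(a)\cdot_{\Delta}\phi(b),
\]
so $\phi$ is a homomorphism of non-unital algebras. Passing to the unital homotopes $B'=\kk\cdot 1 \oplus A_{\Delta'}$ and $B = \kk\cdot 1 \oplus A_{\Delta}$, we extend $\phi$ by sending the adjoined unit of $B'$ to the adjoined unit of $B$; this automatically yields a unital algebra isomorphism $B' \xrightarrow{\sim} B$, because the adjoined unit acts as identity on both sides and the augmentation ideals are identified by $\phi$.

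There is no real obstacle: the whole statement reduces to the identity $\phi(a)\Delta\phi(b) = \phi(a\Delta' b)$, which is immediate once one writes $\Delta' = c\Delta d$. The only mild point worth noting is the asymmetry of the formula $\phi(a)=dac$ (left multiplication by $d$, right by $c$), which is forced by the order in which $c$ and $d$ appear in $\Delta'=c\Delta d$; reversing them would not give a homomorphism.
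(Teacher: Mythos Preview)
Your proof is correct and essentially identical to the paper's: the paper writes the inverse map $B^+\to B'^+$, $b\mapsto d^{-1}bc^{-1}$, while you write its inverse $A_{\Delta'}\to A_{\Delta}$, $a\mapsto dac$, and you supply the multiplicativity check that the paper leaves implicit.
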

\begin{proof}
The isomorphism between $B$ and $B'$ is given by taking the unit to the unit, while the map $B^+\to B'^+$ is defined by
$$
b\mapsto d^{-1}bc^{-1},
$$
where multiplication is taken in $A$.
\end{proof}


{\bf Definition.} An element $\Delta$ in algebra $A$ is said to be {\em well-tempered} if the multiplication map in the homotope $B^+\otimes B^+\to B^+$ is an epimorphism (i.e. $A\Delta A=A$) and $B^+$ is projective as left and right $B$-module.

It is proven in \cite{Zh} that if $A$ is a finite dimensional algebra, then the epimorphism of multiplication implies that $B^+$ is projective, i.e. the second condition on well-tempered elements is redundant. 

\begin{Lemma}
Laplace operator as in (\ref{deltaformula}) is well-tempered element of $\kk\Gamma$.
\end{Lemma}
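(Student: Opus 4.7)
The plan is to verify the two defining properties of a well-tempered element by invoking the machinery developed in the previous section. By Theorem~\ref{bgammaviagr}, the unital homotope $\widehat{\kk\Gamma_\Delta}$ is identified with $B(\Gamma)$, and its augmentation ideal is identified with $\kk\Gamma$ as a $\kk$-module; the left and right $B(\Gamma)$-module structures are pulled back along the homomorphisms $\psi_1$ and $\psi_2$ from (\ref{formpsi1})--(\ref{formpsi2}). With this dictionary in hand, each of the two conditions becomes a direct consequence of results already established.

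First I would check that $A\Delta A = A$ for $A = \kk\Gamma$, which is equivalent to the surjectivity of the multiplication map $B^+ \otimes B^+ \to B^+$. Using $\Delta = 1 + \sum s_{ij} l_{ij}$ together with the groupoid relations $e_i l_{jk} = \delta_{ij} l_{ik}$ and $l_{jk} e_i = \delta_{ki} l_{jk}$, and the fact that the sum defining $\Delta$ runs only over oriented edges (so no $l_{ii}$ appears), a brief direct computation gives $e_i \Delta e_i = e_i$ for every vertex $i$. Thus every $e_i$ lies in $A \Delta A$, and since $1 = \sum_i e_i$, the two-sided ideal $A\Delta A$ equals all of $\kk\Gamma$.

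Second, the left and right projectivity of $B^+$ over $B$ is precisely the content of Corollary~\ref{bgprojective}: the idempotents $x_i \in B(\Gamma)$ yield decompositions $B(\Gamma) = B(\Gamma)x_i \oplus B(\Gamma)(1-x_i)$, which together with Proposition~\ref{bxipi} exhibit $B^+(\Gamma)$ as a direct sum of summands of free $B(\Gamma)$-modules, and similarly on the right. Combining the two points completes the verification that $\Delta$ is well-tempered.

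Honestly, this lemma is a bookkeeping statement rather than a substantive result: all the real work lives in Theorem~\ref{bgammaviagr}, Proposition~\ref{bxipi}, and Corollary~\ref{bgprojective}. The only step that is not pure citation is the identity $e_i \Delta e_i = e_i$, and the main conceptual point to emphasize is that the presence of the unit summand $1$ in the definition of $\Delta$ is what forces these idempotents to survive the sandwich and generates $1$ in $A\Delta A$. No genuine obstacle arises.
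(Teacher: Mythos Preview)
Your proof is correct and follows essentially the same approach as the paper. The paper's proof simply cites Corollary~\ref{bgprojective} for projectivity and Proposition~\ref{tensormult} for the epimorphism condition; your direct computation $e_i\Delta e_i = e_i$ is exactly the content underlying the surjectivity claim there (it is the identity $x_i^2=x_i$ rewritten in $A$), so the only difference is that you unpack this step explicitly rather than referring back to the proposition.
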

\begin{proof}
This follows from corollary \ref{bgprojective} and proposition \ref{tensormult}.
\end{proof}


\begin{Lemma} \label{comtemp}
If algebra $A$ is commutative, then $\Delta$ is well-tempered if and only if it is invertible.
\end{Lemma}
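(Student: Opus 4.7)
The plan is to handle the two implications separately, taking advantage of the fact that commutativity collapses the two-sided ideal $A\Delta A$ into the principal ideal $A\Delta$.

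For the direction \emph{invertible $\Rightarrow$ well-tempered}, I would simply invoke the discussion following the definition of the homotope. When $\Delta$ is invertible, the maps $\psi_1,\psi_2 : A_{\Delta}\to A$ are isomorphisms, $A_{\Delta}$ is already unital (with unit $\Delta^{-1}$), and $B=\widehat{A_{\Delta}}$ decomposes as an algebra product $A_{\Delta}\times \kk$. Under this decomposition $B^+$ is the factor isomorphic to $A$, so it is a direct summand of $B$ both as a left and as a right $B$-module, hence projective on both sides. The epimorphism condition $A\Delta A=A$ is immediate from $1=\Delta^{-1}\Delta\cdot 1\in A\Delta A$.

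For the direction \emph{well-tempered $\Rightarrow$ invertible}, it suffices to use only the first half of the well-tempered hypothesis, namely that the multiplication map $B^+\otimes B^+\to B^+$ is surjective, which translates into $A\Delta A=A$. Since $A$ is commutative we have $A\Delta A=A\Delta$, the principal ideal generated by $\Delta$. The equality $A\Delta=A$ then says $1\in A\Delta$, so there exists $x\in A$ with $x\Delta=\Delta x=1$, which is precisely invertibility of $\Delta$ in $A$.

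I do not expect a real obstacle here: the forward implication reduces to basic commutative algebra once one unpacks the definition of $A\Delta A$, and the reverse implication is entirely a matter of quoting the product decomposition $B\cong A\times\kk$ already established in the text. The only mildly noteworthy point is that, in the commutative setting, the projectivity half of the well-tempered condition is actually redundant for the conclusion of the lemma: the surjectivity condition alone already forces $\Delta$ to be invertible, which in turn forces the projectivity.
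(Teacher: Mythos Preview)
Your proof is correct. The forward direction (well-tempered $\Rightarrow$ invertible) is identical to the paper's: commutativity collapses $A\Delta A$ to the principal ideal $\Delta A$, and surjectivity of multiplication forces $\Delta A=A$.

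For the reverse direction you take a slightly different route than the paper. You invoke the algebra decomposition $B\cong A_\Delta\times\kk$ (established in section~\ref{changemult} for invertible $\Delta$) and observe that $B^+$, being one of the two factors, is a direct summand of $B$ on both sides and hence projective. The paper instead writes down explicit one-sided sections $b\mapsto b\otimes\Delta^{-1}$ and $b\mapsto\Delta^{-1}\otimes b$ to the multiplication map $B^+\otimes B^+\to B^+$ and then appeals to Proposition~\ref{projectiveness}. Your argument is marginally more conceptual and avoids the detour through Proposition~\ref{projectiveness}; the paper's has the virtue of exhibiting the section concretely. Both are short and either is perfectly adequate here.
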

\begin{proof}
Note that the image of the multiplication map $B^+\otimes B^+\to B^+$, in terms of $A$, is the two-sided ideal $A\Delta A$. Therefore, if $A$ is  commutative and $\Delta$ is well-tempered, then $\Delta A=A$, i.e. $\Delta$ is invertible. Conversely, if $\Delta$ is invertible, then the multiplication map is epimorphic, and we can define the left (respectively, right) $B$-module section to it by mapping $b\mapsto b\otimes {\Delta}^{-1}$ (respectively, ${\Delta}^{-1}\otimes b$). Proposition \ref{projectiveness} gives the result.
\end{proof}

As we have seen in 5.1, this implies in the commutative case that there is an isomorphism of algebras $B\simeq A\oplus \kk$ .

According to proposition \ref{projectiveness}, $\Delta$ is well-tempered if and only if the most right differential $A\otimes A\to A$ in the complex $K_{\Delta }$ is epimorphic and allows sections which are left and right $B$-module homomorphisms, respectively.

\begin{Lemma} Let $c$ and $d$ be any invertible elements in $A$. Element $\Delta$ is well-tempered if and only if $\Delta '=c\Delta d$ is so.
\end{Lemma}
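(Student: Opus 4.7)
The plan is to reduce both defining conditions of well-temperedness to invariants under the transformation $\Delta \mapsto c\Delta d$, using Lemma \ref{doublecoset} which already gives an algebra isomorphism $\phi: B \to B'$ between the two homotopes.

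First I would handle the condition $A\Delta A = A$. Since $c$ and $d$ are invertible in $A$, we have $Ac = A$ and $dA = A$, so $A\Delta' A = A(c\Delta d)A = (Ac)\Delta(dA) = A\Delta A$. Hence $A\Delta A = A$ if and only if $A\Delta' A = A$, so the multiplication-epi condition (equivalently, surjectivity of $B^+ \otimes B^+ \to B^+$) is symmetric in $\Delta$ and $\Delta'$.

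Next I would address projectivity of the augmentation ideal. By Lemma \ref{doublecoset}, the map $\phi: B \to B'$ sending $1_B \mapsto 1_{B'}$ and $b \mapsto d^{-1}bc^{-1}$ on $B^+$ is an isomorphism of unital algebras that carries $B^+$ onto $B'^+$. Any such isomorphism induces an equivalence between the categories of left (and right) modules over $B$ and $B'$, under which $B^+$ corresponds to $B'^+$. Projectivity is categorical, so $B^+$ is projective as a left (respectively right) $B$-module if and only if $B'^+$ is projective as a left (respectively right) $B'$-module.

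Combining the two items, both clauses in the definition of well-temperedness are preserved by the transformation $\Delta \mapsto c\Delta d$, and the equivalence follows. The ``hard part'' here is essentially nonexistent: everything is packaged in the previous lemma, and the only verification is that the algebra isomorphism of Lemma \ref{doublecoset} actually identifies the two augmentation ideals as bimodules, which is immediate from its explicit description.
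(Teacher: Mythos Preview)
Your proof is correct and follows essentially the same approach as the paper: both use the algebra isomorphism of Lemma~\ref{doublecoset}, which identifies $B^+$ with $B'^+$, to transport the defining properties of well-temperedness. You are slightly more explicit than the paper in separately verifying the condition $A\Delta A = A\Delta' A$, whereas the paper's one-line proof leaves this (and the surjectivity of the multiplication map) implicit in the observation that the isomorphism preserves $B^+$.
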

\begin{proof}
Since the isomorphism in lemma \ref{doublecoset} preserves $B^+$, the property for $B^+$ to be projective is preserved under the isomorphism.
\end{proof}




\begin{Proposition}
If $\Delta$ is well-tempered, then complex $K_{\Delta}$ is exact.
\end{Proposition}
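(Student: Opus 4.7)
The plan is to exhibit an explicit contracting homotopy on $K_{\Delta}$, constructed from a right $B$-linear splitting of the multiplication map that is supplied by well-temperedness. Under the vector-space identification $B^{+}\cong A$, the complex $K_{\Delta}$ is precisely the bar complex of the non-unital algebra $B^{+}$ (with its twisted product $a\cdot_{\Delta} b = a\Delta b$), so exactness has to be checked at $A$ itself and in every positive bar-degree. At the right end of the complex, exactness is nothing but surjectivity of $a\otimes b\mapsto a\Delta b$, which is built into the definition of well-tempered as $A\Delta A=A$.

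For the positive bar-degrees I would invoke Proposition \ref{projectiveness}: since $B^{+}$ is projective as a right $B$-module and the multiplication is surjective, there exists a right $B$-module homomorphism $s\colon B^{+}\to B^{+}\otimes B^{+}$ splitting $\mu_{\Delta}$. Writing $s(a)=\sum a'_i\otimes a''_i$, this means $\sum a'_i\Delta a''_i = a$, and right $B$-linearity (which is stronger than the right $B^{+}$-linearity actually needed) yields the key identity
$$
s(a\Delta b) \;=\; \sum_i a'_i\otimes (a''_i\Delta b).
$$
I would then define $h\colon A^{\otimes n}\to A^{\otimes (n+1)}$ for $n\ge 1$ by inserting $s$ at the left end,
$$
h(a_0\otimes\cdots\otimes a_{n-1}) \;=\; \sum_i a'_{0,i}\otimes a''_{0,i}\otimes a_1\otimes\cdots\otimes a_{n-1}.
$$

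The verification that $dh+hd$ is the identity is a standard telescoping. All inner-contraction terms of $dh$ and of $hd$ (those involving $a_j\Delta a_{j+1}$ with $j\ge 1$) appear with opposite signs and cancel in pairs. The leading term of $dh$ collapses to $a_0\otimes a_1\otimes\cdots$ via $\sum a'_i\Delta a''_i=a$, and the two remaining boundary terms — namely $-\sum a'_{0,i}\otimes a''_{0,i}\Delta a_1\otimes a_2\otimes\cdots$ from $dh$, and $s(a_0\Delta a_1)\otimes a_2\otimes\cdots$ from $hd$ — cancel precisely by the right-linearity identity displayed above. The only non-mechanical point in the plan is choosing the correct sided linearity: a right-linear $s$ is exactly what allows one to push the splitting across the leftmost bar multiplication, making the homotopy work when inserted on the left. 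A dual argument using left $B$-projectivity of $B^{+}$ and inserting $s$ at the right end of each tensor word would work equally well. Beyond this bookkeeping I do not anticipate any substantive obstacle, since the whole argument rests on the splitting furnished by Proposition \ref{projectiveness} together with an otherwise routine bar-complex calculation.
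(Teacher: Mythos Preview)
Your proposal is correct and follows essentially the same approach as the paper: the paper also takes a right $B$-module section $h\colon B^+\to B^+\otimes B^+$ of the multiplication map (provided by Proposition \ref{projectiveness}), extends it to a homotopy by inserting it at the leftmost tensor factor, and asserts $dh+hd=\mathrm{id}$. Your write-up is in fact more explicit than the paper's, which omits the telescoping verification and the explanation of why right $B$-linearity is the correct sidedness for a left-insertion homotopy.
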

\begin{proof} As we already mentioned, complex $K_{\Delta}$ coincides with the bar-complex for $B^+$:
$$
\dots \to  (B^+)^{\otimes i}\to \dots \to B^+\otimes B^+\to B^+.
$$
Let $h:B^+\to B^+\otimes B^+$ be a homomorphism of right $B$-modules and a section to the most right differential in the complex. It exists by proposition \ref{projectiveness}. Consider the homotopy in this complex defined by :
$$
h(b_1\otimes b_2\otimes b_2\otimes \dots \otimes b_n)=h(b_1)\otimes b_2\otimes \dots \otimes b_n.
$$
One can easily check that it satisfies the equation:
$$
{\rm d}h+h{\rm d}={\rm id}
$$
Hence the complex is exact.
\end{proof}
Note that in the proof of proposition 21 we used only one-sided projectiveness of $B^+$. 
\begin{Corollary}
If $\Delta$ is well-tempered, then multiplication in $B^+$ gives an isomorphism of $A$-bimodules:
\begin{equation}\label{bbbb}
B^+\otimes _BB^+=B^+.
\end{equation}
\end{Corollary}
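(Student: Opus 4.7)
The plan is to deduce the statement directly from the exactness of the bar complex $K_\Delta$ established in the preceding proposition. Concretely, the last two terms of $K_\Delta$ form the sequence
\[
(B^{+})^{\otimes 3} \xrightarrow{\ \partial\ } B^{+}\otimes B^{+} \xrightarrow{\ m\ } B^{+} \to 0,
\]
where $m$ is multiplication in $B^{+}$ (i.e.\ $a\otimes b \mapsto a\Delta b$ in terms of $A$) and $\partial(a\otimes b\otimes c)=ab\otimes c - a\otimes bc$, with all products taken in $B^{+}$.

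First, I would identify $B^{+}\otimes_{B} B^{+}$ with $\mathrm{coker}\,\partial$. By definition, $B^{+}\otimes_{B} B^{+}$ is the cokernel of the map $B^{+}\otimes B \otimes B^{+} \to B^{+}\otimes B^{+}$ sending $a\otimes b\otimes c$ to $ab\otimes c - a\otimes bc$. Writing $B=\kk\cdot 1 \oplus B^{+}$, the restriction of this map to $B^{+}\otimes(\kk\cdot 1)\otimes B^{+}$ vanishes, so the cokernel coincides with $\mathrm{coker}\,\partial$.

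Second, the exactness of $K_\Delta$ asserts both that $m$ is surjective and that $\ker m=\mathrm{im}\,\partial$. Hence $m$ descends to the required isomorphism $B^{+}\otimes_{B} B^{+} \xrightarrow{\ \sim\ } B^{+}$.

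Finally, to see this is an isomorphism of $A$-bimodules, note that in terms of $A$ the map $m$ is $a\otimes b \mapsto a\Delta b$, which is manifestly equivariant for left and right multiplication by $A$ on the two outer tensor factors. These $A$-actions commute with the $B$-actions that define $\otimes_{B}$ (the left $A$-action on the first factor commutes with the right $B$-action on that factor, since the latter is $x\mapsto x\Delta b$, and symmetrically on the right), so they descend to $B^{+}\otimes_{B} B^{+}$. Given Proposition 21, no genuine obstacle remains; the only mild care needed is the first step's identification of the tensor product with the bar cokernel.
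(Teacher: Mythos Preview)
Your proof is correct and follows essentially the same approach as the paper: identify $B^+\otimes_B B^+$ with the cokernel of the bar differential $(B^+)^{\otimes 3}\to (B^+)^{\otimes 2}$ and then invoke exactness of $K_\Delta$ in the last two terms. You have in fact spelled out more than the paper does, both the justification of the cokernel identification via the splitting $B=\kk\cdot 1\oplus B^+$ and the $A$-bimodule compatibility.
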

\begin{proof}
Indeed, the quotient of $(B^+)^{\otimes 2}$ by the image of the differential $(B^+)^{\otimes 3}\to (B^+)^{\otimes 2}$ is isomorphic to $B^+\otimes _BB^+$. Since  there is no homology of $K_{\Delta}$ in the terms $B^+\otimes B^+$ and $B^+$, then this quotient is identified with $B^+$.
\end{proof}

Consider the homomorphism of $A$-bimodules
\begin{equation}\label{b+hom}
A \to {\rm Hom}_B(A , A )
\end{equation}
that takes $a\in A $ to the homomorphism $A \to A$ of left $B$-modules defined by right multiplication with $a$ in $A$. 
\begin{Proposition} Let $\Delta\in A$ be well-tempered, and $B={\widehat A_{\Delta}}$. Then map (\ref{b+hom}) defines an isomorphism of algebras and of $A$-bimodules:
\begin{equation}\label{homkg}
A^{opp} ={\rm Hom}_{B}(A , A )
\end{equation}
\end{Proposition}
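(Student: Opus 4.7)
The plan is to verify by direct construction that $\phi\colon A\to\mathrm{Hom}_B(A,A)$, $\phi(a)(x)=xa$, is an isomorphism with the required structure. The key input is the well-tempered condition, which enters the surjectivity argument in exactly one place.

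First I would check well-definedness and the algebraic/bimodule properties. Recall that $A$, as a left $B$-module, carries the action $b\cdot x=\psi_1(b)\,x$ (multiplication in $A$), and right multiplication by $a\in A$ manifestly commutes with all left multiplications in $A$, hence with $\psi_1(b)\cdot(-)$; this shows $\phi(a)\in\mathrm{Hom}_B(A,A)$. The anti-homomorphism property $\phi(a_1a_2)=\phi(a_2)\circ\phi(a_1)$ is immediate from associativity, giving an algebra morphism $A^{opp}\to\mathrm{End}_B(A)$. Compatibility with the natural $A$-bimodule structure (where $\mathrm{Hom}_B(A,A)$ is made an $A$-bimodule by $(a_1fa_2)(x)=f(xa_1)a_2$, using the right $A$-action on source and target) is a direct unpacking. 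Injectivity is also immediate: $\phi(a)(1_A)=a$, so $\phi(a)=0$ forces $a=0$.

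The main content is surjectivity, and here the well-tempered hypothesis $A\Delta A=A$ does all the work. Since the multiplication map $B^+\otimes B^+\to B^+$ is surjective, I may fix a decomposition $1_A=\sum_i c_i\Delta d_i$ with $c_i,d_i\in A$. Given $f\in\mathrm{Hom}_B(A,A)$ and arbitrary $x\in A$, I expand $x=x\cdot 1_A=\sum_i (xc_i)\Delta d_i$. The crucial observation is that $(xc_i)\Delta d_i=\psi_1(xc_i)\,d_i$ is nothing but the left $B$-action of the element $xc_i\in B^+$ on $d_i$. Applying $f$ and pulling out $xc_i$ using left $B$-linearity yields
\[
f(x)=\sum_i (xc_i)\Delta f(d_i)=x\cdot\Bigl(\sum_i c_i\Delta f(d_i)\Bigr).
\]
Specializing to $x=1_A$ identifies $\sum_i c_i\Delta f(d_i)$ with $f(1_A)$, so $f(x)=x\cdot f(1_A)=\phi(f(1_A))(x)$, and hence $f=\phi(f(1_A))$.

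I do not foresee a serious obstacle: the entire argument rests on the single formula $1_A=\sum c_i\Delta d_i$, which is precisely what well-temperedness provides, and everything else is a formal manipulation of the definitions of $\psi_1$, of the $B$-action, and of the bimodule conventions on $\mathrm{Hom}_B(A,A)$.
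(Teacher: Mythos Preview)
Your proof is correct, and it takes a genuinely different route from the paper's. The paper proceeds abstractly: it invokes the isomorphism $A\otimes_B A\cong A$ (Corollary to the exactness of the bar complex $K_\Delta$) and then applies Hom--tensor adjunction to get
\[
A=\mathrm{Hom}_A(A,A)=\mathrm{Hom}_A(A\otimes_B A,\,A)=\mathrm{Hom}_B(A,A).
\]
That chain, as presented, depends on the full well-tempered hypothesis, since the exactness of $K_\Delta$ (and hence $B^+\otimes_B B^+\cong B^+$) is proved using a one-sided section coming from projectivity of $B^+$.

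Your argument is more elementary and in fact uses strictly less: only the condition $A\Delta A=A$. The single decomposition $1_A=\sum_i c_i\Delta d_i$ lets you recognize $x=\sum_i (xc_i)\cdot_B d_i$ as a left $B$-module expression and then pull $f$ through by $B$-linearity, yielding $f=\phi(f(1_A))$ directly. So what the paper's approach buys is a clean functorial one-liner once the tensor isomorphism is in hand, while your approach buys a self-contained proof that does not touch projectivity of $B^+$ at all and makes the role of $A\Delta A=A$ completely transparent.
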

\begin{proof} Recall that $B^+=A$ as an $A$-bimodule.
It is a straightforward check that (\ref{b+hom}) defines a homomorphism of algebras and of $A$-bimodules.
Apply functor ${\rm Hom}( -, A )$ to isomorphism (\ref{bbbb}). It gives:
$$
A={\rm Hom}_{A }(A , A )={\rm Hom}_{A }(A\otimes_{B}A , A )={\rm Hom}_{B}(A , A).
$$
Again, it is straightforward to check that this isomorphism coincides with the one we consider.


\end{proof}

Since Laplace operator is a well-tempered element in $\kk \Gamma$, then proposition 23  implies
\begin{Corollary}
We have an isomorphism of algebras and of $\kk\Gamma$-bimodules:
\begin{equation}\label{homkg'}
\kk\Gamma^{opp} ={\rm Hom}_{B(\Gamma )}(\kk\Gamma , \kk\Gamma )
\end{equation}
\end{Corollary}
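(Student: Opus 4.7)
The strategy is to recognize the corollary as a direct specialization of the preceding proposition to the data $A=\kk\Gamma$ with $\Delta$ the generalized Laplace operator of (\ref{deltaformula}). Two ingredients are needed, both already established earlier in the text: that this $\Delta$ is well-tempered in $\kk\Gamma$, and that $B(\Gamma)$ coincides with the unital homotope $\widehat{(\kk\Gamma)_{\Delta}}$.

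The well-temperedness of $\Delta$ is the lemma proven just before the preceding proposition; it combines Corollary \ref{bgprojective} (projectivity of $B^+(\Gamma)$ as both a left and a right $B(\Gamma)$-module) with Proposition \ref{tensormult}, which in particular furnishes the surjectivity of the multiplication map $B^+\otimes B^+\to B^+$ --- equivalently the condition $A\Delta A=A$ --- required in the definition of well-temperedness. The identification $B(\Gamma)\cong\widehat{\kk\Gamma_{\Delta}}$, sending the generator $x_i$ to the idempotent $e_i$, is Theorem \ref{bgammaviagr}.

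With these two ingredients in hand, the isomorphism (\ref{homkg}) of the preceding proposition, applied to $A=\kk\Gamma$, is exactly the desired isomorphism (\ref{homkg'}). I do not foresee any serious obstacle: the only point worth noting is that the $\kk\Gamma$-bimodule structure transported through these identifications matches the natural one on $\kk\Gamma$, which is immediate because the left and right $B(\Gamma)$-module structures on $B^+(\Gamma)=\kk\Gamma$ used in the preceding proposition are by construction the pull-backs along $\psi_1$ and $\psi_2$ of the standard $\kk\Gamma$-bimodule structure on itself, as recorded in \S\ref{changemult}.
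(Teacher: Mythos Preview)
Your proof is correct and follows essentially the same approach as the paper: the corollary is deduced immediately from the preceding proposition by specializing to $A=\kk\Gamma$ and invoking the already-established well-temperedness of the generalized Laplace operator. Your additional remark on the matching of the $\kk\Gamma$-bimodule structures is a helpful sanity check that the paper leaves implicit.
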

%



There is a morphism of left $A $-modules
\begin{equation}\label{ahom}
A \to {\rm Hom}_{B}(A , B)
\end{equation}
that takes an element $a\in A$ to the composite of the operator of right multiplication by $a$, 
$$
R_a:A \to A =B^+,
$$ 
with embedding $B^+\to B$.

\begin{Proposition}\label{homkgb} Let $\Delta\in A$ be well-tempered, and $B={\widehat A_{\Delta}}$.
Then morphism (\ref{ahom}) is an isomorphism of left $A$-modules:
$$
A = {\rm Hom}_{B}(A , B).
$$
\end{Proposition}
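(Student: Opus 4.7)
The plan is to reduce the claim to the previous proposition (the identification $A^{opp} = {\rm Hom}_{B}(A, A)$) by cutting out the contribution of $\kk$ sitting inside $B$. I would apply the functor ${\rm Hom}_{B}(A, -)$ to the augmentation short exact sequence of left $B$-modules
\begin{equation*}
0 \to B^+ \to B \to \kk \to 0,
\end{equation*}
obtaining a left exact sequence
\begin{equation*}
0 \to {\rm Hom}_{B}(A, B^+) \to {\rm Hom}_{B}(A, B) \to {\rm Hom}_{B}(A, \kk).
\end{equation*}

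The crucial step is to show that ${\rm Hom}_{B}(A, \kk) = 0$. The left $B$-module structure on $A = B^+$ is pulled back along $\psi_{1}$, so for $b \in B^+$ and $a \in A$ one has $b \cdot a = b \Delta a$. Consequently $B^+ \cdot A = A \Delta A$, which equals $A$ by the well-tempered hypothesis; since $B^+$ annihilates $\kk$, any $B$-linear map $\phi : A \to \kk$ must vanish on $B^+ \cdot A = A$, so $\phi = 0$.

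It then follows that the natural inclusion ${\rm Hom}_{B}(A, B^+) \hookrightarrow {\rm Hom}_{B}(A, B)$ is an isomorphism. Using $B^+ = A$ as a left $B$-module, the previous proposition identifies ${\rm Hom}_{B}(A, B^+) = {\rm Hom}_{B}(A, A)$ with $A$ as a left $A$-module via the map $a \mapsto R_a$, the direct computation $(a' \cdot R_a)(x) = R_a(x a') = x a' a = R_{a' a}(x)$ confirming left $A$-linearity. Composing the two isomorphisms with the embedding $B^+ \hookrightarrow B$ recovers precisely the map (\ref{ahom}).

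The only delicate step is the vanishing ${\rm Hom}_{B}(A, \kk) = 0$; it reduces at once to the surjectivity $A \Delta A = A$ built into the definition of well-temperedness, so no ideas beyond the previous proposition are needed.
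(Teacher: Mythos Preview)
Your proof is correct and follows essentially the same approach as the paper: apply ${\rm Hom}_{B}(A,-)$ to the augmentation sequence, kill the term ${\rm Hom}_{B}(A,\kk)$ using surjectivity of the multiplication map $B^+\otimes B^+\to B^+$ (equivalently $A\Delta A=A$), and invoke the previous proposition for ${\rm Hom}_{B}(A,B^+)\cong A$. Your write-up even adds the explicit verification that the composite isomorphism coincides with the map (\ref{ahom}), which the paper leaves implicit.
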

\begin{proof}
Apply functor ${\rm Hom}_{B}(A , -)$ to the short exact sequence (\ref{aug1}).
We obtain:
\begin{equation}
0\to {\rm Hom}_{B}(A ,B^{+}) \to  {\rm Hom}_{B}(A , B) \to {\rm Hom}_{B}(A , \kk )\to
\end{equation}
The first term in this sequence is isomorphic to $A$ by the previous proposition. The third term is zero, because any homomorphism from $A =B^+$ to the trivial $B$-module $\kk$ must annihilate the image of the multiplication map $B^+\otimes B^+\to B^+$, which is epimorphic by the definition of well-tempered elements.

\end{proof}


\begin{Corollary}\label{homkgb'}
We have an isomorphism of left $\kk\Gamma$-modules:
$$
\kk\Gamma = {\rm Hom}_{B(\Gamma )}(\kk\Gamma , B(\Gamma ))
$$
\end{Corollary}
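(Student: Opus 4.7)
The plan is simply to instantiate Proposition \ref{homkgb} with the particular algebra $A=\kk\Gamma$ and the particular element $\Delta$ equal to the generalized Laplace operator of $\Gamma$ defined by formula (\ref{deltaformula}). Once this specialization is legitimate, the statement of the corollary is just the specialization of the conclusion of the proposition, since $B(\Gamma)\cong \widehat{\kk\Gamma_\Delta}$ by Theorem \ref{bgammaviagr}.

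The only thing to verify is that the hypotheses of Proposition \ref{homkgb} are satisfied in this situation, namely that $\Delta$ is well-tempered in $\kk\Gamma$. But this has already been established: corollary \ref{bgprojective} asserts that $B^+(\Gamma)$ (which, under the isomorphism of Theorem \ref{bgammaviagr}, is identified with $\kk\Gamma$ viewed as a $B(\Gamma)$-bimodule via $\psi_1$ and $\psi_2$) is projective as a left and as a right $B(\Gamma)$-module; proposition \ref{tensormult} asserts that the multiplication map $\kk\Gamma\otimes_{B(\Gamma)}\kk\Gamma\to\kk\Gamma$ is an isomorphism, which in particular implies that the multiplication map $B^+(\Gamma)\otimes B^+(\Gamma)\to B^+(\Gamma)$ is epimorphic (equivalently, $\kk\Gamma\cdot\Delta\cdot\kk\Gamma=\kk\Gamma$). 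These are precisely the two conditions in the definition of a well-tempered element, and they are recorded in the lemma preceding the definition as well.

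Thus $\Delta$ is well-tempered, and Proposition \ref{homkgb} applies verbatim. The morphism constructed in (\ref{ahom}), which sends $a\in\kk\Gamma$ to the composite of the right multiplication operator $R_a:\kk\Gamma\to\kk\Gamma=B^+(\Gamma)$ with the inclusion $B^+(\Gamma)\hookrightarrow B(\Gamma)$, is therefore an isomorphism of left $\kk\Gamma$-modules, which is exactly the assertion of the corollary.

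There is essentially no obstacle here: the entire technical content has been absorbed into Proposition \ref{homkgb} and into the verification that the generalized Laplace operator is well-tempered. The corollary is a direct specialization and requires no further argument beyond citing these two facts.
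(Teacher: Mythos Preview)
Your proposal is correct and follows exactly the paper's approach: the corollary is an immediate specialization of Proposition~\ref{homkgb} to $A=\kk\Gamma$ with $\Delta$ the generalized Laplace operator, once one knows $\Delta$ is well-tempered, which the paper records as Lemma~18 (itself proved from Corollary~\ref{bgprojective} and Proposition~\ref{tensormult}, precisely the ingredients you cite).
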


The finite generation for $B^+$ as a $B$-module, that we observed in the case of $B=B(\Gamma )$ holds also in general for algebras $B$ constructed from a well-tempered elements. More precisely, we have
\begin{Lemma}\label{fingen}
Let $B={\widehat A_{\Delta}}$ be such that the multiplication map $B^+\otimes B^+\to B^+$ is epimorphic. Then $B^+$ is finitely generated as a right and left $B$-module.
\end{Lemma}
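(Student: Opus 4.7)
The plan is to extract a finite decomposition of the unit $1_A$ from the epimorphism hypothesis, and then translate it directly into a finite generating set for $B^+$ as a one-sided $B$-module.

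First I would unpack the hypothesis. The multiplication map $B^+ \otimes_{\kk} B^+ \to B^+$ takes $a \otimes b$ to the element whose name in $A$ is $a\Delta b$, so its image is the two-sided ideal $A\Delta A \subseteq A$. Surjectivity therefore reads simply as $A\Delta A = A$, and in particular $1_A \in A\Delta A$. Pick a finite expression
\[
1_A = \sum_{i=1}^{n} a_i\,\Delta\, b_i, \qquad a_i, b_i \in A.
\]

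Now fix any $c \in B^+ = A$ and multiply the above identity on the right in $A$:
\[
c = \sum_{i=1}^{n} a_i\,\Delta\,(b_i c).
\]
Recall that the right $B$-module structure on $B^+$ is the pull-back of the right $A$-module structure along $\psi_2: b \mapsto \Delta b$, so for $a, \beta \in B^+$ the action reads $a \cdot \beta = a\Delta \beta$, while $1_B$ acts as the identity. The displayed identity therefore expresses $c$ as $\sum_i a_i \cdot (b_i c)$, with each $b_i c$ a well-defined element of $B^+ \subset B$. Hence $\{a_1, \dots, a_n\}$ generates $B^+$ as a right $B$-module. Multiplying the expansion of $1_A$ on the left by $c$ instead yields $c = \sum_i (c a_i)\Delta b_i$, and the symmetric analysis shows that $\{b_1, \dots, b_n\}$ is a finite left $B$-generating set for $B^+$.

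There is no substantive obstacle here — the whole content is translating between the homotope's product $a \cdot_{\Delta} b = a\Delta b$ and the native product of $A$. The only thing requiring a moment of care is verifying that the one-sided $B$-action on $B^+$ agrees, under the identification $B^+ \simeq A$ of $\kk$-modules, with the twisted multiplication $(a,\beta) \mapsto a\Delta \beta$, which is immediate from the definition of the homotope (and is recorded just after the definition of $B$). Note that no projectivity hypothesis is used; only surjectivity of $B^+ \otimes B^+ \to B^+$ enters.
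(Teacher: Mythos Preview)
Your proof is correct and essentially identical to the paper's own argument: both extract a finite expression $1_A = \sum a_i\Delta b_i$ from the surjectivity hypothesis $A\Delta A = A$, then multiply by an arbitrary element of $B^+$ on one side and reinterpret the resulting identity in terms of the homotope multiplication to exhibit a finite one-sided generating set. The only cosmetic difference is that the paper spells out the left-module case first while you spell out the right-module case first.
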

\begin{proof}
The multiplication map for $B^+$ is epimorphic if two-sided ideal $A\Delta A$ is $A$. Therefore, we have a decomposition for the unit in $A$:
$$
1_A=\sum x_i\Delta y_i.
$$
It implies that any $b\in B^+=A$ has a decomposition:
$$
b=\sum bx_i\Delta y_i=\sum (bx_i)\cdot_By_i,
$$
where $\cdot_B$ is multiplication in $B$. Therefore, $B^+$ is left generated by the finite set $\{y_i\}$. Similarly, it is right generated by the set $\{x_i\}$.
\end{proof}

\subsection{Principal double-sided ideals in matrix algebras and Morita equivalence}

According to proposition \ref{moritakg}, algebra $A=\kk {\Gamma }$ is isomorphic to the matrix algebras over the group ring of the fundamental group of the graph.
The condition for an element $\Delta $ in an algebra to be well-tempered includes the property that the double sided ideal generated by $\Delta$ is the whole algebra $A$. We explore this condition via Morita equivalence in this subsection.

For a unital $\kk$-algebra $C$, consider the matrix algebra 
$$
A={\rm Mat}_{n\times n}(C).
$$
These two algebras are known to be Morita equivalent, which means that the categories of right and left modules over $C$ and $A$ are equivalent. We will need an equivalence between the categories of $C$ and $A$ bimodules. Let $P$ be the $C-A$ bimodule whose elements are vector-rows of size $n$ over $C$ and $Q$ the $A-C$ bimodule of vector-colomns of size $n$ over $C$. The equivalence of the categories of left $A$ and $C$ modules is given by the functor which is defined on the left $A$ module $M$ by $
M\mapsto P\otimes_AM$.
Similarly, the equivalence of the categories of right modules is given by the functor that takes the right $A$ module $M$ to $M\otimes_AQ$. Note that $P\otimes_AQ=C$, which implies that the image of $Q$ under the first functor is $C$, as so is the image of $P$ under the second functor.

The equivalence of the category of $A$ bimodules with the categories of $C$ bimodules is defined on an $A$ bimodule $M$ via the rule:
\begin{equation}\label{moritamatrix}
M\mapsto P\otimes_AM\otimes_AQ.
\end{equation}

\begin{Lemma}
Let $C$ be a unital $\kk$-algebra and $A={\rm Mat}_{n\times n}(C)$ the matrix algebra over $C$. Let $\Delta$ be an element in $A$ with the matrix entries $\Delta_{ij}$. Then the following are equivalent:

i) $A\Delta A=A$,

ii) $C\{ \Delta_{ij}\}C=C$, where the left hand side is the double-sided ideal generated by $\Delta_{ij}$'s.
\end{Lemma}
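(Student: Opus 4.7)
The plan is to prove the two implications directly by computation with matrix units, and I will also indicate how the result can be read off from the Morita equivalence (\ref{moritamatrix}) for the reader who prefers a conceptual argument.

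For the implication (ii) $\Rightarrow$ (i), I would fix a decomposition $1_C = \sum_{\alpha} c_\alpha \Delta_{i_\alpha j_\alpha} d_\alpha$ with $c_\alpha, d_\alpha \in C$. Let $E_{kl}$ denote the standard matrix units in $A$ and, for $c \in C$, write $cE_{kl}$ for the matrix with entry $c$ in position $(k,l)$ and zeros elsewhere. A direct computation shows
\[
(c E_{ki})\,\Delta\,(d E_{jl}) \;=\; c\,\Delta_{ij}\,d \cdot E_{kl},
\]
because the only surviving index in the two matrix multiplications is forced to be $i$ on the left and $j$ on the right. Summing over $\alpha$ gives $E_{kl} = \sum_{\alpha} (c_\alpha E_{k i_\alpha})\,\Delta\,(d_\alpha E_{j_\alpha l}) \in A\Delta A$ for every $k,l$, and since $A$ is $C$-linearly spanned by the matrix units $cE_{kl}$ (with a symmetric computation for arbitrary coefficients), we conclude $A\Delta A = A$.

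For the reverse implication (i) $\Rightarrow$ (ii), I would start from a relation $I_A = \sum_\alpha X_\alpha \Delta Y_\alpha$ in $A$ and extract the $(1,1)$-entry on both sides. The $(1,1)$-entry of the right-hand side is $\sum_{\alpha, k, l} (X_\alpha)_{1k}\,\Delta_{kl}\,(Y_\alpha)_{l1}$, which manifestly lies in the two-sided ideal $C\{\Delta_{ij}\}C \subseteq C$. Since the left-hand side contributes $1_C$, this ideal contains the unit and hence equals $C$.

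There is no real obstacle here; the only mildly delicate point is being careful about the order of indices in the matrix computation $(cE_{ki})\Delta(dE_{jl}) = c\Delta_{ij}d \cdot E_{kl}$, which is what makes the matrix entries of $\Delta$ come out in the right position. Conceptually, both implications are simply the statement that the Morita bijection (\ref{moritamatrix}) between two-sided ideals of $A$ and two-sided ideals of $C$ sends $A\Delta A \subseteq A$ to $C\{\Delta_{ij}\}C \subseteq C$, so each is the whole algebra if and only if the other is; the explicit argument above can be viewed as unfolding this bijection on the level of elements.
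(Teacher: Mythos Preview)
Your proof is correct. The paper's own argument is purely the conceptual one you sketch at the end: it observes that under the Morita equivalence of bimodule categories given by (\ref{moritamatrix}), the inclusion $A\Delta A\subset A$ corresponds to the inclusion $C\{\Delta_{ij}\}C\subset C$, so one is an equality if and only if the other is.

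Your main argument takes a more elementary route, working directly with matrix units and the identity $(cE_{ki})\Delta(dE_{jl})=c\Delta_{ij}d\cdot E_{kl}$ to produce explicit witnesses for each implication. This has the advantage of being self-contained: it does not require the reader to know (or accept without proof) that a Morita equivalence of bimodule categories induces an inclusion-preserving bijection between two-sided ideals. The paper's version, on the other hand, is shorter and makes transparent \emph{why} the statement is true rather than just \emph{that} it is true. Since you already include the Morita interpretation in your closing paragraph, your write-up effectively subsumes the paper's proof while also supplying the explicit computation that the paper leaves to the reader.
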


\begin{proof}
This follows from Morita equivalence. Indeed, $A\Delta A\subset A$ is an embedding of $A$ bimodules. The functor (\ref{moritamatrix}) takes $A$ to $C$ and $A\Delta A$ to the double-sided ideal generated by coefficients of $\Delta$. Since the functor is an equivalence the fact follows.
\end{proof}

There is also a categorical interpretation of the same condition on $\Delta$. If $\Delta \in A={\rm Mat}_{n\times n}(C)$, then it defines a homomorphism of right $C$ modules ${\tilde \Delta} :C^n\to C^n$. Given a left $C$ module $M$, we define $\kk$-homomorphism $\Delta_M : M^n\to M^n$ by taking the tensor product of ${\tilde \Delta}$ with $M$ over $C$.

\begin{Proposition} The following are equivalent:

i) $A\Delta A=A$,

ii) For any left $C$-module $M\ne 0$, homomorphism $ \Delta_M$ is not zero.
\end{Proposition}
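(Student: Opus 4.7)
The plan is to exploit the Morita equivalence between left $A$-modules and left $C$-modules, under which the functor $M \mapsto M^n = C^n \otimes_C M$ identifies the left $A$-module structure on $M^n$ with entry-wise matrix multiplication, so that $\Delta \in A$ acts on $M^n$ precisely as the map $\Delta_M$.

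For the implication (i) $\Rightarrow$ (ii), I would take a presentation $1_A = \sum_k a_k \Delta b_k$ with $a_k, b_k \in A$ guaranteed by the hypothesis $A\Delta A = A$. For any nonzero left $C$-module $M$, the left $A$-module $M^n$ is nonzero and $1_A$ acts as the identity on it. If $\Delta_M$ were zero, then $\Delta$ would act as zero on $M^n$, hence so would $\sum_k a_k \Delta b_k = 1_A$, contradicting $M^n \neq 0$.

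For (ii) $\Rightarrow$ (i), I would argue by contrapositive. Suppose $A \Delta A \neq A$. By the preceding lemma, the two-sided ideal $I := C\{\Delta_{ij}\}C$ is proper in $C$, so $M := C/I$ is a nonzero left $C$-module. Writing out $\tilde\Delta \otimes_C \mathrm{id}_M$ on the generator $e_j \otimes m$ gives $\sum_i e_i \otimes (\Delta_{ij}\cdot m)$, and since every $\Delta_{ij}$ lies in $I$, it annihilates $M = C/I$. Hence $\Delta_M = 0$ on this nonzero module $M$, negating (ii).

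The one step that requires a small direct verification is the identification of $\Delta_M$ with the action of $\Delta$ on the Morita-transported module $M^n$; everything else is formal manipulation with the ideal structure. This compatibility is not really an obstacle, just a bookkeeping check on matrix entries; the conceptual content of the proposition is that $A\Delta A = A$ is equivalent to $\Delta$ acting non-trivially on every nonzero object in the module category.
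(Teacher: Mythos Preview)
Your proof is correct and follows essentially the same approach as the paper: both directions use the decomposition $1_A=\sum a_k\Delta b_k$ for (i)$\Rightarrow$(ii) and the module $M=C/C\{\Delta_{ij}\}C$ (combined with the preceding lemma) for (ii)$\Rightarrow$(i). You have spelled out the Morita identification of $\Delta_M$ with the action of $\Delta$ on $M^n$ in slightly more detail than the paper does, but the argument is the same.
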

\begin{proof}
If $A\Delta A=A$, then we have a decomposition $1=\sum a_i\Delta b_i$, with $a_i,b_i\in A$. Hence, if $\Delta_M =0$, then $1_M =0$, i.e. $M=0$.

Conversely, assume that $\Delta _M\ne 0$ for any $M\ne 0$. Consider $M=C/C\{ \Delta_{ij}\}C$. Clearly, $\Delta_M=0$. Hence $M=0$ and $C\{ \Delta_{ij}\}C=C$. By the above lemma, $A\Delta A=A$.
\end{proof}

\subsection{Hochschild dimension and global dimension of homotopes}

{\em Hochschild dimension over} $\kk$ of a $\kk$-algebra $B$ is defined as projective dimension of $B$ as a $\kk$-central $B$-bimodule, i.e. as a module over $B\otimes B^{opp}$ (note that the tensor product is taken over $\kk$). We denote it by ${\rm Hdim}B={\rm Hdim}_{\kk}B$. It is invariant under Morita equivalences.

In more general approach, when $B$ is a DG-algebra, it is called {\em smooth} if $B$ is a perfect $B\otimes B^{opp}$-bimodule. Smoothness is a derived Morita invariant property, but Hochschild dimension might not be preserved under derived equivalences. For an ordinary algebra $B$, smoothness is equivalent to finiteness of Hochschild dimension.

\begin{Lemma} Let $\Delta $ be a well-tempered element in algebra $A$ and $B={\widehat A_{\Delta}}$. If $P$ is a projective $A$-bimodule, then $_{\psi_1}P_{\psi_2}$ is a projective $B$-bimodule. The similar statement holds for projective left and right $A$-modules.
\end{Lemma}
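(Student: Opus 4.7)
The plan is to reduce both claims to the single input provided by well-temperedness: that $B^{+}$ is projective as both a left and a right $B$-module. In each case, we express the given projective as a direct summand of a free module and compute the pullback of the rank-one generator.

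For the one-sided case, let $M$ be a projective left $A$-module, so that $M$ is a direct summand of some free module $A^{(I)}$. The pullback functor ${}_{\psi_{1}}(-)$ preserves direct sums and direct summands, and it sends the rank-one free $A$-module $A$ to ${}_{\psi_{1}}A = B^{+}$ by the very definition of the left $B$-module structure induced by $\psi_{1}$. Hence ${}_{\psi_{1}}M$ is a direct summand of $(B^{+})^{(I)}$, whose projectivity over $B$ follows from well-temperedness of $\Delta$. The case of right modules is symmetric, using $\psi_{2}$ in place of $\psi_{1}$.

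For the bimodule case, write $P$ as a direct summand of a free $A$-bimodule $\bigoplus_{i\in I} A \otimes_{\kk} A$. Because the left $B$-action on ${}_{\psi_{1}}(A\otimes_{\kk}A)_{\psi_{2}}$ acts only on the first tensor factor through $\psi_{1}$, while the right $B$-action acts only on the second through $\psi_{2}$, we obtain a canonical isomorphism of $B$-bimodules
\[
{}_{\psi_{1}}(A\otimes_{\kk}A)_{\psi_{2}} \;\cong\; B^{+}\otimes_{\kk}B^{+}.
\]
Hence ${}_{\psi_{1}}P_{\psi_{2}}$ is a $B$-bimodule direct summand of a coproduct of copies of $B^{+}\otimes_{\kk}B^{+}$, and it suffices to show that $B^{+}\otimes_{\kk}B^{+}$ is projective as a $B$-bimodule, i.e.\ as a left $B\otimes_{\kk}B^{\mathrm{opp}}$-module.

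This last step is the main one requiring care, and it is where the standing $\kk$-freeness assumption enters. Using well-temperedness, realise $B^{+}$ as a direct summand of a free left $B$-module $B^{(I)}$ and, separately, as a direct summand of a free right $B$-module $B^{(J)}$. Tensoring these one-sided splittings over $\kk$ in succession, one exhibits $B^{+}\otimes_{\kk}B^{+}$ as a direct summand of $B^{(I)}\otimes_{\kk}B^{(J)} \cong (B\otimes_{\kk}B)^{(I\times J)}$, which is a free $B\otimes_{\kk}B^{\mathrm{opp}}$-module. The only bookkeeping obstacle is that one must check that the two one-sided splittings combine into a splitting of the bimodule structure, but this is immediate from bifunctoriality of $\otimes_{\kk}$ together with the fact that the left and right $B$-actions on the two tensor factors are independent.
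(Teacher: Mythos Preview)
Your argument is correct and follows essentially the same route as the paper's proof: realise $P$ as a summand of a free $A$-bimodule $A\otimes U\otimes A^{opp}$, pull back to $B^{+}\otimes U\otimes B^{+opp}$, and conclude by showing that $B^{+}\otimes_{\kk}B^{+}$ is projective over $B\otimes_{\kk}B^{\mathrm{opp}}$. The paper simply asserts this last projectivity in one line, whereas you spell it out by tensoring the two one-sided splittings; the content is the same.
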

\begin{proof} We will prove the lemma for $A$-bimodules, the proof for left and right modules is similar. Since $B^+$ is projective as a left and right $B$-module, then $B^+\otimes B^{+opp}$ is projective as a $B\otimes B^{opp}$-module. Projective $A\otimes A^{opp}$-module $P$ is a direct summand in a free $A\otimes A^{opp}$-module $A\otimes U\otimes A^{opp}$, where $U$ is a free $\kk$-module. Therefore, $_{\psi_1}P_{\psi_2}$ is a direct summand in $_{\psi_1}A\otimes U\otimes A^{opp}_{\psi_2}= B^+\otimes U\otimes B^{+opp}$, which is a projective $B\otimes B^{opp}$-module. Therefore, it is projective itself.
\end{proof}
\begin{Theorem} Let $\Delta$ be a well-tempered element in algebra $A$ and $B$ the corresponding homotope $B={\widehat A_{\Delta}}$. Then ${\rm Hdim}B$ is less than or equal to ${\rm max}({\rm Hdim}A, 2)$.
\end{Theorem}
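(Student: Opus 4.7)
The plan is to show $\mathrm{Hdim}\, B \le \max(\mathrm{Hdim}\, A, 2)$ by bounding the projective $B$-bimodule dimensions of $B^{+}$ and of the trivial bimodule $\kk$ separately, and combining them via the augmentation short exact sequence of $B$-bimodules
$$0 \to B^{+} \to B \to \kk \to 0.$$
The long exact sequence of $\mathrm{Ext}^{*}_{B\otimes B^{opp}}(-, N)$ associated to this sequence yields
$$\mathrm{pd}_{B\text{-bim}}(B) \le \max\bigl(\mathrm{pd}_{B\text{-bim}}(B^{+}),\, \mathrm{pd}_{B\text{-bim}}(\kk)\bigr),$$
so it suffices to bound each summand by $\mathrm{Hdim}\, A$ and $2$ respectively.

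For $B^{+}$, I apply the preceding lemma directly: the twist ${}_{\psi_1}(-)_{\psi_2}$ is exact and sends projective $A$-bimodules to projective $B$-bimodules. Lifting a length-$\mathrm{Hdim}\, A$ projective $A$-bimodule resolution of $A$ along this functor produces a projective $B$-bimodule resolution of $B^{+} = {}_{\psi_1}A_{\psi_2}$ of length at most $\mathrm{Hdim}\, A$.

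For $\kk$, the key step is to exhibit the explicit length-$2$ projective resolution
$$0 \to B^{+}\otimes B^{+} \to (B\otimes B^{+}) \oplus (B^{+}\otimes B) \to B\otimes B \to \kk \to 0,$$
with maps $b_1\otimes b_2 \mapsto \epsilon(b_1)\epsilon(b_2)$, $(x,y)\mapsto x+y$, and $x\mapsto (x,-x)$ (the latter using that $B^{+}\hookrightarrow B$ is $\kk$-split via $B=\kk\oplus B^{+}$, which identifies $B^{+}\otimes B^{+}$ canonically as a sub-$\kk$-module of both $B\otimes B^{+}$ and $B^{+}\otimes B$). Exactness is a direct check: the $\kk$-decomposition $B\otimes B = \kk \oplus (\kk\otimes B^{+}) \oplus (B^{+}\otimes \kk) \oplus (B^{+}\otimes B^{+})$ makes $\ker(\epsilon\otimes \epsilon) = B\otimes B^{+} + B^{+}\otimes B$, and the intersection $(B\otimes B^{+})\cap (B^{+}\otimes B)$ inside $B\otimes B$ equals exactly $B^{+}\otimes B^{+}$. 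Projectivity as $B$-bimodules uses well-temperedness: since $B^{+}$ is projective as both left and right $B$-module, it is a direct summand of a free one-sided module, and tensoring with $B$ (or $B^{+}$) on the opposite side exhibits each of $B\otimes B^{+}$, $B^{+}\otimes B$, $B^{+}\otimes B^{+}$ as a direct summand of a free $B$-bimodule $(B\otimes B)^{I}$.

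The principal obstacle is the bookkeeping for the $\kk$-resolution: one has to verify simultaneously exactness (from the $\kk$-splitting of the augmentation) and bimodule projectivity (from the one-sided projectivity of $B^{+}$ provided by well-temperedness) for the three middle terms. Once this is in place, combining $\mathrm{pd}_{B\text{-bim}}(\kk)\le 2$ with the bound $\mathrm{pd}_{B\text{-bim}}(B^{+})\le \mathrm{Hdim}\, A$ in the $\mathrm{Ext}$-inequality from the augmentation sequence gives the theorem.
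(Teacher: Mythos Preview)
Your proof is correct and follows essentially the same approach as the paper: bound $\mathrm{pd}_{B\text{-bim}}(B^{+})$ by $\mathrm{Hdim}\,A$ via the lemma on ${}_{\psi_1}(-)_{\psi_2}$, bound $\mathrm{pd}_{B\text{-bim}}(\kk)$ by $2$ via the explicit resolution $0 \to B^{+}\otimes B^{+} \to (B\otimes B^{+}) \oplus (B^{+}\otimes B) \to B\otimes B \to \kk \to 0$, and combine using the augmentation sequence. In fact you supply more detail than the paper does on the exactness and bimodule projectivity of that resolution.
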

\begin{proof} Since $A$ has a projective resolution of length ${\rm Hdim}A$ as $A\otimes A^{opp}$-module, then $B^+$, which is isomorphic to $_{\psi_1}A_{\psi_2}$, has a projective resolution of the same length as $B\otimes B^{opp}$-module, by the above lemma.

The trivial module $\kk$ has the following resolution of length 2 by projective $B\otimes B^{opp}$-modules:
$$
0\to B^+\otimes B^+\to (B\otimes B^+)\oplus (B^+\otimes B)\to B\otimes B\to \kk \to 0.
$$
The augmentation exact sequence (\ref{aug1}) implies that the projective $B\otimes B^{opp}$-dimension of $B$ is not greater than the maximum of projective dimensions of $B^+$ and of $\kk$.
\end{proof}

Thus, we see that the smoothness of $A$ in the DG-sense implies the smoothness of $B$.
\begin{Corollary}
\label{hochshield dimension of bg}
Hochschild dimension of $B(\Gamma )$ over $\kk$ is less than or equal to 2.
\end{Corollary}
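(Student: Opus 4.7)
The plan is to invoke the preceding theorem with $A=\kk\Gamma$ and $\Delta$ the generalized Laplace operator of $\Gamma$. Both hypotheses are already in hand: $\Delta$ is well-tempered by the lemma recorded just above the corollary (itself a combination of Corollary \ref{bgprojective} and Proposition \ref{tensormult}), and Theorem \ref{bgammaviagr} identifies $B(\Gamma)$ with $\widehat{\kk\Gamma_\Delta}$. Applying the theorem therefore reduces the corollary to the inequality ${\rm Hdim}(\kk\Gamma)\le 2$.

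To bound ${\rm Hdim}(\kk\Gamma)$, I would pass to the group algebra of the fundamental group. By Proposition \ref{moritakg}, $\kk\Gamma$ is isomorphic to the matrix algebra ${\rm Mat}_{|V(\Gamma)|}(\kk[\pi(\Gamma,t)])$, and in particular is Morita equivalent to $\kk[\pi(\Gamma,t)]$. Since Hochschild dimension is Morita invariant, as recorded in the definition paragraph above the theorem, it suffices to bound ${\rm Hdim}(\kk[\pi(\Gamma,t)])$.

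The fundamental group of a graph is a free group $F$. The plan is to exhibit the standard Fox-calculus bimodule resolution
$$
0\to \bigoplus_{s\in S}\kk F\otimes \kk F\to \kk F\otimes \kk F\to \kk F\to 0,
$$
where $S$ is a free generating set, the right arrow is multiplication, and the $s$-summand on the left embeds via $x\otimes y\mapsto xs\otimes y-x\otimes sy$. Equivalently, the bimodule $\Omega^1_{\kk F}$ is free on the symbols $\{ds\}_{s\in S}$. This gives ${\rm Hdim}(\kk F)\le 1$, whence ${\rm Hdim}(\kk\Gamma)\le 1\le 2$, and combining with the theorem yields ${\rm Hdim}(B(\Gamma))\le \max(1,2)=2$.

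The only mildly nontrivial input is the bimodule resolution for $\kk F$, but this reduces at once to the classical fact that the augmentation ideal of $\kk F$ is free as a left $\kk F$-module on the elements $\{s-1:s\in S\}$; tensoring on the right with $\kk F$ and rewriting via the Leibniz-style formula above produces the stated bimodule resolution. Everything else in the proof is immediate from results already established earlier in the section, so no further technical work is expected to be necessary.
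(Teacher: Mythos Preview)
Your argument is correct and follows essentially the same route as the paper: reduce via the preceding theorem to bounding ${\rm Hdim}(\kk\Gamma)$, pass by Morita invariance to $\kk[\pi(\Gamma,t)]$, and use that the group algebra of a free group has Hochschild dimension at most $1$. The only cosmetic difference is that where you exhibit the explicit Fox-calculus bimodule resolution showing $\Omega^1_{\kk F}$ is free, the paper phrases the same fact as ``$\kk[\pi(\Gamma,t)]$ is quasi-free relatively over $\kk$'' in the sense of Cuntz--Quillen; these amount to the same statement.
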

\begin{proof} By proposition \ref{moritakg}, algebra $\kk\Gamma$ is Morita equivalent to a matrix algebra over $\kk[\pi (\Gamma ),t]$. Since the fundamental group of $\Gamma$ is free, algebra $\kk[\pi (\Gamma ),t]$ is quasi-free relatively over $\kk$ in the sense of Cuntz and Quillen \cite{CQ1} (for the relative version of quasi-freeness see \cite{BZh}), hence Hochschild dimension of $\kk\Gamma$ is $\le 1$. As Hochschild dimension is Morita invariant, the above theorem gives the required upper bound for Hochschild dimension of $B(\Gamma )$.
\end{proof}

Recall that the left (respectively, right) global dimension of an algebra is the maximum of projective dimensions of left (respectively, right) modules over the algebra. For an algebra $B$, we denote its left global dimension by ${\rm gldim}_lB$ and right global dimension by ${\rm gldim}_rB$.

As a consequence of theorem 31, we obtain
\begin{Theorem}\label{globaldim}Let $\Delta $ be a well-tempered element in algebra $A$ and $B={\widehat A_{\Delta}}$.
Then we have inequalities for the left and right global dimensions of $B$:
$$
{\rm gldim}_lB\le {\rm max}({\rm Hdim}A, 2)+{\rm gldim}_l\kk,
$$
$$
{\rm gldim}_rB\le {\rm max}({\rm Hdim}A, 2)+{\rm gldim}_r\kk .
$$
\end{Theorem}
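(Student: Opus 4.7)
The plan is to combine the Hochschild dimension bound from Theorem 31 with the general inequality ${\rm gldim}_l B \le {\rm Hdim}_{\kk} B + {\rm gldim}_l \kk$, valid for any $\kk$-algebra $B$ that is flat over $\kk$ (which holds in our setting by the running assumption that algebras are free $\kk$-modules). Theorem 31 gives ${\rm Hdim} B \le \max({\rm Hdim} A, 2)$, and substituting this bound yields the desired estimate for ${\rm gldim}_l B$; the inequality for ${\rm gldim}_r B$ follows by the symmetric argument applied to $B^{opp}$.

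To prove the auxiliary inequality, I would fix a left $B$-module $M$ and build a projective resolution of length at most $h+d$, where $h = {\rm Hdim}_{\kk} B$ and $d = {\rm gldim}_l \kk$, from two ingredients. First, choose a projective $B$-bimodule resolution $P_\bullet \to B$ of length $h$ in which each $P_i$ is a direct summand of $B \otimes_{\kk} V_i \otimes_{\kk} B$ with $V_i$ a free $\kk$-module (available, for instance, via a truncation of the normalized bar resolution that is then made projective in the last step using Hochschild dimension $\le h$). Since $B$ is $\kk$-flat, each $P_i$ is flat as a right $B$-module, so $P_\bullet \otimes_B M \to M$ is a resolution of $M$ whose terms are summands of $B \otimes_{\kk} V_i \otimes_{\kk} M$. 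Second, take a projective $\kk$-resolution $Q_\bullet \to M$ of length $d$; replacing $M$ by $Q_\bullet$ in each term yields a double complex whose rows and columns are exact, and whose entries are summands of $B \otimes_{\kk} V_i \otimes_{\kk} Q_j$, hence projective as left $B$-modules. The total complex is then a projective left $B$-resolution of $M$ of length $\le h+d$.

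An equivalent and perhaps cleaner packaging is the Cartan--Eilenberg change-of-rings spectral sequence
\[
E_2^{p,q} = {\rm Ext}^p_{B \otimes B^{opp}}(B, {\rm Ext}^q_{\kk}(M, N)) \Rightarrow {\rm Ext}^{p+q}_B(M, N),
\]
where $N$ is any left $B$-module and ${\rm Hom}_{\kk}(M, N)$ is given its natural $B$-bimodule structure. The $E_2$-page vanishes whenever $p > {\rm Hdim}_{\kk} B$ or $q > {\rm gldim}_l \kk$, forcing the abutment to vanish in total degree strictly greater than the sum, which is the required bound.

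The principal technical point is to arrange the bimodule resolution with $\kk$-free coefficients $V_i$, so that tensoring with $Q_\bullet$ preserves exactness and the resulting modules are genuinely projective over $B$; this, together with the convergence of the associated double complex (or spectral sequence), relies essentially on $B$ being $\kk$-flat, which is guaranteed by the standing hypothesis that $B$ is free over $\kk$. Apart from this bookkeeping the argument is entirely formal, and the substance of the theorem is really contained in Theorem 31.
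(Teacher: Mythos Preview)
Your proposal is correct and essentially identical to the paper's argument: the paper invokes exactly the spectral sequence
\[
E_2^{p,q}={\rm Ext}^p_{B\otimes B^{opp}}(B,{\rm Ext}^q_{\kk}(M,N))\Rightarrow {\rm Ext}^{p+q}_B(M,N)
\]
and reads off the bound from Theorem~31 just as you do. Your first packaging via an explicit double complex is merely an unwinding of this spectral sequence, and your care about $\kk$-flatness of $B$ is appropriate and matches the paper's standing assumption that algebras are free over $\kk$.
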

\begin{proof}
Let $M$ and $N$ be any two left (or right) $A$-modules. There is the spectral sequence with the sheet ${\rm} E_2$:
\begin{equation}\label{spectralseq}
{\rm E}_2^{ij}={\rm Ext}^i_{A-A}(A, {\rm Ext}_{\kk}^{j}(M, N))
\end{equation}
that converges to ${\rm Ext}^{i+j}_A(M, N)$.
Thus, we get an upper bound for (left or right) global dimension of $B$ from the upper bound on Hochschild dimension obtained in the previous theorem and from the upper bound on $j$ for non-zero ${\rm Ext}^j_{\kk}$ by global dimension of $\kk$.
\end{proof}
\begin{Corollary}\label{globaldim2} If $\kk =k$ is a field, then
the global dimension of the category of $B(\Gamma)$-modules is not higher than 2.
\end{Corollary}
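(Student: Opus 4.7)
The plan is to simply specialize Theorem \ref{globaldim} to the situation $A = k\Gamma$, $B = B(\Gamma)$, using the realization of $B(\Gamma)$ as the homotope ${\widehat{\kk\Gamma}_\Delta}$ for $\Delta$ the generalized Laplace operator, which is well-tempered by Lemma 18. Since $\kk = k$ is a field, we have $\mathrm{gldim}_l k = \mathrm{gldim}_r k = 0$, so the inequality of Theorem \ref{globaldim} collapses to
\[
\mathrm{gldim}_l B(\Gamma) \le \max(\mathrm{Hdim}\, k\Gamma, 2), \qquad \mathrm{gldim}_r B(\Gamma) \le \max(\mathrm{Hdim}\, k\Gamma, 2).
\]

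The only remaining ingredient is the bound $\mathrm{Hdim}\, k\Gamma \le 1$. This was already extracted inside the proof of Corollary \ref{hochshield dimension of bg}: by Proposition \ref{moritakg}, $k\Gamma$ is Morita equivalent to the group algebra $k[\pi(\Gamma, t)]$, which is free as a group (since $\Gamma$ is a graph), hence quasi-free in the sense of Cuntz--Quillen, giving Hochschild dimension at most $1$. Since Hochschild dimension is a Morita invariant, the same bound carries over to $k\Gamma$. Thus $\max(\mathrm{Hdim}\, k\Gamma, 2) = 2$, and the corollary follows.

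There is essentially no obstacle here: the proof is a single-line application of the previous theorem together with the Hochschild dimension computation already performed in Corollary \ref{hochshield dimension of bg}. The only point worth mentioning explicitly is that the corollary is stated for left modules but the same reasoning works verbatim on the right, so both the left and right global dimension of $B(\Gamma)$-modules are bounded by $2$, and in particular the (left, which equals right, for this two-sided bound) global dimension does not exceed $2$.
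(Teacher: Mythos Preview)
Your proof is correct and follows exactly the intended approach: the paper states this corollary without explicit proof precisely because it is an immediate specialization of Theorem~\ref{globaldim} with $A=k\Gamma$, $\Delta$ the generalized Laplace operator (well-tempered by Lemma~18), $\mathrm{gldim}\,k=0$, and $\mathrm{Hdim}\,k\Gamma\le 1$ as established in the proof of Corollary~\ref{hochshield dimension of bg}. Your write-up simply makes these implicit steps explicit.
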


In \cite{Zh}, it was shown in the case of finite dimensional algebras  that if $\Delta$ is not well-tempered, then the left and right dimensions of the homotope are infinite. 

\section{Representation theory for homotopes.}
\label{repbg}

\subsection{General representation theory for homotopes}
\label{genrepbg}
Here we describe functors between categories of modules over an algebra and its homotope. We do not assume in this subsection that $\Delta$
is well-tempered.


Assume again that we have a unital algebra $A$ and a fixed element $\Delta$ in $A$. Let $B^+=A_{\Delta}$ and  $B=\widehat{A}_{\Delta}$ be the non-unital and unital homotopes.


We define push-forward functors on the categories of left modules by restricting the module structure along $\psi_1$ and $\psi_2$,:
$$
\psi_{1*}:  A-{\rm mod} \to  B-{\rm mod},\ \ \psi_{2*}:  A-{\rm mod} \to B-{\rm mod}.
$$
We use notations for these functors that are compatible with the viewpoint of Noncommutative Algebraic Geometry,
where homomorphisms $\psi_1$ and $\psi_2$ are assumed to define geometric maps between the noncommutative affine spectra of
unital algebras: ${\rm Spec}A\to {\rm Spec}B$. The noncommutative affine spectrum of an algebra is understood
as an object of the category opposite to the category of associative algebras. Modules have the meaning of  sheaves on the affine spectra.

There is a natural transformation of functors:
\begin{equation}\label{lowpsi}
\lambda :\psi_{1*} \to \psi_{2*},
\end{equation}
For a representation $\rho :A\to {\rm End}V$ and $v\in V$, it is defined by
\begin{equation}\label{formulaforlambda}
\lambda (v)=\rho (\Delta )v.
\end{equation}
It is a straightforward check that this formula defines a natural transformation.

We say that a $B$-module is $B^+$-{\em trivial}, if $B^+$ acts by zero on it.

\begin{Lemma}\label{trivialconefirst}
Let $W$ be an $A$-module. Then
the kernel and cokernel of $\lambda_W$ are $B^+$-trivial modules.
Moreover, we have an exact sequence with the middle morphism $\lambda_W$:
\begin{equation}\label{psilow12}
0\to (\psi_{1*}W)^{B^+}\to \psi_{1*}W\to \psi_{2*}W\to \psi_{2*}W/(B^+\psi_{2*}W)\to 0.
\end{equation}
\end{Lemma}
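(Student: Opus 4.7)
The plan is to unpack both sides of $\lambda_W$ using the explicit formulas already established for the pull-back module structures. Recall that $\psi_1(a)=a\Delta$ and $\psi_2(a)=\Delta a$ for $a\in B^+$, so the $B^+$-action on $\psi_{1*}W$ is $a\cdot w = a\Delta w$, the $B^+$-action on $\psi_{2*}W$ is $a\cdot w = \Delta a w$, and $\lambda_W(w)=\Delta w$ by formula \eqref{formulaforlambda}. The proof then amounts to identifying the kernel and image of $\lambda_W$ with the outer terms of the claimed four-term sequence, from which both assertions of the lemma follow at once.

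First I would identify $\ker\lambda_W$ with $(\psi_{1*}W)^{B^+}$. By definition, $(\psi_{1*}W)^{B^+}$ consists of those $w\in W$ with $a\Delta w=0$ for every $a\in A$; taking $a=1_A$ gives $\Delta w=0$, hence $w\in\ker\lambda_W$. Conversely, if $\Delta w=0$ then $a\Delta w=0$ for every $a\in A$, so $w\in(\psi_{1*}W)^{B^+}$. This gives the exactness of the first three terms.

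Next I would identify $\operatorname{Im}\lambda_W$ with $B^+\cdot_{\psi_2}\psi_{2*}W$. The image equals $\Delta W\subset W$, while $B^+\cdot_{\psi_2}\psi_{2*}W$ is the $\kk$-span of elements $\Delta a w$, which obviously lies in $\Delta W$; conversely, taking $a=1_A$ shows every $\Delta w$ belongs to $B^+\cdot_{\psi_2}\psi_{2*}W$. Combining this with the previous step yields the stated exact sequence, and the two outer terms are then manifestly $B^+$-trivial: $(\psi_{1*}W)^{B^+}$ by its very definition, and $\psi_{2*}W/B^+\psi_{2*}W$ as a quotient by the $B^+$-action. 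There is no genuine obstacle here — the whole statement reduces to using that $A$ is unital, which lets one freely pass between $\Delta w$ and $A\Delta w$, and between $\Delta W$ and $\Delta A W$.
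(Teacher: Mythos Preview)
Your proof is correct and follows essentially the same route as the paper: both arguments compute $\lambda_W$ as left multiplication by $\Delta$, then use the unitality of $A$ (specifically $1_A\in B^+$) to identify $\ker\lambda_W=(\psi_{1*}W)^{B^+}$ and $\operatorname{Im}\lambda_W=B^+\cdot\psi_{2*}W$. The paper's proof is slightly terser but makes exactly the same moves.
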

\begin{proof} The map $\lambda_W$ is given by the action of $\Delta$ on the representation space. This implies that the action of $\psi_1(b)=b\Delta$ on the kernel of $\lambda_W$ is zero for any $b\in B^+$. By taking $b=1_A$, we see that the kernel is exactly the submodule in $\psi_{1*}W$ which contains all elements on which $B^+$ acts trivially. Hence it is  $(\psi_{1*}W)^{B^+}$.

The image of $\lambda_W$ is the image of the action of $\Delta$. It contains the image of the action of $\psi_2(b)=\Delta b$, for any $b\in B^+$, and, in fact, coincides with $B^+\cdot \psi_{2*}W$ (again, consider $b=1_A$). Thus the quotient of $\psi_{2*}W$ by the image is the space of co-invariants for $B^+$-action.

\end{proof}

Functors $\psi_{1*}$ and $\psi_{2*}$ have right adjoints $\psi^!_1,\ \psi^!_2 : B - {\rm mod} \to A-{\rm mod}$ defined by:
\begin{equation}
\psi^!_1,\ \psi^!_2: V\mapsto {\rm Hom}_{B}(A, V),
\end{equation}
where $A$ is endowed with the structure of left $B$-module via $\psi_1$ or $\psi_2$ respectively.

Also functors $\psi_{1*}$ and $\psi_{2*}$ have left adjoints $\psi^*_1,\ \psi^*_2 : B - {\rm mod} \to A-{\rm mod}$ defined by:
\begin{equation}
\psi^*_1,\ \psi^*_2:  V \mapsto A\otimes_{B}V,
\end{equation}
where $A$ is endowed with right $B$-module structure via $\psi_1$ or $\psi_2$ respectively.


In order to distinguish the multiplication in $A$ from that in $B^+$, we denote it by $\cdot _A$.
Let $\rho : B\to {\rm End}(V)$ be a representation of $B$.
Consider the map
$$
\mu_V: A\otimes_{B} V\to  {\rm Hom}_{B}(A, V)
$$
defined by:
\begin{equation}\label{mu1}
a\otimes v\mapsto \phi_{a\otimes v} \in {\rm Hom}_{B}(A, V),
\end{equation}
where
\begin{equation}\label{phimu}
\phi_{a\otimes v} (a')= \rho (a'\cdot_Aa)v
\end{equation}
Note that we used $\psi_2$ to define $A\otimes_{B} V$ and $\psi_1$ to define ${\rm Hom}_{B}(A, V)$.
\begin{Lemma}
Formulas (\ref{mu1}) and (\ref{phimu}) define a natural transformation of functors:
\begin{equation}\label{naturalmu}
\mu: \psi_2^*\to \psi_1^!.
\end{equation}
\end{Lemma}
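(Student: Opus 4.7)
The plan is to verify the three standard things that one must check whenever a formula like (\ref{mu1})--(\ref{phimu}) is proposed to define a natural transformation between two functors: (a) for each representation $V$ the assignment on elementary tensors descends to a well-defined map on the tensor product $A\otimes_B V$, (b) the image $\phi_{a\otimes v}$ is genuinely a homomorphism of left $B$-modules $A\to V$, where $A$ on the source carries the $\psi_1$-structure, and (c) the resulting maps $\mu_V$ are natural in $V$. I would also (d) note that $\mu_V$ is automatically a morphism of left $A$-modules, since both $\psi_2^{*}V$ and $\psi_1^{!}V$ are $A$-modules and the formulas respect the residual left $A$-action on $A$.

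First I would check (b), i.e.\ that for fixed $a\in A$ and $v\in V$ the map $a'\mapsto \rho(a'\cdot_A a)v$ is $\psi_1$-linear. For $b\in B^+$ this unpacks to the identity $\rho(\psi_1(b)\cdot_A a'\cdot_A a)v=\rho(b)\rho(a'\cdot_A a)v$; writing $\psi_1(b)=b\Delta$ in $A$, both sides become $\rho(b\Delta a'a)v$ once one uses that $\rho$ restricted to $B^+$ satisfies $\rho(x)\rho(y)=\rho(x\cdot_\Delta y)=\rho(x\Delta y)$, where $\Delta$ is multiplied in $A$. Unitality is automatic because $\psi_1(1_B)=1_A$.

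Next, for the $B$-balancedness needed in (a), I would check that $\phi_{a\cdot_{\psi_2}b\otimes v}=\phi_{a\otimes \rho(b)v}$ for every $b\in B$. For $b\in B^+$ this is $\rho(a'\cdot_A a\Delta b)v=\rho(a'\cdot_A a)\rho(b)v$, which is again just the identity $\rho(x)\rho(b)=\rho(x\Delta b)$ applied to $x=a'\cdot_A a$; the case $b=1_B$ is trivial. This shows $\mu_V$ is a well-defined $\kk$-linear map. The $A$-linearity of (d) is the observation that $\mu_V(ca\otimes v)$ and $c\cdot\mu_V(a\otimes v)$ send $a'$ to $\rho(a'ca)v$ in both cases (using that the left $A$-action on $\mathrm{Hom}_B(A,V)$ is $(c\cdot f)(a')=f(a'c)$, which commutes with the $\psi_1$-action because left and right multiplications in $A$ commute).

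Finally, for naturality (c), given a $B$-module map $f\colon V\to V'$, one has to check that the square formed by $\mu_V,\mu_{V'}$ and $\psi_2^{*}f$, $\psi_1^{!}f$ commutes on an elementary tensor $a\otimes v$; both compositions send it to the homomorphism $a'\mapsto f(\rho(a'\cdot_A a)v)=\rho'(a'\cdot_A a)f(v)$, which is immediate from $f$ being $B$-equivariant. I do not foresee a serious obstacle: every step is a formal calculation, and the only point that requires care is to keep a clean bookkeeping of when a product is taken in $A$ and when in $B^+$, that is, to keep track of the insertions of $\Delta$ coming from $\psi_1$ and $\psi_2$; once this is done, the verification reduces to the single identity $\rho(x)\rho(y)=\rho(x\Delta y)$ for $x,y\in B^+=A$ together with $\psi_1(1_B)=\psi_2(1_B)=1_A$.
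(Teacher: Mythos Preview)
Your proposal is correct and follows precisely the same approach as the paper. The paper's own proof merely lists the four checks to be made---that $\phi_{a\otimes v}$ is a left $B$-module homomorphism, that $\mu_V$ is well-defined on $A\otimes_B V$, that $\mu_V$ is compatible with the left $A$-module structures, and that the $\mu_V$ are functorial in $V$---and then declares them a straightforward check left to the reader; you have carried out exactly these verifications in detail and correctly.
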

\begin{proof} There are several things that we need to check. First, morphism $\phi$ is indeed a homomorphism of left $B$-modules. Second,  morphism $\mu_V$ is well-defined on $A\otimes_{B}V$. Third, morphism $\mu_V$ is compatible with  left $A$-module structures. Forth, morphisms $\mu_V$ are functorial with respect to $V$. All this is a straightforward check, which we leave to the reader.
\end{proof}
By adjunction, we have a natural transformation:
$$
\chi : \psi_{1*}\psi_2^*\to {\rm id}.
$$
Let $V$ be a $B$-module with the action of $B$ given by $\rho :B\to {\rm End}V$. Then $\psi_{1*}\psi_2^*V=A\otimes V$ and
$$
\chi(a\otimes v)=\rho (a)v,
$$
where $a$ is interpreted as an element in $B^+$.

Denote the following adjunction morphisms by $\epsilon$ and $\delta$:
$$
\epsilon: \psi_{1*}\psi_1^!\to {\rm id},
$$
$$
\delta: {\rm id}\to \psi_{2*}\psi_2^*.
$$
For a $B$-module $V$ and $\varphi \in {\rm Hom}_B(A, V)=\psi_{1*}\psi_1^!V$, we have
$$
\epsilon_V(\varphi )=\varphi(1).
$$
For $v\in V$, we have
$$
\delta_V(v)= 1\otimes v
$$
as an element in $A\otimes_BV=\psi_{2*}\psi_2^*V$.
\begin{Lemma}\label{epsilondelta}
The kernel and cokernel of both $\epsilon$ and $\delta$ evaluated on any $B$-module $V$ are $B^+$-trivial modules.
\end{Lemma}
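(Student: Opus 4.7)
The plan is to verify the four claims (kernel and cokernel of $\epsilon$, kernel and cokernel of $\delta$) by direct computation, exploiting the adjunction descriptions of $\epsilon$ and $\delta$ together with the explicit formulas $\psi_1(\iota(a)) = a\Delta$, $\psi_2(\iota(a)) = \Delta a$ for the $B$-action on the augmentation ideal $B^+ \cong A$; here I write $\iota: A \hookrightarrow B$ for the canonical identification of $A$ with $B^+$.

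For $\epsilon_V$, I would first treat the kernel. Given $\varphi \in {\rm Hom}_B(A,V)$ with $\varphi(1_A) = 0$ and $b = \iota(b') \in B^+$, the formula $(b\cdot\varphi)(a) = \varphi(a\psi_1(b))$ lets me rewrite $ab'\Delta = \psi_1(\iota(ab'))\cdot 1_A$ and apply $B$-linearity of $\varphi$ to obtain $(b\cdot\varphi)(a)=\rho(\iota(ab'))\varphi(1_A) = 0$, so $b\cdot\varphi=0$. For the cokernel I would construct, for each $b' \in A$ and $v \in V$, the map $\varphi_{b',v}\colon A\to V$ defined by $\varphi_{b',v}(a) = \rho(\iota(ab'))v$; $B$-linearity is a one-line check from $\iota(c') \cdot_B \iota(ab') = \iota(c'\Delta ab')$, and one computes $\epsilon_V(\varphi_{b',v}) = \rho(\iota(b'))v$, so every element of $B^+V$ lies in ${\rm im}\,\epsilon_V$.

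For $\delta_V$, the main tool will be the auxiliary $\kk$-linear map
$$
\tilde\chi\colon A\otimes_B V \to V,\qquad a\otimes v \mapsto \rho(\iota(a))v,
$$
which is well defined precisely because $\iota\colon B^+\to B$ is multiplicative, giving $\rho(\iota(a\Delta c)) = \rho(\iota(a))\rho(\iota(c))$. The kernel calculation is then immediate: if $\delta_V(v) = 1_A\otimes v = 0$ then, using that $A\otimes_B V$ is a left $A$-module via the first factor, $a \otimes v = a\cdot(1_A\otimes v) = 0$ for every $a\in A$, whence $\rho(\iota(a))v = \tilde\chi(a\otimes v) = 0$, so $B^+v = 0$. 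For the cokernel I would use the identity $\Delta b' a = \psi_2(\iota(b'a)) = 1_A\cdot_B \iota(b'a)$ in the right $B$-module $A$ to compute
$$
\iota(b')\cdot(a\otimes v) = \Delta b' a \otimes v = 1_A\otimes \rho(\iota(b'a))v = \delta_V(\rho(\iota(b'a))v),
$$
which shows $B^+ \cdot (A\otimes_B V) \subseteq {\rm im}\,\delta_V$.

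The arguments are essentially formal and I do not anticipate any genuine obstacle beyond notational bookkeeping: one has to keep straight which module structure ($\psi_1$ or $\psi_2$) is in force at each step, and maintain the distinction between multiplication in $A$ versus multiplication in $B$. Notably, no well-tempered hypothesis is required, consistent with the fact that the lemma is stated in the general setting of Section 7.1 and follows from pure adjunction formalism together with the explicit formulas defining the homotope.
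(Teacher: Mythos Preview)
Your proposal is correct and follows essentially the same approach as the paper's proof: the four cases are handled by the same direct computations, and your auxiliary map $\tilde\chi$ is precisely the map $\chi\colon \psi_{1*}\psi_2^*\to{\rm id}$ that the paper introduces just before the lemma (with the same formula $a\otimes v\mapsto\rho(a)v$). Your observation that no well-tempered hypothesis is needed is also correct and matches the paper's placement of the lemma in the general subsection.
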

\begin{proof} We denote by $\rho$ the action of $B$ on $V$.
Let $\varphi \in {\rm Hom}_B(A, V)=\psi_{1*}\psi_1^!V$. If $\varphi$ belongs to the kernel of $\epsilon_V$, then $\varphi (1)=0$ and
$$
(b\cdot \varphi ) (a)=\varphi (a\cdot_Ab\cdot_A\Delta )=\rho (a\cdot_A b)\varphi (1)=0,
$$
for any $a\in A$ and $b\in B^+$. Hence the kernel of $\epsilon$ is a $B^+$-trivial $B$-module.

If $v=\rho (b)u$ for some $b\in B^+$ and $u\in V$, then define $\varphi \in {\rm Hom}_B(A, V)$ by
$\varphi (a)=\rho (a\cdot_A b)u$. This is indeed a homomorphism of $B$-modules and $\varphi (1)=v$. Hence the action by elements in $B^+$ on $V$ has values in the image of $\epsilon$, i.e. the cokernel of $\epsilon$ is a $B^+$-trivial $B$-module.

Let $a\otimes v$ be in $A\otimes_BV=\psi_{2*}\psi_2^*V$ and $b\in B^+$. Then
$$
b\cdot (a\otimes v)=(\Delta \cdot_Ab\cdot_Aa)\otimes v=1\otimes \rho(b\cdot_Aa)v.
$$
Thus, the cokernel of $\delta_V$ is a $B^+$-trivial $B$-module.

Let $v\in V$ is such that $\delta_V(v)=1\otimes v=0$ in $A\otimes_BV=\psi_{2*}\psi_2^*V$. Using the fact that $A\otimes_BV$ is a left $A$-module, we get $a\otimes v=a(1\otimes v)=0$, for any $a\in A$. Now let us use the fact that $A\otimes_BV$ is the underlying vector space for $\psi_{1*}\psi_2^*V$, though with a different $B$-module structure, and apply $\chi_V :A\otimes_BV\to V$. We get:
$$
\chi_V(a\otimes v)=\rho (a)v=0,
$$
where $a$ is now interpreted as an arbitrary element in $B^+$. Therefore, the kernel of $\delta_V$ is a $B^+$-trivial $B$-module.
\end{proof}

\begin{Lemma}\label{3composite}Let $V$ be a $B$-module.
The composite of natural transformations:
\begin{equation}
\xymatrix{\psi_{1*}\psi_1^!V\ar[r]^{\epsilon_V} & V \ar[r]^{\delta_V} &  \psi_{2*}\psi_2^*V\ar[r]^{\psi_{2*}(\mu_V)} & \psi_{2*}\psi_1^!V}
\end{equation}
coincides with $\lambda_{\psi_1^!V}$.
\end{Lemma}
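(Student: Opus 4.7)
The plan is a direct diagram chase on a generic element $\varphi \in \psi_1^! V = \mathrm{Hom}_B(A, V)$, tracking carefully which $B$-module structure on $A$ is being used at each stage. The main bookkeeping issue, and really the only point one has to get right, is that in $\psi_1^! V$ the underlying set is $\mathrm{Hom}_B(A, V)$ with $A$ regarded as a left $B$-module via $\psi_1$, and the resulting left $A$-module structure on $\psi_1^! V$ is induced by the \emph{right} $A$-action on $A$, i.e.\ $(a \cdot \varphi)(a') = \varphi(a' \cdot_A a)$. All three maps in the composite are then easy to evaluate on $\varphi$.

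\emph{Step 1.} By the formula for $\epsilon$, $\epsilon_V(\varphi) = \varphi(1_A) \in V$.

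\emph{Step 2.} By the formula for $\delta$, $\delta_V(\varphi(1_A)) = 1_A \otimes \varphi(1_A) \in A\otimes_B V = \psi_{2*}\psi_2^* V$.

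\emph{Step 3.} Applying $\mu_V$, we obtain $\phi_{1_A \otimes \varphi(1_A)} \in \mathrm{Hom}_B(A, V)$, which by (\ref{phimu}) is the homomorphism
\[
a' \ \longmapsto\ \rho(a' \cdot_A 1_A)\,\varphi(1_A) \ =\ \rho(a')\,\varphi(1_A),
\]
with $a' \in A$ identified with its image in $B^+ \subset B$.

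\emph{Step 4.} I will then compare this with $\lambda_{\psi_1^! V}(\varphi)$. By (\ref{formulaforlambda}), the latter is the result of acting by $\Delta \in A$ on $\varphi$ in the left $A$-module structure on $\psi_1^! V$, hence the homomorphism
\[
a' \ \longmapsto\ \varphi(a' \cdot_A \Delta).
\]
The key identification is that these two functions of $a'$ agree. This is exactly the $B$-linearity of $\varphi$: for any $b \in B$, $\varphi(\psi_1(b) \cdot_A a) = b \cdot \varphi(a)$, so taking $a = 1_A$ and $b = a' \in B^+$ (whence $\psi_1(a') = a' \cdot_A \Delta$) gives
\[
\varphi(a' \cdot_A \Delta) \ =\ \rho(a')\,\varphi(1_A).
\]
Thus the two elements of $\mathrm{Hom}_B(A, V)$ coincide, which completes the proof. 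There is no real obstacle; everything reduces to writing out the adjunction units and the definition of $\mu$, and then invoking $B$-linearity of $\varphi$ with respect to the $\psi_1$-action on $A$.
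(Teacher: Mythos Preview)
Your proof is correct and is precisely the ``straightforward check'' the paper invokes as its entire proof. You have tracked the underlying elements through $\epsilon_V$, $\delta_V$, and $\mu_V$ accurately, and the final identification via $B$-linearity of $\varphi$ with respect to the $\psi_1$-structure on $A$ is exactly the right point.
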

\begin{proof} This is a straightforward check.
\end{proof}

{\bf Example.} If $A$ is a commutative algebra, then $\psi_1=\psi_2$. Let $A=k[t]$ be the algebra of polynomial in one variable and $\Delta =t^2$. Algebra $B={\widehat A_{\Delta}}$ is the algebra of an affine curve with a cusp:
$$
B=k[x, y]/(x^3-y^2).
$$
The homomorphism $\psi_1=\psi_2:B\to A$ is the normalization map for the cusp curve given by:
$$
x=t^2,\ \ y=t^3.
$$
Take $V=A$ as a $B$-module. Then an easy calculation shows that
$$
A\otimes_BA\simeq k[t]\oplus k[t]/t^2
$$
as a left $A$-module, and
$$
{\rm Hom}_B(A, A)\simeq k[t].
$$
Thus $\mu_A$ is not an isomorphism. Note that $\Delta$ is not well-tempered in this example in view of lemma \ref{comtemp}. We will see that in the well-tempered case $\mu$ is an isomorphism of functors.

\subsection{Calculations in the well-tempered case}

\begin{Lemma}\label{exactfunctors}
Let $\Delta$ be a well-tempered element in $A$. Then functors $\psi^!_1$ and $\psi^*_2$ are exact.
\end{Lemma}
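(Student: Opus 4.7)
The plan is to reduce both claims directly to the projectivity of $B^+$ as a one-sided $B$-module, which is guaranteed by well-temperedness. The key observation is that under the identification $B^+ \cong {}_{\psi_1}A_{\psi_2}$ recorded in Section 5.1, the left $B$-module $A$ used in defining $\psi_1^!$ and the right $B$-module $A$ used in defining $\psi_2^*$ agree, as $B$-modules, with $B^+$ equipped with its left (resp.\ right) module structure inherited from $B$.

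First I would handle $\psi_2^*$. By definition $\psi_2^*(V) = A \otimes_B V$ with $A$ regarded as a right $B$-module via $\psi_2$, and by the identification above this is the same as $B^+ \otimes_B V$. Since $\Delta$ is well-tempered, $B^+$ is projective, hence flat, as a right $B$-module, so the functor $B^+ \otimes_B (-)$ is exact. This gives exactness of $\psi_2^*$.

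Next I would treat $\psi_1^!$ analogously. By definition $\psi_1^!(V) = \mathrm{Hom}_B(A, V)$ with $A$ regarded as a left $B$-module via $\psi_1$, which is the same as $\mathrm{Hom}_B(B^+, V)$. Again by the well-tempered hypothesis, $B^+$ is projective as a left $B$-module, so $\mathrm{Hom}_B(B^+, -)$ is exact on the category of left $B$-modules.

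There is essentially no obstacle: once one unwinds the module structures correctly, both statements are immediate consequences of the projectivity half of the definition of well-temperedness. The only point requiring minor care is the verification that the left (resp.\ right) $B$-action on $A$ obtained by pulling back along $\psi_1$ (resp.\ $\psi_2$) indeed coincides with the restriction to $B^+$ of the regular $B$-module structure on $B$, which is precisely the content of the identification $B^+ \cong {}_{\psi_1}A_{\psi_2}$ discussed in Section~\ref{changemult}.
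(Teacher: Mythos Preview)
Your proof is correct and takes essentially the same approach as the paper: both reduce the exactness of $\psi_1^!$ and $\psi_2^*$ directly to the projectivity of $B^+$ as a left (respectively right) $B$-module via the identification $B^+\cong{}_{\psi_1}A_{\psi_2}$. Your write-up is simply a more detailed unpacking of the two-line argument in the paper.
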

\begin{proof}
Since $B^+$ is projective as a left $B$-module, then $\psi^!_1$ is exact. Since $B^+$ is projective, hence flat, as a right $B$-module, then $\psi^*_2$ is exact.
\end{proof}
\begin{Lemma}\label{zerohoms}
Let $\Delta$ be well-tempered, $W$ an $A$-module and $V$ a $B^+$-trivial $B$-module.
Then
\begin{itemize}
\item[(i)] $\RR \psi_1^!V={\mathbb R}{\rm Hom}_{B}(A , V)=0;$
\item[(ii)] ${\mathbb L} \psi_2^*V=A \otimes_{B}^{\mathbb L} V=0;$
\item[(iii)] ${\rm Ext}_{B}^{\bullet}(\psi_{1*}W, V)=0;$
\item[(iv)]${\rm Ext}_{B}^{\bullet}(V, \psi_{2*}W)=0.$
\end{itemize}
\end{Lemma}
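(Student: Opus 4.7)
The plan is to prove (i) and (ii) by direct computation from well-temperedness, and then deduce (iii) and (iv) as formal consequences via derived adjunction.

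For (i), I would let $\phi\in\mathrm{Hom}_B({}_{\psi_1}A,V)$ and show $\phi=0$. For $b\in B^+$ and $a\in A$, the left $\psi_1$-structure gives
$$
\phi(b\Delta a)=\phi(b\cdot_{\psi_1}a)=b\cdot\phi(a)=0,
$$
where the last equality uses that $V$ is $B^+$-trivial. Thus $\phi$ vanishes on the two-sided ideal $A\Delta A$, which equals $A$ by well-temperedness (equivalently, by lemma \ref{fingen} one has a decomposition $1_A=\sum x_i\Delta y_i$). Hence $\mathrm{Hom}_B(A,V)=0$. Since ${}_{\psi_1}A\cong B^+$ is projective as a left $B$-module by well-temperedness, $\mathrm{Ext}^i_B(A,V)=0$ for all $i\geq 0$, so $\RR\psi_1^!V=0$. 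The argument for (ii) is the mirror image: for an elementary tensor $a\otimes v\in A\otimes_B V$, substituting $a=\sum(ax_i)\Delta y_i$ and sliding $y_i\in B^+$ across the tensor via the right $\psi_2$-structure gives $a\otimes v=\sum(ax_i)\otimes y_iv=0$, since $B^+$ kills $V$. Because $A_{\psi_2}\cong B^+$ is flat (indeed projective) as a right $B$-module, $\mathbb{L}\psi_2^*V=\psi_2^*V=0$.

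For (iii) and (iv), I would invoke derived adjunction. Restriction of scalars $\psi_{1*}$ is exact, so the ordinary adjunction $(\psi_{1*},\psi_1^!)$ derives to the derived adjunction $(\psi_{1*},\RR\psi_1^!)$, giving
$$
\mathrm{Ext}^\bullet_B(\psi_{1*}W,V)=\RR\mathrm{Hom}_A(W,\RR\psi_1^!V)=0
$$
by (i). Analogously, exactness of $\psi_{2*}$ gives the derived adjunction $(\mathbb{L}\psi_2^*,\psi_{2*})$, whence
$$
\mathrm{Ext}^\bullet_B(V,\psi_{2*}W)=\RR\mathrm{Hom}_A(\mathbb{L}\psi_2^*V,W)=0
$$
by (ii).

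I do not anticipate real obstacles: the heavy lifting has already been done in the definition of well-temperedness and in proposition \ref{projectiveness}, and the argument reduces to the identity $A\Delta A=A$ combined with the left and right projectivity of $B^+$ over $B$. The one place requiring care is to keep track of which $\psi_i$ supplies the left and which the right $B$-structure on $A$; once that bookkeeping is in place, (i) and (ii) are immediate, and (iii)--(iv) are formal.
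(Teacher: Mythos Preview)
Your proof is correct and follows essentially the same approach as the paper: (i) and (ii) are proved by direct computation from $A\Delta A=A$ together with left/right projectivity of $B^+$, and (iii)--(iv) are then derived-adjunction consequences. The only difference is cosmetic---you phrase (i) and (ii) in terms of $A$ and $\Delta$, while the paper phrases the same computation in terms of the surjectivity of the multiplication map $B^+\otimes B^+\to B^+$.
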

\begin{proof} First,
$$
{\mathbb R}{\rm Hom}_{B}(A , V)=\RR {\rm Hom}_{B}(B^+, V)={\rm Hom}_{B}(B^+, V),
$$
because $B^+$ is a projective left $B$-module.
Further any homomorphism from $A =B^+$ to a $B^+$-trivial $B$-module must annihilate the image
of the multiplication map $B^+\otimes B^+\to B^+$. This map is epimorphic by the definition of well-tempered elements.
This implies (i).

We have by adjunction:
$$
{\rm Ext}_{B}^{\bullet}(\psi_{1*}W, V)={\rm Ext}_{A}^{\bullet}(W,\RR \psi_1^!V)=0,
$$
which proves (iii).

Similarly,
$$
A \otimes_{B}^{\mathbb L} V=B^+\otimes_{B}^{\mathbb L} V=B^+\otimes_{B}V
=0,
$$
because every element $b\in B^+$ has the form $b=\sum a_i\cdot_Bb_i$, where $a_i,b_i\in B^+$. Hence $b\otimes v=\sum a_i\otimes b_iv=0$ for any $v\in V$. This proves (ii) and implies
$$
{\rm Ext}_{B}^{\bullet}(V, \psi_{2*}W)={\rm Ext}_{A}^{\bullet}({\mathbb L} \psi_2^*V, W)={\rm Ext}_{A}^{\bullet}(A \otimes_{B}^{\mathbb L} V, W)=0,
$$
which proves (iv).
\end{proof}
\begin{Proposition}\label{adjunction}
The adjunction morphisms
$$
\psi_2^*\psi_{2*}\to {\rm id},\ \ {\rm id}\to \psi_1^!\psi_{1*}
$$
are isomorphisms in the well-tempered case.
\end{Proposition}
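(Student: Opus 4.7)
My plan is to verify each adjunction morphism is an isomorphism first on $W=A$ and then to extend to arbitrary $A$-modules by a five-lemma argument applied to a free presentation, using that both composite functors $\psi_1^!\psi_{1*}$ and $\psi_2^*\psi_{2*}$ are exact and commute with arbitrary direct sums. For the unit $W\to \psi_1^!\psi_{1*}W = {\rm Hom}_B(A,\psi_{1*}W)$, $w\mapsto(a\mapsto aw)$, the case $W=A$ is exactly the map $a\mapsto R_a$ of isomorphism (\ref{homkg}), which is already known to be an isomorphism.

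The counit $\psi_2^*\psi_{2*}W = A\otimes_B\psi_{2*}W\to W$, $a\otimes w\mapsto aw$, is the subtle case at $W=A$. I will use the natural transformation $\lambda:\psi_{1*}\to\psi_{2*}$ of (\ref{lowpsi}); by Lemma \ref{trivialconefirst} the kernel and cokernel of $\lambda_A$ are $B^+$-trivial. Since $\psi_2^*$ is exact (Lemma \ref{exactfunctors}) and, by Lemma \ref{zerohoms}, kills $B^+$-trivial modules, it carries $\lambda_A$ to an isomorphism $\psi_2^*\psi_{1*}A \cong \psi_2^*\psi_{2*}A$. Moreover $\psi_2^*\psi_{1*}A = A_{\psi_2}\otimes_B{}_{\psi_1}A \cong A$ via $a\otimes a'\mapsto a\Delta a'$ by (\ref{bbbb}). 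These identifications produce an abstract isomorphism $\Phi: A\to \psi_2^*\psi_{2*}A$; to identify the counit with $\Phi^{-1}$, I will use the decomposition $1_A = \sum_j x_j\Delta y_j$ coming from $A\Delta A=A$. Under (\ref{bbbb}) the element $\sum_j x_j\otimes y_j\in \psi_2^*\psi_{1*}A$ maps to $1$, so $\Phi(1)=\sum_j x_j\otimes\Delta y_j$, on which the counit evaluates to $\sum_j x_j\Delta y_j=1$. Any left $A$-module endomorphism of $A$ is determined by the image of $1$, so this forces the counit on $A$ to be the inverse of $\Phi$, hence an isomorphism.

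To pass to arbitrary $W$, choose a free presentation $F_1\to F_0\to W\to 0$. Both $\psi_1^!\psi_{1*}$ and $\psi_2^*\psi_{2*}$ are exact by Lemma \ref{exactfunctors}; they also commute with arbitrary direct sums, which for $\psi_1^! = {\rm Hom}_B(A,-)$ uses that $A = B^+$ is finitely generated projective, hence finitely presented, as a left $B$-module (Lemma \ref{fingen} together with well-temperedness). Thus each adjunction morphism is an isomorphism on $F_0$ and $F_1$; a five-lemma applied to the resulting commutative diagram with exact rows yields the isomorphism on $W$. The principal obstacle is the element-chase identifying $\Phi$ with the canonical counit at $W=A$: without it the chain of natural isomorphisms via $\lambda$ and (\ref{bbbb}) only delivers abstract isomorphy. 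Once this matching is pinned down, the extension to all modules is formal.
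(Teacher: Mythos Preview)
Your proof is correct and uses the same ingredients as the paper --- the natural transformation $\lambda$, Lemmas \ref{trivialconefirst}, \ref{exactfunctors}, \ref{zerohoms}, and the isomorphisms (\ref{bbbb}) and (\ref{homkg}) --- but it is organised differently in two places. First, for the counit the paper works for arbitrary $W$ from the outset: it tensors the $A$-bimodule isomorphism $A\otimes_B A\cong A$ of (\ref{bbbb}) with $W$ over $A$ to obtain $\psi_2^*\psi_{1*}W\cong W$ directly, then observes the standard adjunction identity that the map $\eta_W:\psi_2^*\psi_{1*}W\to W$ adjoint to $\lambda_W$ is the composite $(\text{counit}_W)\circ\psi_2^*(\lambda_W)$; since $\psi_2^*(\lambda_W)$ is an isomorphism by the same lemmas you invoke, the counit is an isomorphism by two-out-of-three. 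This sidesteps entirely what you flag as the ``principal obstacle'': no element-chase matching $\Phi$ to the counit is needed, because the factorisation through the counit is forced by the triangle identity for the adjunction $(\psi_2^*,\psi_{2*})$. Second, your passage from $W=A$ to general $W$ by free presentation and five-lemma is replaced in the paper (for the counit) by the single step of tensoring over $A$, and (for the unit) by the same one-line remark on (\ref{homkg}). Your route works, but the paper's adjunction-calculus argument is shorter and avoids both the explicit decomposition $1_A=\sum x_j\Delta y_j$ and the five-lemma.
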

\begin{proof}
Let $W$ be a left $A$-module. Since $A$ is isomorphic $B^+$, isomorphism (\ref{bbbb}) reads as $A\otimes_BA=A$. Applying tensor product $(-)\otimes_{A}W$ to it gives:
$A\otimes_{B}W=W$, which implies that the morphism $\eta_W:\psi_2^*\psi_{1*}W\to W$ obtained from $\lambda_W$ by adjunction is an isomorphism. Now apply $\psi_2^*$ to $\lambda_W$.
By lemmas \ref{trivialconefirst}, \ref{exactfunctors} and \ref{zerohoms}(ii), we get that
$\psi_2^*(\lambda_W):\psi_2^*\psi_{1*}W\to \psi_2^*\psi_{2*}W$ is an isomorphism.
Since $\eta_W$ is the composite of $\psi_2^*(\lambda_W)$ and the adjunction morphism $\psi_2^*\psi_{2*}W\to W$, we get that $\psi_2^*\psi_{2*}\to {\rm id}$ is an isomorphism.

Now, applying tensor product $(-)\otimes_{A}W$ to isomorphism (\ref{homkg}) implies that ${\rm id}\to \psi_1^!\psi_{1*}$ is an isomorphism.
\end{proof}


\begin{Proposition}\label{naturaliso}
Let $\Delta$ be a well-tempered element in algebra $A$ and $B={\widehat A_{\Delta}}$. Then natural transformation $\mu$ from (\ref{naturalmu}) gives an isomorphism $\psi_2^*\simeq \psi_1^!$ on the category $B-{\rm mod}$.
\end{Proposition}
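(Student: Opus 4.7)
The plan is to verify that $\mu_V$ is an isomorphism first on the free module $V=B$, and then to bootstrap to arbitrary $V$ using a free presentation together with exactness and compatibility with direct sums for the two functors in question.

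First I would record the two structural properties of $\psi_2^*$ and $\psi_1^!$ in the well-tempered case. Exactness of both functors is Lemma \ref{exactfunctors}. Commutation with arbitrary direct sums holds for $\psi_2^*$ because it is a left adjoint, and for $\psi_1^!V={\rm Hom}_B(A,V)$ because, by Lemma \ref{fingen}, $A=B^+$ is finitely generated as a left $B$-module. The natural transformation $\mu$ is compatible with direct sums and with cokernels (being a natural transformation between such functors), so it suffices to show that $\mu_B\colon A\otimes_B B\to {\rm Hom}_B(A,B)$ is an isomorphism.

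Next I would compute $\mu_B$ and identify it with the isomorphism of Proposition \ref{homkgb}. Under the canonical identification $A\otimes_B B=A$, an element $a\in A$ corresponds to $a\otimes 1_B$. By formulas (\ref{mu1}) and (\ref{phimu}), $\mu_B(a\otimes 1_B)$ is the $B$-module homomorphism $A\to B$ given by $a'\mapsto a'\cdot_A a$, viewed through the inclusion $B^+\hookrightarrow B$. This is precisely the map (\ref{ahom}) from Proposition \ref{homkgb}, which is proved there to be an isomorphism of left $A$-modules. Hence $\mu_B$ is an isomorphism.

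Finally, for an arbitrary $B$-module $V$, choose a free presentation $F_1\to F_0\to V\to 0$ with each $F_i$ a direct sum of copies of $B$. Applying the exact functors $\psi_2^*$ and $\psi_1^!$ and using naturality of $\mu$, I obtain a commutative diagram
\begin{equation*}
\xymatrix{
\psi_2^*F_1\ar[r]\ar[d]_{\mu_{F_1}} & \psi_2^*F_0\ar[r]\ar[d]_{\mu_{F_0}} & \psi_2^*V\ar[r]\ar[d]_{\mu_V} & 0 \\
\psi_1^!F_1\ar[r] & \psi_1^!F_0\ar[r] & \psi_1^!V\ar[r] & 0
}
\end{equation*}
with exact rows. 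Since both functors commute with the direct sums involved and $\mu_B$ is an isomorphism, the two left vertical arrows are isomorphisms, and the five-lemma (or simply right-exactness) forces $\mu_V$ to be an isomorphism. The main obstacle in executing this plan is the bookkeeping in the second step: one must carefully match the $B$-bimodule structures on $A$ induced by $\psi_1$ and $\psi_2$ with the identifications $A\otimes_B B=A$ and ${\rm Hom}_B(A,B)=A$ in order to see that $\mu_B$ really coincides with the isomorphism (\ref{ahom}).
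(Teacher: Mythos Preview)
Your proposal is correct and follows essentially the same strategy as the paper: check that $\mu_B$ is the isomorphism of Proposition \ref{homkgb}, note exactness (Lemma \ref{exactfunctors}) and compatibility with direct sums (via Lemma \ref{fingen}), and then deduce the general case from a free presentation. The paper's proof is terser but identical in content; your version adds the explicit diagram and the careful identification of $\mu_B$ with (\ref{ahom}), which is exactly the bookkeeping you flagged as the main obstacle.
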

\begin{proof} Functor $\psi_2^*$ takes rank 1 free $B$-module $B$ to $A$ and so does the functor $\psi_1^!$ due to proposition \ref{homkgb}. Functors $\psi_2^*$ and $\psi_1^!$ are exact and commute with infinite direct sum. The latter does, because $A$ is a finitely generated left $B$-module due to lemma \ref{fingen}. Hence, a presentation for a $B$-module $V$ as a cokernel of a homomorphism of free $B$-modules implies similar presentation for $\psi_2^*V$ and $\psi_1^!V$ as cokernels of a homomorphisms of free $A$-modules, while $\mu$ induces an isomorphism of these presentations.
\end{proof}



\begin{Proposition}\label{exceptional} Let $\Delta$ be a well-tempered. For any two $B^+$-trivial $B$-modules $U$ and $V$ and any $i\in \ZZ$, we have:
$$
{\rm Ext}^i_B(U, V)={\rm Ext}^i_{\kk}(U, V).
$$
In particular, $\kk$ is an exceptional object, i.e. ${\rm Hom}_{B}(\kk, \kk)=\kk$ and
${\rm Ext}^i_{B}(\kk, \kk)=0$, for $i\ne 0$.
\end{Proposition}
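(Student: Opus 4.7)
The strategy is to compute the Ext groups via an explicit projective $B$-resolution of $U$ obtained by combining the augmentation sequence $0 \to B^+ \to B \to \kk \to 0$ with a projective $\kk$-resolution of $U$. Well-temperedness of $\Delta$ enters in two places: it makes $B^+$ a projective $B$-module, so the augmentation sequence is itself a length-one projective $B$-resolution of $\kk$, and it forces the multiplication map $B^+ \otimes B^+ \to B^+$ to be surjective.

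The crucial vanishing I would establish first is that ${\rm Hom}_B(B^+, V) = 0$ for every $B^+$-trivial $B$-module $V$. Indeed, any $B$-linear map $\varphi: B^+ \to V$ satisfies $\varphi(bc) = b\cdot \varphi(c) = 0$ for $b,c \in B^+$ since $V$ is $B^+$-trivial, and by surjectivity of $B^+ \otimes B^+ \to B^+$ every element of $B^+$ is a finite sum of such products, so $\varphi \equiv 0$. Applying this to the resolution $0 \to B^+ \to B \to \kk \to 0$ immediately yields ${\rm Ext}^0_B(\kk, V) = V$ and ${\rm Ext}^i_B(\kk, V) = 0$ for $i \geq 1$; specializing to $V = \kk$ produces the exceptionality statement.

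For arbitrary $B^+$-trivial $U$, pick a projective $\kk$-resolution $P_\bullet \to U$ and form the bicomplex whose two columns are the complexes $B^+ \otimes_\kk P_\bullet$ and $B \otimes_\kk P_\bullet$ of $B$-modules, joined by the horizontal inclusion $\iota \otimes {\rm id}$. Because $B$ is free as a $\kk$-module, each $B \otimes_\kk P_n$ and $B^+ \otimes_\kk P_n$ is a projective $B$-module (a summand of a free one). Running the spectral sequence of this bicomplex that takes vertical homology first, and using the $\kk$-flatness of $B$ and $B^+$, one sees that the total complex is quasi-isomorphic to $U$ concentrated in degree zero; hence it is a projective $B$-resolution of $U$.

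Finally, applying ${\rm Hom}_B(-, V)$ and invoking the tensor-hom adjunction, the column $B^+ \otimes_\kk P_\bullet$ contributes ${\rm Hom}_\kk(P_\bullet, {\rm Hom}_B(B^+, V)) = 0$ by the key vanishing, while the column $B \otimes_\kk P_\bullet$ contributes ${\rm Hom}_\kk(P_\bullet, V)$. The totalization collapses to the single complex ${\rm Hom}_\kk(P_\bullet, V)$, whose cohomology is by definition ${\rm Ext}^*_\kk(U, V)$. The main technical point to verify with care is that the total complex of the bicomplex really is a projective $B$-resolution of $U$; beyond that, no additional ideas are needed.
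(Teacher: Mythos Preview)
Your proof is correct and follows essentially the same route as the paper: both arguments hinge on the vanishing ${\rm Hom}_B(B^+,V)=0$ (from surjectivity of $B^+\otimes B^+\to B^+$), use the augmentation sequence tensored with a free/projective $\kk$-resolution of $U$, and conclude by collapsing the resulting double complex. The paper phrases the last step as ``free $\kk$-modules are ${\rm Hom}_B(-,V)$-acyclic, so a free $\kk$-resolution computes ${\rm Ext}^*_B(U,V)$,'' whereas you make the same computation explicit via the bicomplex; one cosmetic remark is that the projectivity of $B\otimes_\kk P_n$ and $B^+\otimes_\kk P_n$ over $B$ comes from $P_n$ being $\kk$-projective (and $B^+$ being $B$-projective), not from $B$ being $\kk$-free.
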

\begin{proof} Clearly, ${\rm Hom}_{B}(U, V)={\rm Hom}_{\kk}(U, V)$. Let $F$ be a free $\kk$-module. Let us show that ${\rm Ext}^{>0}_B(F, V)=0$, for any $B^+$-module $V$. Applying functor $(-)\otimes F$ to the augmentation exact sequence (\ref{aug1}) gives a short exact sequence of $B$-modules:
$$
0\to B^+\otimes F\to B\otimes F\to F\to 0
$$
Apply functor ${\rm Ext}^i_B(-, V)$ to it. Since $B^+$ and $B$ are projective $B$-modules, we get ${\rm Ext}^{>1}(F, V)=0$ and an exact sequence:
$$
0\to {\rm Hom}_B(F, V)\to {\rm Hom}_B(B\otimes F, V)\to {\rm Hom}_B(B^+\otimes F, V)\to {\rm Ext}^1_B(F, V)\to 0
$$
Since $B^+=\psi_{1*}A$, then ${\rm Hom}_B(B^+, V)=0$ by lemma \ref{zerohoms} (iii).
Since $F$ is free, we have: ${\rm Hom}_B(B^+\otimes F, V)=0$ and, in view of the exact sequence, ${\rm Ext}^1_B(F, V)=0$, too. The calculation of ${\rm Ext}^i_B(U, V)$ via free $\kk$-resolutions implies the required isomorphism wit ${\rm Ext}^i_{\kk}(U, V)$.

\end{proof}
\begin{Proposition} Let $\Delta$ be well-tempered and $W$ an $A$-module. Then we have an isomorphism of $B$-modules:
\begin{equation}\label{hompsi12}
{\rm Hom}_B(B^+, \psi_{1*}W)=\psi_{2*}W.
\end{equation}
Further, there is a quasi-isomorphism of complexes:
\begin{equation}\label{hominpsi1}
\RR {\rm Hom}_B(\kk , \psi_{1*}W)=\{
\xymatrix{
0\ar[r] & \psi_{1*}W \ar[r]^{\lambda_W} & \psi_{2*}W\ar[r] &  0}\},
\end{equation}
with the latter having two non-trivial components in degrees 0 and 1.
\end{Proposition}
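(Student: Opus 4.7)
The plan is to prove the two claims in order, since the second builds on the first.

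For the isomorphism ${\rm Hom}_B(B^+,\psi_{1*}W)=\psi_{2*}W$, I would exhibit the explicit map $\Phi\colon\psi_{2*}W\to{\rm Hom}_B(B^+,\psi_{1*}W)$ sending $w$ to $g_w\colon a\mapsto aw$, where $aw$ denotes the $A$-action on $W$. Three things need checking. First, $g_w$ is left $B$-linear as a map ${}_{\psi_1}A\to{}_{\psi_1}W$, since $g_w(\psi_1(b)a)=(\psi_1(b)a)w=\psi_1(b)(aw)=\psi_1(b)g_w(a)$ by associativity of the $A$-action. Second, $\Phi$ is $B$-equivariant: the natural left $B$-structure on ${\rm Hom}_B(B^+,\psi_{1*}W)$ comes from the right $B$-action on $B^+$ (which is given by $\psi_2$), so $(b\cdot g_w)(a)=g_w(a\Delta b)=(a\Delta b)w=g_{\Delta bw}(a)$, matching the $\psi_{2*}W$ structure on the source. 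Third, $\Phi$ is bijective. Injectivity is immediate from $g_w(1_A)=w$. Surjectivity is the essential step where well-temperedness enters: using $A\Delta A=A$ to write $1_A=\sum_i c_i\Delta d_i$, I would upgrade the mere $B$-linearity $g(b\Delta a)=b\Delta g(a)$ of an arbitrary $g\in{\rm Hom}_B(B^+,\psi_{1*}W)$ to full left $A$-linearity via
$$
g(ca)=g\!\Big(\sum_i cc_i\Delta d_i a\Big)=\sum_i cc_i\Delta\,g(d_i a)=c\,g\!\Big(\sum_i c_i\Delta d_i a\Big)=c\,g(a),
$$
so that $g$ is determined by $g(1_A)$ and equals $g_{g(1_A)}$.

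For the quasi-isomorphism, I would apply $\RR{\rm Hom}_B(-,\psi_{1*}W)$ to the augmentation sequence $0\to B^+\to B\to\kk\to 0$, obtaining a distinguished triangle whose middle term $\RR{\rm Hom}_B(B,\psi_{1*}W)$ equals $\psi_{1*}W$ concentrated in degree $0$ (via $f\mapsto f(1_B)$) and whose right-hand term $\RR{\rm Hom}_B(B^+,\psi_{1*}W)$ equals $\psi_{2*}W$ concentrated in degree $0$: higher ${\rm Ext}$'s vanish because well-temperedness makes $B^+$ projective as a left $B$-module, and degree $0$ is handled by the first part. Consequently $\RR{\rm Hom}_B(\kk,\psi_{1*}W)$ is quasi-isomorphic to a two-term complex supported in degrees $0$ and $1$. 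It remains to identify the differential with $\lambda_W$: an element $w\in\psi_{1*}W$ corresponds to $f_w\colon b\mapsto\psi_1(b)w$, whose restriction to $B^+$ is the homomorphism $a\mapsto a\Delta w$, and applying $\Phi^{-1}$ (which is evaluation at $1_A$) yields $\Delta w=\rho(\Delta)w=\lambda_W(w)$, as required.

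The main obstacle I anticipate is the upgrade from $B$-linearity to $A$-linearity in the first part; once that is in hand, everything else is transport of structure through standard adjunctions and an explicit chase of the boundary morphism.
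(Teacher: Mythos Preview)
Your proof is correct and follows essentially the same route as the paper: both identify ${\rm Hom}_B(B^+,\psi_{1*}W)$ with $W$ via evaluation at $1_A$ and then read off the $B$-structure through $\psi_2$, and both compute $\RR{\rm Hom}_B(\kk,\psi_{1*}W)$ by applying ${\rm Hom}_B(-,\psi_{1*}W)$ to the augmentation resolution $0\to B^+\to B\to\kk\to 0$. The only cosmetic difference is that the paper packages your surjectivity step (the upgrade from $B$-linearity to $A$-linearity using $1_A=\sum c_i\Delta d_i$) as a citation of the earlier adjunction isomorphism $\psi_1^!\psi_{1*}\simeq{\rm id}$, whereas you redo that computation in place.
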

\begin{proof}
We have an isomorphism of $A$-modules ${\rm Hom}_B(A, \psi_{1*}W)=\psi_1^!\psi_{1*}W=W$ by proposition \ref{adjunction}. ${\rm Hom}_B(B^+, \psi_{1*}W)$ is the same module with the left action of $B$ coming from its right action on $A$,  defined by $\psi_2$. This proves (\ref{hompsi12}).

By using augmentation sequence (\ref{aug1}) as a projective  resolution for left $B$-module $\kk$ and the above identification for ${\rm Hom}_B(B^+, \psi_{1*}W)$, we obtain (\ref{hominpsi1}).
\end{proof}

\begin{Lemma}\label{homkb+} Let $\Delta$ be well-tempered and $W$ an $A$-module. Then:
${\rm Hom}_B(\kk , \psi_{1*}W)=(\psi_{1*}W )^{B^+}$, ${\rm Ext}^1_B(\kk , \psi_{1*}W)= \psi_{2*}W/B^+\psi_{2*}W$,
and ${\rm Ext}^{> 1}_B(\kk , \psi_{1*}W)=0$. In particular, ${\rm Hom}_B(\kk , B^+)=\{a\in A| \Delta a=0\}$, ${\rm Ext}^1_B(\kk , B^+)= A/\Delta A$ and ${\rm Ext}^{> 1}_B(\kk , B^+)=0$.
\end{Lemma}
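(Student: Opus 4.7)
The plan is to read off the cohomology of the two-term complex produced by the preceding proposition, and then specialise to $W=A$.

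By the preceding proposition, $\RR{\rm Hom}_B(\kk,\psi_{1*}W)$ is quasi-isomorphic to the two-term complex
$$
\xymatrix{0\ar[r] & \psi_{1*}W\ar[r]^{\lambda_W} & \psi_{2*}W\ar[r] & 0}
$$
concentrated in degrees $0$ and $1$. Consequently ${\rm Ext}^i_B(\kk,\psi_{1*}W)=0$ for $i>1$ trivially, ${\rm Hom}_B(\kk,\psi_{1*}W)=\ker\lambda_W$ and ${\rm Ext}^1_B(\kk,\psi_{1*}W)={\rm coker}\,\lambda_W$. It remains only to identify these kernel and cokernel intrinsically.

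For this I invoke lemma \ref{trivialconefirst}, which describes the four-term exact sequence $(\ref{psilow12})$ built out of $\lambda_W$: it identifies $\ker\lambda_W$ with the $B^+$-invariants $(\psi_{1*}W)^{B^+}$ and ${\rm coker}\,\lambda_W$ with the $B^+$-coinvariants $\psi_{2*}W/B^+\psi_{2*}W$. Substituting these identifications yields the three claimed formulas for general $W$.

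For the particular case $W=A$, recall from section \ref{changemult} that as an $A$-bimodule $B^+$ is identified with ${}_{\psi_1}A_{\psi_2}$, so $\psi_{1*}A=B^+$ and, under formula $(\ref{formulaforlambda})$, the natural transformation $\lambda_A$ becomes the map $A\to A$ of left $A$-modules given by $a\mapsto\Delta a$. Its kernel is exactly $\{a\in A\mid\Delta a=0\}$ and its image is the left ideal $\Delta A$, so the cokernel is $A/\Delta A$. The vanishing ${\rm Ext}^{>1}_B(\kk,B^+)=0$ is immediate from the general statement. The only thing worth double-checking is the orientation of the multiplication in $\lambda_A$: since $\lambda$ is defined via the action of $\Delta$ on a left module and $\psi_{1*}A$ means $A$ with left $B$-structure coming from $\psi_1(b)=b\Delta$, the value of $\lambda_A$ on $a\in A$ is $\rho_{\psi_1}(\Delta)a=\Delta a$, as used. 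No genuine obstacle arises; the whole proof is a direct reading of the two-term complex combined with lemma \ref{trivialconefirst}.
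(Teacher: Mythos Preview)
Your proof is correct and follows essentially the same route as the paper: invoke the two-term complex from (\ref{hominpsi1}), read off kernel and cokernel, and identify them via the exact sequence (\ref{psilow12}) of Lemma~\ref{trivialconefirst}; then specialise to $W=A$. The only quibble is the phrase ``$\rho_{\psi_1}(\Delta)a$'' in your last paragraph: $\lambda_W$ is defined directly by the $A$-action of $\Delta$ on $W$ (formula (\ref{formulaforlambda})), not through $\psi_1$, so for $W=A$ one simply has $\lambda_A(a)=\Delta\cdot a$ by left multiplication in $A$ --- your conclusion is right, but the justification should bypass $\psi_1$.
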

\begin{proof} This follows from (\ref{hominpsi1}) and (\ref{psilow12}). The particular case is when we take $W=A$.
\end{proof}



\begin{Proposition}\label{globaldim1} Let $\Delta$ be well-tempered, $V$ a $B$-module and $U$ a $B^+$-trivial $B$-module.
Then ${\rm Ext}_B^i(V,U)=0$, for $i> {\rm gldim}\kk+1$.
\end{Proposition}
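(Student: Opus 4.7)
The plan is to exploit the canonical four-term exact sequence
\[
0\to K\to \psi_{1*}\psi_{1}^{!}V\xrightarrow{\;\epsilon_{V}\;} V\to C\to 0,
\]
whose outer terms $K=\ker\epsilon_{V}$ and $C=\mathrm{coker}\,\epsilon_{V}$ are $B^{+}$-trivial by Lemma \ref{epsilondelta}. This sandwiches $V$ between an object of the form $\psi_{1*}W$ (for which Ext into $U$ will vanish in every degree) and two $B^{+}$-trivial modules (for which Ext into $U$ is already controlled by $\mathrm{gldim}\,\kk$ via Proposition \ref{exceptional}).

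First I would verify that $\mathrm{Ext}^{i}_{B}(\psi_{1*}\psi_{1}^{!}V,\,U)=0$ for all $i\geq 0$. Writing $W:=\psi_{1}^{!}V$, the derived adjunction between the exact restriction functor $\psi_{1*}$ and its right adjoint $\psi_{1}^{!}$ gives
\[
\RR\mathrm{Hom}_{B}(\psi_{1*}W,\,U)=\RR\mathrm{Hom}_{A}(W,\,\RR\psi_{1}^{!}U),
\]
and Lemma \ref{zerohoms}(i) asserts $\RR\psi_{1}^{!}U=0$ whenever $U$ is $B^{+}$-trivial. So the whole right-hand side vanishes.

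Next, break the four-term sequence into the two short exact sequences
\[
0\to K\to \psi_{1*}\psi_{1}^{!}V\to I\to 0,\qquad 0\to I\to V\to C\to 0,
\]
with $I=\mathrm{Im}(\epsilon_{V})$. Applying $\mathrm{Ext}^{\bullet}_{B}(-,U)$ to the first sequence and using the vanishing from the previous paragraph collapses the long exact sequence to isomorphisms $\mathrm{Ext}^{i}_{B}(I,U)\cong \mathrm{Ext}^{i-1}_{B}(K,U)$ for every $i\geq 1$. Since both $K$ and $U$ are $B^{+}$-trivial, Proposition \ref{exceptional} identifies the right-hand side with $\mathrm{Ext}^{i-1}_{\kk}(K,U)$, which vanishes as soon as $i-1>\mathrm{gldim}\,\kk$.

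Finally, the long exact sequence attached to $0\to I\to V\to C\to 0$ places $\mathrm{Ext}^{i}_{B}(V,U)$ between $\mathrm{Ext}^{i}_{B}(C,U)$ and $\mathrm{Ext}^{i}_{B}(I,U)$. For $i>\mathrm{gldim}\,\kk+1$ the first of these vanishes by Proposition \ref{exceptional} together with the definition of $\mathrm{gldim}\,\kk$, and the second vanishes by the preceding step; hence $\mathrm{Ext}^{i}_{B}(V,U)=0$. There is no real obstacle beyond bookkeeping; the one point to watch is that the derived adjunction used in the second paragraph needs no finiteness hypothesis on $V$, which is fine because $\psi_{1*}$ is exact on the nose.
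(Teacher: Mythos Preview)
Your proof is correct and follows essentially the same route as the paper: both use the four-term exact sequence from Lemma~\ref{epsilondelta}, the vanishing of $\RR\mathrm{Hom}_{B}(\psi_{1*}W,U)$ via adjunction and Lemma~\ref{zerohoms}, and Proposition~\ref{exceptional} to control Ext between $B^{+}$-trivial modules. The only cosmetic difference is that the paper invokes the spectral sequence of the functor $\mathrm{Ext}^{\bullet}_{B}(-,U)$ applied to the four-term sequence, whereas you split it into two short exact sequences and chase the resulting long exact sequences; these are the same computation.
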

\begin{proof}
By lemma \ref{epsilondelta} we have an exact sequence
\begin{equation}
\label{exactsequence}
0\to K\to \psi_{1*}\psi_1^!V\to V\to Q\to 0
\end{equation}
with $K$ and $Q$ $B^+$-trivial $B$-modules.
Consider the spectral sequence that calculates functor ${\rm Ext}_{B}^{\bullet} (-, U)$ on this exact sequence. Its $E_1$-term has Ext-groups on top of terms of the sequence and it converges to zero, because the sequence is exact. We have by adjunction:
$$
{\rm Ext}^{\bullet}_{B}(\psi_{1*}\psi_1^!V, U)={\rm Ext}_{A}^{\bullet}(\psi_1^!V, \psi_1^!U),
$$
because $\psi_1^!$ is an exact functor by lemma \ref{exactfunctors}.
By lemma \ref{zerohoms}, $\psi_1^!U=0$. Therefore, ${\rm Ext}^{\bullet}_{B}(\psi_{1*}\psi_1^!V, U)=0$.
Moreover, since $K$ and $Q$ are trivial modules, ${\rm Ext}_{B}^{i}(K, U)=0$ and ${\rm Ext}_{B}^{i}(Q, U)=0$, for $i> {\rm gldim}\kk$, by proposition \ref{exceptional}.
The spectral sequence immediately implies that ${\rm Ext}_{B}^{i}(V, U)=0$, for $i> {\rm gldim}\kk+1$.
\end{proof}

\subsection{Coherence of algebras and categories of finitely presented modules}

Let $A=\kk \Gamma$. Then we know that $B(\Gamma )=\widehat{A}_{\Delta}$. According to our original problem, we are interested in the categories of  representations, which are finitely generated over $\kk$. A counterpart of the category of coherent sheaves on an affine algebraic variety in Noncommutative Algebraic Geometry is the category of {\em finitely presented} modules, while finite over $\kk$ $A$-modules are only analogous to Artinian sheaves, i.e. sheaves supported at points.

In order to work with the category of finitely presented modules, we need first to show that it is abelian.

A left module $M$ over a ring is said to be {\em coherent} if it is finitely generated and for every morphism $\varphi : P\to M$ with free module $P$ of finite rank the kernel of $\varphi$ is finitely generated too. A ring is {\em (left) coherent} if it is coherent as a left module over itself. If a ring is coherent, then finitely presented modules are the same as coherent modules and the category of finitely presented modules is abelian. Thus we need to check when the homotope is coherent.

By proposition \ref{moritakg}, algebra $\kk\Gamma$ is isomorphic to the matrix algebra over $\kk[\pi (\Gamma , t)]$. Hence, it is quasi-free relatively over $\kk$. The definition  of a quasi-free algebra in the relative case (i.e. for algebras over a commutative ring $\kk$ rather than over a field) is a straightforward generalization of the one due to Cuntz and Quillen \cite{CQ1} (see \cite{BZh}). Algebra $A$ is {\em quasi-free} if the module of noncommutative differential 1-forms
$\Omega^1A$
is a projective $A\otimes A^{opp}$-module.

A direct generalization of Cuntz-Quillen criterion  for quasi-freeness \cite{CQ1} holds in the relative case. Recall that a {\em square zero nilpotent extension} of an algebra $R$ is an algebra ${\tilde R}$ together with a ring epimorphism ${\tilde R}\to R$, whose kernel $I$ satisfies $I^2=0$.
\begin{Proposition}\cite{BZh}
\label{qfcriterion}
Algebra $A$ is quasi-free over $\kk$ if and only if for any ${\tilde R}$, a square zero nilpotent extension of the algebra $R$ in the category of $\kk$-algebras, and any homomorphism $A\to R$, there exists its lifting to a homomorphism $A\to {\tilde R}$.
\end{Proposition}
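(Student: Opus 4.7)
The plan is to adapt the classical Cuntz--Quillen argument to the relative setting, working with Hochschild cohomology with values in $A$-bimodules and the fact that quasi-freeness is equivalent to ${\rm Hdim}_{\kk} A \le 1$.

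For the forward direction, I would start with a $\kk$-algebra map $f: A \to R$ and a square-zero extension $\pi: \tilde R \to R$ with kernel $I$ satisfying $I^2 = 0$. Because $I^2 = 0$, the $\tilde R$-bimodule structure on $I$ descends to an $R$-bimodule structure, and hence to an $A$-bimodule structure via $f$. Since $A$ is free over $\kk$ by the paper's standing assumption, I choose a $\kk$-linear section $s: A \to \tilde R$ with $\pi s = f$; the defect $\omega(a,b) = s(a) s(b) - s(ab)$ lands in $I$ and is a Hochschild $2$-cocycle on $A$ with values in the $A$-bimodule $I$. A $\kk$-linear correction $\phi: A \to I$ turns $s + \phi$ into a genuine algebra homomorphism precisely when $\omega = \delta \phi$ (using $I^2 = 0$ to discard quadratic terms), so the obstruction to lifting lives in ${\rm Ext}^2_{A \otimes A^{opp}}(A, I)$. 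Quasi-freeness of $A$ means $\Omega^1 A$ is projective as a bimodule; combined with the defining short exact sequence $0 \to \Omega^1 A \to A \otimes A \to A \to 0$, this yields a projective $A \otimes A^{opp}$-resolution of $A$ of length at most $1$, so the relevant ${\rm Ext}^2$ vanishes and the obstruction disappears, producing the desired lift.

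For the converse, I would verify that $\Omega^1 A$ is projective as an $A$-bimodule by lifting the identity through an arbitrary epimorphism $q: N \to \Omega^1 A$ of $A$-bimodules. Form the square-zero extensions $\tilde R = A \oplus N$ and $R = A \oplus \Omega^1 A$, each with its natural bimodule-squared-zero multiplication, and the induced surjection $\tilde R \to R$. The canonical map $A \to R$ sending $a \mapsto (a, da)$ is a $\kk$-algebra homomorphism because $d$ is a derivation, and by hypothesis it lifts to an algebra homomorphism $A \to \tilde R$. The second component of such a lift is a derivation $D: A \to N$ with $q \circ D = d$, which by the universal property of $d$ corresponds to a bimodule section $\Omega^1 A \to N$ of $q$. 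Hence every surjection onto $\Omega^1 A$ splits, and $\Omega^1 A$ is projective.

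The main obstacle is making the cocycle--coboundary computation work over a general commutative ring $\kk$ rather than a field, and this is precisely where the paper's blanket assumption that algebras are $\kk$-free becomes indispensable: it guarantees the existence of the initial $\kk$-linear section $s$ (so that the Hochschild $2$-cocycle $\omega$ is well-defined) and it ensures that $0 \to \Omega^1 A \to A \otimes A \to A \to 0$ is genuinely a resolution by projective $A \otimes A^{opp}$-modules, so that ${\rm Ext}^2_{A \otimes A^{opp}}(A, -)$ vanishes exactly when $\Omega^1 A$ is projective. With these two points, the remainder is a formal repetition of the Cuntz--Quillen argument from \cite{CQ1}.
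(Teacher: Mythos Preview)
The paper does not supply its own proof of this proposition: it is stated with a citation to \cite{BZh} and introduced as ``a direct generalization of Cuntz--Quillen criterion for quasi-freeness \cite{CQ1} \ldots\ in the relative case.'' Your proposal is precisely that generalization---the standard Cuntz--Quillen obstruction-theory argument, carried out over a base ring $\kk$ with the $\kk$-freeness hypothesis used exactly where it is needed (to produce the $\kk$-linear section $s$ and to ensure $A\otimes A$ is a projective bimodule). So your approach agrees with what the paper points to, and the argument is correct.

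One small point worth making explicit in the converse direction: when you lift $a\mapsto (a,da)$ to $\tilde f:A\to A\oplus N$, you should note that the first component of $\tilde f$ is forced to be the identity (since the projection $\tilde R\to R$ is $(a,n)\mapsto (a,q(n))$), so that the second component is genuinely a derivation $A\to N$ covering $d$. You implicitly use this, but it is the step that makes the universal property of $\Omega^1 A$ apply cleanly.
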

The following criterion of coherence for quasi-free algebras is proven in \cite{BZh} with the help of Chase criterion \cite{Chase}.
\begin{Theorem}\label{thquasifree}
Let $\kk$ be a commutative noetherian ring. Assume that $A$ is an algebra quasi-free relatively over $\kk$ and $A$ is flat as a $\kk$-module. Then $A$ is a left and right coherent algebra.
\end{Theorem}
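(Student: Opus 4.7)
The plan is to invoke Chase's classical characterization of coherent rings: a ring $R$ is left coherent if and only if every direct product of flat right $R$-modules is flat, equivalently, for every index set $I$ the product $R^I$ of copies of $R$ is flat as a right $R$-module. By the symmetry of the hypotheses, it suffices to establish left coherence of $A$; the argument for right coherence is dual.

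Fix a set $I$ and write $N = A^I$. First, I would observe that $N$ is flat as a $\kk$-module: since $\kk$ is Noetherian (hence coherent), applying Chase's criterion to $\kk$ itself shows products of flat $\kk$-modules are $\kk$-flat, and $A$ is $\kk$-flat by hypothesis. Second, the quasi-free hypothesis supplies the short exact sequence of $A$-bimodules
$$0 \to \Omega^1 A \to A \otimes_\kk A \to A \to 0,$$
a projective bimodule resolution of $A$ of length one. In particular $\Omega^1 A$ is a direct summand of some free bimodule $\bigoplus_J A \otimes_\kk A$. Tensoring this sequence on the right over $A$ with an arbitrary left $A$-module $M$ (exact because the rightmost term $A$ is projective as a right $A$-module) produces the short exact sequence of left $A$-modules
$$0 \to \Omega^1 A \otimes_A M \to A \otimes_\kk M \to M \to 0.$$

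The heart of the argument is a Tor-vanishing calculation. For the middle term, pick any $\kk$-free resolution $F_\bullet \to M$; since $A$ is $\kk$-flat, $A \otimes_\kk F_\bullet$ is a free resolution of $A \otimes_\kk M$ by left $A$-modules, and applying $N \otimes_A -$ produces the complex $N \otimes_\kk F_\bullet$, whose homology is concentrated in degree zero because $N$ is $\kk$-flat. Hence $\mathrm{Tor}_i^A(N, A \otimes_\kk M) = 0$ for all $i \geq 1$. The analogous vanishing for $\Omega^1 A \otimes_A M$ follows by passing to the direct summand inside $\bigoplus_J A \otimes_\kk M$ and using that Tor commutes with direct sums. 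Plugging both vanishings into the long exact Tor sequence attached to the displayed short exact sequence yields $\mathrm{Tor}_i^A(N, M) = 0$ for $i \geq 2$ and identifies $\mathrm{Tor}_1^A(N, M)$ with the kernel of the induced map $N \otimes_A (\Omega^1 A \otimes_A M) \to N \otimes_\kk M$; a further chase using the bimodule direct summand decomposition shows this kernel to vanish, whence $N$ is right $A$-flat and Chase's criterion delivers left coherence.

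The main obstacle is the final kernel vanishing: one must translate the bimodule direct summand decomposition of $\Omega^1 A$ into injectivity of a map of $\kk$-modules obtained after tensoring with $N$ on the left over $A$. The key technical point is that the splitting of the bimodule resolution as a sequence of right (respectively, left) $A$-modules, afforded by the sections $a \mapsto 1 \otimes a$ and $a \mapsto a \otimes 1$, must be combined with the identification $N \otimes_A (A \otimes_\kk M) = N \otimes_\kk M$ to force compatibility with the tensor product. Once this bookkeeping is carried out, the theorem follows.
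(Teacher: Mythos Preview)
The paper does not prove this theorem in the text; it cites \cite{BZh} and says only that the proof is ``with the help of Chase criterion''. Your overall strategy---reduce to flatness of $A^{I}$ via Chase, then exploit the length-one projective bimodule resolution $0\to\Omega^{1}A\to A\otimes_{\kk}A\to A\to 0$ together with $\kk$-flatness of $A^{I}$---is exactly the route indicated, and your argument that $\mathrm{Tor}^{A}_{i}(A^{I},A\otimes_{\kk}M)=0$ for $i\ge 1$, and likewise for $\Omega^{1}A\otimes_{A}M$, is correct.

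The gap is at the step you yourself flag. You claim that the kernel of
\[
N\otimes_{A}(\Omega^{1}A\otimes_{A}M)\;\longrightarrow\;N\otimes_{\kk}M
\]
vanishes ``by a further chase using the bimodule direct summand decomposition''. But this is not bookkeeping: it is the entire content of the theorem. Tensoring the exact sequence $0\to N\otimes_{A}\Omega^{1}A\to N\otimes_{\kk}A\to N\to 0$ of right $A$-modules with $M$ identifies the kernel in question \emph{exactly} with $\mathrm{Tor}^{A}_{1}(N,M)$, which is what you are trying to kill; so appealing to the long exact sequence here is circular. The fact that $\Omega^{1}A$ is a bimodule direct summand of a free bimodule gives you a \emph{different} split monomorphism $N\otimes_{A}\Omega^{1}A\otimes_{A}M\hookrightarrow (N\otimes_{\kk}M)^{(J)}$, not the map above; composing with a bimodule map $(A\otimes_{\kk}A)^{(J)}\to A\otimes_{\kk}A$ does not yield injectivity into the single copy $N\otimes_{\kk}M$. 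What your argument actually establishes is only that $N=A^{I}$ has flat dimension at most one, i.e.\ $\mathrm{Tor}^{A}_{\ge 2}(A^{I},-)=0$, which is weaker than Chase's criterion requires. The passage from flat dimension $\le 1$ to flat dimension $0$ needs an additional input (in \cite{BZh} this is where the Noetherianity of $\kk$ and the structure of $\Omega^{1}A$ are used more sharply), and your sketch does not supply it.
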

Now we present a theorem that allows us to iteratively construct coherent algebras via homotopes.

\begin{Theorem}\label{bcoherent}
Let $\Delta \in A$ be well-tempered and $B={\widehat A_{\Delta}}$. Assume that $A$ is a coherent algebra over a noetherian commutative ring $\kk$. Then $B$ is coherent too.
\end{Theorem}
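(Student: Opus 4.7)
\emph{Plan.} The strategy is to filter $B$ as a left $B$-module by the augmentation sequence $0 \to B^+ \to B \to \kk \to 0$ and verify that each of $B^+$ and $\kk$ is a coherent left $B$-module. Since coherent modules over any ring form a Serre subcategory (in particular are closed under extensions), this will imply $B$ is left coherent, and a symmetric argument using $\psi_2$ and right modules (available since $B^+$ is projective on both sides by well-temperedness) will give right coherence.

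The main technical step is to show $B^+ \simeq \psi_{1*}A$ is coherent, and more generally to establish: \emph{if $M$ is a coherent left $A$-module, then $\psi_{1*}M$ is a coherent left $B$-module.} A $B$-module map $f:B^n \to \psi_{1*}M$ is determined by its values $(m_1,\ldots,m_n)\in M^n$. Using the $\kk$-decomposition $B=\kk\cdot 1_B\oplus B^+$, write $b_i=\alpha_i 1_B+c_i$ with $c_i\in B^+=A$; then $f(b)=0$ translates into
$$\sum_i \alpha_i m_i \;+\; \sum_i c_i\Delta m_i \;=\; 0\quad\text{in }M.$$
Introducing the $A$-linear map $g:A^n\to M$, $(c_i)\mapsto \sum c_i\Delta m_i$, we obtain a short exact sequence of $B$-modules
$$0\to \ker g\to \ker f\to N\to 0,$$
where $N\subseteq \kk^n$ is the image of $\ker f$ under the augmentation $B^n\to \kk^n$. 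The quotient $N$ is finitely generated over $\kk$ by noetherianity, hence over $B$. Coherence of $M$ makes $g(A^n)$ a finitely presented $A$-submodule of $M$, so $\ker g$ is finitely generated as a left $A$-module. The remaining point is that $\ker g$ is also finitely generated as a left $B$-module via $\psi_1$: using Lemma \ref{fingen} pick $y_1,\ldots,y_s$ generating $B^+=A$ over $B$; then if $k^{(1)},\ldots,k^{(r)}$ generate $\ker g$ over $A$, the finite set $\{y_j k^{(l)}\}$ generates $\ker g$ over $B$. Indeed any $a\in A$ can be written $a=\sum_j \psi_1(b_j)y_j$, whence $ak^{(l)}=\sum_j b_j\cdot (y_j k^{(l)})$ under the $\psi_1$-action on $\psi_{1*}A^n$. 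Thus $\ker f$ is an extension of two finitely generated $B$-modules and is itself finitely generated. Applying the lemma to $M=A$ yields coherence of $B^+$.

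Coherence of $\kk$ as a left $B$-module is elementary: a map $B^n\to \kk$ is determined by $(\alpha_i)\in\kk^n$, its kernel contains $(B^+)^n$, and the quotient of the kernel by $(B^+)^n$ is a $\kk$-submodule of $\kk^n$, finitely generated by noetherianity of $\kk$. Hence the kernel is finitely generated as a $B$-module.

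Finally, the closure of coherent modules under extensions, applied to $0\to B^+\to B\to \kk\to 0$ with $B^+$ and $\kk$ both coherent, gives that $B$ is left coherent. The symmetric construction with $\psi_{2*}$ and right modules establishes right coherence. The main obstacle is the transfer of finite generation across $\psi_1$ in the key lemma; it is exactly the well-tempered identity $A\Delta A=A$ (incarnated in Lemma \ref{fingen}) that makes this transfer work.
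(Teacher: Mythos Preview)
Your argument is correct and takes a genuinely different route from the paper's proof. The paper works directly with a map $M\to N$ of finite free $B$-modules, applies the exact composite $\psi_{1*}\psi_1^!$ to the associated four-term sequence $0\to K\to M\to N\to Q\to 0$, uses coherence of $A$ to see that $\psi_{1*}\psi_1^!K$ is finitely generated, and then controls the $B^+$-trivial cokernel of the adjunction $\psi_{1*}\psi_1^!K\to K$ by a $\mathrm{Tor}^B_\bullet(\kk,-)$ spectral sequence (using that the augmentation sequence is a length-one flat resolution of $\kk$). By contrast, you split the problem via the augmentation sequence and prove the two building blocks $B^+\simeq\psi_{1*}A$ and $\kk$ are coherent $B$-modules separately, then invoke closure of coherent modules under extensions (a standard fact valid over any ring).

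Your approach buys two things. First, it is more elementary: no derived functors, no exactness of $\psi_1^!$, and no spectral sequences are needed---everything is done by the explicit $\kk$-splitting $B=\kk\cdot 1_B\oplus B^+$ and the decomposition of $\ker f$ it induces. Second, and perhaps more interestingly, your proof never uses the projectivity of $B^+$: only the condition $A\Delta A=A$ (through Lemma~\ref{fingen}) and noetherianity of $\kk$ enter. The paper's argument, by contrast, needs $B^+$ projective both for the exactness of $\psi_1^!$ and for the vanishing $\mathrm{Tor}_2^B(\kk,Q)=0$. So you have in fact established a slightly stronger statement than the theorem as written. On the other side, the paper's functorial approach packages the passage between $A$-modules and $B$-modules in a way that parallels the rest of the paper and makes the relationship to the recollement formalism transparent; your hands-on argument, while cleaner here, is more specific to this particular computation. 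One small point you might add explicitly: you use, but do not state, that $\psi_{1*}M$ is finitely generated over $B$ when $M$ is finitely generated over $A$; this is the same Lemma~\ref{fingen} trick you use for $\ker g$, and is needed for the coherence of $\psi_{1*}M$.
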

\begin{proof}
Consider a homomorphism $M\to N$ of free $B$-modules of finite rank. Embed it into a four term exact sequence
\begin{equation}\label{kmnq}
0\to K\to M\to N\to Q\to 0.
\end{equation}
We need to show that $K$ is finitely generated. Apply functor $\psi_{1*}\psi_1^!$ to sequence (\ref{kmnq}. Since both functors $\psi_{1*}$ and $\psi_1^!$ are exact (lemma \ref{exactfunctors}), we get an exact sequence:
$$
0\to \psi_{1*}\psi_1^!K\to \psi_{1*}\psi_1^!M\to \psi_{1*}\psi_1^!N\to \psi_{1*}\psi_1^!Q\to 0
$$
By proposition \ref{homkgb}, $\psi_1^!M$ and $\psi_1^!N$ are free $A$-modules of finite rank. Since $A$ is a coherent algebra, this implies that $\psi_1^!K$ is a finitely generated $A$-module. Further, $\psi_{1*}A$ is finitely generated as a $B$-module by lemma \ref{fingen}. Therefore, $\psi_{1*}$ of a finitely generated $A$-module is a finitely generated $B$-module. Hence, $B$-module $\psi_{1*}\psi_1^!K$ is finitely generated too.

Now consider the exact sequence induced by the adjunction morphism:
$$
\psi_{1*}\psi_1^!K\to K\to S\to 0
$$
As $\psi_{1*}\psi_1^!K$ is finitely generated, it is remained to show that $S$ is so. By lemma \ref{epsilondelta}, $S$ is a $B^+$-trivial $B$-module. Therefore, $S$ is a quotient of $K/ B^+K$. Let us check that the latter is finitely generated.

Consider the spectral sequence which is obtained by applying functor ${\rm Tor}^B_{\bullet}(\kk, -)$ to the sequence (\ref{kmnq}). The spectral sequence converges to zero, because the original sequence is exact. Taking into account that $M$ and $N$ are projective, hence, flat, we see that the contribution to $\kk\otimes_BK=K/B^+K$
come from a submodule of $\kk\otimes_B M$ and ${\rm Tor}_2^{B}(\kk , Q)$.
Module $\kk\otimes_B M$ is a finite rank free $\kk$-module. In view of noetherianess of $\kk$, any its submodule is finitely generated.
The group ${\rm Tor}_2^{B}(\kk , Q)$ is zero because augmentation sequence (\ref{aug1}) provides a flat $B$-resolution of length 2 for $\kk$.
\end{proof}

\begin{Corollary} Let $\kk$ be a noetherian ring. Then Poincare groupoid $\kk \Gamma$ and algebra $B(\Gamma )$ are coherent.
\end{Corollary}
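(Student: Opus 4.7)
The plan is to handle the two claims in the order in which they naturally depend on each other. First I would establish coherence of $\kk\Gamma$, and then feed this into Theorem \ref{bcoherent} together with the fact (already proved in the excerpt) that the generalized Laplace operator $\Delta$ is well-tempered in $\kk\Gamma$ and that $B(\Gamma)\cong\widehat{\kk\Gamma_{\Delta}}$ (Theorem \ref{bgammaviagr}), which will immediately give coherence of $B(\Gamma)$.

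For the first step, I would reduce to the connected case: if $\Gamma$ is the disjoint union of its connected components $\Gamma_\alpha$, then $\kk\Gamma=\prod_\alpha \kk\Gamma_\alpha$, and a finite product of coherent rings is coherent, so it suffices to treat connected $\Gamma$. Assuming $\Gamma$ connected, Proposition \ref{moritakg} provides an isomorphism $\kk\Gamma\cong\mathrm{Mat}_n(\kk[\pi(\Gamma,t)])$, with $n=|V(\Gamma)|$. Since $\pi(\Gamma,t)$ is a free group, the argument recalled in the proof of Corollary \ref{hochshield dimension of bg} shows that $\kk[\pi(\Gamma,t)]$ is quasi-free relatively over $\kk$ in the sense of Cuntz--Quillen; it is also free (hence flat) as a $\kk$-module because the group elements form a $\kk$-basis. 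Theorem \ref{thquasifree} then yields left and right coherence of $\kk[\pi(\Gamma,t)]$. Coherence is preserved by passage to the matrix ring $\mathrm{Mat}_n(-)$: finitely presented modules over $\mathrm{Mat}_n(R)$ correspond under Morita equivalence to finitely presented modules over $R$, and the Morita functors carry kernels of morphisms between free modules of finite rank on one side to kernels of morphisms between free modules of finite rank on the other. Hence $\kk\Gamma$ is coherent.

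For the second step, the earlier lemma identifying the Laplace operator $\Delta=1+\sum s_{ij}l_{ij}$ as a well-tempered element of $\kk\Gamma$ is in place, and Theorem \ref{bgammaviagr} identifies $B(\Gamma)$ with the unital homotope $\widehat{\kk\Gamma_{\Delta}}$. Applying Theorem \ref{bcoherent} to $A=\kk\Gamma$ and this $\Delta$ (legal because $\kk\Gamma$ has just been shown to be coherent over the noetherian ring $\kk$) yields coherence of $B(\Gamma)$, completing the proof.

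The main potential obstacle, and the only non-routine point, is the passage from coherence of $\kk[\pi(\Gamma,t)]$ to coherence of the matrix algebra $\mathrm{Mat}_n(\kk[\pi(\Gamma,t)])\cong\kk\Gamma$; one must verify that the equivalence supplied by $P_t$ really does preserve finite generation of kernels on both sides, rather than just preserving the abstract category of finitely presented modules. Once this standard Morita-theoretic fact is recorded, everything else is an application of results already proved in the excerpt.
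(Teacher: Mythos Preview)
Your proposal is correct and follows essentially the same route as the paper: show $\kk[\pi(\Gamma,t)]$ is quasi-free over $\kk$ (the paper argues this via the lifting criterion of Proposition~\ref{qfcriterion}, noting the group ring is a localization of a free algebra), apply Theorem~\ref{thquasifree} for coherence, pass to $\kk\Gamma$ by Morita equivalence, and then invoke Theorem~\ref{bcoherent}. Your added reduction to connected components and explicit check of $\kk$-flatness are sensible hygiene the paper leaves implicit.
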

\begin{proof} Since the fundamental group $\pi (\Gamma )$ is free, the group ring $\kk [\pi (\Gamma )]$ is quasi-free relatively over $\kk$ (cf. \cite{CQ1}). Indeed, the free algebra $\kk \langle x_1, \dots , x_n \rangle$ is relatively quasi-free over $\kk$ because it obviously satisfies the lifting criterion of proposition \ref{qfcriterion}. The group ring $\kk [\pi (\Gamma )]$ is a localization of the free algebra, hence satisfies the lifting criterion too. By theorem  \ref{thquasifree} it is coherent.
By proposition \ref{moritakg}, algebra $\kk \Gamma$ is Morita equivalent to the group ring of $\kk [\pi (\Gamma )]$. Therefore, it is coherent too. Coherence of $B(\Gamma )$ follows by theorem \ref{bcoherent}.
\end{proof}


This corollary implies that the categories of finitely presented left modules over $\kk \Gamma$ and over $B(\Gamma )$ are abelian.
We indicate the categories of finitely presented (left) modules by the subscript $fp$.

\begin{Proposition}\label{finrep4psi}
Let $\Delta$ be a well-tempered element in algebra $A$ and $B={\widehat A_{\Delta}}$.
Then functors $\psi_{1*}$, $\psi_2^*$, $\psi_1^!$ and $\psi_1^*$ take finitely presented left modules over corresponding algebras
to finitely presented modules.
\end{Proposition}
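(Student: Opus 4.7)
The plan is to split the argument according to the direction of the functor. The three functors $\psi_1^*, \psi_2^*, \psi_1^!$ send $B\text{-mod}$ to $A\text{-mod}$, and are all easy once we notice that each of them is right exact and takes the regular $B$-module $B$ to $A$. The functor $\psi_{1*}$ goes the other way, and is the only nontrivial case; it relies on the coherence of $B$ established in Theorem~\ref{bcoherent} together with Lemma~\ref{fingen}.

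For $\psi_1^*$ and $\psi_2^*$, these are the tensor functors $V \mapsto {}_{\psi_1}\!A \otimes_B V$ and $V \mapsto {}_{\psi_2}\!A \otimes_B V$, so they are right exact by general nonsense, and $\psi_i^* B = A$. Given a finite presentation $B^m \to B^n \to V \to 0$, applying either functor yields an exact sequence $A^m \to A^n \to \psi_i^*V \to 0$, which is a finite presentation of $\psi_i^* V$ as an $A$-module. For $\psi_1^!$, invoke Proposition~\ref{naturaliso}, which identifies $\psi_1^!$ with $\psi_2^*$ on $B\text{-mod}$, so the previous case applies verbatim.

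The interesting case is $\psi_{1*}: A\text{-mod} \to B\text{-mod}$. Since $\psi_{1*}$ is restriction of scalars it is exact, and it sends $A$ to the $B$-module $B^+ = {}_{\psi_1}\!A$. The first step is to verify that $B^+$ is finitely presented as a left $B$-module. By Lemma~\ref{fingen} it is finitely generated, so it is the image of some $B^k \twoheadrightarrow B^+$; equivalently, $B^+$ is a finitely generated $B$-submodule of $B$. By Theorem~\ref{bcoherent} the algebra $B$ is coherent (given that $A$ is), and in a coherent ring every finitely generated submodule of a finitely presented module is finitely presented. Hence $B^+$ is finitely presented, and so is $(B^+)^n$ for any $n$.

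Given a finite presentation $A^m \to A^n \to W \to 0$ of an $A$-module $W$, exactness of $\psi_{1*}$ yields an exact sequence $(B^+)^m \to (B^+)^n \to \psi_{1*}W \to 0$. Let $K$ be the kernel of $(B^+)^n \twoheadrightarrow \psi_{1*}W$, which equals the image of the first arrow and is therefore a finitely generated $B$-submodule of the finitely presented module $(B^+)^n$. Coherence of $B$ implies $K$ is finitely presented, so in particular finitely generated, and standard homological manipulation (splicing a finite presentation of $(B^+)^n$ with the surjection $B^\ell \twoheadrightarrow K$) produces a finite presentation of $\psi_{1*} W$. The main obstacle in the whole argument is precisely this step: handling $\psi_{1*}$ requires both the qualitative input that $B^+$ is \emph{finitely generated} (Lemma~\ref{fingen}) and the structural input that $B$ is \emph{coherent} (Theorem~\ref{bcoherent}); without the latter, finite generation would not upgrade to finite presentation.
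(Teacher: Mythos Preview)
Your treatment of $\psi_1^*$, $\psi_2^*$, and $\psi_1^!$ is exactly the paper's.

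For $\psi_{1*}$ there is a genuine gap. You invoke Theorem~\ref{bcoherent} to get coherence of $B$, but that theorem has hypotheses --- $A$ coherent over a noetherian $\kk$ --- which are \emph{not} assumed in Proposition~\ref{finrep4psi}. The proposition is stated for an arbitrary well-tempered $\Delta$, so you are not entitled to coherence of $B$ here, and your argument as written does not prove the proposition in its stated generality.

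The paper's proof avoids this by using the other half of the well-tempered condition that you never touch: $B^+$ is \emph{projective} as a left $B$-module. Combined with Lemma~\ref{fingen}, $B^+$ is finitely generated projective, hence automatically finitely presented (a direct summand of some $B^k$). Applying the exact functor $\psi_{1*}$ to $A^m\to A^n\to W\to 0$ yields $(B^+)^m\to (B^+)^n\to \psi_{1*}W\to 0$ with both left terms finitely generated projective; any module admitting such a two-term resolution is finitely presented. No coherence is needed. In fact your own ``standard homological manipulation'' at the end already works once you know $(B^+)^n$ is finitely presented and $K$ is finitely generated --- the further appeal to coherence to make $K$ finitely presented is idle even in your version. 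So the fix is simply to replace your coherence argument for finite presentation of $B^+$ by the one-line observation that finitely generated projectives are finitely presented.
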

\begin{proof} A finitely presented $A$-module $W$ has a presentation by finitely generated free $A$-modules. By applying exact functor $\psi_{1*}$ to this presentation, we obtain a presentation for $\psi_{1*}W$ by projective $B$-modules which are finite sums of copies of $B^+$. Since $B^+$ is a finitely generated $B$-module by lemma \ref{fingen}, $\psi_{1*}W$ is finitely presented.

Functors $\psi_1^*$ and $\psi_2^*$ take finitely generated free $B$-modules to  finitely generated free $A$-modules. As the both functors are right exact, it follows that they take finitely presented $B$-modules to finitely presented $A$-modules.

Functor $\psi_1^!$ is isomorphic to $\psi_2^*$ by proposition \ref{naturaliso}.

\end{proof}
\begin{Lemma}\label{kkb}
Let $\kk$ be a noetherian ring. Then any finitely generated $\kk$-module is finitely presented as a $B$-module.
\end{Lemma}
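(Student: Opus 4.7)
The plan is to proceed in two steps: first show that the trivial $B$-module $\kk$ itself is finitely presented, and then bootstrap to arbitrary finitely generated $\kk$-modules.

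For the first step, I would use the augmentation short exact sequence $0\to B^+\to B\to \kk\to 0$. This already exhibits $\kk$ as a cokernel of a map $B^+\hookrightarrow B$, so to conclude that $\kk$ is finitely presented as a $B$-module it is enough to know that $B^+$ is finitely generated over $B$. But this is precisely lemma \ref{fingen}, which applies because $\Delta$ being well-tempered forces the multiplication map $B^+\otimes B^+\to B^+$ to be surjective.

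For the second step, let $M$ be a finitely generated $\kk$-module, viewed as a $B$-module through the augmentation $B\to \kk$. Pick a surjection $\kk^n\twoheadrightarrow M$; since $\kk$ is noetherian, the kernel $K\subset \kk^n$ is finitely generated over $\kk$. Lifting along the augmentation, consider the composite $B$-linear surjection $B^n\twoheadrightarrow \kk^n\twoheadrightarrow M$, and let $K'$ denote its kernel. Then $K'$ fits into an exact sequence
\begin{equation*}
0\to (B^+)^n\to K'\to K\to 0,
\end{equation*}
because $B^+$ acts trivially on $M$ and the induced quotient $K'/(B^+)^n$ is exactly $\ker(\kk^n\to M)=K$. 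The left term $(B^+)^n$ is finitely generated over $B$ by lemma \ref{fingen}, and $K$ is finitely generated over $\kk$, hence over $B$; therefore $K'$ is finitely generated over $B$, and $M$ is finitely presented.

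There is no real obstacle here: the argument is routine once coherence of $B$ and finite generation of $B^+$ are in hand. The only point requiring a tiny bit of care is matching up the noetherian hypothesis on $\kk$ (used to make $\ker(\kk^n\to M)$ finitely generated over $\kk$) with the finite generation of $(B^+)^n$ over $B$ (for which well-temperedness is essential via lemma \ref{fingen}); combining these two is the crux. Alternatively, one could package the same facts categorically: $\kk$ is finitely presented, hence coherent over the coherent ring $B$ (theorem \ref{bcoherent}), so any finitely generated $\kk$-submodule of $\kk^n$ is coherent as a $B$-module, and $M$ inherits finite presentation as the cokernel in the abelian category of finitely presented $B$-modules.
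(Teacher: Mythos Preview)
Your proof is correct and follows essentially the same two-step outline as the paper: first handle $\kk$ itself via the augmentation sequence (\ref{aug1}), then bootstrap to an arbitrary finitely generated $\kk$-module using a presentation over $\kk$ coming from noetherianity.

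There is one small difference in emphasis worth noting. The paper's version invokes the fact that (\ref{aug1}) is a \emph{projective} $B$-resolution of $\kk$ (so it uses the projectivity of $B^+$, i.e.\ the full well-tempered hypothesis), and then says tersely that ``the fact follows'' from combining this with a finite $\kk$-presentation of $M$. Your argument is more explicit and actually more economical: you only use that $B^+$ is \emph{finitely generated} over $B$ (lemma \ref{fingen}, which needs only the surjectivity half of well-temperedness), and you write out the extension $0\to (B^+)^n\to K'\to K\to 0$ directly. Either way works; your route makes the dependence on the hypotheses slightly clearer, while the paper's route incidentally yields a length-two finitely generated projective resolution of $M$, not merely a finite presentation.
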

\begin{proof} Since $\kk$ is noetherian every finitely generated module has a presentation by finitely generated free modules over $\kk$. Finitely generated free modules over $\kk$ have finite projective resolution over $B$, because $\kk$ has a projective $B$-resolution (\ref{aug1}). The fact follows.
\end{proof}

\begin{Proposition}\label{fppsi2} Let $\Delta$ be well-tempered and $\kk$ noetherian. Then functor $\psi_{2*}$ takes finitely presented modules to finitely presented if and only if $\{a\in A|\Delta a=0\}$ and  $A/\Delta A$ are finitely generated $\kk$-modules.
\end{Proposition}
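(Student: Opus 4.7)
My plan is to reduce the statement to the single $A$-module $W=A$ and then to read off both conditions from the four-term exact sequence of Lemma~\ref{trivialconefirst}. First, I note that $\psi_{2*}$ is exact, being the restriction of scalars along $\psi_2\colon B\to A$, so applying it to a finite presentation $A^n\to A^m\to W\to 0$ of a finitely presented $A$-module $W$ yields the exact sequence $(\psi_{2*}A)^n\to(\psi_{2*}A)^m\to \psi_{2*}W\to 0$. Using coherence of $B$ (which follows from coherence of $A$ via Theorem~\ref{bcoherent}, implicit in the context of \S6.3), the category of finitely presented $B$-modules is abelian, so once $\psi_{2*}A$ is finitely presented, so is $\psi_{2*}W$. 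The converse direction of this reduction is trivial because $A$ itself is finitely presented over $A$. Thus the proposition reduces to showing that $\psi_{2*}A$ is finitely presented over $B$ if and only if $K:=\{a\in A\mid \Delta a=0\}$ and $C:=A/\Delta A$ are finitely generated over $\kk$.

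Specializing the sequence (\ref{psilow12}) of Lemma~\ref{trivialconefirst} to $W=A$, and taking $b=1_A$ in the definitions of $(\psi_{1*}A)^{B^+}$ and $\psi_{2*}A/B^+\psi_{2*}A$, one obtains the four-term exact sequence
$$
0\to K\to B^+\to \psi_{2*}A\to C\to 0,
$$
in which the end terms are $B^+$-trivial, so that their $B$-module and $\kk$-module structures carry the same notion of finite generation.

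For the forward implication I would use that $B^+$ is finitely generated projective over $B$ by Corollary~\ref{bgprojective} and Lemma~\ref{fingen}, hence coherent; if $\psi_{2*}A$ is finitely presented it too is coherent, so both the kernel $K$ and the cokernel $C$ of the induced map $B^+\to \psi_{2*}A$ are finitely generated over $B$, and therefore finitely generated over $\kk$ by $B^+$-triviality. For the converse, Lemma~\ref{kkb} upgrades the $\kk$-finite generation of $K$ and $C$ to finite presentation over $B$; splitting the four-term sequence into $0\to K\to B^+\to \Delta A\to 0$ and $0\to \Delta A\to \psi_{2*}A\to C\to 0$ first identifies $\Delta A=B^+/K$ as a finitely presented quotient of $B^+$ by a finitely presented submodule, and then exhibits $\psi_{2*}A$ as an extension of two finitely presented modules, hence itself finitely presented. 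The main subtlety I expect is the careful explicit identification of the end terms of the four-term sequence with the concrete $\kk$-modules $K$ and $C$; once that is in place, the remainder is a clean application of the coherence of $B$ together with Lemmas~\ref{kkb} and~\ref{trivialconefirst}.
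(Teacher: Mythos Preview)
Your proof is correct, and the forward direction is essentially identical to the paper's (both specialize the sequence~(\ref{psilow12}) to $W=A$ and use that finitely presented $B$-modules form an abelian category). A small citation slip: projectivity of $B^+$ in the general well-tempered setting is part of the \emph{definition} of well-tempered, not Corollary~\ref{bgprojective}, which is specific to $B(\Gamma)$.

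For the converse, however, your route genuinely differs from the paper's and is more elementary. The paper proves directly, for an arbitrary finitely presented $A$-module $W$, that the kernel and cokernel of $\lambda_W$ are finitely generated $\kk$-modules; the cokernel is handled by right exactness of $\psi_{2*}(-)/B^+\psi_{2*}(-)$, but the kernel requires applying $\operatorname{Hom}_B(\kk,-)$ to a four-term exact sequence coming from a presentation of $W$ and running a spectral sequence together with Lemma~\ref{homkb+}. You instead observe that it suffices to show $\psi_{2*}A$ is finitely presented, since applying the exact functor $\psi_{2*}$ to any presentation $A^n\to A^m\to W\to 0$ exhibits $\psi_{2*}W$ as a cokernel of a map between finite direct sums of $\psi_{2*}A$. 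This bypasses the spectral sequence entirely. The trade-off is that the paper's argument yields the extra information that $\ker\lambda_W$ and $\operatorname{coker}\lambda_W$ are finitely generated over $\kk$ for \emph{every} finitely presented $W$, while your argument only needs and only establishes this for $W=A$; but for the proposition as stated, your reduction is cleaner. Note also that your reduction step (cokernel of a map from a finitely generated module to a finitely presented one is finitely presented) and your extension step (an extension of finitely presented modules is finitely presented) are true over any ring, so the only place coherence of $B$ is genuinely required is in extracting $K$ in the forward direction---the same place the paper uses it.
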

\begin{proof} Assume that $\psi_{2*}$ takes finitely presented modules to finitely presented. Consider exact sequence (\ref{psilow12}) applied to module $W=A$. Since $\psi_{1*}W$ is finitely presented and the category of finitely presented modules is abelian, the kernel and cokernel of $\lambda_A$ must be finitely presented modules. But they are $B^+$-trivial modules with the kernel being isomorphic to $\{a\in A|\Delta a=0\}$ and cokernel to $A/\Delta A$. Hence they must be finitely generated $\kk$-modules.

Conversely, assume that these $\kk$-modules are finitely generated. By lemma \ref{kkb} it is enough to show that the kernel and cokernel of $\lambda_W$ are finitely generated $\kk$-modules for a finitely presented $W$. For $W=A$, this is clear from (\ref{psilow12}).


Consider the 4-term exact sequence that comes from a finite presentation of $W$:
$$
0\to Z\to A^k\to A^l\to W\to 0
$$
First, since functor $\psi_{2*}(-)/B^+\psi_{2*}(-)$ is right exact and its value on $A^l$ is a finitely generated $\kk$-module then so is its value on $W$. Thus the cokernel of $\lambda_W$ is a finitely generated $\kk$-module.
Second, by applying exact functor $\psi_{1*}$ to the exact sequence, we get a 4-terms exact sequence:
$$
0\to \psi_{1*}(Z)\to (B^+)^k\to (B^+)^l\to \psi_{1*}(W)\to 0
$$
By applying functor ${\rm Hom}_B(\kk , -)$ to it, we get a spectral sequence that converges to 0. It shows that $(\psi_{1*}W)^{B^+}={\rm Hom}_B(\kk ,\psi_{1*}W)$ is filtered by the $\kk$-subquotients of modules 
$$
{\rm Hom}_B(\kk , (B^+)^k), {\rm Ext}^1_B(\kk , (B^+)^k), {\rm Ext}^2_B(\kk , \psi_{1*}Z).
$$
By lemma \ref{homkb+} the first two are finitely generated $\kk$-modules, while the third module is zero.
Since $\kk$ is noetherian, the kernel of $\lambda_W$ is finitely generated over $\kk$.
\end{proof}

For an $A$-module $W$, consider the map $\lambda_W:\psi_{1*}W\to \psi_{2*}W$ defined by natural transformation (\ref{lowpsi}). Define the $B$-module $W_{min}$ as the image of $\lambda_W$.

\begin{Lemma}\label{trivialcone}
Let $W$ be an $A$-module and $V$ a $B^+$-trivial $B$-module. Then
$B$-module $W_{min}$  satisfies
${\rm Hom}_{B}(V, W_{min})=0$ and ${\rm Hom}_{B}(W_{min}, V)=0$.
\end{Lemma}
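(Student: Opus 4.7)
The plan is to exploit the factorization of $\lambda_W:\psi_{1*}W\to \psi_{2*}W$ through its image $W_{min}$, giving an epimorphism $p:\psi_{1*}W\twoheadrightarrow W_{min}$ and a monomorphism $j:W_{min}\hookrightarrow \psi_{2*}W$, and then combine these with the vanishing statements already established in Lemma \ref{zerohoms}, parts (iii) and (iv). All of the real work has been done there; the argument for $W_{min}$ is just diagram chasing.

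For ${\rm Hom}_B(W_{min},V)=0$ I would take any $\varphi:W_{min}\to V$ and pre-compose with the epimorphism $p$. The composite $\varphi\circ p:\psi_{1*}W\to V$ lies in ${\rm Hom}_B(\psi_{1*}W,V)$, which vanishes by Lemma \ref{zerohoms}(iii) since $V$ is $B^+$-trivial. Because $p$ is an epimorphism in $B$-${\rm mod}$, this forces $\varphi=0$.

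For ${\rm Hom}_B(V,W_{min})=0$ I would dually take any $\psi:V\to W_{min}$ and post-compose with the monomorphism $j$. The composite $j\circ \psi:V\to \psi_{2*}W$ lies in ${\rm Hom}_B(V,\psi_{2*}W)$, which vanishes by Lemma \ref{zerohoms}(iv). Since $j$ is a monomorphism, $\psi=0$.

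There is no real obstacle: the content is entirely reduced to Lemma \ref{zerohoms}, and the only point worth noting is that the factorization of $\lambda_W$ through $W_{min}$ is a factorization in the category $B$-${\rm mod}$ (so $p$ and $j$ are genuinely an epi and a mono there), which is automatic since $W_{min}$ is defined as the image in the abelian category of $B$-modules.
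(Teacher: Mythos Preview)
Your proof is correct and is essentially the same as the paper's own argument: the paper notes that $W_{min}$ is a submodule of $\psi_{2*}W$ (your monomorphism $j$) and a quotient of $\psi_{1*}W$ (your epimorphism $p$), then invokes Lemma~\ref{zerohoms}(iii) and (iv) exactly as you do. Your version spells out the diagram chase a bit more explicitly, but the content is identical.
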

\begin{proof}

Module $W_{min}$ is a submodule in $\psi_{2*}W$, hence ${\rm Hom}_{B(\Gamma )}(V, W_{min})=0$ by lemma \ref{zerohoms}(iv). Also it is a quotient of $\psi_{1*}W$, hence ${\rm Hom}_{B(\Gamma )}(W_{min}, V)=0$ by lemma \ref{zerohoms}(iii).
\end{proof}

If $V$ is a $B$-module, then define the {\em minimal shadow} of $V$ by $V_{min}:=(\psi_1^!V)_{min}=(\psi_2^*V)_{min}$.
\begin{Proposition}
Given $B$-module $V$ we have a commutative diagram:
\[
\begin{CD}
\psi_{1*}\psi_1^!V @>\alpha>> V_{min} \\
@V {\epsilon_V} VV @VV \beta V\\
V @>> {\delta_V} > \psi_{2*}\psi_2^*V
\end{CD}
\]
with an epimorphism $\alpha$ and monomorphism $\beta$.
\end{Proposition}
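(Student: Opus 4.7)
The plan is to exhibit the map $\delta_V \circ \epsilon_V\colon \psi_{1*}\psi_1^!V \to \psi_{2*}\psi_2^*V$ as a map that factors through $V_{min}$, and then define $\alpha$ as the corestriction onto its image and $\beta$ as the inclusion. We work with $V_{min}=(\psi_2^*V)_{min}$, which by definition sits inside $\psi_{2*}\psi_2^*V$ as the image of $\lambda_{\psi_2^*V}$; this gives us $\beta$ for free as a monomorphism.

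The key identification is that $\delta_V \circ \epsilon_V$ has the same image as $\lambda_{\psi_2^*V}$. To see this, first invoke Proposition~\ref{naturaliso} to conclude that $\mu_V\colon \psi_2^*V \to \psi_1^!V$ is an isomorphism, so that $\psi_{1*}(\mu_V)$ and $\psi_{2*}(\mu_V)$ are isomorphisms as well. Lemma~\ref{3composite} then yields
\begin{equation*}
\psi_{2*}(\mu_V)\circ \delta_V\circ \epsilon_V \;=\; \lambda_{\psi_1^!V}.
\end{equation*}
Next, naturality of $\lambda$ applied to the morphism $\mu_V^{-1}\colon \psi_1^!V\to \psi_2^*V$ in $A\text{-mod}$ gives
\begin{equation*}
\lambda_{\psi_2^*V}\circ \psi_{1*}(\mu_V^{-1})\;=\;\psi_{2*}(\mu_V^{-1})\circ \lambda_{\psi_1^!V}.
\end{equation*}
Combining these two identities produces
\begin{equation*}
\delta_V\circ \epsilon_V \;=\; \lambda_{\psi_2^*V}\circ \psi_{1*}(\mu_V^{-1}).
\end{equation*}
Since $\psi_{1*}(\mu_V^{-1})$ is an isomorphism, the image of $\delta_V \circ \epsilon_V$ coincides with the image of $\lambda_{\psi_2^*V}$, which is precisely $V_{min}\subset \psi_{2*}\psi_2^*V$.

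With this in hand, the construction is immediate: define $\alpha\colon \psi_{1*}\psi_1^!V \to V_{min}$ as the corestriction of $\delta_V\circ \epsilon_V$ to its image (automatically an epimorphism) and $\beta\colon V_{min} \hookrightarrow \psi_{2*}\psi_2^*V$ as the canonical inclusion (automatically a monomorphism). The diagram commutes tautologically, since $\beta\circ \alpha = \delta_V\circ \epsilon_V$ by construction. There is no serious obstacle here — the main point is simply to recognize that Lemma~\ref{3composite} plus the naturality square for $\lambda$ together pin down the image of $\delta_V\circ \epsilon_V$ as the minimal shadow, and to choose the presentation of $V_{min}$ (via $\psi_2^*V$ rather than $\psi_1^!V$) so that $\beta$ is literally the subobject inclusion.
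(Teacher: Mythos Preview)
Your proof is correct and follows the same approach as the paper, which simply cites Lemma~\ref{3composite} and Proposition~\ref{naturaliso}; you have carefully unpacked exactly how those two ingredients combine, in particular making explicit the naturality square for $\lambda$ that identifies the image of $\delta_V\circ\epsilon_V$ with $(\psi_2^*V)_{min}$.
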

\begin{proof}
This follows from lemma \ref{3composite} and proposition \ref{naturaliso}.
\end{proof}

For
 a
left $B(\Gamma )$-module $V$ and a vertex $i\in V(\Gamma )$, define $\kk$-modules
$$
V_i=\{v\in V|\ \rho (x_i)v=v\}.
$$
If $\Gamma $ is a connected graph, then $V_i$ are isomorphic for all $i\in V(\Gamma )$.
Indeed, if $(ij)\in E(\Gamma )$ and $v\in V_i$, then $s_{ij}^{-1}\rho (x_j)v\in V_j$. Moreover, $s_{ij}^{-1}\rho (x_i)(s_{ij}^{-1}\rho (x_j)v)=s_{ij}^{-2}\rho (x_ix_jx_i)v=v$. Thus, $s_{ij}^{-1}\rho (x_j)$ and $s_{ij}^{-1}\rho (x_i)$ are mutually inverse transformations between $V_i$ and $V_j$. In general, the isomorphism depends on the choice of path connecting vertices $i$ and $j$.

\begin{Proposition}\label{tensorhom}
Let $V$ be a left $B(\Gamma )$-module.
Then there is a natural decomposition:
\begin{equation}
\psi_2^*V=\oplus_i\ V_i
\end{equation}
\end{Proposition}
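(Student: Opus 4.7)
The strategy is to exploit the decomposition of $A=\kk\Gamma$ as a right $B(\Gamma)$-module coming from the vertex idempotents. First, I would invoke Proposition \ref{bxipi}, which under the identification of $B^{+}(\Gamma)$ with $\kk\Gamma$ (Theorem \ref{bgammaviagr}) gives a decomposition of right $B(\Gamma)$-modules
$$
\kk\Gamma \;=\; \bigoplus_i e_i\kk\Gamma \;\cong\; \bigoplus_i x_iB(\Gamma),
$$
where the right $B(\Gamma)$-action on $\kk\Gamma$ is the one induced by $\psi_2$. Since $\psi_2^{*}V=\kk\Gamma\otimes_{B(\Gamma)}V$ by definition, the tensor product distributes over this direct sum, yielding
$$
\psi_2^{*}V \;=\; \bigoplus_i \bigl(x_iB(\Gamma)\otimes_{B(\Gamma)}V\bigr).
$$

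Next, I would identify each summand with $V_i$. The element $x_i\in B(\Gamma)$ is an idempotent, so $B(\Gamma)=x_iB(\Gamma)\oplus(1-x_i)B(\Gamma)$ as right $B(\Gamma)$-modules. Applying $(-)\otimes_{B(\Gamma)}V$ and using the canonical isomorphism $B(\Gamma)\otimes_{B(\Gamma)}V=V$, the summand $x_iB(\Gamma)\otimes_{B(\Gamma)}V$ gets identified with $\rho(x_i)V\subset V$ via the map $x_ib\otimes v\mapsto \rho(x_ib)v$. Finally, for any idempotent $e$ in a ring and any module $V$, the image $\rho(e)V$ coincides with the fixed submodule $\{v\in V\mid \rho(e)v=v\}$: indeed $v=\rho(e)w$ implies $\rho(e)v=\rho(e^2)w=\rho(e)w=v$, while conversely $\rho(e)v=v$ exhibits $v$ as lying in the image. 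Applied to $e=x_i$, this gives $\rho(x_i)V=V_i$.

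Assembling the pieces yields the required natural isomorphism $\psi_2^{*}V\cong\bigoplus_i V_i$; naturality in $V$ is automatic since every step is functorial. The only thing requiring some care is the bookkeeping of module structures: one must check that the right $B(\Gamma)$-action on $e_i\kk\Gamma$ induced by $\psi_2$ matches the right $B(\Gamma)$-action on $x_iB(\Gamma)$ coming from viewing it as a direct summand of $B(\Gamma)$, but this is precisely the content of Proposition \ref{bxipi}. I do not anticipate any serious obstacle; this statement is essentially a direct reading of the summand decomposition of $\kk\Gamma$ through the idempotents $x_i$.
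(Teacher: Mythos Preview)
Your proposal is correct and follows essentially the same route as the paper: invoke the right $B(\Gamma)$-module decomposition $\kk\Gamma\cong\bigoplus_i x_iB(\Gamma)$ from Proposition~\ref{bxipi} (equation (\ref{bgxiright})), distribute the tensor product, and identify each summand $x_iB(\Gamma)\otimes_{B(\Gamma)}V$ with $V_i$ using that $x_i$ is idempotent. The only cosmetic difference is that the paper computes $x_iB\otimes_B V$ as the cokernel $V/\mathrm{Im}(\mathrm{id}_V-\rho(x_i))$ from the short exact sequence (\ref{bbxi}), whereas you compute it as the image $\rho(x_i)V$; for an idempotent these coincide, so the arguments are interchangeable.
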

\begin{proof}
Since $\kk\Gamma$ is isomorphic
to $B^+(\Gamma )$ as a right $B(\Gamma )$-module, we can apply (\ref{bgxiright}) to calculation of
$\kk \Gamma\otimes_{B(\Gamma )} V$.
Since $x_i$ is an idempotent we have a short exact sequence of right modules (actually, a direct sum decomposition):
\begin{equation}\label{bbxi}
0\to (1-x_i)B(\Gamma )\to B(\Gamma )\to x_iB(\Gamma )\to 0
\end{equation}
Taking tensor product of (\ref{bbxi}) with $V$ over $B(\Gamma )$ shows that the
$$
x_iB\otimes_BV=V/{\rm Im}({\rm id}_V-\rho(x_i))=V_i.
$$
\end{proof}
%

We say that a finite dimensional $B(\Gamma )$-module is {\em of rank $r$}, and write ${\rm rank}\ V=r$, if ${\rm dim}V_i=r$.
Rank 1 representations $V$ of $B(\Gamma )$ correspond to configurations of lines in $V$ subordinated to $\Gamma$. The minimal shadow of the representation corresponds to the minimal configuration that was canonically assigned to any configuration in subsection \ref{problem formulation}.
\bigskip

{\bf The end of the proof of theorem 3.} After we have developed the relevant techniques, the assertions of the theorem become natural and easily to check. First, the equivalence of $k[\pi (\Gamma )]$-mod with $k\Gamma$-mod takes 1-dimensional $k[\pi (\Gamma )]$ representations into  $k\Gamma$ representations $W$ in which the idempotents $e_i$ are represented by projectors of rank 1. Further, the functor $W\mapsto W_{\rm min}$ takes such  representations into rank 1 minimal representations of algebra $B(\Gamma )$. These representations correspond exactly to configurations of lines which are considered in the theorem, because the lines are the images of the projectors $x_i$. Since the 1-dimensional representations of a group are its characters and the moduli of the characters of the fundamental group of the graph is ${\rm H}^1(\Gamma. k^*)$, we get that this group  parameterizes the minimal configurations of lines, which are the $S$-equivalence classes of configurations.

\subsection{Duality}

Assume that we have an involutive anti-automorphism $\sigma =\sigma_A$ on $A$ that preserves $\Delta$:
$$
\sigma (\Delta )=\Delta .
$$
Then, $\sigma$ induces an anti-automorphism on $B^+$, hence on $B$, which we denote by $\sigma_B$ when we need to distinguish it from $\sigma_A$. Indeed,
$$
\sigma (a\cdot_{\Delta}b)=\sigma(a\Delta b)=\sigma (b)\sigma (\Delta )\sigma (a)= \sigma(b)\cdot_{\Delta}\sigma (a).
$$
Further, the anti-involutions on $A$ and $B$ satisfy a relation with respect to $\psi_1$ and $\psi_2$:
\begin{equation}\label{psiduality}
\sigma_A\circ \psi_1 =\psi_2\circ \sigma_B .
\end{equation}

Remember that we have an anti-involution (\ref{dualkg}) on the algebra $\kk\Gamma$. It preserves Laplace operator $\Delta$, hence it induces an anti-involution
$\sigma_B : B(\Gamma )\to B(\Gamma )^{opp}$, which is defined by:
$$
\sigma_B (x_i)= x_i.
$$


Any anti-isomorphism of an algebra induces an equivalence between the categories of left and right modules of the algebra. If $\kk = k$ is a field and a left $B$-module is finite dimensional over $k$, then the dual vector space is a right $B$-module. The composition of the equivalence induced by an anti-isomorphism with "taking dual" gives a duality, i.e. an involutive anti-equivalence, on the category of finite dimensional right $B$-modules. Anti-involution $\sigma_B$ induces a duality $D: B(\Gamma )-{\rm mod}\simeq B(\Gamma )-{\rm mod}^{opp}$. For the representation $\rho :B(\Gamma )\to {\rm End}(V)$, the dual representation $D(\rho ) :B(\Gamma )\to {\rm End}(V^*)$ in $V^*$ is defined by
$$
D(\rho )(b)=\rho (\sigma_B (b))^*.
$$
On generators $x_i$'s of $B^+(\Gamma )$ the duality acts by $x_i\mapsto x_i^*$.
Thus, the duality is an algebraic version of the duality discussed in the beginning of section \ref{problem formulation}.

An interesting problem is to study self-dual representations of $B^+(\Gamma )$, i.e representations $V$ that allow an isomorphism with the dual representation: $V\simeq V^*$.

\subsection{Morita equivalence of homotopes}
Consider a pair of elements $\Delta ,\Delta '$ in an algebra $A$. Here we address the problem when the corresponding algebras $B={\widehat A_{\Delta}}$ and $B'={\widehat A_{\Delta '}}$ are Morita equivalent, i.e. when there exists a $B'-B$-bimodule $N$ such that the functor $\Phi_N: {\rm mod}-B\to {\rm mod}-B'$ given by
$$
\Phi_N: V\mapsto N\otimes_{B}V
$$
induces an equivalence on the categories of modules.

We use superscript $'$ for notation of homomorphisms related to $\Delta '$, to distinguish them from similar homomorphisms related to $\Delta$.

Fix elements $c, d\in A$ which satisfy the equation:
\begin{equation}\label{cdeltad}
c\cdot_A\Delta=\Delta '\cdot_A d.
\end{equation}
Let us construct bimodule $N=N_{cd}$ as follows. We want $N$ to be an extension of a trivial bimodule $\kk$ by $A$, the latter is considered as a $B'-B$-bimodule with the left $B'$-module structure coming via $\psi_1'$ and with the  right $B$-module structure coming from $\psi_2$:
\begin{equation}\label{seqbimodule}
0\to A\to N_{cd}\to \kk \cdot z\to 0
\end{equation}
with $z$ being a generator in $\kk=\kk\cdot z$. Take $N=\kk\cdot z\oplus A$ as a vector space. Define the left action of $b'\in B'^{+}$ on $z$ by:
$$
b'z:=b'\cdot_{A} c.
$$
This is an element in $A\subset N_{cd}$. Similarly, define the right action of $b\in B^+$ on $z$ by:
$$
zb:=d\cdot_Ab.
$$
The action of the unit is, of course, by the identity.
\begin{Lemma}
The above defined action endows $N_{cd}$ with a $B'-B$-bimodule structure.
\end{Lemma}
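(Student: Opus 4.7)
The plan is to unwind the definitions of the two actions on both summands of $N_{cd}=\kk\cdot z\oplus A$ and check the three bimodule axioms separately, leaving only one nontrivial verification which is exactly where hypothesis \eqref{cdeltad} enters. Concretely, on the submodule $A\subset N_{cd}$ the actions are, for $b'\in B'^{+}$, $b\in B^{+}$, $a\in A$,
\[
b'\cdot a=b'\Delta' a,\qquad a\cdot b=a\Delta b,
\]
by the definition of $\psi_1'$ and $\psi_2$, and the units of $B'$ and $B$ act as the identity. On the generator $z$ the actions are $b'z=b'\cdot_A c$, $zb=d\cdot_A b$, again with the units acting trivially. It remains to verify (i) left associativity, (ii) right associativity, and (iii) commutativity of the two actions, on each of the summands.

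For the summand $A$ everything is automatic: the left action is a pullback of the regular left $A$-action along $\psi_1'$, hence associative; symmetrically for the right $A$-action along $\psi_2$; and the two commute because left and right multiplication in $A$ commute. Thus no obstruction appears there. The content is in the three checks on $z$. For left associativity, using $b_1'\cdot_{B'} b_2'=b_1'\Delta' b_2'$ in $A$, one computes
\[
(b_1'\cdot_{B'} b_2')z=(b_1'\Delta' b_2')c=b_1'\Delta'(b_2'c)=\psi_1'(b_1')\cdot_A(b_2'z)=b_1'(b_2'z),
\]
and right associativity is the mirror computation, using $b_1\cdot_B b_2=b_1\Delta b_2$ and the formula $zb=d\cdot_A b$.

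The key step, and the only one that uses the hypothesis, is the commutativity $(b'z)b=b'(zb)$ on $z$. Expanding both sides gives
\[
(b'z)b=(b'c)\Delta b=b'(c\Delta)b,\qquad b'(zb)=b'\Delta'(db)=b'(\Delta' d)b,
\]
and these agree precisely because $c\cdot_A\Delta=\Delta'\cdot_A d$ by \eqref{cdeltad}. One also checks the mixed compatibility between the action on $z$ and the action on $A$ (e.g.\ $b'(za)=(b'z)a$ for $a\in A$, where $za:=d\cdot_A a$ from the right $B$-action extended to $A$-multiples), which reduces to associativity of multiplication in $A$. Altogether, this establishes that $N_{cd}$ is a $B'$-$B$-bimodule; I expect the hypothesis \eqref{cdeltad} to appear nowhere else, and indeed the lemma is essentially a reformulation of that single identity.
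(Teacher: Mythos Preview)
Your proof is correct and follows essentially the same route as the paper's: check left and right associativity on $z$ (both reduce to associativity in $A$), then verify the commuting of the two actions on $z$, which is the unique place where the relation $c\Delta=\Delta'd$ is used. The paper's proof is the same computation in slightly different notation; your additional remark about ``mixed compatibility'' is harmless but not needed, since the three axioms you already checked on $z$ and on $A$ separately exhaust what has to be verified for the direct sum $N_{cd}=\kk z\oplus A$.
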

\begin{proof}
Indeed, we have for the left $B'$-module structure:
$$
b_1'(b'z)=b_1'\cdot_{B'}b'\cdot_Ac=(b_1'b')z
$$
and similar for the right $B$-module structure. Also, the left and the right module structures commute:
$$
(b'z)\cdot_Bb=(b'\cdot_Ac)\cdot_Bb=b'\cdot_Ac\cdot_A{\Delta }\cdot_Ab=b'\cdot_A\Delta '\cdot_A d\cdot_Ab=b'\cdot_{B'}(d\cdot_Ab)=b'\cdot_{B'}(zb).
$$
\end{proof}

Let $\Delta '=\Delta$, then $B'=B$. Let $(c, d)=(1_A,1_A)$ be two copies of the unit in $A$. Then $N_{11}=B$ as a $B$-bimodule. Hence the functor $\Phi_{N_{11}}$ is the identity functor in ${\rm mod}-B$.

The composition of functors of type $\Phi_N$ is compatible with products of bimodules:
$$
\Phi_L\circ \Phi_N=\Phi_{L\otimes_{B'}N}
$$
for a $B'-B$-bimodule $L$ and a $B-B''$-bimodule $N$.
\begin{Proposition} Let $\Delta$ be a well-tempered element in $A$.
Let $N_{cd}$ be a $B'-B$-bimodule and $N_{uv}$ be a $B-B''$-bimodule. Then
$$
N_{cd}\otimes_B N_{uv}=N_{c\cdot_Au,d\cdot_Av}
$$
as a $B'-B''$-bimodule.
\end{Proposition}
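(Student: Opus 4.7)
The plan is to construct an explicit map $\phi: N_{cd}\otimes_B N_{uv}\to N_{c\cdot_A u,\, d\cdot_A v}$ on elementary tensors and then verify it is a bimodule isomorphism by a short-exact-sequence comparison. Guided by the vector space decompositions $N_{cd}=A\oplus \kk z$ and $N_{uv}=A\oplus \kk z'$, I would set
$$
a\otimes a'\mapsto a\Delta a', \qquad a\otimes z'\mapsto a\cdot_A u, \qquad z\otimes a'\mapsto d\cdot_A a', \qquad z\otimes z'\mapsto z'',
$$
where the first three images lie in $A\subset N_{c\cdot u,\, d\cdot v}$ and $z''$ is the distinguished generator of the quotient. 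The first task is to show $\phi$ descends to the tensor product over $B$, i.e.\ that $\phi((x\cdot_B b)\otimes y)=\phi(x\otimes(b\cdot_B y))$ for $b\in B^+$ and the four pairings $(x,y)\in\{a,z\}\times\{a',z'\}$. Each identity is a one-line unwinding of the definitions of the $B$-actions on $N_{cd}$ and $N_{uv}$ together with associativity in $A$. Checking $B'$--$B''$-bilinearity of $\phi$ is similar; the only nontrivial input is equation (\ref{cdeltad}), $c\Delta=\Delta' d$, which is used precisely in the verification on pairs $z\otimes a'$: one has $\phi((b'\cdot z)\otimes a')=\phi((b'c)\otimes a')=b'c\Delta a'$, which equals $b'\cdot\phi(z\otimes a')=b'\Delta' d a'$ by (\ref{cdeltad}).

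Bijectivity is obtained by applying the functor $N_{cd}\otimes_B(-)$ to the defining left $B$-module sequence $0\to A\to N_{uv}\to \kk z'\to 0$. Well-temperedness of $\Delta$ supplies the needed inputs: by (\ref{bbbb}) one has $A\otimes_B A=A$; the resolution $0\to B^+\to B\to \kk \to 0$ combined with $B^+\otimes_B\kk = A/A\Delta A=0$ (since $A\Delta A=A$) gives $N_{cd}\otimes_B \kk = \kk$ and ${\rm Tor}_1^B(N_{cd},\kk)=0$; and applying $N_{cd}\otimes_B(-)$ to $0\to A\to N_{cd}\to \kk z\to 0$, together with $\kk z\otimes_B A=A/A\Delta A=0$, yields $N_{cd}\otimes_B A = A\otimes_B A = A$. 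Consequently the long exact sequence collapses to
$$
0\to A\to N_{cd}\otimes_B N_{uv}\to \kk\to 0.
$$

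Finally, I would verify that this short exact sequence coincides with the one defining $N_{c\cdot u,\, d\cdot v}$ as a $B'$--$B''$-bimodule. On the $A$-subbimodule, $\phi$ restricts to the multiplication isomorphism of (\ref{bbbb}) and is therefore the identity on $A$; on the quotient, the class of $z\otimes z'$ maps to $z''$. The compatibility of the extension structures reduces to computing the actions $b'\cdot(z\otimes z')=(b'c)\otimes z'$ and $(z\otimes z')\cdot b''=z\otimes(vb'')$, and tracing these through the identification $A\otimes_B N_{uv}=A$ established above: a decomposition $vb''=\sum p_j\Delta q_j$ (possible by $A\Delta A=A$) gives $z\otimes vb'' = \sum (dp_j)\otimes q_j\mapsto dvb''$, matching $z''\cdot b'' = dv\cdot_A b''$; the left action is handled symmetrically. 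The Five Lemma then forces $\phi$ to be an isomorphism. The principal obstacle is this last step of matching the bimodule extension data: the Tor vanishings and identification of terms are straightforward consequences of well-temperedness, but carefully tracking the bimodule structure across the identifications requires care.
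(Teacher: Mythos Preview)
Your proposal is correct and follows essentially the same route as the paper: both arguments tensor the defining short exact sequences for $N_{cd}$ and $N_{uv}$ over $B$, invoke $A\otimes_B A=A$ and $A\otimes_B^{\mathbb L}\kk=0$ from well-temperedness to obtain $0\to A\to N_{cd}\otimes_B N_{uv}\to \kk\to 0$, and then identify the $B'$- and $B''$-actions on the generator $z\otimes z'$ by decomposing an element of $A=B^+$ through the multiplication map and tracing through the isomorphism $B^+\otimes_B B^+\cong B^+$. Your version front-loads an explicit map $\phi$ and finishes with the Five Lemma, while the paper computes the tensor product first and then reads off the actions, but the substance is the same.
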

\begin{proof}
Let $\Delta ,\Delta ', \Delta ''$ be the defining elements for $B, B'$ and $B''$ respectively. By definition of bimodules $N_{cd}$ and $N_{uv}$, we have:
$$
c\cdot_A\Delta=\Delta '\cdot_A d,\ \  u\cdot_A\Delta ''=\Delta \cdot_A v,
$$
which implies that
$$
cu\cdot_A\Delta ''=\Delta '\cdot_A dv.
$$
That is, the module $N_{cu, dv}$ is correctly defined.

We know by (\ref{bbbb}) that $A\otimes_BA=A$ and by lemma \ref{zerohoms} (ii) that $A\otimes_B^{\mathbb L} \kk=0$. By taking tensor product over $B$ of sequences (\ref{seqbimodule}) for $N_{cd}$ and for $N_{uv}$, we obtain that $N_{cd}\otimes N_{uv}$ fits in a similar short exact sequence
$$
0\to A\to N_{cd}\otimes_B N_{uv}\to \kk\cdot (z\otimes w) \to 0,
$$
where $z$ and $w$ stand for generators in the $\kk$ components  for $N_{cd}$ and for $N_{uv}$, respectively. Thus, we need to check  the left action of $B'$ and the right action of $B''$ on $z\otimes w$. Let $b'$ be in $B'$. Then
$$
b'(z\otimes w)=b'\cdot_Ac\otimes w.
$$
Now $b'\cdot_Ac$ is an element in $A\simeq B^+$. According to (\ref{bbbb}), we can decompose it into
$$
b'\cdot_Ac=\sum p_i\cdot_Bq_i,
$$
where $p_i$ and $q_i$ are some elements in $B^+$. Further,
$$
b'\cdot_Ac\otimes w=\sum p_i\cdot_Bq_i\otimes w=\sum p_i\otimes q_iw=\sum p_i\otimes q_i\cdot_Au.
$$
The identification $B^+\otimes_BB^+=B^+$ in (\ref{bbbb}) is given by the multiplication in $B^+$. Hence under this identification, we have
$$
b'(z\otimes w)=\sum p_i\otimes q_i\cdot_Au=\sum p_i\cdot_B q_i\cdot_Au=b'\cdot_Ac\cdot_Au.
$$
Thus the left action of $B'$ on $z\otimes w$ is `via' $c\cdot_Au$. Similarly for the right action of $B''$.
\end{proof}
{\bf Definition.} Two pairs of elements $(c,d)$ and $(c', d')$ in $A$ satisfying (\ref{cdeltad}) are said to be {\em homotopic}, if there exists an element $h\in A$ such that:
$$
c'-c=\Delta 'h, \ \ d'-d=h\Delta .
$$
This clearly defines an equivalence relation.

\begin{Lemma}
If pairs $(c,d)$ and $(c' , d')$ satisfying (\ref{cdeltad}) are homotopic, then $B'-B$-bimodules $N_{cd}$ and $N_{c'd'}$ are isomorphic.
\end{Lemma}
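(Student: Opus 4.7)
The plan is to write down an explicit isomorphism of $B'$--$B$-bimodules, essentially perturbing the generator of the $\kk$ summand of $N_{cd}$ by $h$, and to verify the two module-action compatibilities, which will each use one of the two homotopy equations. As a vector space $N_{cd}=\kk\cdot z\oplus A$, and similarly $N_{c'd'}=\kk\cdot z'\oplus A$. I would define
$$
\varphi:N_{cd}\longrightarrow N_{c'd'},\qquad \varphi|_A=\mathrm{id}_A,\qquad \varphi(z)=z'-h,
$$
extended $\kk$-linearly. Since this is triangular in the direct-sum decomposition with $\mathrm{id}$ on the diagonal, it is automatically a $\kk$-linear isomorphism; indeed the map $a\mapsto a$, $z'\mapsto z+h$ is a two-sided inverse.

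Next I would check that $\varphi$ intertwines the left $B'$-action. Restricted to $A$, the left $B'$-action on both sides is the same (via $\psi_1'$), so there is nothing to prove there; it suffices to test on $z$. For $b'\in B'^{+}$ the left action on $h\in A$ (inside either $N_{cd}$ or $N_{c'd'}$) is given by $b'\cdot h=\psi_1'(b')\cdot_A h=b'\cdot_A\Delta'\cdot_A h$, so
$$
b'\cdot\varphi(z)=b'\cdot_A c'-b'\cdot_A\Delta'\cdot_A h=b'\cdot_A(c'-\Delta'h)=b'\cdot_A c=\varphi(b'\cdot z),
$$
where the penultimate equality uses the homotopy relation $c'-c=\Delta'h$. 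The right $B$-action is handled symmetrically: for $b\in B^{+}$, the right action on $h$ is $h\cdot b=h\cdot_A\Delta\cdot_A b$, hence
$$
\varphi(z)\cdot b=d'\cdot_A b-h\cdot_A\Delta\cdot_A b=(d'-h\Delta)\cdot_A b=d\cdot_A b=\varphi(z\cdot b),
$$
using $d'-d=h\Delta$. Unital compatibility and the commuting of left and right actions are inherited from $N_{cd}$ and $N_{c'd'}$ (or can be read off directly), so $\varphi$ is a bimodule homomorphism.

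There is no real obstacle; the only thing requiring care is choosing the sign in $\varphi(z)=z'-h$ so that the two homotopy relations are consumed with the correct orientation. I expect the argument to take only a few lines in the paper: identify the perturbed generator, then cite the two equations $c'-c=\Delta'h$ and $d'-d=h\Delta$ for the left and right checks respectively. The statement is also transparently symmetric under swapping $(c,d)\leftrightarrow(c',d')$ and replacing $h$ by $-h$, which explains why $\varphi$ is automatically invertible.
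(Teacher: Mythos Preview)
Your argument is correct and is exactly the explicit change-of-generator $z\mapsto z+h$ that the paper sketches; you have simply written out the two verifications in full. The paper additionally remarks that such extensions are classified by ${\rm Ext}^1_{B'-B}(\kk,A)$ computed via a short projective bimodule resolution, but this is only a cohomological repackaging of the same computation you did.
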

\begin{proof}
The bimodules are extensions of $\kk$ by $A$, hence they are classified by ${\rm Ext}^1_{B'-B}(\kk , A)$. One can use projective bimodule resolution (similar to the one we used to find Hochshild dimension)
$$
0\to (B')^+\otimes B^+\to (B'\otimes B^+)\oplus ((B')^+\otimes B)\to B'\otimes B\to \kk \to 0.
$$
to calculate this group and see that pairs satisfying (\ref{cdeltad}) modulo homotopy equivalence give exactly elements of ${\rm Ext}^1_{B'-B}(\kk , A)$. One can also make change $z\mapsto z+h$ which gives a new presentation for $N_{cd}$, which  identifies it with $N_{c'd'}$.
\end{proof}
\begin{Theorem} If $\Delta$ and $\Delta '$ are well-tempered elements in $A$ and there exist elements $c,d, u, v, h_1,h_2\in A$ satisfying
$$
cu=1+\Delta 'h_1,\ dv=1+h_1\Delta ',\ uc=1+\Delta h_2,\ vd= 1+h_2\Delta ,
$$
then categories $B-{\rm mod}$ and $B'-{\rm mod}$ are equivalent.
\end{Theorem}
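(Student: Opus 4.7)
My plan is to exhibit inverse Morita equivalences using bimodules of the type $N_{cd}$ introduced above. The hypotheses are set up so that $N_{cd}$ is a $B'$-$B$-bimodule (since the implicit relation $c\cdot_A\Delta=\Delta'\cdot_A d$ is presupposed for $N_{cd}$ to be defined) and $N_{uv}$ is a $B$-$B'$-bimodule (for the symmetric reason $u\cdot_A\Delta'=\Delta\cdot_A v$). I claim the functors
\[
\Phi_{N_{cd}}:B\text{-mod}\to B'\text{-mod},\qquad \Phi_{N_{uv}}:B'\text{-mod}\to B\text{-mod}
\]
are mutually inverse.

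The first main step is to compute their compositions. Since $\Delta$ and $\Delta'$ are well-tempered, the previous proposition applies in both directions and gives isomorphisms of bimodules
\[
N_{cd}\otimes_{B}N_{uv}\cong N_{c\cdot_A u,\,d\cdot_A v},\qquad N_{uv}\otimes_{B'}N_{cd}\cong N_{u\cdot_A c,\,v\cdot_A d}.
\]
Thus the compositions $\Phi_{N_{cd}}\circ \Phi_{N_{uv}}$ and $\Phi_{N_{uv}}\circ \Phi_{N_{cd}}$ are the bimodule functors associated to $N_{cu,dv}$ (as a $B'$-bimodule) and to $N_{uc,vd}$ (as a $B$-bimodule), respectively.

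The second step is to identify these with the identity bimodules. The first two relations in the hypothesis, $cu-1=\Delta' h_1$ and $dv-1=h_1\Delta'$, are precisely the statement that the pair $(cu,dv)$ is homotopic to $(1,1)$ in the $B'$-setting, with homotopy element $h_1$. Likewise the last two relations, $uc-1=\Delta h_2$ and $vd-1=h_2\Delta$, say that $(uc,vd)$ is homotopic to $(1,1)$ in the $B$-setting via $h_2$. By the preceding lemma, homotopic pairs give isomorphic bimodules, and $N_{1,1}$ in either setting is just the regular bimodule. Hence
\[
N_{cd}\otimes_{B}N_{uv}\cong B',\qquad N_{uv}\otimes_{B'}N_{cd}\cong B,
\]
so $\Phi_{N_{cd}}$ and $\Phi_{N_{uv}}$ are inverse equivalences, proving the Morita equivalence of $B$ and $B'$.

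The main thing to be careful about is that the identification $N_{cd}\otimes_B N_{uv}\cong N_{cu,dv}$ really does hold as a $B'$-$B'$-bimodule, not merely as an extension of $\kk$ by $A$; but this is exactly what the previous proposition delivers, using the well-temperedness of $\Delta$ through the key identifications $A\otimes_B A=A$ and $A\otimes_B^{\mathbb L}\kk=0$. Everything else is essentially a bookkeeping exercise in assembling the homotopy data $h_1,h_2$ into explicit isomorphisms.
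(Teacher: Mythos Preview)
Your proof is correct and follows essentially the same approach as the paper: compute both composites via the tensor-product proposition to obtain $N_{cu,dv}$ and $N_{uc,vd}$, then use the homotopy lemma together with the hypotheses to identify each with $N_{1,1}$, the regular bimodule. Your explicit remark that the relations $c\Delta=\Delta'd$ and $u\Delta'=\Delta v$ must be presupposed (so that $N_{cd}$ and $N_{uv}$ are defined at all), and your care in noting which well-temperedness hypothesis is needed for which tensor product, are welcome clarifications of what the paper leaves implicit.
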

\begin{proof}
By proposition 58 and lemma 59, functors $\Phi_{N_{cd}}$ and $\Phi_{N_{uv}}$ are mutually inverse. Hence the equivalence.
\end{proof}
This theorem claims that under condition of well-temperedness the category of representations of the homotope is an invariant of the homotopy class of two term complexes of right $A$-modules
of the form $\{A\to A\}$.

Take tensor product over $B$ of the sequence (\ref{seqbimodule}) with the $B^+$-trivial left $B$-module $\kk $. We know that  $A\otimes_B^{\mathbb L} \kk =0$, therefore $\Phi_{N_{cd}}(\kk)=\kk $.
\begin{Proposition}
If $\Delta$ is well-tempered, then functors $\psi_{1*}$ and $\psi '_{1*}$ are compatible with equivalences $\Phi_{N_{cd}}$, i.e.
$$
\Phi_{N_{cd}}\circ \psi_{1*}=\psi '_{1*}.
$$
\end{Proposition}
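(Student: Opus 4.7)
The goal is to show $\Phi_{N_{cd}}(\psi_{1*}W) \cong \psi'_{1*}W$ naturally in the $A$-module $W$. My plan is to tensor the defining short exact sequence (\ref{seqbimodule})
\begin{equation*}
0 \to A \to N_{cd} \to \kk \to 0
\end{equation*}
of $B'$-$B$-bimodules over $B$ with the left $B$-module $\psi_{1*}W$. If both the rightmost term $\kk \otimes_B \psi_{1*}W$ and the connecting term $\mathrm{Tor}_1^B(\kk, \psi_{1*}W)$ vanish, the resulting sequence collapses to an isomorphism $A \otimes_B \psi_{1*}W \cong N_{cd} \otimes_B \psi_{1*}W$, which I then identify with $\psi'_{1*}W$.

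For the first vanishing, $\kk \otimes_B \psi_{1*}W = \psi_{1*}W / (B^+ \psi_{1*}W)$, and since $B^+ = A$ acts on $\psi_{1*}W$ through $\psi_1$, we have $B^+ \psi_{1*}W = A\Delta W$. Well-temperedness of $\Delta$ gives $A\Delta A = A$, hence $W = A\Delta A \cdot W \subseteq A\Delta W$, so the quotient is zero. For the $\mathrm{Tor}_1$ vanishing, projectivity of $B^+$ as a right $B$-module (the second part of well-temperedness) makes the augmentation sequence a length-one projective resolution of $\kk$, and $\mathrm{Tor}_1^B(\kk, \psi_{1*}W)$ is the kernel of the multiplication map $B^+ \otimes_B \psi_{1*}W \to \psi_{1*}W$. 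Under the identification $B^+ = A$ as a right $B$-module via $\psi_2$, this map is precisely the morphism $\eta_W : \psi_2^*\psi_{1*}W \to W$, $a \otimes w \mapsto a\Delta w$, which is an isomorphism by the proof of Proposition \ref{adjunction}. In particular this also furnishes an $A$-linear isomorphism $A \otimes_B \psi_{1*}W \cong W$.

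It remains to trace the $B'$-action. On $A \subset N_{cd}$ the left $B'$-module structure is given by $\psi'_1$, so for $b' \in B'^+$,
\begin{equation*}
b' \cdot (a \otimes w) = (b'\Delta' a) \otimes w \;\longmapsto\; b'\Delta' a \Delta w = \psi'_1(b') \cdot (a\Delta w)
\end{equation*}
under $\eta_W$, matching the left action of $B'$ on $\psi'_{1*}W$. Naturality in $W$ is automatic since every identification used is functorial. The only real difficulty is bookkeeping: one must carefully distinguish the three algebra actions in play (right $B$ via $\psi_2$, left $B$ via $\psi_1$, left $B'$ via $\psi'_1$) on the different pieces of $N_{cd}$ and its tensor product; once these are aligned the argument reduces to the two vanishings together with Proposition \ref{adjunction}.
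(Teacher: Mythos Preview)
Your proof is correct and follows essentially the same route as the paper: tensor the defining sequence for $N_{cd}$ with $\psi_{1*}W$ and use that $\kk\otimes_B^{\mathbb L}\psi_{1*}W=0$ together with $A\otimes_B A=A$ to collapse the result to $\psi'_{1*}W$. The paper states the derived vanishing $\kk\otimes_B^{\mathbb L}\psi_{1*}M=0$ in one line and invokes Corollary~22 directly, whereas you unpack the $\mathrm{Tor}_0$ and $\mathrm{Tor}_1$ vanishings separately and trace the $B'$-action by hand; this is the same argument, just made more explicit.
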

\begin{proof}
Let $M$ be a left $A$-module. Take tensor product over $B$ of $\psi_{1*}M$ with short exact sequence (\ref{seqbimodule}). First, 
$$
\kk \otimes_B^{\mathbb L}\psi_{1*}M= \kk \otimes_B^{\mathbb L}A\otimes_AM=0.
$$
Second, 
$$
A\otimes_B\psi_{1*}M=A\otimes_BA\otimes_AM=\psi_{1*}'M,
$$
because $A\otimes_BA=A$ for well-tempered $\Delta$. This implies that $N_{cd}\otimes_B^{\mathbb L}\psi_{1*}M=\psi_{1*}'M$.
\end{proof}


\subsection{The example of the matrix algebra over a field}
\label{examplematrix}
Let $A$ be a matrix algebra over a field $\kk =k$. We identify $A$ with the algebra of operators in a vector space $V$ over $k$ of dimension $n$.
\begin{Proposition}
All non-zero elements in $A$ are well-tempered. If $A$ is an operator of corank $s$, then the homotope $B$ is Morita equivalent to the algebra of the quiver with two vertices and $s$ arrows $\alpha_i$ in one direction  and $s$ arrows $\beta_j$ in the opposite direction and relations $\beta_j\alpha_i=0$, for all $1\le i,j\le s$.
\end{Proposition}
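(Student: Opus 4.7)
The proof splits cleanly into two parts. For well-temperedness of every nonzero $\Delta$, I would invoke the simplicity of $A=\mathrm{Mat}_n(k)$ to conclude $A\Delta A = A$, which is surjectivity of the multiplication $B^+\otimes B^+\to B^+$. Since $A$ is finite-dimensional, the result of \cite{Zh} mentioned earlier in the paper supplies projectivity of $B^+$ as a left and right $B$-module automatically, so $\Delta$ is well-tempered.

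For the Morita equivalence, the plan is to reduce to a small corner algebra $pBp$. First I would use Gaussian elimination together with Lemma \ref{doublecoset} to replace $\Delta$ by the projector $e$ of rank $n-s$ onto a fixed subspace $V_1\subset V$ along a complementary $V_2=\ker e$ of dimension $s$. Set $f:=1_A-e$ and $f':=1_B-e$. Pick any rank-one projector $\tilde e\in eAe$, and put $p:=\tilde e+f'$. The rule $b\cdot_B c = bec$ on $B^+$ shows that $\tilde e$ and $f'$ are orthogonal idempotents, so $p$ is an idempotent in $B$. To check $BpB=B$, complete $\tilde e$ to an orthogonal decomposition $e=\tilde e_1+\cdots +\tilde e_{n-s}$ in $eAe$; since $\Delta=e$ is the unit of $eAe$, the $B$-multiplication restricted to $eAe$ agrees with the ordinary matrix product, so the conjugations of $\tilde e_1$ to the remaining $\tilde e_i$ take place literally inside $B$, giving $\tilde e_i\in BpB$, hence $e=\sum\tilde e_i\in BpB$ and $1_B=e+f'\in BpB$. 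Standard Morita theory then produces the equivalence $B\text{-}\mathrm{mod}\simeq pBp\text{-}\mathrm{mod}$ via $M\mapsto pM$.

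It remains to compute the four corners of $pBp$ and match them with the quiver algebra. A direct calculation yields $\tilde eB\tilde e = k\tilde e$, $\tilde eBf' = \tilde e Af$ (of dimension $s$, providing arrows $\beta_j$ from the vertex $f'$ to the vertex $\tilde e$), $f'B\tilde e = fA\tilde e$ (of dimension $s$, providing arrows $\alpha_i$ in the opposite direction), and $f'Bf' = kf'\oplus fAf$. The cancellation $fe=0$ immediately forces $\beta_j\cdot_B\alpha_i=0$ and $(fAf)\cdot_B(fAf)=0$ inside $f'Bf'$, while the reverse products $\alpha_i\cdot_B\beta_j$ fill out $fAf$ as $i,j$ range. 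This identifies $pBp$ with the path algebra of the described quiver modulo the relations $\beta_j\alpha_i=0$. The only real care in the argument, and the main bookkeeping obstacle, lies in scrupulously distinguishing $\cdot_A$ from $\cdot_B$ in every corner; once the rule $b\cdot_B c=bec$ is installed, every vanishing flows from $ef=fe=0$ and the nontrivial products all reduce to standard matrix operations in corners of $A$.
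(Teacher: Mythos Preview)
Your proof is correct and follows essentially the same route as the paper: reduce $\Delta$ to a projector $e$ of corank $s$ via Lemma~\ref{doublecoset}, take the idempotent $p=\tilde e+f'$ (the paper writes $p_1+q$), and identify $pBp\cong\mathrm{End}_B(Bp)^{opp}$ with the stated quiver algebra. The only differences are cosmetic: for projectivity of $B^+$ you invoke \cite{Zh}, whereas the paper argues directly by decomposing $B^+\cong\psi_{1*}A$ into copies of $Bp_1$; and you spell out the four corners $\tilde eB\tilde e$, $\tilde eBf'$, $f'B\tilde e$, $f'Bf'$ where the paper simply asserts ``one can easily check.''
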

\begin{proof} First, $A$ is simple, hence, for any non-zero element $\Delta$, the two sided ideal $A\Delta A$ coincides with $A$.  Let $\Delta$ be an operator of corank $s$. We know that $B$ remains in the same isomorphism class when $\Delta$ is multiplied by invertible elements in $A$ on both sides.
By multiplying $\Delta$ by invertible operators, we can make it to become a projector $\Delta =p$ of corank $s$.

Projector $p$ can be decomposed into a sum 
$$
p=\sum p_i
$$
of orthogonal (i.e. $p_ip_j=0$) minimal projectors in $A$. It is easy to check that the same decomposition for $p$ into orthogonal idempotents holds in $B^+$. Matrix algebra $A$, as a left module over itself, is decomposed into a sum of $n$ copies of the projective module $Ap_1$. Since $B^+=\psi_{1*}A$ it has also decomposition into $n$ copies of $\psi_{1*}Ap_1=Bp_1$.

Let $q=1_B-p$.  As a left $B$-module, $B$ is decomposed
$$
B=Bp\oplus Bq=\oplus_iBp_i\oplus Bq
$$
which implies that $Bp_i$ are projective modules, hence $B^+$ is projective, whence $\Delta$ is well-tempered.
Consider $B$-module $P=Bp_1\oplus Bq$. Clearly, $P$ is a projective generator in $B-{\rm mod}$. One can easily check that its endomorphism algebra coincides with the one described in the proposition.
\end{proof}

The algebra in proposition 62 has global dimension 2.
In the case of $s=1$, it is the well-known algebra whose category of representations is equivalent to category $\cal O$ of representations of Lie algebra $sl(2, k)$ with fixed central character. Also it is equivalent to the category of perverse sheaves on the 2-dimensional sphere stratified by a point and the complement to it.

\subsection{The example of the cyclic graph}
\label{cyclicgraph}



Let ${\rm H}^1(\Gamma )={\mathbb Z}$. Then the group ring $k[\pi_1(\Gamma )]$ is the algebra $k[x, x^{-1}]$ of Laurent polynomials in one variable. In view of Morita equivalence between $k\Gamma$ and $k[\pi_1(\Gamma )]$, the category $k\Gamma -{\rm mod}$ is equivalent to the category of 
$k[x,x^{-1}]$-mod.


Since $k\Gamma$ is isomorphic to the matrix algebra over $k[\pi_1(\Gamma )]$ (see proposition \ref{moritakg}), elements of $k\Gamma$ can be understood as homomorphisms of free sheaves of $k[x,x^{-1}]$-modules 
of rank $n=|V(\Gamma )|$. In particular, Laplace operator gives a homomorphism 
$$
\Delta : k[x,x^{-1}]^{\oplus n}\to k[x,x^{-1}]^{\oplus n}.
$$


If $\Gamma$ is a cyclic graph with $n$ vertices and $n$ edges $l_{i,i+1}$, where $i \in \ZZ / n$, then
$$
\Delta = 1 + \sum_{i \in \ZZ / n} s_{i,i+1} (l_{i,i+1}+l_{i+1,i}),
$$
for some $s_{i,i+1} \in k^*, i \in \ZZ / n$.
The matrix of $\Delta$, wich we also call Lapaplce operator, in a suitable basis has the form:
\begin{equation}
\label{laplcyc}
\begin{pmatrix}
1 & s_{1,2} & 0 & ... & 0 & s_{n,1}x^{-1}\\
s_{1,2} & 1 & s_{2,3} & 0 & ... & 0\\
0 & s_{2,3} & 1 & s_{3,4} &  ... & 0\\
... & ... & ... & ... & ... & ... \\
0 & ... & 0 & s_{n-2, n-1} & 1 & s_{n-1,n} \\
s_{n,1}x & 0 & ... & 0 & s_{n-1,n} & 1
\end{pmatrix}
\end{equation}

The rank of this operator is exactly the dimension of the minimal representation of the homotope corresponding to character $x$ of the fundamental group. The minor of the size $(n-2)$x$(n-2)$ which contains all columns except for the first and the last ones and all rows except for the last two, is nonzero for any value of $s_{ii+1}$. Hence the rank of the Laplace operator is never less than $n-2$. Thus, the stratification of the moduli space of rank 1 representations of the homotope $B(\Gamma )$ has maximally three strata:
$$
{\mathcal M}(\Gamma)={\mathcal M}^n\cup {\mathcal M}^{n-1}\cup {\mathcal M}^{n-2}, 
$$
which correspond to the minimal representations of dimension $n$, $n-1$ and $n-2$. Here ${\mathcal M}(\Gamma )$ is isomorphic to ${\mathbb A}^1\setminus 0$, parameterized by $x\ne 0$.

It is easy to see that for $n\ge 3$ the determinant of the Lapace matrix has the form:
$$
{\rm det} = A(x+x^{-1})+B,
$$
where $A$ and $B$ are polinomials in $s_{i,i+1}$. Moreover, $A=\pm \prod s_{ii+1}$. The zeros of the determinant define the values of the variables $s_{ii+1}$ and $x$ for which the rank of the Laplacian drops at least by 1. Thus for the generic value of $s_{ii+1}$, we have two values of $x$ for which the rank of the Laplace operator drops by 1. 
These are the coordinates of the two points which comprise ${\mathcal M}^{n-1}$. For the generic case, ${\mathcal M}^{n-2}=\emptyset$.

Computations show that, for special values of $s_{ii+1}$, which are determined by three equations in variables $r_{ii+1}=s_{ii+1}^2$ (whose explicit form me omit here) and define the variety ${\mathcal R}$ of dimension $n-3$, the rank of the Laplace operator drops by 2 in the point $x=1$ or $x=-1$ depending of a connected component of ${\mathcal R}$. Thus, for point in  ${\mathcal R}$, the stratum ${\mathcal M}^{n-1}$ is empty, while ${\mathcal M}^{n-2}$ consists of one point.

\section{Derived categories, recollement and generalised homotopes}
In this section, we approach the subject using the point of view of derived categories. This allows us to understand the homotope construction in terms of the abstract recollement of t-structures. Since the recollement originally was a homological tool for constructing perverse sheaves on stratified spaces, it is not a surprise that, under suitable conditions on stratifications, the categories of perverse sheaves might be equivalent to the categories of representations of homotopes. 

\subsection{Decomposition of the derived category of the homotope}
We assume here that $\Delta\in A$ is well-tempered and $B={\widehat A_{\Delta}}$.
Consider derived categories $D(A-{\rm mod})$ and $D(B-{\rm mod})$.
Functors $\psi_{1*}$ and $\psi_{2*}$ and
functors $\psi_2^*\simeq \psi_1^!$ are exact. We denote by the same symbols the corresponding derived functors.
We show that they provide semi-orthogonal decompositions for $D(B-{\rm mod})$.
Natural transformations (\ref{lowpsi}) and (\ref{naturalmu}) and the fact that $\mu$ is an isomorphism of functors carry on to the context of derived categories.




Denote by $D_0(B-{\rm mod})$ the full subcategory in $D(B-{\rm mod})$ of complexes with cohomology $B^+$-trivial modules. In view of proposition \ref{exceptional}, it is equivalent to $D(\kk -{\rm mod})$. Denote by $i_*:D(\kk-{\rm mod})\to D(B -{\rm mod})$ the corresponding embedding functor.

Recall some definitions from \cite{Bon1}.
A triangulated subcategory is said to be {\em right (resp., left) admissible} if it has right (resp. left) adjoint to the embedding functor.


\begin{Proposition} Subcategory $D_0(B-{\rm mod})$ in $D(B-{\rm mod})$ is left and right admissible.
Functors $\psi_{1*}$ and $\psi_{2*}$ are fully faithful and identify category $D(A -{\rm mod})$ with,
respectively, left and right orthogonal to the subcategory $D_0(B-{\rm mod})$ in $D(B-{\rm mod})$.
\end{Proposition}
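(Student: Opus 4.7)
The plan is to set up, essentially, the six-functor recollement formalism of \cite{Bon1} in this situation, from which all assertions of the proposition follow at once. The six functors are the embedding $i_*$ together with its left adjoint $i^*=\kk\otimes_B^{\LL}(-)$ and right adjoint $i^!=\RR{\rm Hom}_B(\kk,-)$, and on the other side $j^*=\psi_1^!\simeq\psi_2^*$ (by proposition \ref{naturaliso}) together with its left adjoint $j_!=\psi_{1*}$ and right adjoint $j_*=\psi_{2*}$.

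For the admissibility of $D_0(B-{\rm mod})$, note that by well-temperedness of $\Delta$ the augmentation sequence (\ref{aug1}) provides a projective $B$-resolution of $\kk$ of length one on either side, so the derived functors $i^*$ and $i^!$ are everywhere defined on $D(B-{\rm mod})$. The isomorphisms $i^*i_*\simeq{\rm id}$ and $i^!i_*\simeq{\rm id}$ are immediate from proposition \ref{exceptional}, giving left and right admissibility.

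Since $\psi_{1*}$ and $\psi_{2*}$ are exact, they agree with their own derived functors. Proposition \ref{adjunction} lifts verbatim to the derived setting, giving $\psi_1^!\psi_{1*}\simeq{\rm id}$ and $\psi_2^*\psi_{2*}\simeq{\rm id}$, whence full faithfulness by the standard adjunction argument. For the orthogonality inclusions, lemma \ref{zerohoms}(i)--(ii) gives $\psi_1^!i_*=0$ and $\psi_2^*i_*=0$, so adjunction yields
\begin{equation*}
{\rm Hom}(\psi_{1*}W,\,i_*V)={\rm Hom}(W,\psi_1^!i_*V)=0,\qquad {\rm Hom}(i_*V,\,\psi_{2*}W)={\rm Hom}(\psi_2^*i_*V,W)=0,
\end{equation*}
proving $\psi_{1*}D(A-{\rm mod})\subseteq{}^\perp D_0(B-{\rm mod})$ and $\psi_{2*}D(A-{\rm mod})\subseteq D_0(B-{\rm mod})^\perp$.

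The substantive step is producing the two semiorthogonal decomposition triangles for arbitrary $X\in D(B-{\rm mod})$. Tensor the augmentation sequence of right $B$-modules $0\to B^+\to B\to\kk\to 0$ with $X$ over $B$. Using the bimodule identification $B^+={}_{\psi_1}A_{\psi_2}$ together with proposition \ref{naturaliso}, a short computation identifies $B^+\otimes_B^{\LL}X\simeq\psi_{1*}\psi_1^!X$ as a left $B$-module, producing the triangle
\begin{equation*}
\psi_{1*}\psi_1^!X\to X\to i_*i^*X\to .
\end{equation*}
This, combined with the orthogonality containment above, exhibits the semiorthogonal decomposition $D(B-{\rm mod})=\langle\psi_{1*}D(A-{\rm mod}),\,D_0(B-{\rm mod})\rangle$ and hence the desired equality $\psi_{1*}D(A-{\rm mod})={}^\perp D_0(B-{\rm mod})$. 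Dually, applying $\RR{\rm Hom}_B(-,X)$ to the augmentation sequence of left $B$-modules yields
\begin{equation*}
i_*i^!X\to X\to\psi_{2*}\psi_2^*X\to ,
\end{equation*}
which gives $\psi_{2*}D(A-{\rm mod})=D_0(B-{\rm mod})^\perp$. The only real technical point throughout is the careful bookkeeping of the two left- and right-$B$-module structures carried by the bimodule $B^+={}_{\psi_1}A_{\psi_2}$, and this is precisely what proposition \ref{naturaliso} reduces to a single uniform identification.
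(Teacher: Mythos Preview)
Your proof is correct and follows essentially the same strategy as the paper: establish full faithfulness via proposition \ref{adjunction}, verify orthogonality via lemma \ref{zerohoms}, and produce the decomposition triangles. The only tactical difference is that the paper obtains the triangle by citing lemma \ref{epsilondelta} (the cone of the adjunction counit $\psi_{1*}\psi_1^!V\to V$ is $B^+$-trivial for a pure module $V$, then one extends to complexes by exactness), whereas you tensor the augmentation sequence directly with $X$ and identify $B^+\otimes_B X\simeq\psi_{1*}\psi_2^*X\simeq\psi_{1*}\psi_1^!X$ using the bimodule description $B^+={}_{\psi_1}A_{\psi_2}$ together with proposition \ref{naturaliso}. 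Your packaging is slightly cleaner in that it works uniformly on complexes without the intermediate reduction to pure modules; the paper's route is a bit more concrete at the abelian level but amounts to the same computation.
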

\begin{proof}

Exact functors $\psi_{1*}$ and $\psi_{2*}$ between triangulated categories 
are fully faithful if the adjunction morphisms are isomorphisms:
${\rm id}\simeq \psi_2^*\psi_{2*}$, ${\rm id}\simeq \psi_1^!\psi_{1*}$. This follows from proposition \ref{adjunction}, because functors $\psi_2^*=\psi_1^!$, $\psi_{2*}$ and $\psi_{1*}$ are exact as functors between abelian categories.

Let $W$ be an $A$-module and $V_0$ a $B^+$-trivial $B$-module. We already know in view of  lemma \ref{zerohoms} that 
${\rm Hom}_{B(\Gamma)}^{\bullet}(\psi_{1*}W, V_0)=0$ and
${\rm Hom}_{B(\Gamma )}^{\bullet}(V_0, \psi_{2*}W)=0$.
To prove that $D_0(B-{\rm mod})$ is left admissible and, simultaneously, that the image of functor $\psi_{1*}$ is indeed the whole left orthogonal to $D_0(B-{\rm mod})$, it is enough to check \cite{Bon1} that every object, say $V$, in $D(B-{\rm mod})$ has a decomposition into exact triangle $U\to V\to W$ with $U$ in the image of $\psi_{1*}$ and $W\in D_0(B-{\rm mod})$.
Consider the adjunction morphism $\psi_{1*}\psi_1^!V\to V$.
By lemma \ref{epsilondelta} its cone is in $D_0(B-{\rm mod})$ if $V$ is a pure $B$-module.
This gives a triangle with required properties for such $V$.
For general $V$, this follows from exactness of functors $\psi_{1*}$ and $\psi_1^!$. For $\psi_{2*}$, the proof is similar.
%
%
\end{proof}
In accordance with ideology of \cite{Bon1}, this proposition is interpreted as the existence of decompositions into the semiorthogonal pairs:
\begin{equation}\label{semiordec}
D(B-{\rm mod})=\langle i_*D(\kk-{\rm mod}), \psi_{1*}D(A-{\rm mod})\rangle =\langle \psi_{2*}D(A-{\rm mod}), i_*D(\kk-{\rm mod})\rangle.
\end{equation}

Given an admissible subcategory $\cal B$ in a triangulated category $\cal D$, there is an equivalence between the left and right orthogonals to it, $\ ^{\perp} {\cal B}$ and ${\cal B}^{\perp}$. The mutually inverse 'mutation' functors $L_{\cal B}:\ ^{\perp} {\cal B}\to {\cal B}^{\perp}$ and $R_{\cal B}:{\cal B}^{\perp}\to \ ^{\perp}{\cal B}$ are given by restricting to $\ ^{\perp} {\cal B}$ the left adjoint to the embedding functor ${\cal B}^{\perp}\to {\cal B}$ and by restricting to ${\cal B}^{\perp}$ the right adjoint functor to the embedding $\ ^{\perp} {\cal B}\to {\cal D}$.

In the case of our interest,
subcategories $D_0(B-{\rm mod})^{\perp}$ and $\ ^{\perp}D^b_0(B-{\rm mod})$ are both equivalent to $D(A -{\rm mod})$ via functors $\psi_{1*}$ and $\psi_{2*}$, with equality
$$
\psi_{2*}=i_{ {\cal B}^{\perp }}\circ L_{\cal B}\circ \psi_{1*},
$$
where $i_{{\cal B}^{\perp }}$ is the embedding functor for the subcategory ${\cal B}^{\perp }$. Since $L_{\cal B}$ is basically the adjoint to $i_{ {\cal B}^{\perp }}$ we have the adjunction morphism:
\begin{equation}\label{lambdamut}
\lambda :{\rm id}_{^{\perp}{\cal B}}\to i_{{\cal B}^{\perp }}\circ L_{\cal B}.
\end{equation}
It implies the functorial morphism $\psi_{1*}\to \psi_{2*}$, which coincides with the natural transformation $\lambda$ in  (\ref{lowpsi}) when
extended to a transformation between the derived functors.

The embedding functor $i_*$
has the left adjoint 
$$
i^* :D(B-{\rm mod})\to D(\kk -{\rm mod})
$$
defined by
$$
i^*(V)=\RR {\rm Hom}_{B}(V, \kk)^*
$$
and the right adjoint $i^!$ defined by
$$
i^!(V)=\RR {\rm Hom}_{B}(\kk, V).
$$
All together, functors $\psi_{1*}, \psi_{2*}, \psi^!_1, i_*, i^*, i^!$ fit the formalism of six functors of the recollement.

Given an admissible subcategory ${\cal B}$ in a triangulated category,
one can consider the ambient category as being 'glued' from the subcategories ${\cal B}$ and its orthogonal $\ ^{\perp}{\cal B}$. The necessary extra data to define the gluing is the functor
\begin{equation}\label{functorglue}
F: \ ^{\perp}{\cal B}\to {\cal B},
\end{equation}
which takes an object from $\ ^{\perp}{\cal B}$ to the cone of the natural transformation (\ref{lambdamut}) applied to this object.
The resulting object lies in ${\cal B}$.

To be more precise, this approach works only in the framework of pre-triangulated DG-categories \cite{BK2}, rather than for ordinary triangulated categories. Thus we should consider suitable DG-categories, {\em enhancements} for ${\cal B}$ and for its orthogonal $\ ^{\perp}{\cal B}$, and a gluing DG-functor (or DG-bimodule) between them. Given this data, one can construct in an essentially unique way a new DG-category, such that its homotopy category has a semiorthogonal decomposition into a pair $\langle {\cal B}, \ ^{\perp}{\cal B}\rangle$ ({\emph cf.} \cite{KL}, \cite{Ef}, \cite{Or}). 

When applied to our categories, this means that category $D(B-{\rm mod})$ is glued from dg-enhanced categories $D(A -{\rm mod})$ and $D(\kk -{\rm mod})$ by means of a functor which can be understood as a DG $\kk - A$-bimodule, i.e. simply as a right $A$-module, because the left $\kk$-module structure will be automatically induced. This right DG $A$-module is given by the cone of the natural transformation $\lambda$ in (\ref{lowpsi}).

The first semiorthogonal decomposition in (\ref{semiordec}) restricts to the category of bounded complexes with finitely presented cohomology $D^b_{fp}(B-{\rm mod})$, but the second one restricts only when $\psi_{2*}$ takes finitely presented modules to finitely presented. The conditions for this is given in proposition \ref{fppsi2}.

{\bf Example.} Assume that graph $\Gamma$ is a tree and $\kk = k$ is a field. Then homology of $\Gamma$ is trivial and $k\Gamma$ is isomorphic to the algebra of matrices of size $n\times n$ with $n$ equal to the number of vertices in the graph. This example we already considered in \ref{examplematrix}.
The category $k\Gamma -{\rm mod}$ is equivalent to the category of $k$-modules.
The gluing functor $F: D(k\Gamma -{\rm mod})\to D(k-{\rm mod})$ in (\ref{functorglue}) for this case is fully defined by its values on the standard representation of the matrix algebra $\kk\Gamma -{\rm mod}$ in the $n$ dimensional vector space $V$. The value of $F$ on this representation is the cone of  $\lambda_V$, which is nothing but the Laplace operator $\Delta$ understood as an element of the matrix algebra.

Hence,
if $\Delta$ has corank $s$, then $D^b(B(\Gamma -{\rm mod_{fd}}))$ has a semiorthogonal pair, which is basically a full  exceptional collection with two elements $(E_1, E_2)=(i_*\kk , \psi_{1*}P)$, and Ext-groups are given by formulas: 
$$
{\rm Ext}^{0,1}(E_1,E_2)=k^s, \ {\rm Ext}^{\ne 0,1}(E_1,E_2)=0.
$$


\subsection{Recollement}
\label{recolle}
Given an admissible subcategory in a triangulated category and two t-structures, one in a subcategory and another one in its orthogonal, on can define a t-structure in the ambient category by the procedure known as {\em recollement} \cite{BBD}. In this subsection, we show that the standard t-structure in $D(B-{\rm mod})$ is obtained by recollement from the standard t-structure on $D(A -{\rm mod})$ and $D(\kk -{\rm mod})$.

The initial data for recollement are 3 triangulated categories $D$, $D_U$ and $D_F$ and six exact functors between them.
Two of the functors are:
$$
i_*:D_F \to D,\ \ j^*:D\to D_U.
$$
The other functors $i^*, i^!, j_*, j_!$ are the left and right adjoints to these two. They must satisfy a number of constraints which are equivalent to saying that $D_F$ is an admissible subcategory in $D$ and $D_U$ is the quotient of $D$ by $D_F$, with $i_*$ the embedding functor and $j^*$ the quotient functor.

Given a t-structure $(D_U^{\le 0}, D_U^{\ge 0})$ in $D_U$ and a t-structure $(D_F^{\le 0}, D_F^{\ge 0})$ in $D_F$, the glued t-structure is defined by:
$$
D^{\le 0}:=\{K\in D|\ j^*K\in D_U^{\le 0}\ {\rm and}\ i^*K\in D_F^{\le 0}\},
$$
$$
D^{\ge 0}:=\{K\in D|\ j^*K\in D_U^{\ge 0}\ {\rm and}\ i^!K\in D_F^{\ge 0}\}.
$$
According to theorem 1.4.10 in \cite{BBD}, this pair of categories indeed defines a t-structure.

Recall some facts from the general theory of gluing of $t$-structures.
Denote by $\mathcal{A}_F$ and $\mathcal{A}_U$ the hearts of $t$-structures on $D_F$ and $D_U$ respectively and by $\mathcal{A}$ the heart of the glued $t$-structure. These are abelian categories, which are related by six functors, the restriction of the functor $i_*$ to $\mathcal{A}_F$, the restriction of $j^*$ to $\mathcal{A}$ and their right and left adjoint functors to the corresponding hearts. We use for them the same symbols as for the notation of the functors between triangulated categories. Two of these functors, namely
$$
i_*:\mathcal{A}_F\to \mathcal{A},\ \  j^*:\mathcal{A}\to \mathcal{A}_U,
$$
are exact functors between abelian categories. The other ones are exact, in general, only from one side, which is determined by their adjuntion with these two functors. The functors satisfy the properties which resemble those which hold for the functors of triangulated recollement. Altogether  three categories and six functors between them satisfying the above mentioned properties comprise what is called 'abelain recollement' \cite{Kuh}, \cite{FP}. 

In the case of the abelian recollement there is one more important functor,  
{\em the functor of intermediate extension} $j_{!*}:\mathcal{A}_U\to \mathcal{A}$ \cite{BBD}. Functors  $j_!$ and $j_*$ are connected by natural transformation $j_!\to j_*$, which follows from the adjunction of these functors with $j^*$. The intermediate extension is defined as the image of this natural transformation evaluated on an arbitrary object   $A$ in $\mathcal{A}_U$:
$$
j_{!*}(A):={\rm Im}(j_!A\to j_*A).
$$
This functor allows us, in particular, to describe indecomposable objects in the category 
$\mathcal{A}$. Namely, all indecomposable objects in $\mathcal{A}$ are either objects of the form $i_*(S_F)$, where $S_F$ is any indecomposable object in $\mathcal{A}_F$, or objects of the form $j_{!*}(S_U)$, where $S_U$ is any indecomposable object in $\mathcal{A}_U$.

Now let us see how this theory is applicable to homotopes. As we got in section 8.1, category $D(B-{\rm mod})$ for the homotope $B$ constructed by a well-tempered element $\Delta$  has an admissible subcategory $D_F=D(\kk -{\rm mod})$, with the quotient category (the orthogonal) being the category equivalent to $D_U=D(A -{\rm mod})$.

We denote by $(D_F^{\le 0}, D_F^{\ge 0})$ and by $(D_U^{\le 0}, D_U^{\ge 0})$ the standard t-structure in these categories.

The dictionary between our functors and the standard notations for six functors is as follows:
$$
\psi_{1*}\longleftrightarrow j_!,\ 
\psi_{2*}\longleftrightarrow j_*,\ 
\psi_1^!=\psi_2^*\longleftrightarrow j^*,
$$
while the notation for $i_*, i^!, i^*$ coincide.

\begin{Theorem}
The standard t-structure in $D(B-{\rm mod})$ coincides with the one obtained by recollement of the standard  t-structures in $D(\kk -{\rm mod})$ and in $D(A -{\rm mod})$.
\end{Theorem}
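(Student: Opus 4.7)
My plan is to verify directly that the two collections $D^{\leq 0}$ and $D^{\geq 0}$ coincide for the standard and for the glued t-structure, writing $D=D(B\text{-mod})$, $D_F=D(\kk\text{-mod})$, $D_U=D(A\text{-mod})$, and using the dictionary already fixed in the text: $i_*$ is the embedding, $j^*=\psi_1^!=\psi_2^*$, $j_!=\psi_{1*}$, $j_*=\psi_{2*}$, and $i^*,i^!$ are the adjoints of $i_*$. The central input I will lean on is that all four of the functors $i_*,j_!,j_*,j^*$ are exact as functors between the abelian categories $\kk\text{-mod}$, $A\text{-mod}$, $B\text{-mod}$. Exactness of $\psi_{1*},\psi_{2*},i_*$ is tautological, since each is a restriction-of-scalars functor along a ring homomorphism; exactness of $\psi_2^*=\psi_1^!$ is Lemma \ref{exactfunctors}, where the well-temperedness of $\Delta$ (projectivity of $B^+$ on both sides) is used essentially. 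Consequently these underived functors coincide with their derived analogues $\RR j_!,\RR j_*,\RR j^*,\RR i_*$.

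For the easy half, suppose $K\in D^{\leq 0}_{\text{std}}$, i.e. $H^i K=0$ for $i>0$. Then $j^*K=\psi_2^*K$ lies in $D_U^{\leq 0}$ because $\psi_2^*$ is exact, and $i^*K\in D_F^{\leq 0}$ because $i^*$, being left adjoint to the exact functor $i_*$, is right exact. Hence $D^{\leq 0}_{\text{std}}\subseteq D^{\leq 0}_{\text{glu}}$. A symmetric argument using left exactness of $i^!$ (right adjoint to exact $i_*$) and exactness of $\psi_2^*$ shows $D^{\geq 0}_{\text{std}}\subseteq D^{\geq 0}_{\text{glu}}$.

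The reverse inclusions I would obtain from the two recollement triangles. Given $K\in D^{\leq 0}_{\text{glu}}$, the triangle $j_!j^*K\to K\to i_*i^*K\to[1]$ has left term $\psi_{1*}(\psi_2^*K)$, which by exactness of $\psi_{1*}$ and the condition $\psi_2^*K\in D_U^{\leq 0}$ belongs to $D^{\leq 0}_{\text{std}}$; and right term $i_*i^*K\in D^{\leq 0}_{\text{std}}$ by t-exactness of $i_*$ together with $i^*K\in D_F^{\leq 0}$. Closure of $D^{\leq 0}_{\text{std}}$ under extensions then forces $K\in D^{\leq 0}_{\text{std}}$. Dually, for $K\in D^{\geq 0}_{\text{glu}}$ one uses the other recollement triangle $i_*i^!K\to K\to j_*j^*K\to[1]$: both outer terms land in $D^{\geq 0}_{\text{std}}$ by the same exactness inputs applied to $i_*$ and $\psi_{2*}$.

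The argument is essentially formal once exactness of the four functors between abelian categories is in place, so the only genuinely substantive step is the invocation of well-temperedness via Lemma \ref{exactfunctors}. Without it, $j^*$ would not preserve the standard t-structure and the glued heart would fail to coincide with $B\text{-mod}$; with it, no further obstacle arises.
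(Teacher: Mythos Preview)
Your proof is correct and the first half (the inclusions $D^{\le 0}_{\text{std}}\subseteq D^{\le 0}_{\text{glu}}$ and $D^{\ge 0}_{\text{std}}\subseteq D^{\ge 0}_{\text{glu}}$) is exactly what the paper does: t-exactness of $j^*=\psi_1^!$, right t-exactness of $i^*$, left t-exactness of $i^!$.

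For the reverse inclusions you take a different route. You invoke the recollement triangles $j_!j^*K\to K\to i_*i^*K$ and $i_*i^!K\to K\to j_*j^*K$, use the extra exactness of $j_!=\psi_{1*}$ and $j_*=\psi_{2*}$ (a special feature of the well-tempered homotope, not available in a general recollement), and conclude by closure of the standard aisles under extensions. The paper instead argues purely formally: once both one-sided inclusions hold and both pairs are genuine t-structures, equality is forced because $D^{\le 0}$ is the left orthogonal of $D^{\ge 1}$, so $D^{\ge 1}_{\text{std}}\subseteq D^{\ge 1}_{\text{glu}}$ implies $D^{\le 0}_{\text{glu}}\subseteq D^{\le 0}_{\text{std}}$. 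The paper's argument is shorter and uses less input (it does not need exactness of $j_!$ or $j_*$); your argument is more explicit and shows concretely how every object of the glued aisle is built from pieces already in the standard aisle.
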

\begin{proof} Denote by $(D^{\le 0}, D^{\ge 0})$ the standard t-structure in $D(B-{\rm mod})$ and by $D_{gl}^{\le 0}\cap D_{gl}^{\ge 0}$ the glued t-structure.
The embeddings $D^{\le 0}\subset D_{gl}^{\le 0}$ and $D^{\ge 0}\subset D_{gl}^{\ge 0}$ follow from the t-exactness (in the standard t-structures) of $\psi_1^!$ and from the right t-exactness of $i^*$ and left t-exactness of $i^!$. The inverse inclusions follow from the fact that both $(D^{\le 0}, D^{\ge 0})$ and $D_{gl}^{\le 0}\cap D_{gl}^{\ge 0}$ are t-structures, in particular, $D^{\le 0}$ is the left orthogonal to $D^{\ge 0}$.
\end{proof}

Let us stress the special property of the homotope in the context of the gluing of t-structures. In general, the functors $j_!$ and $j_*$ are respectively left and right exact only, while in the case of the homotope they are equal to $\psi_{1*}$ and $\psi_{2*}$ respectively, hence exact. 

Let $V$ be a $B$-module. We say that it is {\em minimal} if
\begin{itemize}

\item[(i)] ${\rm Hom}_{B(\Gamma )}(V, k)=0,$
\item[(ii)] ${\rm Hom}_{B(\Gamma )}(k, V)=0.$
\end{itemize}
In the case of $B=B(\Gamma )$, this definition is in compliance with the definition of minimal configurations of projectors given in subsection \ref{problem formulation}.

An object $V\in D^b(B-{\rm mod})$ is said to be an {\em extension} of an object $W\in D^b(A -{\rm mod})$ if $\psi_1^!V\simeq W$. Given a $A$-module $W$, there is a unique up to isomorphism {\em minimal extension} of $W$. It can be defined as the image:
$$
W_{min}:={\rm Im}\ \lambda_W :\psi_{1*}W\to \psi_{2*}W.
$$
In other words, the minimal extension is nothing but the intermediate extension of an $A$-module in the glued t-structure. 

Given an object $V\in D^b(B-{\rm mod})$, we define its {\em minimal shadow} as the minimal extension for the $A$-module $\psi^!_1V$:
$$
V_{min}=(\psi^!_1V )_{min}={\rm Im}\ \lambda_{\psi^!_1V} :\psi_{1*}\psi^!_1V\to \psi_{2*}\psi^!_1V.
$$
Clearly, the assignment $V\mapsto V_{min}$ is extended to a functor. Note that $\lambda_{\psi^!_1V}$ is identified with the composite of adjunction maps:
$$
\psi_{1*}\psi^!_1V\to V \to \psi_{2*}\psi^*_2V
$$
In the language of the gluing of t-structures, the functor of the minimal shadow is nothing but $j_{!*}j^*$. Thus, the consruction of the minimal configuration of lines in a vector space in section 2.3 gets the functorial meaning in the terms of the gluing of t-structures.

\subsection{Homotopes out from the dg gluing}
Since the homotope provides the t-structure obtained by gluing from a recollement on the components of a semiorthogonal pair, it makes sense to reverse the logic and see how the homotope naturally appears in dg gluing.

Given an associate algebra $C$ over a commutative ring $\kk$, free as a $\kk$-module, we glue (semiorthogonally) its derived category of left modules with the category of $\kk$-modules.  In the case, when $\kk$ is a field, this can be considered as an extension of the triangulated category by one exceptional object.

Since the operation of gluing of categories is well defined in the dg world only, we assume to work with dg enhancements of the categories of modules. The glued category is uniquely defined by a complex of {\bf right} $C$-modules, N, because its left $\kk$-module structure is implied by the right $C$ action.

Consider $N$, a complex of right $C$-modules, which has non-trivial components in degree 0 and 1 only. We assume that module $N_1$ is a free right $C$-module of finite rank, say  $N_1=V_1\otimes_{\kk } C$, where $V_1$ is a free $\kk$-module. Note that if $N$ was a complex of right $C$-modules with cohomology in degree 0 and 1 only, such that the module of the 1-st cohomology is finitely generated over $C$, then we can always find a another complex with the above discussed properties, quasiisomorphic to $N$.
Denote by $\Delta$ the operator
$N_0\to V_1\otimes_{\kk} C$ that defines the differential in this complex.

The glued DG category is the category of DG modules over algebra $D$ which can be understood as the algebra of $2\times 2$ matrices whose entries are $d_{11}\in \kk$, $d_{12}\in N$, $d_{21}=0$, $d_{22}\in C$. Thus, algebra $D$ can be presented in the matrix form: 
\begin{equation}\label{matrixd}
D=
\left(\begin{array}{cc}
\kk&N\\
0&C
\end{array}\right)
\end{equation}
The columns of this matrix have the meaning of left DG $D$-modules. We denote them by $P_1$ and $P_2$.

It is known  that
$$
{\rm Hom}(P_i, P_j)=D_{ij},
$$
where $D_{ij}$ are entries of the matrix (\ref{matrixd}) for $D$. Taking this into account, one can calculate the degree zero component of the homomorphism group
$$
{\rm Hom}^0(P_1, P_2\otimes_{\kk}V_1^*[1])={\rm End}_{\kk}V_1\otimes_{\kk}C,
$$
where $V_1^*={\rm Hom}_{\kk}(V_1, \kk )$.
Define the element $\eta = {\rm id}_{V_1}\otimes 1_C$ in this group. Note that it depends on presentation of the degree 1 component of $N$ in the form $V_1\otimes_{\kk} C$.
Consider the twisted complex $P$ over $D$ \cite{BK2} of the form
\begin{equation}\label{twistcomplex}
P_1\to P_2 \otimes_{\kk}V_1^*[1]
\end{equation}
with the defining morphism  $\eta$.
Denote by ${\tilde B}$ the dg algebra of endomorphisms of the twisted complex $P$. An easy calculation shows that ${\tilde B}$ can be presented in the matrix form as:
\begin{equation}\label{matrixb}
{\tilde B}=
\left(\begin{array}{cc}
\kk&{\rm Hom}_{\kk}(V_1, N)\\
0&C\otimes_{\kk}{\rm End}_{\kk}(V_1)
\end{array}\right)
\end{equation}
Note that all the entries of this matrix, except for ${\tilde B}_{12}$, live in the degree 0 component, while ${\tilde B}_{12}$ has the degree 0 component
${\rm Hom}_{\kk}(V_1, N_0)$ and the degree 1 component ${\rm Hom}_{\kk}(V_1, V_1)\otimes_{\kk}C$.

If
$
\left(\begin{array}{cc}
\lambda&a\\
0&e
\end{array}\right)
$ is an element of degree 0 in this algebra, i.e. $a\in {\rm Hom}_{\kk}(V_1, N_0)$, then the differential is defined by:
$$
\partial
\left(\begin{array}{cc}
\lambda&a\\
0&e
\end{array}\right)
=
\left(\begin{array}{cc}
0&\lambda\cdot 1-e+\Delta a\\
0&0
\end{array}\right)
$$
This shows that the differential is epimorphic (essentially by means of $e$) on the degree 1 component of the algebra. Hence the DG algebra ${\tilde B}$ is quasiisomorphic to its degree 0 homology, which is
an ordinary algebra. By eliminating $e$:
$$
e=\lambda \cdot 1+\Delta a,
$$
we get that elements in ${\rm H}^0B$ are described by pairs
\begin{equation}\label{genhomotope1}
(\lambda , a)\in \kk\times {\rm Hom}_{\kk}(V_1, N_0)
\end{equation}
By multiplying matrices of the form
$$
\left(\begin{array}{cc}
\lambda&a\\
0&\lambda\cdot 1+\Delta a
\end{array}\right),
$$
we see that the multiplication in $B={\rm H}^0({\tilde B})$ is given by
\begin{equation}\label{genhomotope2}
(\lambda_1 , a_1)\cdot (\lambda_2 , a_2)= (\lambda_1\lambda_2, \lambda_1 a_2+\lambda_2a_1+a_1\Delta a_2)
\end{equation}
We see that this formula defines the homotope when $N_0=N_1$.

We have proven the following
\begin{Proposition}\label{dghomotope}
Under the assumption that the gluing dg $C$-module $N$ has two components, in degree 0 and 1, and the degree 1 component is a free right $C$-modules of finite rank,
the endomorphism algebra of $P$ is quasiisomorphic to its 0-th homology, and ${\rm H}^0({\rm End}P)$ is the algebra defined by (\ref{genhomotope1}) and (\ref{genhomotope2}).
\end{Proposition}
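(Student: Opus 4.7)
The plan is to compute the DG-algebra ${\rm End}(P)$ directly using the formalism of twisted complexes, verify that its differential is surjective in degree $1$, and then read off the $0$-th cohomology as the homotope of (\ref{genhomotope1})--(\ref{genhomotope2}).

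First, I would use the standard description of morphism complexes between twisted complexes over $D$ together with the given identities ${\rm Hom}(P_i, P_j) = D_{ij}$ to write the underlying graded algebra of ${\rm End}(P)$ in the matrix form of (\ref{matrixb}). The shift by $[1]$ in the second entry of $P$ accounts for the fact that, in the $(1,2)$ corner, an element of ${\rm Hom}_\kk(V_1, N_p)$ sits in total degree $p-1$; in particular, the summand ${\rm End}_\kk(V_1) \otimes_\kk C$ arising from $N_1 = V_1 \otimes_\kk C$ lives in degree $1$, matching the displayed grading.

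Second, I would compute the differential on ${\rm End}(P)$ as the sum of the internal differential on $N$ and the commutator with the twisting cochain encoding $\eta$. Evaluated on a degree-$0$ element written as the matrix with entries $(\lambda, a, 0, e)$, where $e \in C \otimes {\rm End}_\kk(V_1)$ sits in the $(2,2)$ corner, a direct computation yields $\partial(\lambda, a, 0, e) = \lambda\cdot 1 - e + \Delta a$ in the degree-$1$ slot, as displayed in the text. The decisive observation is that as $e$ varies freely over $C \otimes {\rm End}_\kk(V_1)$ with $\lambda, a$ arbitrary, the right-hand side attains every element of the degree-$1$ component; hence $\partial$ is surjective there. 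Since ${\tilde B}$ is concentrated in degrees $0$ and $1$, this gives ${\rm H}^1({\tilde B}) = 0$ and therefore a quasi-isomorphism ${\tilde B} \to {\rm H}^0({\tilde B})$.

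Finally, I would describe ${\rm H}^0({\tilde B}) = \ker \partial$ by solving the cocycle equation $e = \lambda\cdot 1 + \Delta a$, which identifies it as a $\kk$-module with the pairs $(\lambda, a) \in \kk \times {\rm Hom}_\kk(V_1, N_0)$ of (\ref{genhomotope1}); the multiplication rule (\ref{genhomotope2}) is then obtained by straightforward matrix multiplication of two such elements in the ambient algebra and reading off the $(2,2)$ entry after eliminating $e$. The main obstacle is keeping track of signs coming from the $[1]$ shift when identifying the $(1,2)$ corner and computing the commutator with the twisting cochain $\eta$; once the signs are pinned down, everything else reduces to bookkeeping in (\ref{matrixb}).
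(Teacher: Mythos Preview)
Your proposal is correct and follows essentially the same approach as the paper: compute the endomorphism DG-algebra of $P$ in the matrix form (\ref{matrixb}), observe that the differential is surjective onto the degree-$1$ component because the $(2,2)$ entry $e$ ranges freely over ${\rm End}_\kk(V_1)\otimes_\kk C$, and then eliminate $e$ via the cocycle equation to identify ${\rm H}^0({\tilde B})$ with the pairs $(\lambda,a)$ under the multiplication (\ref{genhomotope2}). The only slip is a harmless one of wording: the new $a$-component in the product is read off from the $(1,2)$ entry of the matrix product, not the $(2,2)$ entry.
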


\subsection{Generalized homotopes}

The DG gluing from the previous paragraph suggests a generalization of homotopes to the case when $\Delta$ is not an element of any algebra, but a homomorphism of right $C$-modules:
\begin{equation}\label{deltann}
\Delta :N_0\to N_1.
\end{equation}
Define $B^+$ to be the space of morphisms $N_1\to N_0$ and endow it with the multiplication $a\cdot b=a\Delta b$. We adjoin the unit to this space, as we did when $A$ was an algebra, and get an algebra $B$ with the maximal ideal $B^+$. 

We don't have algebra homomorphisms $\psi_1, \psi_2: B\to A$ any more, instead we have algebra homomorphisms
$$
\psi_0: B \to A_0={\rm End}_C(N_0),\ \ \psi_1: B \to A_1={\rm End}_C(N_1),
$$
defined on $B^+$ by
$$
\psi_1(b)=b\Delta ,\ \ \psi_2(b)=\Delta b,
$$
and the unit in $B$ goes into the unit in $A_i$.

By construction, the ideal $B^+$ has a natural structure of $A_0-A_1$-bimodule given by usual composition of operators.

We have functors $\psi_{0*}:A_0-{\rm mod}\to B-{\rm mod}$, $\psi_{1*}:A_1-{\rm mod}\to B-{\rm mod}$ and there adjoints.

We can extend the notion of {\em well-temperedness} to the case when  $\Delta$ is of the above form
by taking literally the same definition as in the case when $\Delta$ was an element of an algebra.

The following proposition generalizes the example from \ref{examplematrix}.
\begin{Proposition}
Let $\kk =k$ be a field, $C=k$ and $\Delta :V_0\to V_1$ a non-zero operator between finite dimensional vector spaces with kernel of rank $s$ and cokernel of rank $t$. Then $\Delta$ is well-tempered and $B$ is Morita equivalent to the algebra of the quiver with two vertices and $s$ arrows $\alpha_i$ in one direction  and $t$ arrows $\beta_j$ in the opposite direction and relations $\beta_j\alpha_i=0$, for all pairs $(i,j)$.
\end{Proposition}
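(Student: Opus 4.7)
The plan is to choose a reflexive generalized inverse $p : V_1 \to V_0$ of $\Delta$, that is, an element satisfying $\Delta p \Delta = \Delta$ and $p \Delta p = p$. Such a $p$ exists: pick splittings $V_0 = \ker\Delta \oplus V_0'$ and $V_1 = {\rm im}\,\Delta \oplus V_1''$, and let $p$ be the inverse of the isomorphism $\Delta|_{V_0'} : V_0' \to {\rm im}\,\Delta$, extended by zero on $V_1''$. Then $e_2 := (0, p) \in B$ and $e_1 := 1_B - e_2$ are orthogonal idempotents summing to $1_B$, as follows directly from (\ref{genhomotope2}) using $p \Delta p = p$.

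For well-temperedness, the surjectivity of $B^+ \otimes B^+ \to B^+$ reduces to showing every rank-one tensor $u \otimes v_1^* \in {\rm Hom}(V_1, V_0)$ lies in the image: choose $v_0 \in V_0$ with $\Delta v_0 \ne 0$ (which exists since $\Delta \ne 0$), set $b := v_0 \otimes v_1^*$ so that $\Delta b = (\Delta v_0) \otimes v_1^*$, and pick $a$ with $a(\Delta v_0) = u$; then $a \Delta b = u \otimes v_1^*$. For projectivity of $B^+$ as a left $B$-module I would decompose $B = Be_1 \oplus Be_2$ and obtain $B^+ = B^+ e_1 \oplus Be_2$ (using $Be_2 \subset B^+$ and $B^+ e_2 = Be_2$). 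A direct block-matrix calculation adapted to the splittings above gives $Be_2 \cong \psi_{0*}(V_0)^{\oplus r}$ and $B^+ e_1 \cong \psi_{0*}(V_0)^{\oplus t}$, where $r = {\rm rank}\,\Delta$. A Schur-type commutation argument with $\psi_0(B^+) = \{ f \in {\rm End}(V_0) : f|_{\ker\Delta} = 0\}$ yields ${\rm End}_B(\psi_{0*} V_0) = k$, whence $e_2 B e_2 \cong M_r(k)$. For any rank-one idempotent $e_2' \in e_2 B e_2$ one then has $Be_2' \cong \psi_{0*}(V_0)$ as a direct summand of the projective module $Be_2$, so $\psi_{0*}(V_0)$ is projective and hence so is $B^+$; the right-module case is symmetric.

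For the Morita equivalence, a submodule analysis of $\psi_{0*} V_0$ (whose $B$-submodules are the subspaces of $\ker\Delta$ together with $V_0$) and of $\psi_{1*} V_1$ (whose proper nonzero submodules are exactly the subspaces containing ${\rm im}\,\Delta$) gives exactly two simple $B$-modules up to isomorphism: the trivial module $k$ and $S := {\rm im}\,\Delta$, with ${\rm End}_B(S) = k$. The idempotent $f := e_1 + e_2'$ satisfies $BfB = B$, since both simples have nonzero $f$-component, so by classical Morita theory $B$ and $fBf$ are Morita equivalent.

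Finally I would compute $fBf$ block by block. Direct calculation yields $e_1 B e_1 = k \oplus M_{s \times t}(k)$ with the $M_{s \times t}$-part a square-zero ideal (the vanishing following from $\Delta M_2 = 0$ whenever $M_2$ takes values in $\ker\Delta$), $e_2' B e_2' = k$, $e_1 B e_2' \cong k^s$, and $e_2' B e_1 \cong k^t$. The product $(e_2' B e_1) \cdot (e_1 B e_2')$ vanishes, since $\phi \Delta \psi = 0$ for $\phi \in e_2 B e_1$ and $\psi \in e_1 B e_2$ by the block structure of $\Delta$; on the other hand $(e_1 B e_2') \cdot (e_2' B e_1)$ realizes all outer products $vw^T$ with $v \in k^s$, $w \in k^t$, and therefore spans the full square-zero ideal of $e_1 B e_1$. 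Matching $e_2'$ to vertex $1$ and $e_1$ to vertex $2$ of the quiver, and taking bases of $e_1 B e_2'$ and $e_2' B e_1$ as the $\alpha_i$ and $\beta_j$, exhibits $fBf$ as the stated path algebra modulo $\beta_j \alpha_i = 0$. The main obstacle is the projectivity step for $B^+$, which rests on the explicit block-matrix identification of $Be_2$ as $\psi_{0*}(V_0)^{\oplus r}$ and the accompanying endomorphism computation.
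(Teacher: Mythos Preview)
Your proposal is correct and follows essentially the same route as the paper. The paper also builds orthogonal idempotents in $B$ from operators $p_i\in\operatorname{Hom}(V_1,V_0)$ satisfying $p_i\Delta p_j=\delta_{ij}p_i$, identifies $B^+$ with copies of $Bx_1=\psi_{0*}(V_0)$ to get projectivity, and takes $P=Bx_1\oplus By$ (your $Be_2'\oplus Be_1$) as projective generator; the only cosmetic difference is that the paper starts with $r$ rank-one idempotents while you start with a single rank-$r$ reflexive inverse $p=\sum p_i$ and refine to $e_2'$ afterward, and you supply the explicit block computation of $fBf$ and the simple-module argument for $BfB=B$ where the paper simply asserts that $P$ is a projective generator with the stated endomorphism algebra.
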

\begin{proof} Denote by $A$ the space of operators $V_1\to V_0$. It is easy to show that $A\Delta A=A$, which is the first condition for $\Delta$ to be well-tempered. Let $r$ be the rank of $\Delta$. One can easily find $r$ rank 1 operators in $A$ such that:
$$
p_i\Delta p_i=p_i, \ \ {\rm and}\ \ p_i\Delta p_j=0\ \ {\rm for}\ \  i\ne j.
$$
Denote be $x_i$ the corresponding elements in $B^+$. They are pairwise orthogonal idempotents in $B$. Modules $Bx_i$ are projective and easily seen to be pairwise isomorphic. Let 
$$
y=1_B-\sum x_i,
$$
then $y$ is an idempotent orthogonal to all $x_i$'s.
We have a decomposition $B=\oplus_iBx_i\oplus By$.
Since $A$, as a left $A_0$-module, is isomorphic to a direct sum of ${\rm dim}V_1$ copies of the module $A_0p_1$, then $B^+=\psi_{1*}A$ is isomorphic to the sum of ${\rm dim}V_1$ copies of $Bx_1$. Therefore, $B^+$ is projective, i.e. $\Delta$ is well-tempered. Module $P=Bx_1\oplus By$ is a projective generator in B-mod. The endomorphism algebra of $P$ is isomorphic to the quiver described in the proposition.
\end{proof}
The homotope from proposition  66 has global dimension 2, hence by Dlab-Ringel theorem it is a  quasi-hereditary algebras \cite{DR}. These generalised homotopes were considered by W.P.Brown under the name of generalized matrix algebras in \cite{Br}.

\subsection{Recollement for generalized homotopes}

Assume again that we are given a $\kk$-algebra $C$ and a complex of right $C$-modules $N$ with cohomology in degree 0 and 1 only. By choosing a representative in the class of this object up to derived equivalence, we can assume that $N$ has only two terms in degree 0 and 1 and the degree 1 term $N_1$ is a free $C$-module: 
$$
N_1=V_1\otimes_{\kk}C.
$$

We consider the DG algebra $D$ of the form (\ref{matrixd}). The derived category of left DG  $D$-modules has a semiorthogonal decomposition into the category of $\kk$-modules and $C$-modules. Denote by $i_*: D(\kk -{\rm mod})\to D(D-{\rm mod})$ and $j^*: D(D -{\rm mod})\to D(C-{\rm mod})$ the natural embedding and projection functors. One can easily check that these functors define a recollement data for gluing of t-structures \cite{BBD}, meaning that the left and right adjoint functors to $i_*$ and $j^*$ are well-defined. Since both categories $D(\kk -{\rm mod})$ and $D(C -{\rm mod})$ have natural t-structures, we can glue them into a t-structure on $D(D -{\rm mod})$.

We will also consider the triangulated category $D-{\rm Perf}$, of perfect DG $D$-modules. By definition (see \cite{BLL}), this is the thick (i.e. closed with respect to taking direct summands) triangulated envelop of the images of twisted complexes in the derived category of $D$-modules. This definition is applicable to usual algebras as well.

Assume that $V_1$ is a free module of finite rank over $\kk$.
We consider the twisted complex $P$ defined by (\ref{twistcomplex}). According to the theory of twisted complexes \cite{BK2}, we have an exact triangle in $D-{\rm mod}$:
\begin{equation}\label{triangleforp}
P_2 \otimes_{\kk}V_1^*\to P\to P_1\to P_2 \otimes_{\kk}V_1^*[1]
\end{equation}
Since both ends of the triangle are in $D-{\rm Perf}$, then $P$ descends to an object in $D-{\rm Perf}$. According to proposition \ref{dghomotope}, $P$ is a tilting module and the endomorphism algebra of $P$ in $D-{\rm Perf}$ is the generalized homotope $B$ constructed by means of the differential $\Delta$ (\ref{deltann}).

\begin{Theorem}
Let $\Delta: N_0\to N_1$ be well-tempered and $N_1=V_1\otimes_{\kk}C$ a free right $C$-module with $V_1$ a free $\kk$-module of finite rank. Then:
\begin{itemize}
\item
The embedding functor provides equivalences of triangulated categories:
${\rm Perf}-B=D-{\rm Perf}$ and $D({\rm mod}-B)\to D(D-{\rm mod})$,
\item
The standard t-structure on $D({\rm mod}-B)$ transported by this equivalence onto $D(D-{\rm mod})$ coincides with the one obtained by recollement from the standard t-structures on $D({\rm mod}-\kk )$ and $D({\rm mod}-C)$.
\item
The object $P$ defined by (83) is a perfect generator in the category $D(D-{\rm mod})$.
\end{itemize}
\end{Theorem}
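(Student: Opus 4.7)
The plan is to prove the assertions in the order: the third bullet (that $P$ is a perfect generator) first, then the first and second bullets, which follow essentially formally from the generator claim via Keller's DG Morita theorem and the recollement formalism of Section~\ref{recolle}. Compactness of $P$ is immediate: by construction $P$ is a finite twisted complex of the perfect modules $P_1$ and $P_2\otimes V_1^*[1]$. The content is that $P$ is a compact generator, i.e., that the localizing subcategory of $D(D{\rm -mod})$ generated by $P$ is the whole category. The defining triangle \eqref{triangleforp} already yields $\mathrm{thick}(P,P_2)=\mathrm{thick}(P_1,P_2)=D{\rm -Perf}$, so it suffices to place $P_2$ (or equivalently $P_1$) inside the thick envelope of $P$.

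For this placement the well-temperedness of $\Delta$ is essential. Specifically, the generalized analog of the $B$-bimodule isomorphism \eqref{bbbb}, together with the projectivity of $B^+$ as a $B$-module, should provide a short resolution of $P_2$ whose terms are direct sums of shifts of $P$, mirroring on the $D$-side the augmentation resolution \eqref{aug1} of $\kk$ by projective $B$-modules on the $B$-side. This places $P_2$ inside $\mathrm{thick}(P)$ and completes the generator claim.

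Once generation is known, the first bullet follows from the standard DG Morita theorem: the functor ${\rm RHom}_D(P,-)\colon D(D{\rm -mod})\to D(B{\rm -mod})$ is fully faithful because ${\rm RHom}_D(P,P)$ is quasi-isomorphic to $B$ as a DG algebra by Proposition~\ref{dghomotope}, and essentially surjective because $P$ compactly generates; restriction to compact objects then yields $D{\rm -Perf}\simeq{\rm Perf}{\rm -}B$. For the second bullet I trace the standard t-structure on $D(B{\rm -mod})$ through the inverse equivalence $-\otimes_B^{\mathbb L}P$: the computation ${\rm RHom}_D(P,P_1)=\kk$ extracted from \eqref{triangleforp} identifies $P_1=i_*(\kk)$ with the trivial $B$-module, and analogously $P_2=j_!(C)$ matches the $\psi_{1*}$-side of the generalized homotope under the dictionary $j_!\leftrightarrow\psi_{1*}$ of Section~\ref{recolle}; combined with exactness of the recollement functors (a generalization of Lemma~\ref{exactfunctors}), this shows that the transported t-structure satisfies precisely the defining vanishing conditions of the glued t-structure.

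The principal obstacle is the generation step: the triangle \eqref{triangleforp} alone yields only $\mathrm{thick}(P,P_2)=D{\rm -Perf}$, and upgrading this to $\mathrm{thick}(P)=D{\rm -Perf}$ requires the resolution of $P_2$ mentioned above. Correctly formalising well-temperedness in the generalized setting, where $\Delta$ is no longer an element of an algebra but a morphism $N_0\to N_1$ of right $C$-modules, and extracting the required projective data from it, is the technical heart of the argument.
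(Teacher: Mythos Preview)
Your overall strategy matches the paper's: prove that $P$ thickly generates $D{\rm -Perf}$ using well-temperedness, then deduce the equivalences via DG Morita theory, then check the t-structure. The paper's generation argument is, however, more direct than the one you sketch. The paper considers the functor ${\rm Perf}\text{-}B\to D{\rm -Perf}$ determined by $B\mapsto P$ (this is fully faithful precisely because ${\rm End}(P)\simeq B$, which you correctly cite from Proposition~\ref{dghomotope}, though you invoke it only later). Under this functor the augmentation sequence \eqref{aug1} goes to the triangle \eqref{triangleforp}; since $B^+$ is projective it lies in ${\rm Perf}\text{-}B$, so its image $P_2\otimes_{\kk}V_1^*$ lies in $\mathrm{thick}(P)$, and since $V_1$ is free of finite rank this forces $P_2\in\mathrm{thick}(P)$ and then $P_1\in\mathrm{thick}(P)$ from the triangle. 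Your formulation in terms of ``a short resolution of $P_2$ by direct sums of shifts of $P$'' is a slightly roundabout paraphrase of the same direct-summand argument, and your appeal to the bimodule isomorphism \eqref{bbbb} is unnecessary here---the paper uses only the projectivity of $B^+$. For the t-structure the paper is terser than you: it simply notes that $i_*$ and $j^*$ are t-exact, which pins down the glued t-structure as the standard one, rather than tracing individual objects as you propose; your more explicit matching of $P_1\leftrightarrow i_*\kk$ and $P_2\leftrightarrow j_!C$ is correct and in fact underlies the identification of \eqref{aug1} with \eqref{triangleforp} that the paper uses implicitly in the generation step.
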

\begin{proof}
According to \cite{BLL}, to ensure that the functors ${\rm Perf}-B\to D-{\rm Perf}$ and $D({\rm mod}-B)\to D(D-{\rm mod})$ provide equivalences, we need to show that the thick envelop of $P$ is $D-{\rm Perf}$.
We have a semiorthogonal decomposition of $D-{\rm Perf}$ into $\kk -{\rm Perf} $ and $C-{\rm Perf}$, similar to the one in the case of unbounded complexes. The components of the decomposition are generated by $P_1$ and $P_2$, respectively. Consider the functor ${\rm Perf}-B\to D-{\rm Perf}$ that takes the rank 1 free $B$-module to $P$. It takes the sequence (50) into the exact triangle (88). Since $B^+$ is a projective module, it belongs to ${\rm Perf}-B$. Hence its image $P_2\otimes_{\kk}V_1$ belongs to the thick envelop of $P$.
Hence, $P_2$ itself belongs to this envelop. The triangle (88) implies that $P_1$ is in the same category too. Thus, the thick envelop of $P$ coincides with $D-{\rm Perf}$.

It follows from the t-exactness of $i_*$ and $j^*$ that the standard t-structure is glued. 

\end{proof}

Let $C=\kk\Gamma $ be the Poincare groupoid and $\Delta\in \kk\Gamma$ the generalized Laplace operator. Since $\kk\Gamma$ is isomorphic to the matrix algebra over $C$ we can regard $\Delta$ as a morphism of right $C$-modules $C^n\to C^n$. Consider a complex $N$ of right $C$-modules with two non-trivial components $C^n$ in degree 0 and 1 with differential $\Delta$ between them and apply the construction from the previous paragraph. We get the DG algebra $D$ and object $P$ in D-mod.

\begin{Corollary}
We have an equivalence of triangulated categories:
$$
B(\Gamma )-{\rm Perf}= D-{\rm Perf}
$$
\end{Corollary}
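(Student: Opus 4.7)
The plan is to apply the preceding theorem directly, so the work reduces to unpacking the Morita dictionary and invoking results already established. First I would set $C=\kk[\pi(\Gamma,t)]$, the group algebra of the fundamental group, and recall from Proposition \ref{moritakg} that $\kk\Gamma$ is isomorphic to the matrix algebra ${\rm Mat}_{n\times n}(C)$, where $n=|V(\Gamma)|$. Under this isomorphism, the generalized Laplace operator $\Delta\in \kk\Gamma$ is nothing other than an endomorphism of the free right $C$-module $C^n$, hence defines a morphism of right $C$-modules $\Delta:C^n\to C^n$. Setting $V_1=\kk^n$, the hypothesis $N_1=V_1\otimes_{\kk}C$ of the theorem is satisfied, and $V_1$ is manifestly a free $\kk$-module of finite rank.

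Next I would identify the generalized homotope of this $\Delta:C^n\to C^n$ with $B(\Gamma)$. By the definition in the previous subsection, the augmentation ideal of the generalized homotope is $B^+={\rm Hom}_C(C^n,C^n)={\rm Mat}_{n\times n}(C)\cong \kk\Gamma$, equipped with the twisted multiplication $a\cdot b=a\Delta b$. But this is exactly the ordinary unital homotope $\widehat{\kk\Gamma_{\Delta}}$, which by Theorem \ref{bgammaviagr} is canonically isomorphic to $B(\Gamma)$. Thus the generalized and the ordinary homotope constructions coincide in this situation.

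It remains to check that $\Delta$ is well-tempered in the generalized sense. Projectivity of $B^+$ as a left and right $B$-module is an intrinsic property of the algebra $B\cong B(\Gamma)$, already established in Corollary \ref{bgprojective}, so it is preserved between the two viewpoints. The condition that the image of the multiplication map on $B^+$ be the whole augmentation ideal is, via Morita equivalence applied to the lemma on principal double-sided ideals in matrix algebras, equivalent to $A\Delta A=A$ for $\Delta\in \kk\Gamma$, which is exactly the content of the earlier lemma asserting well-temperedness of the Laplace operator in $\kk\Gamma$.

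With the hypotheses of the theorem verified, its conclusion applies and yields an equivalence ${\rm Perf}\text{-}B=D\text{-}{\rm Perf}$. Under the identification $B=B(\Gamma)$, this is precisely the statement of the corollary. The only real obstacle is bookkeeping: one must be careful to track the two different roles played by $\Delta$ (as an element of $\kk\Gamma$ on one side and as a right $C$-module morphism on the other) and to verify that the tautological identification $\kk\Gamma\cong {\rm End}_C(C^n)$ matches the homotope structures on both sides, but no substantive new input beyond the results already cited is required.
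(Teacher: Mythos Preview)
Your proposal is correct and follows essentially the same approach as the paper: verify that the hypotheses of the preceding theorem are met (in particular that $\Delta$ is well-tempered, which the paper cites from the earlier lemma on the Laplace operator) and then apply it. The paper's own proof is two sentences to this effect; you have simply made explicit the Morita bookkeeping identifying the generalized homotope over $C=\kk[\pi(\Gamma,t)]$ with $B(\Gamma)$, which the paper leaves implicit in the paragraph preceding the corollary.
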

\begin{proof}
The element $\Delta$ is well-tempered according to lemma 18. Therefore, we can apply theorem 67.
\end{proof}

\section{Perverse sheaves}
In this section we discuss how representation theory of homotopes provides a description of perverse sheaves on some stratified spaces.

Consider a topological space $X$ together with a stratification by a closed set $F$ and its open complement $U$. Let $D^b(X)$ be the derived category of complexes of sheaves on $X$ with cohomology sheaves constructible with respect to this stratification, i.e. locally constant along the strata. $D^b(X)$ has a semiorthogonal decomposition into a pair of subcategories, $D^b(U)$ and $D^b(F)$, which are the derived categories of complexes of sheaves with locally constant cohomology on $U$ and $F$ respectively. It will be sufficient for us to consider the case when $F$ is contractible, say just a point. If $C$ is the group algebra of the fundamental group of $U$ and $U$ is good enough, then $D^b(U)=D^b(C-{\rm mod})$. Also $D^b(F)=D^b(k-{\rm mod})$.

Perverse sheaves on $X$ which are smooth along a given stratification, are objects of the heart of the  t-structure on $D^b(X)$ glued along the standard t-structures on $D^b(U)$ and on $D^b(F)$, one of which is sheafted by the translation functor ({\em cf.} \cite{GM}). For the purpose to demonstrate the connection of Discrete Harmonic Analysis with perverse sheaves on stratified topological spaces, we restrict our attention in this paper to two examples only of Riemann surfaces (one of which is singular) stratified by a point and the complement to it. In this case we will be interested only in the so-called middle perversity which is defined by the shift by 1 of the t-structure on the closed stratum. In the case of the sphere with a double point, we show that the category of perverse sheaves is equivalent to the category of representations of the homotope constructed via a cyclic graph and a suitable choice of the generalised Laplace operator on it. 

The problem of describing perverse sheaves on various stratified topological spaces currently attracts the growing attention (see \cite{KS1}, \cite{KS2}), because perverse sheaves have turned out to be important for the description of schobers, the categorical constructions which provide us with understanding of the internal structure of various categories of geometric origin   \cite{BKS}, \cite{HK}, \cite{Do}. Schobers are categorifications of perverse sheaves (see  \cite{KS}), whence the clear understanding of the structure of perverse sheaves helps to construct schobers.

\subsection{Perverse sheaves on a disc}
\label{pervdisc}
Let $X$ be an open disc in the complex plane $\mathbb C$ centered at the origin ${\bf 0}$ stratified by the closed subset $F={\bf 0}\in X$ and the complement $U=X\setminus {\bf 0}$. The category of perverse sheaves for this case was scrutinized by A. Beilinson and P. Deligne in \cite{Bei}, \cite{Del}.

The fundamental group of $U$ is ${\mathbb Z}$. Denote by $C=k [x,x^{-1}]$ its group algebra, the algebra of Laurent polynomials in one variable. Its category of (finite dimensional) modules is identified with locally constant sheaves on $U$. The derived category on the stratified disc $X$ has a semiorthogonal decomposition into a pair of subcategories, $D^b(C-{\rm mod})$ and $D^b(k-{\rm mod})$, the latter being the category of sheaves on the point ${\bf 0}$.

Let us calculate the gluing DG-functor $\Phi :D^b(C-{\rm mod})\to D^b(k-{\rm mod})$. The functor is given by a DG $k-C$-bimodule $N$. Since the left action by $k$ coincides with the right one by $k\subset C$, only the right $C$-module structure matters. If $M$ is a left DG $C$-module, then the functor takes it to $\Phi (M)=N\otimes_C^{\mathbb L} M$. Hence, if we apply it to $M=C$, then the result will be $N$ itself. The right $C$ module structure on $N = \Phi (C)$ comes from the right $C$-module structure on $C$.

The gluing functor is a DG-version of the functor that takes a complex of sheaves $M$ on $U$ to the cone of $j_!M\to j_*M$. Thus, we need to calculate in the consistent way functors $j_!$ and $j_*$ for $M$ equal to $k [x,x^{-1}]$, i.e. for the universal local system.

Consider the cover of $U$ by the set of $n$ open subsets $U_k$, where $k$ is taken modulo $n$ and $U_k$ consists of points in $U$ with the phase in the interval 
$$
(\frac{2\pi k}{n}-\epsilon ,\frac{2\pi(k+1)}{n}  +\epsilon ).
$$
It will be convenient for us to assume that $n\ge 2$.  
The intersection $V_k:=U_k\cap U_{k+1}$ 
consists of points with the phase in the interval 
$$
(\frac{2\pi(k+1)}{n}-\epsilon , \frac{2\pi(k+1)}{n}+\epsilon ).
$$ 
Denote by $j_k:U_k\to U$ and $s_k: V_k\to U$ the corresponding open embeddings. Then sheaves $j_{k!}F_k$, where $F_k$ is the constant sheaf on $U_k$, and sheaves $s_{k!}G_k$, where $G_k$ is the constant sheaf on $V_k$, are acyclic for both functors $j_!$ and $j_*$. The restriction of the universal local system to every open set $U_k$ is a constant sheaf. Therefore, we can use the following variant of the Chech resolution induced by our covering for the universal local system ${\cal U}$ for calculating the gluing functor:
\begin{equation}\label{chech-universal}
0\to \oplus_k s_{k!}s_k^*{\cal U}\to \oplus_k j_{k!}j_k^*{\cal U}\to {\cal U}\to 0
\end{equation}

\begin{Theorem}\label{disctheorem} The category of perverse sheaves on the disc $X$ is equivalent to the category of finite dimensional representations of the homotope $B$ constructed by algebra $A={\rm End}(k[x,x^{-1}]^{\oplus n})$
and element  $\Delta\in A$ defined by the operator with matrix
\begin{equation}\label{deltafordisc}
\begin{pmatrix}
1 & -1 & 0 & ... & 0 & 0\\
0 & 1 & -1 & 0 & ... & 0\\
0 & 0 & 1 & -1 &  ... & 0\\
... & ... & ... & ... & ... & ... \\
0 & ... & 0 & 0 & 1 & -1 \\
-x & 0 & ... & 0 & 0& 1
\end{pmatrix}
\end{equation}
\end{Theorem}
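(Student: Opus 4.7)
The plan is to identify $D^b(X)$ with the derived category of perfect modules over a DG algebra of the matrix form (\ref{matrixd}) built from $C = k[x,x^{-1}]$ and a suitable DG right $C$-module $N$, and then apply the theorem at the end of Section 8.5 to recognise the heart of the middle-perversity $t$-structure as the category of modules over the homotope $B$. The first step is to invoke the semiorthogonal decomposition $D^b(X) = \langle D^b(C-{\rm mod}), D^b(k-{\rm mod}) \rangle$ coming from the open/closed stratification, where $C$ is the group algebra of $\pi_1(U) \simeq \ZZ$. The gluing data is the DG right $C$-module $N = \Phi(\mathcal{U})$, where $\Phi = \mathrm{cone}(j_{!} \to j_{*})|_{F}$ is the gluing functor and $\mathcal{U}$ is the universal local system on $U$.

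The technical heart of the argument is to compute $N$ explicitly. I would apply $\Phi$ to the \v{C}ech resolution (\ref{chech-universal}). Since each $j_{k!}F_k$ and $s_{k!}G_k$ is acyclic for both $j_{!}$ and $j_{*}$, and since each open $U_k$ (respectively $V_k$) is contractible and approaches the origin through a contractible region so that $\Phi$ sends each summand to a single copy of $C$ in cohomological degree $0$, the functor $\Phi$ converts the resolution into a two-term complex of free right $C$-modules $[C^n \xrightarrow{\delta} C^n]$ in degrees $0$ and $1$ quasi-isomorphic to $N$. The map $\delta$ is the \v{C}ech coboundary after fixed trivializations of $\mathcal{U}|_{U_k}$: for the intersections $V_0,\ldots,V_{n-2}$ that avoid a chosen branch cut the transitions are the identity and contribute entries $\pm 1$, whereas the intersection $V_{n-1}$ crossing the cut contributes $\pm x$, encoding the $\ZZ$-monodromy of $\mathcal{U}$. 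With the natural orientation conventions this reproduces exactly the matrix in (\ref{deltafordisc}); as a consistency check, $\det \Delta = 1-x$, so the cokernel of $\delta$ is $k = C/(1-x)C$, matching the topological computation $\Phi(\mathcal{U}) \simeq R\Gamma(S^1, \mathcal{U}|_{S^1})$ placed in cohomological degree $1$.

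Once $N$ is identified, the DG algebra $D$ of the form (\ref{matrixd}) built from $C$ and $N$ satisfies $D$-Perf $\simeq D^b(X)$ by the very construction of the semiorthogonal gluing, and the middle-perversity $t$-structure on $D^b(X)$ coincides with the glued $t$-structure on $D$-mod because $N$ sits in degrees $0$ and $1$, which builds in the shift by $1$ on the codimension-one closed stratum $F$. To conclude via the theorem at the end of Section 8.5, I would verify that $\Delta$ is well-tempered: the condition $A\Delta A = A$ is immediate since the matrix entries of $\Delta$ include the unit $1$, so the two-sided ideal they generate in $C$ is already all of $C$; projectivity of $B^+$ as a left and right $B$-module then reduces, after extending the argument of Proposition 66 from a field to the Laurent polynomial ring, to the fact that $\delta$ is an injection of free $C$-modules with cyclic cokernel. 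Restricting to finite-dimensional modules on both sides preserves the equivalence, finishing the proof.

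The main obstacle is the combinatorial bookkeeping in the second paragraph: one must choose orientations of the cover and trivializations of $\mathcal{U}$ coherently so that the resulting differential is not merely conjugate to $\Delta$ by a monomial change of basis but equal to it on the nose, and the sign conventions for $\Phi$ must be consistent with those used in Section 8.5. A secondary technical point is the extension of well-temperedness from the field case of Proposition 66 to the present setting over $C$, which however follows from the observation that $\det \delta = 1-x$ is a non-zero-divisor in $C$ and that $\delta$ therefore behaves fibrewise exactly like the field case after localisation at any maximal ideal of $C$ avoiding $1-x$.
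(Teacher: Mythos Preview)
Your overall strategy matches the paper's proof closely: compute the gluing bimodule $N$ by applying $j_*$ termwise to the \v{C}ech resolution (\ref{chech-universal}) and reading off the fiber at $\mathbf{0}$ (the paper notes explicitly that $j_!$ contributes nothing at the origin, which is your acyclicity remark), then invoke Theorem~67 once $\Delta$ is known to be well-tempered. The determinant check $\det\Delta = 1-x$ is a helpful sanity test that the paper does not spell out.

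The one genuine gap is your well-temperedness argument. Routing it through Proposition~66 and then ``extending from a field to the Laurent polynomial ring'' by localising away from $1-x$ does not work: away from that locus $\Delta$ becomes invertible and the homotope degenerates, while the projectivity of $B^+$ is a global statement over $C$ that is not recovered from the generic fibre. The paper instead points to the method of Lemma~18, and the reason that method transports here is concrete: the diagonal entries of the matrix (\ref{deltafordisc}) are all equal to $1$, so the standard diagonal idempotents $e_i$ of the matrix algebra satisfy $e_i\Delta e_i = e_i$ and hence remain idempotents in $B^+$. Exactly as in Proposition~\ref{bxipi} and Corollary~\ref{bgprojective}, one then gets $B^+ = \bigoplus_i Bx_i$ with each $Bx_i$ a direct summand of $B$, establishing left projectivity (and symmetrically right projectivity) of $B^+$ without any localisation argument. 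Replace your Proposition~66 extension with this observation and the proof goes through.
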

\begin{proof}
Apply funtors $j_!$ and $j_*$ termwise to the resolution (\ref{chech-universal}) and take the cone of $j_!\to j_*$. We know that we get a complex of sheaves with cohomology support in  ${\bf 0}$. Thus, we have to look at the fibers of our sheaves only at ${\bf 0}$. Since the fiber over ${\bf 0}$ of the image of $j_!$ of any sheaf is zero, it would be enough to apply functor $j_*$ to (\ref{chech-universal}) and check the fiber over ${\bf 0}$. 

Sheaves $j_k^*{\cal U}$ and $s_k^*{\cal U}$ are constant on the corresponding open sets  with the fibers rank 1 free $k[x,x^{-1}]$-modules. The same are the fibers of  $j_*j_{k!}j_k^*{\cal U}$ and $j_*s_{k!}s_k^*{\cal U}$ over ${\bf 0}$. We can choose the generators in these modules in a consistent way so that the functors of the restriction from $U_k$ to $V_k$, for $k=1,\dots , n$, and from $U_{k+1}$ to $V_k$, for $k=1,\dots n-1$, are the identity operators in $k[x,x^{-1}]$. Then the matrix of the first nontrivial morphism in the sequence (\ref{chech-universal}) after application of $j_*$ in the fiber over ${\bf 0}$ in the chosen bases has the form (\ref{deltafordisc}). 

Similarly to the proof of lemma 18
for the Laplace operator, one can check that that operator (\ref{deltafordisc}) is well-tempered.
The category of perverse sheaves is the heart of the glued t-structure. By theorem 
67, this heart is identified with the category of modules over the homotope constructed via $\Delta$.
\end{proof}


Let us describe the generators and defining relations of the homotope algebra $B$ in the theorem \ref{disctheorem}. We can choose as generators the minimal diagonal projectors of the matrix algebra. Denote their images in the augmentation ideal $B^+$ by $z_i$, where $i$ is better understood modulo $n$. Ideal $B^+$ is a module over algebra $A$, in particular, it is so over $C=k[x,x^{-1}]$. Note that the whole algebra $B$ does not carry any natural $C$-module structure, and since $B^+$ does not have a unit, then it makes no sense to consider $C$ embedded into $B^+$ or in $B$. In particular, $x$ is not an element of algebra $B$, it only acts on elements of $B^+$. The following relations define $B^+$ as the quotient of the free non-unital $C$-algebra with generators $z_i$, $i\in {\mathbb Z}\mod n$:
\begin{itemize}
\item{$z^2_i = z_i$, for any $i$,}

\item{$z_iz_j= 0$, if $j\ne i-1,i,i+1$ modulo $n$,}

\item{$z_iz_{i+1}\dots z_{i+n} = (-1)^nxz_i$, for any $i$.}

\end{itemize}

A more detailed description of this algebra and its generalisations, as well as its representation theory will be given in \cite{BL}. According to theorem \ref{disctheorem},
the algebras described above are Morita equivalent for $n\ge 2$, and their categories of finite dimensional representations are equivalent to the category of perverse sheaves on the stratified disc. The interested reader can find in {\em loc.cit.} or solve as an exercise the translation of our description  into the standard description of perverse sheaves on the disc in terms of nearby and vanishing cycles, which is given by a pair of vector spaces  $U$ and $V$, and a pair of linear operators $a: U\to V$ and $b:V\to U$, satisfying the condition of invertibility of $ab-{\rm id}_V$ and $ba-{\rm id}_U$. 

Generators $z_i$ are idempotents. If we consider the subspaces $E_i:= {\rm Im}z_i$ in an arbitrary representation $E_0$ of algebra $B$, the embedding operators $\delta_i: E_i\to E_0$ and operators $\gamma_i: E_0\to E_i$ induced by $z_i$, then it is easy to see that finite dimensional representations of $B$ are identified with the description of perverse sheaves  on the disc given in \cite{KS2} (with keeping notation as in {\em loc. cit.}).



\subsection{Perverse sheaves on the sphere with a double point and Laplacian of the cyclic graph}


In order to demonstrate how the theory of perverse sheaves is related to the categorical harmonic analysis on graphs, we give the simplest example when the category of perverse sheaves on a stratified space is equivalent the category of finite dimensional representations of $B(\Gamma )$ for a suitable generalised Laplace operator. The graph in this case is cyclic. The category of representations of the homotope for this graph is discussed in section \ref{cyclicgraph}. 

The generalisation of this example to other stratified topological spaces deserves a special attention, it will be considered separately in another publication. 

\begin{Theorem}
Consider the singular Riemann surface $X$, the sphere with one double point $p$, and stratify it with the singular point and the complement to it. Then the category of perverse sheaves on $X$ is equivalent to the category  of finite dimensional representations of the homotope $B(\Gamma )$, where graph  $\Gamma$ is cyclic and parameters $s_{ii+1}$ in (\ref{laplcyc}) are chosen in such a way that the Laplace operator has corank 2 in $x=1$. 

\end{Theorem}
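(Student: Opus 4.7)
I would first handle the topology. Let $\pi: \tilde X \simeq \mathbb{P}^1 \to X$ be the normalization, identifying two distinct points $p_1, p_2 \in \tilde X$ to the singular point $p$. Then $U = X \setminus \{p\}$ is homeomorphic to the cylinder $\mathbb{P}^1 \setminus \{p_1, p_2\}$, so $\pi_1(U) \simeq \mathbb{Z}$ and the derived category of sheaves on $U$ locally constant along the stratification is identified with $D^b(C\mathrm{-mod})$ for $C = k[x,x^{-1}]$. The category of perverse sheaves on $X$ with middle perversity (the standard t-structure on $U$ glued to the shift-by-one of the standard t-structure on $\{p\}$) is the heart of the corresponding glued t-structure on $D^b(X)$, as set up in subsection \ref{recolle}. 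By the recollement theorem for generalised homotopes proved just above, once the gluing DG functor is represented by a two-term complex $\Delta: N_0 \to N_1$ of right $C$-modules with $N_1$ a free $C$-module of finite rank and $\Delta$ well-tempered, this heart is equivalent to the category of finite-dimensional modules over the generalised homotope of $\Delta$.

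\textbf{Computing the gluing differential.} To compute $\Delta$ I would imitate the Chech-cover strategy of subsection \ref{pervdisc}. Choose a cover of $U$ by $n$ overlapping angular strips $U_k$ ($k \in \mathbb{Z}/n$), each the preimage in $\mathbb{P}^1 \setminus \{p_1,p_2\}$ of an angular interval $(\frac{2\pi k}{n} - \epsilon, \frac{2\pi(k+1)}{n} + \epsilon)$, extending from one end of the cylinder to the other. Each $U_k$ and each $V_k = U_k \cap U_{k+1}$ is contractible, so the universal local system restricts to a constant sheaf with value $C$. Apply the Chech resolution (\ref{chech-universal}), then $j_!$ and $j_*$ termwise, and take the cone of the natural transformation $j_! \to j_*$. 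Since each strip $U_k$ now meets both punctures $p_1$ and $p_2$ at opposite ends of the cylinder, monodromy contributions from both branches appear at both corners of the resulting $n \times n$ matrix; with a suitable normalisation of the local trivialisations one recognises it as the Laplace operator (\ref{laplcyc}) of the cyclic graph with $n$ vertices.

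\textbf{Matching the corank condition.} The stalk of $Rj_* \mathcal{U}$ at $p$ is the direct sum of the contributions from small punctured-disc neighborhoods of $p_1$ and $p_2$ in $\tilde X$. On either punctured disc the universal local system has cyclic monodromy, so $R^1 j_* \simeq C/(x-1)C \simeq k$ while all other cohomology vanishes; the total contribution at $p$ is therefore $k \oplus k$ in degree $1$. This forces $H^1(N) = \mathrm{coker}(\Delta)$ to be two-dimensional over $k$ and supported at $x = 1$; equivalently, the invariant-factor decomposition of $\Delta$ has exactly two invariant factors divisible by $x - 1$, i.e.\ $\Delta$ has corank $2$ at $x = 1$. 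This pins down the parameters $\{s_{ij}\}$ to the component of the variety $\mathcal{R}$ of subsection \ref{cyclicgraph} for which $\mathcal{M}^{n-2}$ sits at $x = 1$. Well-temperedness of $\Delta$ then follows as for the generalised Laplacian (using the projectivity and tensor-product arguments of subsection \ref{sectionlaplace}), and by Theorem \ref{bgammaviagr} its generalised homotope is $B(\Gamma)$. The recollement theorem for generalised homotopes then completes the identification of perverse sheaves on $X$ with finite-dimensional $B(\Gamma)$-modules.

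\textbf{Main obstacle.} The delicate step is the explicit identification of the matrix produced by the Chech cover as the symmetric cyclic Laplacian (\ref{laplcyc}) rather than as the asymmetric disc form (\ref{deltafordisc}). Each strip $U_k$ now meets both branches $p_1, p_2$, so both corner entries of the monodromy must be computed simultaneously; keeping track of orientations at the two ends of the cylinder and of the basis changes that realise each $s_{i,i+1}$ as the prescribed invertible scalar is the main bookkeeping bottleneck. That the corank is $2$ at $x = 1$ rather than at $x = -1$ reflects a definite choice of relative orientation of the two branches at $p$, and must be matched with the sign conventions in the Chech differential.
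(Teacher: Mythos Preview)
Your overall architecture is right --- set up the recollement, compute the gluing bimodule via a \v{C}ech resolution of the universal local system, identify the heart with modules over a homotope, then match with $B(\Gamma)$ --- but the step ``Computing the gluing differential'' contains a genuine error, and it is precisely the one you flag as the main obstacle.

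You claim the \v{C}ech cover by $n$ angular strips produces an $n\times n$ matrix that can be normalised into the symmetric cyclic Laplacian~(\ref{laplcyc}). It does not. The stalk at the double point $p$ is computed over a punctured neighbourhood of $p$, and such a neighbourhood has \emph{two} connected components, one for each branch $p_1,p_2$. You yourself use this in the next paragraph (``the stalk of $Rj_*\mathcal{U}$ at $p$ is the direct sum of the contributions from small punctured-disc neighbourhoods of $p_1$ and $p_2$''), but it applies equally to every term of the resolution: each $(j_*\, j_{k!}\, j_k^*\,\mathcal{U})_p$ and each $(j_*\, s_{k!}\, s_k^*\,\mathcal{U})_p$ is a free $C$-module of rank~$2$, not~$1$. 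So the gluing operator is a map $C^{2n}\to C^{2n}$. Moreover the \v{C}ech differential is local: the restriction maps from $V_k$ to $U_k$ and $U_{k+1}$ act independently on the $p_1$-component and the $p_2$-component. Hence the $2n\times 2n$ matrix is block-diagonal, the direct sum of two copies of the disc matrix~(\ref{deltafordisc}). The two branches do not produce ``entries at both corners'' of a single $n\times n$ matrix; they produce two disjoint $n\times n$ blocks.

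The paper bridges the remaining gap by a different argument that your proposal is missing entirely: it observes that the block-diagonal operator and the cyclic Laplacian (with parameters chosen so that the corank is~$2$ at $x=1$) have isomorphic cokernels as right $C$-modules, namely $(C/(x-1))^{\oplus 2}$; then it invokes the general principle that two maps between finite free $C$-modules with isomorphic cokernels differ, up to isomorphism, by an identity direct summand, and that adding an identity summand to $\Delta$ yields a Morita-equivalent homotope. That Morita-equivalence step is the missing idea in your plan.
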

{\bf A sketch of the proof.} 
The complement $X\setminus p$ is homeomorphic to a punctured disc $U$. Let us use its cover that we considered in section \ref{pervdisc} and resolution (\ref{chech-universal}) of the universal local system ${\cal U}$ for calculation of the gluing functor, as in the proof of theorem \ref{deltafordisc}. Since $U$ adjoins the point $p$ along the two brunches of the surface at the point $p$, the same argument as in the proof of the theorem implies that the operator $\Delta :k[x,x^{-1}]^{2n}\to k[x,x^{-1}]^{2n}$ is given by the matrix which is the direct sum of two matrices of the form (\ref{deltafordisc}). 

Matrix (\ref{deltafordisc}) has a one dimensional kernel at $x=1$. This easily implies that the cokernel of $\Delta$ is isomorphic to the direct sum of two $k[x,x^{-1}]$-modules of the form $k[x,x^{-1}]/ (x-1)$. Laplace operator (\ref{laplcyc}) of the cyclic graph satisfying the  conditions as in the formulation of the theorem has as the kernel the same module  $(k[x,x^{-1}]/ (x-1))^{\oplus 2}$. This follows from the computation of the determinant of Laplace operator in section \ref{cyclicgraph}, because this determinant as a function on $x$ has only two zeros. 

It remains to check that the homotopes constructed by two operators of the form  $k[x,x^{-1}]^{m}\to k[x,x^{-1}]^{m}$ and $k[x,x^{-1}]^{n}\to k[x,x^{-1}]^{n}$ with isomorphic cokernels as $k[x,x^{-1}]$-modules, are Morita equivalent. Two such operators differ from each other up to isomorphism by a direct summand, the identity operator $k[x,x^{-1}]^{l}\to k[x,x^{-1}]^{l}$. It is easy to see that adding of this direct summand implies Morita equivalence at the level of homotopes.




We are grateful to Alexander Kuznetsov for fruitful discussions and to Alexander Efimov and Svetlana Makarova for useful remarks to the original version of the text.

\def\cprime{$'$}
\ifx\undefined\bysame
\newcommand{\bysame}{\leavevmode\hbox to3em{\hrulefill}\,}
\fi

{\bf Alexey Bondal}

Steklov Math Institute of RAS;

MIPT, AGHA laboratory;

National Research University Higher School of Economics;

Kavli IPMU, Tokyo University.

{\bf Ilya Zhdanovskiy} 

MIPT, AGHA laboratory;

National Research University Higher School of Economics.

\end{document}